\documentclass{article}
\synctex=1

\usepackage{amsmath,amsthm,amsfonts,amssymb,esint,mathtools,extarrows,witharrows,enumitem}



\newcommand{\N}{\mathbb{N}}   
\newcommand{\R}{\mathbb{R}} 
  
\newcommand{\Z}{\mathbb{Z}}

\makeatletter
\renewcommand*\env@matrix[1][*\c@MaxMatrixCols c]{%
  \hskip -\arraycolsep
  \let\@ifnextchar\new@ifnextchar
  \array{#1}} 
\makeatother


\newtheorem{corollary}{Corollary}[section]
\newtheorem{remark}{Remark}[section]    
\newtheorem{lemma}{Lemma}[section]
\newtheorem{proposition}{Proposition}[section] 
\newtheorem{thm}{Theorem}

 
\newtheorem{thmx}{Theorem}

\numberwithin{equation}{section}

\title
{
Asymptotics for minimizers of a Donaldson functional
and  mean curvature 1-immersions of surfaces  
into hyperbolic 3-manifolds
}

\author{Gabriella Tarantello}

\begin{document}

\maketitle

\begin{abstract}
It has been shown in \cite{Huang_Lucia_Tarantello_2} that,
for given $\vert c \vert <1$, the moduli space
of constant mean curvature (CMC) $c$-immersions of a closed orientable surface
of genus $\mathfrak{g} \geq 2$ into a hyperbolic $3$-manifold 
can be parametrized  by elements of the tangent
bundle of the corresponding  Teichm\"uller space.
This is attained by showing the unique solvability of the 
Gauss-Codazzi equations governing (CMC) c-immersions. 
The corresponding unique solution is identified as the global minimum 
(and only critical point)
of the Donaldson functional $D_t$ (introduced 
in \cite{Goncalves_Uhlenbeck}) given in \eqref{D_t} with $t=1-c^{2}$.

When $\vert c \vert \geq 1$ (i.e. $t\leq 0$), 
so far nothing is known about the existence of analogous 
(CMC) c-immersions. 
Indeed, for $t\leq 0$  the functional $D_{t}$
may no longer be bounded from below  
and evident non-existence situations do occur.

Already the case $\vert c \vert =1$ (i.e. $t=0$) appears rather involved 
and actually (CMC) 1-immersions
can be attained only as "limits" of (CMC) c-immersions 
for $\vert c \vert \longrightarrow 1^{-}$ (see Theorem \ref{thm_1}).
To handle this situation, here we analyse the asymptotic behaviour of minimizers
of $D_{t}$
as  $t \longrightarrow 0^{+}$.

We use an accurate asymptotic analysis (see \cite{Tar_1})
to describe possible blow-up phenomena.
In this way, we can relate
the existence of (CMC) 1-immersions to the Kodaira map
defined in \eqref{Kodaira_map_Intro}. 
As a consequence, 
when the genus $\mathfrak{g}=2$, 
we obtain the first 
existence and uniqueness result
about  (CMC) 1-immersions of surfaces  
into hyperbolic 3-manifolds.
\end{abstract}


\section{Introduction}

Let $S$ be an oriented closed surface with genus $\mathfrak{g}\geq 2$ and denote 
by $\mathcal{T}_{\mathfrak{g}}(S)$ the Teichm\"uller space of conformal structures on $S$, 
modulo biholomorphisms in the homotopy class of the identity.  

The Teichm\"uller space
has proved to be particularly useful in the description of various moduli spaces,
and here we explore such possibility for 
constant mean curvature (CMC) $c$-immersions of $S$ into hyperbolic $3$-manifolds.
We recall (from \cite{Uhlenbeck},\cite{Goncalves_Uhlenbeck}) that, 
for a given conformal class $X\in \mathcal{T}_{\mathfrak{g}}(S)$, 
such immersions are governed by solutions of the Gauss-Codazzi equations
relating the pull-back  metric on $X$ with the second fundamental form $II$
of the immersion. 
Actually, the Codazzi equation just states that the $(2,0)$-part of $II$
is a holomorphic quadratic differential $\alpha$ 
(i.e. the Hopf differential)  which completely identifies $II$. 
In particular for minimal (i.e. $c=0$) 
immersions we have $II =  Re(\alpha)$. Thus,
Uhlenbeck in \cite{Uhlenbeck} proposed a parametrization 
of such minimal immersions in terms of 
elements of the cotangent bundle of 
$\mathcal{T}_{\mathfrak{g}}(S)$, as described by the pairs:
$
(X,\alpha) \in \mathcal{T}_{\mathfrak{g}}(S) \times C_{2}(X)
$,
where $C_{2}(X)$ is the space of holomorphic 
quadratic differentials on $X$.
The role of holomorphic quadratic differentials 
in Teichm\"uller theory
has emerged naturally also in relation with harmonic maps.

Thus, to pursue Uhlenbeck's parametrization, 
it appears reasonable to seek minimal immersions with assigned 
second fundamental form, that is to set: 
$II=Re(\alpha)$, for a given $\alpha \in C_{2}(X)$.  
In this way,
one is reduced to solve the Gauss equation (of Liouville type)
for the pull-back metric in terms of the (given) second fundamental form. 

However, 
as discussed in \cite{Huang_Lucia},\cite{Huang_Lucia_Tarantello_1},   
a minimal immersion with assigned second fundamental form may not exist, 
or when it exists, 
it may not be unique (see also \cite{Hung_Loftin_Lucia}).
So, although suggestive, this approach does not  yield 
a one-to-one correspondence between a minimal immersion 
and a solution of the Gauss-Codazzi equations,
with respect to a given pair $(X,\alpha)$.

On the contrary, it has proved more successful the approach adopted 
by Goncalves and Uhlenbeck in \cite{Goncalves_Uhlenbeck}
where, more generally, the authors propose to parametrize 
constant mean curvature (CMC) immersions of $S$
into $3$-manifolds of constant sectional curvature $-1$, 
in terms of elements of the tangent bundle of
$\mathcal{T}_{\mathfrak{g}}(S)$.
By recalling the isomorphism:
$
C_{2}(X) \simeq (\mathcal{H}^{0,1}(X,E))^{*}
$, 
where 
$E=T^{1,0}_{X}$ is the holomorphic tangent bundle of $X$ and 
$\mathcal{H}^{0,1}(X,E)$ is the Dolbeault cohomology  group of $(0,1)$-forms
valued in $E$ (see \eqref{H_0_1_definition}),  
we find a parametrization of the tangent bundle of
$\mathcal{T}_{\mathfrak{g}}(S)$  by the pairs:
$
(X,[\beta])\in \mathcal{T}_{\mathfrak{g}}(S) \times \mathcal{H}^{0,1}(X,E) 
$. 

Accordingly, it has been proved
in \cite{Huang_Lucia_Tarantello_2} that 
(as anticipated by \cite{Goncalves_Uhlenbeck})
the following holds:

\begin{thmx}[\cite{Goncalves_Uhlenbeck},\cite{Huang_Lucia_Tarantello_2}]
\label{thm_A}
For given $c\in (-1,1)$ there is a one-to-one correspondence between the space of constant mean curvature $c$ immersions of $S$ into a $3$-manifold
of constant sectional curvature $-1$ and the 
tangent bundle of 
$\mathcal{T}_{\mathfrak{g}}(S)$, parametrized by the pairs: 
$
(X,[\beta])\in \mathcal{T}_{\mathfrak{g}}(S) \times \mathcal{H}^{0,1}(X,E),
\; 
E=T_{X}^{1,0}.  
$ 
\end{thmx}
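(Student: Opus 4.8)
The plan is to fix $c \in (-1,1)$, set $t = 1 - c^2 > 0$, and reduce the classification of (CMC) $c$-immersions to the unique solvability of a scalar equation. First fix a conformal class $X \in \mathcal{T}_{\mathfrak{g}}(S)$, together with the hyperbolic background metric $g_X$ of curvature $-1$. Through the isomorphism $C_2(X) \simeq (\mathcal{H}^{0,1}(X,E))^{*}$, a cohomology class $[\beta] \in \mathcal{H}^{0,1}(X,E)$ corresponds to a holomorphic quadratic differential $\alpha \in C_2(X)$; by the Codazzi equation this $\alpha$ is forced to be the Hopf differential of the immersion, which pins down the $(2,0)$-part of the second fundamental form $II$, and the mean-curvature normalization then determines $II$ entirely in terms of $\alpha$ and of the pull-back metric. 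Writing the latter in the conformal form $e^{2u}g_X$, the only remaining equation is the Gauss equation, which becomes a Liouville-type scalar equation on $S$ carrying the parameter $t$ and the datum $\|\alpha\|_{g_X}$. I would then recall that the solutions of this equation are precisely the critical points of the Donaldson functional $D_t$ of \eqref{D_t}, so that everything reduces to showing that, for $t > 0$, $D_t$ admits a unique critical point, which is its global minimum.

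The analytic heart of the argument is this unique solvability, which I would obtain by the direct method. For uniqueness: on the Sobolev space $H^1(S)$ the functional $D_t$ is, for $t > 0$, a sum of the convex Dirichlet energy, of two exponential terms which both carry a convex sign precisely because $t > 0$ and because the contribution of $\alpha$ enters with a nonnegative coefficient, and of a term affine in $u$; hence $D_t$ is strictly convex and has at most one critical point. For existence: writing $u = \bar u + u^{\perp}$ with $\bar u$ its mean value, the Dirichlet energy controls $\|u^{\perp}\|_{H^1}$ while Jensen's inequality bounds the exponential terms from below by a quantity growing exponentially in $|\bar u|$, which dominates the affine term; thus $D_t$ is coercive and weakly lower semicontinuous on $H^1(S)$ and attains its infimum at some $u = u_{X,[\beta]}$, which is then the unique critical point, smooth by elliptic regularity. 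The resulting conformal metric $e^{2u}g_X$ and tensor $II$ satisfy the full Gauss-Codazzi system.

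It remains to pass from $u_{X,[\beta]}$ to an immersion and to verify bijectivity. By the fundamental theorem of surface theory, the pair $(e^{2u}g_X, II)$ is realized by an isometric immersion of the universal cover $\widetilde S$ into $\mathbb{H}^{3}$, unique up to an isometry of $\mathbb{H}^{3}$; uniqueness makes this immersion equivariant with respect to a representation $\rho\colon \pi_1(S) \to \mathrm{Isom}^{+}(\mathbb{H}^{3})$, producing a (CMC) $c$-immersion of $S$ into the hyperbolic $3$-manifold determined by $\rho$. Conversely, a (CMC) $c$-immersion of $S$ into a constant-curvature $-1$ manifold has a well-defined conformal structure $X$, Hopf differential $\alpha$, hence class $[\beta]$, and its conformal factor solves the above Gauss equation, so it must coincide with $u_{X,[\beta]}$; the two constructions are therefore mutually inverse. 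Finally one checks that the whole procedure is natural — independent of the auxiliary choices of $g_X$ and of local holomorphic coordinates, equivariant under the mapping class group, and compatible with the identification of $\mathcal{H}^{0,1}(X,E)$ with the fiber of $T\mathcal{T}_{\mathfrak{g}}(S)$ over $X$ — so that the correspondence descends to an actual bijection with the tangent bundle.

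I expect the main obstacle to be precisely this analytic step, and within it the coercivity of $D_t$: it uses $t > 0$ in an essential way and degenerates as $t \to 0^{+}$, which is exactly the delicate regime the rest of the paper is devoted to; similarly, ruling out critical points other than the minimizer rests on strict convexity and hence again on $t > 0$. By contrast, reconstructing an equivariant immersion from the scalar solution via Bonnet's theorem, and carrying out the naturality checks that upgrade the fibrewise correspondence to a bijection with $T\mathcal{T}_{\mathfrak{g}}(S)$, are largely bookkeeping rather than further hard analysis.
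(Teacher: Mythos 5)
Your reduction to a scalar Liouville equation with a \emph{fixed} holomorphic quadratic differential is precisely the approach that the paper (introduction, and \cite{Huang_Lucia},\cite{Huang_Lucia_Tarantello_1}) points out does \emph{not} work. The class $[\beta]$ does not determine the Hopf differential of the immersion through the linear Serre-duality isomorphism $C_{2}(X)\simeq(\mathcal{H}^{0,1}(X,E))^{*}$: the Hopf differential is $\alpha=*_{E}e^{u}(\beta_{0}+\overline{\partial}\eta)$ (Remark \ref{CMC_correspondence}), which depends nonlinearly on the unknown solution. If instead you fix $\alpha\in C_{2}(X)$ and solve only the Gauss equation, the resulting problem corresponds to the cotangent-bundle parametrization $(X,\alpha)$, for which existence can fail and uniqueness can fail; this is exactly why Goncalves--Uhlenbeck replaced it by the two-variable Donaldson functional $D_{t}(u,\eta)$ in \eqref{D_t}, whose Euler--Lagrange system \eqref{system_of_equations_intro} couples the Gauss equation with the condition $\overline{\partial}(e^{(\kappa-1)u}*_{E}(\beta_{0}+\overline{\partial}\eta))=0$ selecting, within the fixed class $[\beta]$, the representative that is harmonic for the conformal metric. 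The paper's own route to Theorem A is: the unique solvability of this coupled system for $t=1-c^{2}>0$, $\kappa=2$ (Theorem \ref{thm_A_intro}), combined with Remark \ref{CMC_correspondence}; your route replaces the coupled system by a scalar equation that does not have the required existence/uniqueness properties.

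Relatedly, your convexity claim is false and cannot be repaired in either formulation. In the scalar reduction with fixed $\alpha$, the term $\Vert\alpha\Vert^{2}e^{-u}$ appears in the Euler--Lagrange equation with the same sign as $te^{u}$, hence enters the energy with a \emph{concave} sign, which is the source of the documented non-uniqueness. In the correct two-variable functional, the coupling term $e^{(\kappa-1)u}\Vert\beta_{0}+\overline{\partial}\eta\Vert^{2}$ is not jointly convex in $(u,\eta)$ (the model $(x,y)\mapsto e^{x}\vert y\vert^{2}$ already fails to be convex), so $D_{t}$ is not strictly convex on $\Lambda$; only the domain $\Lambda$ is convex. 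Uniqueness in Theorem \ref{thm_A_intro}/\ref{thm_uniqueness} is instead obtained in \cite{Huang_Lucia_Tarantello_2} by showing that the Hessian \eqref{seconda_variatione_D_t}--\eqref{6.4a} is strictly positive at every critical point (so each is a strict local minimum) and then excluding a second critical point by a mountain-pass/Ekeland argument against the weak Palais--Smale property --- the very scheme this paper reproduces at $t=0$ in the proof of Theorem \ref{thm_primo}. Your existence argument (minimize over $\eta$ first, then use coercivity in $\bar u$ coming from $te^{u}$ for $t>0$) is essentially right, but the uniqueness half of the statement, and the identification of the correct datum, both require the Donaldson-functional framework rather than the fixed-$\alpha$ scalar reduction.
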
	
As we shall discuss in Section \ref{statements_main_results} 
(see Remark \ref{CMC_correspondence}),
the datum $(X,[ \beta ])$ 
characterizes the Donaldson functional $D_{t}$ 
given in \eqref{D_t}-\eqref{c_t_relation}, 
whose global minimum (and only critical point)
uniquely identifies the corresponding (CMC) c-immersion.   
We recall that, 
the functional $D_{t}$ was introduced in \cite{Goncalves_Uhlenbeck}
as the appropriate "lagrangean" of the Gauss-Codazzi equations 
governing the immersion.

\

At this point it is natural to ask whether, 
for a given pair $(X,[\beta])$, such (CMC)-immersions do exist also 
for $\vert c \vert \geq 1$. 

We face an evident non-existence situation when $[\beta]=0$
(see Section \ref{Preliminaries} for details), 
while for $[\beta]\neq 0$ such question
is completely open to investigation. 
In this note, we start to tackle the case $\vert c \vert =1$ and refer 
to such immersions as (CMC) 1-immersions. 
Starting with the work of \cite{Bryant}, 
(CMC) 1-immersions of surfaces into the hyperbolic space $\mathbb{H}^{3}$ 
have played a relevant role in hyperbolic geometry, 
in view of their striking analogies with minimal immersions into the euclidean
space $\mathbb{E}^{3}$, see also 
\cite{Rossman_Umehara_Yamada},\cite{Umehara_Yamada}.

Since here we deal with compact surfaces, 
in view of those analogies, we expect that for $\vert c \vert =1$ such immersions 
should "favor" the presence of "punctures" at a finite number of points. 
Indeed, in our analysis those points  occur as the "blow-up" points, 
and form the so called "blow-up set" appearing in 
Theorem \ref{thm_main_1_intro} and Theorem \ref{main_thm_2} below. 
But surprisingly, when the genus $\mathfrak{g}=2$, we are able to
find "regular" (CMC) 1-immersions. Actually, 
their existence will be formulated in terms of the Kodaira map
discussed in section 12.1.3 of \cite{Donaldson_Book},
and specified as follows:
\begin{equation}\label{Kodaira_map_Intro}
\tau:X\longrightarrow \mathbb{P}(V^{*}),\; V=C_{2}(X);
\end{equation}
which defines a holomorphic map of $X$ into the projective space of 
$V^{*}=\mathcal{H}^{0,1}(X,E)$ with $E=T^{1,0}_{X}$,
see \cite{Donaldson_Book} and Section \ref{Preliminaries} for details. 

The role of the projective space 
$\mathbb{P}(\mathcal{H}^{0,1}(X,E))$, $E=T^{1,0}_{X}$,
is readily explained. 
Indeed, if to a pair $(X,[\beta])$ there corresponds a 
(CMC) 1-immersion, then $[\beta]\neq 0$ and it exists 
a (CMC) 1-immersion also corresponding to the data
$(X,\lambda [\beta])$, for all $\lambda \in \mathbb{C}\setminus \{  0 \} $. 
 
Recall that, for $E=T^{1,0}_{X}$ we have:  
$\mathbb{P}(\mathcal{H}^{0,1}(X,E)) \simeq \mathbb{P}^{3\mathfrak{g}-4}$
and
$3\mathfrak{g}-4 \geq 2$ for $\mathfrak{g}\geq 2$, 
therefore $\dim_{\mathbb{C}}\mathbb{P}(\mathcal{H}^{0,1}(X,E)) \geq 2$. 
On the other hand, we shall see in Lemma \ref{lem_complex_curve} that the image 
$\tau(X)$ defines a complex curve 
(hence of complex dimension one)
into $\mathbb{P}(\mathcal{H}^{0,1}(X,E))$, 
whence $\tau(X) \subsetneq \mathbb{P}(\mathcal{H}^{0,1}(X,E))$,
and actually $\tau(X)$ is a "tiny" subset of 
$\mathbb{P}(\mathcal{H}^{0,1}(X,E))$.

If for $[\beta] \in \mathcal{H}^{0,1}(X,E) \setminus \{  0 \} $ we let
$[\beta]_{\mathbb{P}}$ denote the element of the the projective space 
$\mathbb{P}(\mathcal{H}^{0,1}(X,E))$
corresponding to the  complex line
$\lambda [\beta], \lambda \in \mathbb{C}\setminus \{  0 \} $, 
then we prove:

\begin{thm}\label{thm_one}
If $\mathfrak{g}=2$, then to every  
$(X,[\beta]) 
\in 
\mathcal{T}_{\mathfrak{g}}(X) \times (\mathcal{H}^{0,1}(X,E)\setminus \{  0 \} )$
$E=T^{1,0}_{X}$, with projective representative 
$[\beta]_{\mathbb{P}}\not \in \tau(X)$, 
there  corresponds a \underline{unique} (CMC) 1-immersion of $X$  
into a $3$-manifold $M(\simeq S \times \R$) 
with sectional curvature $-1$.
\end{thm}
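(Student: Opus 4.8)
The strategy is to derive Theorem~\ref{thm_one} from the asymptotic analysis of minimizers of $D_t$ as $t\to 0^+$, combined with the structural input provided by the Kodaira map.

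\emph{Step 1: Existence via a limiting procedure.}
For each fixed $t>0$ (equivalently $|c|<1$, $c^2=1-t$), Theorem~\ref{thm_A} gives a unique (CMC) $c$-immersion attached to the data $(X,[\beta])$, realized as the global minimum $u_t$ of $D_t$. The plan is to fix the datum $(X,[\beta])$ and study the family $\{u_t\}$ as $t\to 0^+$. One first establishes, using the hypothesis $[\beta]\neq 0$, uniform bounds away from the possible blow-up set: away from finitely many points, $u_t$ (or rather the associated conformal metric) converges in $C^\infty_{\mathrm{loc}}$ to a solution of the $t=0$ Gauss--Codazzi system, which is exactly a (CMC) $1$-immersion possibly with punctures. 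The quantization/blow-up description for equations of Liouville type (the "accurate asymptotic analysis" of \cite{Tar_1}) is what controls where and how concentration can occur: each blow-up point carries a definite quantum of mass, and the location of blow-up points is constrained by the local structure of $[\beta]$.

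\emph{Step 2: Ruling out blow-up when $[\beta]_{\mathbb P}\notin\tau(X)$.}
This is the heart of the matter and the main obstacle. One must show that under the hypothesis $[\beta]_{\mathbb P}\notin\tau(X)$ \emph{no} blow-up occurs, so the limit immersion is genuinely regular (no punctures). The mechanism: a blow-up point $p$ forces, through the asymptotic analysis, a compatibility condition relating $[\beta]$ to the value at $p$ of a holomorphic section — precisely, $p$ being a blow-up point should force $[\beta]_{\mathbb P}$ to lie in $\tau(X)$ (or in the finite union of its "higher-order" strata). Here one uses that $\mathfrak{g}=2$: then $\dim_{\mathbb C}\mathbb P(\mathcal H^{0,1}(X,E))=3\mathfrak g-4=2$ while $\tau(X)$ is a complex curve (Lemma~\ref{lem_complex_curve}), so the generic $[\beta]$ avoids it, and for those $[\beta]$ the blow-up set is empty. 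Establishing the implication "blow-up at $p$ $\Rightarrow$ $[\beta]_{\mathbb P}\in\tau(X)$" — i.e. identifying the obstruction to compactness with the Kodaira curve — is where the real work lies; it combines the local blow-up profile (a bubble modeled on a standard solution) with the global constraint that $[\beta]$ is a fixed cohomology class, via a Pohozaev-type identity or a residue computation at $p$.

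\emph{Step 3: Uniqueness.}
Once regularity of the limit is secured, uniqueness of the (CMC) $1$-immersion attached to $(X,[\beta])$ follows by showing that any regular (CMC) $1$-immersion for this datum arises as such a limit, and then transporting the uniqueness of minimizers of $D_t$ to the limit. Concretely, one argues that $D_0$ (or the relevant $t=0$ energy) is still strictly convex, or that the $t=0$ Gauss equation has a unique solution in the regular class, exactly as in the $t>0$ case treated in \cite{Huang_Lucia_Tarantello_2}; the passage $t\to 0^+$ is compatible with this because the convexity/monotonicity structure is preserved in the limit away from blow-up. The $3$-manifold is then reconstructed as $M\simeq S\times\mathbb R$ with the hyperbolic metric determined by the first and second fundamental forms via the fundamental theorem of surfaces, which closes the argument.

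I expect Step~2 — pinning down that the only way compactness can fail is for $[\beta]_{\mathbb P}$ to land on the Kodaira curve $\tau(X)$ — to be the decisive and most delicate point, since it is what converts the soft dimension count ($\dim\mathbb P^{2}$ versus a curve) into a genuine existence statement for \emph{regular} immersions.
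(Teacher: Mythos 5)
Your outline follows the paper's own route: existence is obtained by showing that any blow-up of the minimizers $(u_t,\eta_t)$ as $t\to 0^+$ forces (via the localization of the pairing $\int_X\beta_0\wedge\alpha$ at the single least-mass blow-up point $q$, available for $\mathfrak g=2$ since $\rho([\beta])=4\pi$) the condition $\int_X\beta_0\wedge\alpha=0\Leftrightarrow\alpha\in Q_2[q]$, i.e.\ $[\beta]_{\mathbb P}=\tau(q)\in\tau(X)$ — this is exactly Theorems \ref{thm_main_1_intro} and \ref{thm_last_intro}, assembled into Theorem \ref{main_thm_3} and Theorem \ref{thm_infimum_in_lambda_outside_tau_X} — and uniqueness is transported from the $t>0$ case via Theorem \ref{thm_1}. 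The one detail where your description diverges is Step 3: the paper does not prove convexity of $D_0$, but shows every critical point of $D_0$ is a strict local minimum and then excludes a second one by a mountain-pass contradiction with the uniqueness of $(u_{t},\eta_{t})$ for small $t>0$; since you correctly cite Theorem \ref{thm_1} as the source of uniqueness, this does not affect the soundness of your plan.
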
	
 
Let us emphasize once more  that, since $\tau(X)$ is a complex curve 
into a complex space of dimension $2$ 
(for $\mathfrak{g}=2$) then Theorem \ref{thm_one} applies for a "generic" class 
$[\beta] \in \mathcal{H}^{0,1}(X,E)\setminus \{  0 \} $, 
and it furnishes the first existence result about (CMC) 1-immersions. 

We obtain such an existence result  "variationally", 
from a \underline{strict} global minimum 
(and unique critical point) 
of the Donaldson functional $D_{0}$ in \eqref{def_D_0_intro}. 
Based on "stability" arguments, we expect 
existence also of (CMC) $c$-immersions with $\vert c \vert  > 1$
but close to $1$. 

It would be interesting to inquire if (CMC) 1-immersions as described in Theorem
\ref{thm_one} could exist also when
$[\beta]_{\mathbb{P}} \in \tau(X)$. 
 
\

To establish Theorem \ref{thm_one}, firstly 
for any genus $\mathfrak{g}\geq 2$ we provide
a detailed asymptotic analysis about the (CMC)-immersions given in 
Theorem \ref{thm_A} (with $\vert c \vert < 1$), 
as $\vert c \vert \longrightarrow 1^{-}$.
Indeed, by Theorem \ref{thm_1}, (CMC) 1-immersions
can be attained only as "limits" in this way. 

For this purpose, we need to handle possible "blow-up" situations 
for the pull-back metrics. 
This analysis becomes particularly delicate when "blow-up"
occurs around points of "collapsing" zeroes 
of the holomorphic quadratic differentials governing the second fundamental form. 
Recall that indeed, any holomorphic quadratic differential admits 
$4(\mathfrak{g}-1)$ zeroes in $X$, counted with multiplicity. 

By the recent estimates obtained in \cite{Tar_1} for solutions of Liouville 
equations with "collapsing" Dirac sources, 
we are able to overcome such difficulties when "blow-up" occurs 
with the least possible "blow-up mass" (see Section \ref{blow_up_minimizers} for details). 
This property is always guaranteed for genus $\mathfrak{g}=2$. 
But we expect it to hold also for any genus $\mathfrak{g}\geq 2$, by virtue
of the "minimising" character of the (CMC) c-immersions of Theorem \ref{thm_A},
from the point of view of the Donaldson functional \eqref{D_t}.

We refer to the following sections for details and 
more specifically to our main 
Theorem \ref{thm_main_1_intro}, Theorem \ref{main_thm_3} 
and Theorem \ref{main_thm_2}
(and their corollaries) for precise statements of our results. 
They contain useful information in case of  "blow-up", including a 
crucial "orthogonality" condition
imposed on the class $[\beta]$, as stated in 
\eqref{orthogonality_condition_intro},\eqref{if_and_only_if}
and 
\eqref{orthonality_on_S_0}.

This enables us to relate the "blow-up" phenomenon to 
the image of the Kodaira map
and  to establish  Theorem \ref{thm_one}. 
Equivalently, we refer to  Theorem \ref{thm_infimum_in_lambda_outside_tau_X} 
concerning the existence for extremals of the associated 
(non-coercive) 
Donaldson function $D_{0}$ (in \eqref{def_D_0_intro}).

\subsection{Statement of the Main Results.}\label{statements_main_results}

We take a slightly more general point of view. 

For a given conformal class $X\in \mathcal{T}_{\mathfrak{g}}(S)$ 
(identified with the corresponding Riemann surface),  
we recall that,  $T_{X}^{1,0}$ denotes  the holomorphic tangent bundle of $X$, whose 
dual $(T(X)^{1,0})^{*}=K_{X}$ coincides with the canonical bundle 
$K_{X}$ of $X$. 
Then, for $\kappa \geq 2$, we define the holomorphic line bundle:
$ 
E= \otimes^{\kappa-1}T^{1,0}_{X}$, equipped with the corresponding 
holomorphic structure $\bar{\partial}=\bar{\partial}_{E}$. 

We let   
$A^{0}(E)$
be the space of smooth sections of $E$ and 
$A^{0,1}(X,E)=A^{0,1}(X,\mathbb{C}) \otimes E$ be 
the space of $(0,1)$-forms of $X$ valued on $E$. 
By considering the d-bar operator 
$
\overline{\partial}
:
A^{0}(E)
\longrightarrow 
A^{0,1}(X,E)
$,      
we may define the $(0,1)$-Dolbeault cohomology group:
$
\mathcal{H}^{0,1}(X,E)
=
A^{0,1}(X,E)/\overline{\partial}(A^{0}(E))
$
so that, every cohomology class 
$[\beta]  \in \mathcal{H}^{0,1}(X,E)$ is formed by 
$(0,1)$-forms valued in $E$ of the type:  
$\beta + \bar{\partial}\eta, \; \forall \; \eta \in A^{0}(E)$. 

Furthermore, 
we can use the induced complex structure
$\bar{\partial}$ over the holomorphic line bundle 
$\otimes^{k}(T_{X}^{1,0})^{*}$, 
to define the space $C_{\kappa}(X)$ of holomorphic
$\kappa$-differentials on $X$ as given by the   
holomorphic sections of
$E^{*} \otimes K_{X}=\otimes^{\kappa} (T_{X}^{1,0})^{*}$, that is: 
\begin{equation*}
\begin{split}
C_{\kappa}(X)
= H^{0}(X,E^{*}\otimes K_{X}) 
:= \; &
\{ 
\alpha \in A^{0}(X,\otimes^{k}(T_{X}^{1,0})^{*})
\; : \; 
\bar{\partial} \alpha=0
\}.
\end{split}
\end{equation*}
So 
$C_{\kappa}(X)\subset 
A^{0}(E^{*}\otimes K_{X})=A^{1,0}(X,E^{*})
$, 
and by the Riemann Roch theorem, we know that:
\begin{equation}\label{dimension_C_kappa_X}
\dim_{\mathbb{C}}C_{\kappa}(X) 
=
(2\kappa-1)(\mathfrak{g}-1),
\end{equation} 
see  \cite{Griffiths_Harris}. 

In particular, if $\kappa=2$ then
$\dim_{\mathbb{C}}C_{2}(X)=3(\mathfrak{g}-1)$, and 
since  $\mathcal{T}_{\mathfrak{g}}(S)$ has the structure of
a differential cell of real dimension $6(\mathfrak{g}-1)$ (see \cite{Jost}), 
as already mentioned, we can parametrize the cotangent bundle of 
$\mathcal{T}_{\mathfrak{g}}(S)$ by the pairs
$(X,\alpha) \in \mathcal{T}_{\mathfrak{g}}(X)\times C_{2}(X)$.
Similarly, in view of the isomorphism:
$
C_{\kappa}(X) \simeq (\mathcal{H}^{0,1}(X,E))^{*} 
$
(see Section \ref{Preliminaries} for details), 
then for $\kappa=2$,
we see also that the tangent bundle of 
$\mathcal{T}_{\mathfrak{g}}(S)$  can be parametrized by  the pairs 
$(X,[\beta]) \in \mathcal{T}_{\mathfrak{g}}(S) \times \mathcal{H}^{0,1}(X,E)$.

Next, we consider
on $X$ the unique hyperbolic metric $g_{X}$
(from the "uniformization" of $X$)
with induced scalar product $\langle \cdot,\cdot \rangle $, 
norm $\vert \cdot \vert $ and volume form $dA$.
Unless specified otherwise, we shall always consider $g_{X}$
as the background metric on $X$. 

By using the Hermitian metric  induced by $g_{X}$ on $X$, 
we can define an Hermitian structure on $E$, 
with induced fiberwise Hermitian product 
$\langle \cdot,\cdot \rangle_{E}$
and corresponding (fiberwise) norm 
$\Vert \cdot \Vert_{E}$ of sections and forms valued in $E$.

Also, as detailed in Section \ref{Preliminaries}, 
we can introduce the wedge product: $\beta \wedge \alpha$, 
for $\beta \in A^{0,1}(X,E)$ and $\alpha \in A^{1,0}(X,E^{*})$, 
and then define the 
\underline{Hodge star} operator
$*_{E}:A^{0,1}(X,E)\longrightarrow A^{1,0}(X,E^{*})$
in terms of the Hermitian product $\langle \cdot,\cdot \rangle_{E}$.
We know that, $*_{E}$ is an isomorphism with inverse $*_{E}^{-1}$,  
and actually it defines  an isometry  
with respect to the "dual" (fiberwise) Hermitian product on $E^{*}$:
$$
\langle \cdot,\cdot,\rangle_{E^{*}}
=
\langle *_{E}^{-1} \cdot,*_{E}^{-1}\cdot\rangle_{E}
\; \text{ with norm } \; 
\Vert \cdot \Vert_{E^{*}} = \Vert *_{E}^{-1} \cdot \Vert_{E}.
$$
In the sequel we shall drop the lower script $E^{*}$ and $E$
for the Hermitian product and norm, 
unless there is some ambiguity.

By Dolbeault decomposition, any 
$\beta \in A^{0,1}(X,E)$ can be expressed uniquely as follows:
$$
\beta = \beta_{0} + \bar{\partial}\eta
\; \text{ with  harmonic }\;
\beta_{0} \in A^{0,1}(X,E)
\text{ 
(with respect to $g_{X}$), } \;
\eta \in A^{0}(E).
$$
So $\beta_{0} \in [\beta]$, and every cohomology class 
in $\mathcal{H}^{0,1}(X,E)$
is uniquely identified 
by its harmonic representative. 
More importantly, we have: 
$$
\beta_{0} \in A^{0,1}(X,E) \; \text{ is harmonic } \;
\Longleftrightarrow
*_{E}\beta_{0} \in C_{\kappa}(X) 
$$
see Section \ref{Preliminaries} for details.

In particular, if for a smooth function $u$ in $X$ 
we have: $\bar{\partial} *_{E} e^{2u}\beta = 0$
then $\beta$ is \underline{harmonic} with respect to the metric $h
=e^{u}g_{X}$. 
 
\ 
 
For any pair $(X,[\beta])$, with 
$\beta_{0} \in [ \beta ]$ the harmonic representative of the class $[\beta]$,
we define 
(in the terminology of \cite{Goncalves_Uhlenbeck}) 
the \underline{Donaldson functional}:
\begin{equation}\label{D_t}
D_{t}(u,\eta)
= 
\int_{X}
(
\frac{\vert \nabla u \vert^{2}}{4}
-
u
+
te^{u}
+
4e^{(\kappa-1)u}\Vert \beta_{0} + \overline{\partial} \eta \Vert^{2}
)
\,dA
, 
\end{equation}
$t\in \R$, with "natural" (convex) domain:
\begin{equation*}
\begin{split}
\Lambda
=
\{  
(u,\eta)\in H^{1}(X) \times W^{1,2}(X,E)
\; : \; 
\int_{X}e^{(\kappa-1)u}
\Vert \beta_{0} + \bar{\partial}\eta \Vert^{2}\,dA < \infty
\},
\end{split}
\end{equation*}
with the usual Sobolev spaces  $H^{1}(X)$ of $X$,
and $W^{1,2}(X,E)$ of sections of $E$ (see \eqref{W_1_p}).  
Clearly, for $t>0$, the functional $D_{t}$ is bounded from below in $\Lambda$.

As observed in \cite{Goncalves_Uhlenbeck}, at least locally,
it is possible to construct a (CMC) immersion of $X$ with constant $c$
into a $3$-manifold having sectional curvature $-1$, 
directly from a critical point of the Donaldson functional $D_{t}$, when we take:
\begin{equation}\label{c_t_relation}
t=(1-c^{2}) \; \text{ and } \; \kappa=2.
\end{equation}
Indeed,   
if $(u,\eta)$ is a (weak) critical point 
of $D_{t}$ (in the sense of \eqref{weak_criticality} below), 
then it is smooth and satisfies:
\begin{equation}\label{system_of_equations_intro}
\left\{
\begin{matrix*}[l]
\Delta u +2 -2te^{u} -8(\kappa-1)e^{(\kappa-1)u}\Vert \beta_{0}+\overline{\partial}\eta \Vert^{2} =0  &  \;\text{ in }\;  &  X,  \\
\overline{\partial}(e^{(\kappa-1)u}*_{E}(\beta_{0}+\overline{\partial}\eta))=0, &  \;\text{}\;  &   \\ 
\end{matrix*}
\right.
\end{equation}
From the second equation in \eqref{system_of_equations_intro} we see that, 
$\beta_{0}+\bar{\partial}\eta \in [\beta]$ is harmonic with respect to the metric 
$h=e^{\frac{\kappa-1}{2}u}g_{X}$. 

More importantly, from \cite{Uhlenbeck} we have:
\begin{remark}\label{CMC_correspondence}
If $(u,\eta)$ satisfies \eqref{system_of_equations_intro} with
$\kappa=2$ and $t=(1-c^{2})$,  
then
$(X,e^{u}g_{X})$ 
can be immersed as a (CMC) surface with constant $c$ 
into a suitable $3$-manifold $M^{3}\simeq S \times \mathbb{R}$
of sectional curvature $-1$. 
Furthermore, the $(2,0)$-part of the second
fundamental form $II$ of the immersion is given by:
$
\alpha
=
*_{E} e^{u} (\beta_{0}+\overline{\partial}\eta) 
\in
C_{2}(X) 
$.
\end{remark}

\noindent	
We refer to 
\cite{Goncalves_Uhlenbeck},\cite{Huang_Lucia_Tarantello_1}
and
\cite{Huang_Lucia_Tarantello_2} 
for details.
 
Actually, the system \eqref{system_of_equations_intro}
can be formulated as the Hitchin selfduality equations \cite{Hitchin}
for a suitable nilpotent $SL(2,\mathbb{C})$ Higgs bundle, 
we refer to \cite{Alessandrini_Li_Sanders}, \cite{Huang_Lucia_Tarantello_2}
for details, see also \cite{Li} for a similar formulation 
concerning Harmonic maps in relation to  minimal immersions. 
Under this point of view, 
it becomes clear why we refer to $D_{t}$ as a Donaldson functional.  

Interestingly, (as also anticipated in \cite{Goncalves_Uhlenbeck}) 
for $t>0$, it is possible to show the unique solvability of 
\eqref{system_of_equations_intro}.

\begin{thmx}[\cite{Huang_Lucia_Tarantello_2}]
\label{thm_A_intro}
For given 
$(X,[\beta])\in \mathcal{T}_{\mathfrak{g}}(S) \times \mathcal{H}^{0,1}(X,E)$ 
and $t>0$, 
the functional $D_{t}$
admits a \underline{unique} critical point $(u_{t},\eta_{t})$
which corresponds to the global minimum of $D_{t}$ in $\Lambda$.
In particular, $(u_{t},\eta_{t})$ is smooth and it is the \underline{only} 
solution of  \eqref{system_of_equations_intro}. 
\end{thmx}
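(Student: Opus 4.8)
The plan is to realize $(u_t,\eta_t)$ as the global minimizer of $D_t$ on $\Lambda$ by the direct method of the calculus of variations, to identify it with a smooth solution of \eqref{system_of_equations_intro} through the Euler--Lagrange equations and elliptic regularity, and finally to show that it is the only critical point. One structural fact will be used repeatedly: since $E=\otimes^{\kappa-1}T^{1,0}_{X}$ with $\kappa\ge 2$ and $\mathfrak{g}\ge 2$, one has $\deg E=(\kappa-1)(2-2\mathfrak{g})<0$, so that $H^{0}(X,E)=0$; equivalently $\bar\partial\colon A^{0}(E)\to A^{0,1}(X,E)$ is injective with closed range, which yields a Poincar\'e-type inequality $\Vert\eta\Vert_{W^{1,2}}\le C\,\Vert\bar\partial\eta\Vert_{L^{2}}$.

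\textbf{Existence.} For $t>0$ the functional $D_t$ is bounded below on $\Lambda$. Writing $u=\bar u+u'$ with $\bar u$ the average of $u$ over $(X,g_{X})$ and $\int_X u'\,dA=0$, Jensen's inequality gives $\int_X e^{u}\,dA\ge|X|\,e^{\bar u}$, while $\int_X(-u)\,dA=-|X|\,\bar u$; discarding the nonnegative term of $D_t$,
\[
D_{t}(u,\eta)\ \ge\ \frac14\int_X|\nabla u'|^{2}\,dA\ -\ |X|\,\bar u\ +\ t\,|X|\,e^{\bar u}.
\]
Hence on a sublevel set $\{D_t\le \text{const}\}$ the mean $\bar u$ stays in a fixed compact interval and $\int_X|\nabla u'|^{2}\,dA$ is bounded, so $\Vert u\Vert_{H^{1}}$ is bounded by the Poincar\'e inequality; on the same sublevel set $\int_X e^{(\kappa-1)u}\Vert\beta_{0}+\bar\partial\eta\Vert^{2}\,dA$ is bounded too, and combining this with the Moser--Trudinger inequality (which turns the $H^{1}$-bound on $u$ into $L^{p}$-bounds on $e^{\pm(\kappa-1)u}$ for every $p$) and a weighted $\bar\partial$-estimate one controls $\Vert\bar\partial\eta\Vert_{L^{2}}$, hence $\Vert\eta\Vert_{W^{1,2}}$. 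Thus sublevel sets are bounded in $H^{1}(X)\times W^{1,2}(X,E)$, and a minimizing sequence $(u_n,\eta_n)$ has, up to a subsequence, $(u_n,\eta_n)\rightharpoonup(u_t,\eta_t)$ weakly with $u_n\to u_t$ in $L^{2}$ and a.e. The Dirichlet term is weakly lower semicontinuous, $\int_X(-u)\,dA$ passes to the limit by compactness of $H^{1}\hookrightarrow L^{2}$, $\int_X te^{u}\,dA$ converges by Moser--Trudinger together with a.e.\ convergence, and $\int_X e^{(\kappa-1)u}\Vert\beta_{0}+\bar\partial\eta\Vert^{2}\,dA$ is weakly lower semicontinuous by the standard lower-semicontinuity theorem for integral functionals with integrand nonnegative, continuous in $u$ and convex in $\bar\partial\eta$. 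Therefore $D_{t}(u_t,\eta_t)\le\inf_{\Lambda}D_t$ and $(u_t,\eta_t)\in\Lambda$ is a global minimizer.

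\textbf{The minimizer solves the system.} The first variation of $D_t$ at $(u_t,\eta_t)$ is exactly the weak criticality condition \eqref{weak_criticality}, whose strong form is the system \eqref{system_of_equations_intro}. A bootstrap yields smoothness: from $u_t\in H^{1}$ the Moser--Trudinger inequality puts $e^{u_t}$ and $e^{(\kappa-1)u_t}\Vert\beta_{0}+\bar\partial\eta_t\Vert^{2}$ in every $L^{p}$, so the first equation gives $u_t\in W^{2,p}\subset C^{1,\alpha}$; then the (elliptic) second equation has H\"older coefficients, giving $\eta_t\in C^{2,\alpha}$, and iterating yields $(u_t,\eta_t)\in C^{\infty}$. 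Hence the global minimizer solves \eqref{system_of_equations_intro}.

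\textbf{Uniqueness --- the main obstacle.} The delicate point is that $D_t$, although convex in $u$ for each fixed $\eta$ and quadratic (hence convex) in $\eta$ for each fixed $u$, is \emph{not} jointly convex in $(u,\eta)$: the substitution $\sigma=e^{\frac{\kappa-1}{2}u}(\beta_{0}+\bar\partial\eta)$ makes the functional jointly convex in $(u,\sigma)$ but turns the defining relation between $u$ and $\eta$ into a non-convex constraint, so uniqueness cannot be read off convexity alone and must exploit the coupled structure. Given two solutions $(u_1,\eta_1)$, $(u_2,\eta_2)$ of \eqref{system_of_equations_intro}, set $w_i=\beta_{0}+\bar\partial\eta_i$; the second equation says $w_i$ is the harmonic representative of $[\beta]$ for the weight $e^{(\kappa-1)u_i}$, so that $\alpha_i:=e^{(\kappa-1)u_i}*_{E}w_i\in C_{\kappa}(X)$ and, $*_{E}$ being an isometry, $\Vert w_i\Vert^{2}=e^{-2(\kappa-1)u_i}\Vert\alpha_i\Vert^{2}$; the first equation then becomes the scalar Liouville-type equation $\Delta u_i+2-2te^{u_i}-8(\kappa-1)e^{-(\kappa-1)u_i}\Vert\alpha_i\Vert^{2}=0$. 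I would establish uniqueness by combining: subtracting the two first equations and testing with $u_1-u_2$; subtracting the two second equations and testing with $\eta_1-\eta_2$; and using the optimality inequalities $\int_X e^{(\kappa-1)u_i}\Vert w_i\Vert^{2}\,dA\le\int_X e^{(\kappa-1)u_i}\Vert w_j\Vert^{2}\,dA$ for the harmonic representatives. The hard part is to arrange this combination so that the indefinite cross-terms --- precisely where joint convexity fails --- cancel, forcing each resulting nonnegative quantity to vanish and hence $u_1\equiv u_2$, $\eta_1\equiv\eta_2$; alternatively, one may invoke the known uniqueness of solutions of the associated Hitchin self-duality equations for the underlying nilpotent $SL(2,\mathbb{C})$ Higgs bundle. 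Since the global minimizer constructed above solves \eqref{system_of_equations_intro}, uniqueness of solutions then identifies it as the \underline{unique} critical point of $D_t$, completing the proof.
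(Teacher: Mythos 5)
You should first note that the paper does not prove this statement at all: it is imported verbatim from \cite{Huang_Lucia_Tarantello_2} (restated as Theorem \ref{thm_uniqueness} with only a citation). Still, the machinery behind it is displayed in the surrounding lemmas and reused in the proof of Theorem \ref{thm_primo}, and measured against that your proposal has a genuine gap exactly where you flag ``the main obstacle'': you do not prove uniqueness. Your plan --- subtract the two systems, test with the differences, and hope the indefinite cross-terms cancel --- is left as a hope, and there is no reason it should close, since (as you yourself observe) $D_t$ is not jointly convex. The argument that actually works is of a different kind: one computes the second variation $D''_t$ at an \emph{arbitrary} critical point (see \eqref{seconda_variatione_D_t}--\eqref{6.4a}) and shows it is strictly positive definite; the key input is the sharp weighted Poincar\'e inequality of Corollary \ref{cor_d_bar_eta_estimate} (that is, \eqref{poincare_conformal_estimates} with $\phi=u/4$), which, after completing the square, dominates the indefinite cross term $-4(\kappa-1)\mathrm{Re}\langle\beta_t,\bar\partial v\otimes l\rangle e^{(\kappa-1)u_t}$, while the term $t\int_X e^{u_t}v^2\,dA$ controls the mean of $v$. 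Hence every critical point is a strict local minimum, and if there were two, the Ekeland/mountain-pass argument (carried out in detail in the proof of Theorem \ref{thm_primo}, resting on the weak Palais--Smale property of Lemma \ref{Palais_Smale}) would produce a third critical point at a strictly higher level that cannot be a local minimum --- a contradiction. Your fallback of ``invoking uniqueness for the Hitchin self-duality equations'' is circular here: that uniqueness is precisely the content of the theorem.

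There is also a smaller but real flaw in the existence step. From $\int_X e^{(\kappa-1)u}\Vert\beta_0+\bar\partial\eta\Vert^2\,dA\le C$ together with $e^{-(\kappa-1)u}\in L^p$ for all $p$, H\"older yields only $\Vert\bar\partial\eta\Vert_{L^a}\le C$ for $a<2$; the exponent $2$ is not reached, so sublevel sets are \emph{not} shown to be bounded in $W^{1,2}(X,E)$ as you claim. The paper's device is to minimize first in $\eta$ for fixed $u$ (Lemma \ref{prop_6.1}): the optimal $\eta(u)$ satisfies $e^{(\kappa-1)u}*_E(\beta_0+\bar\partial\eta(u))\in C_\kappa(X)$, a finite-dimensional space, and this upgrades the weak $W^{1,a}$ compactness to strong $W^{1,p}$ convergence for every $p$ (Lemma \ref{eta_u_map}), after which the direct method closes using \eqref{estimate_D_t}. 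Your lower-semicontinuity step can in fact survive with only weak $L^a$ convergence of $\bar\partial\eta_n$ (Ioffe-type semicontinuity), so this part is repairable, but as written the $L^2$ control is unjustified and the finite-dimensionality trick --- which is what the cited proof actually leans on --- is missing.
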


Such a uniqueness result yields also to several interesting algebraic consequences.
For example (as already mentioned) for $\kappa=2$ and $c=0$, we derive  
a parametrization for the moduli space of
minimal immersions of $S$ into a "germ" of  
a hyperbolic $3$-manifold (cf. \cite{Taubes}) by elements of
$\mathcal{T}_{\mathfrak{g}}(S) \times \mathcal{H}^{0,1}(X,E)$, $E=T_{X}^{1,0}$. 
We can lift such information to minimal immersions of the 
Poincar\'e disk $\mathbb{D}$ into the hyperbolic space $\mathbb{H}^{3}$ 
which are equivariant with respect to an irreducible
representation:
$\rho:\pi_{1}(S)\longrightarrow PSL(2,\mathbb{C})$ and 
$PSL(2,\mathbb{C})$ is the
(orientation preserving) isometry group of 
$\mathbb{H}^{3}$ (see \cite{Uhlenbeck}).  
As a consequence, we obtain an analogous parametrization 
for such irreducible representations as well, 
see \cite{Huang_Lucia_Tarantello_2} and \cite{Loftin_Macintosh_1},
\cite{Loftin_Macintosh_2}.  

By taking $\kappa=3$, a similar conclusion may be attained for equivariant
minimal Lagrangian immersions of $\mathbb{D}$
into $\mathbb{C}\mathbb{H}^{2}$. 
Again we refer to \cite{Huang_Lucia_Tarantello_2},\cite{Loftin_Macintosh_1},\cite{Loftin_Macintosh_2} 
and the references therein for details. 

Also (by taking $\kappa=2$ and $t=(1-c^{2})>0$) we derive immediately 
the statement  of Theorem \ref{thm_A} in the introduction. 

\
  
At this point, to find (CMC) immersions of $X$ with constant 
$c\; : \; \vert c \vert \geq 1$,
we need to see whether
$D_{t}$ admits a critical point when we take $t \leq 0$.  
As we shall see, this is not an easy problem to tackle, even for $t=0$. 
Indeed, the functional:
\begin{equation}\label{def_D_0_intro}
D_{0}(u,\eta)
=
\int_{X}
(
\frac{1}{4}\vert \nabla u \vert^{2}
-
u
+
4e^{(\kappa-1)u}\Vert \beta_{0} + \overline{\partial} \eta \Vert^{2}
)
\,dA,
\end{equation}
may no longer be bounded from 
below or coercive in $\Lambda$, and actually the system 
\eqref{system_of_equations_intro} may not admit a solution for $t=0$.
This occurs for example when we take $[\beta]=0$ (i.e. $\beta_{0}=0$), 
where we find:
$u_{t}=\ln \frac{1}{t}\longrightarrow +\infty$, $\eta_{t}=0$, 
$D_{t}(u_{t},\eta_{t})\longrightarrow -\infty$, 
as $t\longrightarrow 0^{+}$, so $D_{0}$ is unbounded from below in $\Lambda$,
and in fact \eqref{system_of_equations_intro} cannot be solved for $t=0$ and
$\beta_{0}=0$.

On the other hand, it is clear that, if for $[\beta]\in \mathcal{H}^{0,1}(X,E)\setminus \{  0 \} $,
the Donaldson functional $D_{0}$ attains its global minimum in $\Lambda$
(i.e. \eqref{system_of_equations_intro} at $t=0$ is solvable),
then, by a simple scaling argument, also the functional corresponding to 
$\lambda[\beta]=[\lambda \beta]$, $\lambda \in \mathbb{C}\setminus \{  0 \} $,
will have the same property.

Furthermore, if we assume the existence of a solution  
for the system \eqref{system_of_equations_intro} at $t=0$, then we can prove that
the corresponding functional 
$D_{0}$ preserves the same properties of $D_{t}$, for $t>0$, 
in the sense that the following holds:
\begin{thm}\label{thm_1}
If $(u_{0},\eta_{0})$ is a (smooth) solution 
for the system \eqref{system_of_equations_intro} with
$t=0$ then,
\begin{enumerate}[label=(\roman*)]
\item $(u_{t},\eta_{t})\longrightarrow (u_{0},\eta_{0})$, 
as $t\longrightarrow 0^{+}$, 
in  $\mathcal{V}_{p}:= H^{1}(X) \times W^{1,p}(X,E)$, $\; \forall \;  p>2$.
\item $D_{0}$ is bounded from below in 
$\Lambda$ and attains its global minimum at 
$(u_{0},\eta_{0})$.
Furthermore $(u_{0},\eta_{0})$ is the \underline{only} critical point of $D_{0}$
and hence the only solution of \eqref{system_of_equations_intro}
for $t=0$ . 	
\end{enumerate}
\end{thm}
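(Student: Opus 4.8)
The plan is to exploit the convexity machinery behind Theorem~\ref{thm_A_intro} together with a uniqueness/rigidity argument. First, I would recall that for $t>0$ the functional $D_t$ is strictly convex on the convex domain $\Lambda$ (modulo the obvious gauge invariance $\eta \mapsto \eta + \text{holomorphic section}$, which is trivial since $E$ has negative degree, so strict convexity holds genuinely), and the minimizer $(u_t,\eta_t)$ is characterized by the Euler--Lagrange system \eqref{system_of_equations_intro}. The key structural observation is that $D_t(u,\eta) = D_0(u,\eta) + t\int_X e^u\,dA$, so $D_t$ is $D_0$ plus a convex, monotone-in-$t$ perturbation. If a smooth solution $(u_0,\eta_0)$ of \eqref{system_of_equations_intro} at $t=0$ exists, then $D_0$ is itself convex on $\Lambda$ (the convexity of each term is independent of $t$), and $(u_0,\eta_0)$ being a critical point of a convex functional is automatically a global minimum; uniqueness of the critical point then follows from strict convexity exactly as in the proof of Theorem~\ref{thm_A_intro}. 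This disposes of part~(ii), granted one first establishes that $D_0$ is bounded below --- which will itself follow once we know it attains a minimum, or alternatively from the convergence in part~(i) by lower semicontinuity.

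For part~(i), the natural route is a monotonicity-plus-compactness argument. Since $t \mapsto \int_X e^{u_t}\,dA$ and $t\mapsto D_t(u_t,\eta_t)$ should be monotone (the minimum value $D_t(u_t,\eta_t)$ is non-decreasing in $t$ because the perturbation $t\int e^u$ is non-decreasing; and one gets a matching bound from evaluating $D_t$ at the fixed competitor $(u_0,\eta_0)$, namely $D_t(u_t,\eta_t)\le D_t(u_0,\eta_0) = D_0(u_0,\eta_0) + t\int_X e^{u_0}\,dA$), we obtain a uniform upper bound $D_t(u_t,\eta_t)\le D_0(u_0,\eta_0) + O(t)$ as $t\to 0^+$. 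Combined with $D_t(u_t,\eta_t)\ge D_0(u_t,\eta_t)$ and the fact (to be shown) that $D_0$ controls the relevant norms, this yields uniform bounds on $\|u_t\|_{H^1}$, on $\int_X e^{u_t}\,dA$, and on $\int_X e^{(\kappa-1)u_t}\|\beta_0+\bar\partial\eta_t\|^2\,dA$, hence on $\eta_t$ in the appropriate Sobolev space. Extracting a weakly convergent subsequence $(u_t,\eta_t)\rightharpoonup (u_*,\eta_*)$, lower semicontinuity of $D_0$ gives $D_0(u_*,\eta_*)\le \liminf D_0(u_t,\eta_t)\le D_0(u_0,\eta_0)$, so $(u_*,\eta_*)$ is a minimizer of $D_0$; by the uniqueness from part~(ii) (which only needs convexity, already available), $(u_*,\eta_*)=(u_0,\eta_0)$, and since the limit is independent of the subsequence the full family converges. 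Upgrading weak to strong convergence in $H^1\times W^{1,p}$ for all $p>2$ is then a matter of standard elliptic regularity applied to \eqref{system_of_equations_intro}: once $u_t\to u_0$ in $H^1$ and the right-hand sides converge, bootstrapping through $L^p$ and Schauder estimates gives convergence in any desired topology, using the Moser--Trudinger inequality to control $e^{u_t}$ terms in $L^p$.

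The main obstacle I anticipate is \emph{not} the convexity argument but obtaining the uniform a priori bounds as $t\to 0^+$ before knowing the limit is well-behaved --- in particular, ruling out that $\int_X e^{u_t}\,dA$ or $\sup u_t$ diverges. The toy example with $\beta_0=0$ in the excerpt shows that $D_0$ is genuinely unbounded below in general, so the bound must crucially use the \emph{assumed} existence of $(u_0,\eta_0)$; the mechanism is precisely the competitor inequality $D_t(u_t,\eta_t)\le D_0(u_0,\eta_0)+t\int_X e^{u_0}\,dA$, which pins the minimum value from above uniformly, but translating this into an $H^1$ bound on $u_t$ requires a coercivity estimate of the form $D_0(u,\eta) \ge c\|u\|_{H^1}^2 - C$ valid on sublevel sets, and this coercivity is exactly what can fail. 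The resolution should be to integrate the first equation of \eqref{system_of_equations_intro} over $X$ (Gauss--Bonnet type identity) to get $2|X|_{g_X} = 2t\int e^{u_t} + 8(\kappa-1)\int e^{(\kappa-1)u_t}\|\beta_0+\bar\partial\eta_t\|^2$, which bounds both integrals on the right by a $t$-independent constant for $t>0$, and then feed this back --- via the energy bound and Moser--Trudinger, as in the proof of Theorem~\ref{thm_A_intro} --- to control $\|\nabla u_t\|_{L^2}$ and $\int u_t$, hence $\|u_t\|_{H^1}$. I would then handle $\eta_t$ by testing the second equation suitably, or more simply by noting that $\int e^{(\kappa-1)u_t}\|\beta_0+\bar\partial\eta_t\|^2$ bounded plus the gauge-fixing (e.g. $\eta_t \perp \ker\bar\partial$) gives a $W^{1,2}$ bound on $\eta_t$ and then elliptic regularity for $\bar\partial$ gives $W^{1,p}$.
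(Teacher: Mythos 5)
Your argument rests on the claim that $D_t$ (hence $D_0$) is convex, indeed strictly convex, on $\Lambda$, and both halves of your proof are driven by this: global minimality and uniqueness of the critical point in part (ii), and the identification of the weak limit in part (i). But the Donaldson functional is \emph{not} jointly convex in $(u,\eta)$: the coupling term $e^{(\kappa-1)u}\Vert \beta_0+\bar{\partial}\eta\Vert^2$ is convex in each variable separately, but its Hessian has a negative eigenvalue wherever $\beta_0+\bar{\partial}\eta\neq 0$ (for the model $f(u,v)=e^{u}v^{2}$ one computes $f_{uu}f_{vv}-f_{uv}^{2}=-2e^{2u}v^{2}<0$). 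The uniqueness in Theorem \ref{thm_A_intro} is not a convexity statement, and the paper explicitly flags the danger you are running into: knowing that every critical point of $D_0$ is a strict local minimum --- which is what the Hessian estimate \eqref{B_t_estimate} actually yields, via Lemma \ref{lem_strict_min} --- does not imply there is only one of them; compare $f(z)=\vert e^{z}-1\vert^{2}$, which has infinitely many strict local minima and no other critical points. The paper closes this gap by a completely different mechanism: if $(u_t,\eta_t)$ stayed at distance at least $\varepsilon_0$ from $(u_0,\eta_0)$ along some $t_n\to 0^{+}$, the strict-local-minimum barrier around $(u_0,\eta_0)$ would create a mountain-pass geometry for $D_{t_0}$ with $t_0>0$ small; Ekeland's principle together with the weak Palais--Smale property (Lemma \ref{Palais_Smale}) then produce a second critical point of $D_{t_0}$, contradicting Theorem \ref{thm_uniqueness}. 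Convergence is obtained first, and boundedness from below plus uniqueness for $D_0$ are deduced afterwards by passing to the limit in $D_t(u,\eta)\geq D_t(u_t,\eta_t)$.

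Your compactness step for part (i) also fails on its own terms. The competitor inequality $c_t=D_t(u_t,\eta_t)\leq D_0(u_0,\eta_0)+t\int_X e^{u_0}\,dA$ together with the integrated identity \eqref{6.17} gives $\frac{1}{4}\int_X\vert\nabla w_t\vert^{2}\,dA-4\pi(\mathfrak{g}-1)d_t\leq C$ with $d_t=\fint_X u_t\,dA$, and this does \emph{not} bound $\Vert u_t\Vert_{H^1}$: in the blow-up regime one has $\int_X\vert\nabla w_t\vert^{2}\,dA=16\pi m\,d_t+O(1)$ and $c_t=-4\pi(\mathfrak{g}-1-m)d_t+O(1)$ (see \eqref{c_k_estimate_intro}), so for $m=\mathfrak{g}-1$ the minimum value stays bounded while $d_t\to+\infty$. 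Hence neither Moser--Trudinger nor the Gauss--Bonnet identity rules out divergence of $u_t$; what rules it out is precisely the nonexistence of a second critical point for small $t>0$, i.e.\ the mountain-pass contradiction above. Without a correct replacement for the convexity claim, both parts of your proposal are unsupported.
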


In view of Theorem \ref{thm_1}, to identify possible critical points for $D_{0}$, 
we must investigate  whether $(u_{t},\eta_{t})$ 
survives the passage to the limit, as $t\longrightarrow 0^{+}$.  

\

Besides the existence of (CMC)-immersions with $c=\pm 1$, when $\kappa\geq 2$, 
such an asymptotic analysis 
permits to follow the behaviour of the global minimizer $(u_{\lambda},\eta_{\lambda})$
of the Donaldson functional:
\begin{equation}\label{6.11_intro}
D(u,\eta) 
=
\int_{X} 
(
\frac{\vert \nabla u \vert^{2}}{4}
-
u 
+ 
e^{u}
+
4e^{(\kappa-1)u}
\Vert \lambda \beta_{0} + \overline{\partial}\eta \Vert^{2}
) 
\,dA
\end{equation}
along the ray of cohomology classes: 
$[\lambda \beta]$, with  $\lambda \in \mathbb{C}\setminus \{  0 \} $
and
$[\beta]\neq 0$ fixed in $\mathcal{H}^{0,1}(X,E)$. 
Indeed, via the transformations:
\begin{equation*}
\begin{split}
&t=\vert \lambda \vert ^{-\frac{2}{\kappa-1}}
,\; 
u_{t}=u_{\lambda}+\frac{2}{\kappa-1}\log \vert \lambda \vert 
,\;
\eta_{t}=\frac{1}{\lambda}\eta_{\lambda},  
\\ &
D_{t}(u_{t},\eta_{t})
=
D(u_{\lambda},\eta_{\lambda})
-
4\pi(\mathfrak{g}-1)\log \vert \lambda \vert ^{\frac{2}{\kappa-1}}
\end{split}
\end{equation*}
we can recast the analysis of $(u_{\lambda},\eta_{\lambda})$
(the global minimum of $D$ in \eqref{6.11_intro}), 
as $\vert \lambda \vert \longrightarrow +\infty$,
to the analysis of $(u_{t},\eta_{t})$ 
(the global minimum of $D_{t}$ in \eqref{D_t}), 
as $t\longrightarrow 0^{+}$.
  
\

We begin our asymptotic analysis by using the strict positivity of 
the Hessian $D_{t}^{\prime \prime }$ at $(u_{t},\eta_{t})$
(see \cite{Goncalves_Uhlenbeck}, \cite{Huang_Lucia_Tarantello_2}) 
and the Implicit Function Theorem to show  
the $C^{2}$-dependence of $(u_{t},\eta_{t})$
with respect to $t\in (0,+\infty)$. More interestingly, we show that 
the expression: 
$ t \int_{X} e^{u_{t}} \,dA$
is \underline{increasing} as a function of 
$t\in (0,+\infty)$, see Lemma \ref{star}. 
Since 
(after integration over $X$ of the first equation in \eqref{system_of_equations_intro})
we have: 
\begin{equation*}
t \int_{X} e^{u_{t}}\,dA
+
4(\kappa-1)\int_{X} e^{(k-1)u_{t}} 
\Vert 
\beta_{0} + \overline{\partial}\eta_{t}
\Vert^{2}
\,dA
=
4\pi (\mathfrak{g}-1)
\end{equation*} 
we may conclude that
\begin{equation*}
\rho_{t}([\beta]):= 
4(\kappa-1)\int_{X} e^{(\kappa-1)u_{t}}
\Vert \beta_{0} + \overline{\partial}\eta_{t}\Vert^{2}
\,dA 
\in 
(0,4\pi(\mathfrak{g}-1))
\end{equation*}
is \underline{decreasing}  in $t \in (0,+\infty)$.
Thus, it is well defined the value:
\begin{equation}\label{rho_beta_definition}
\rho([\beta])
:=
\lim_{t \to 0^{+} }
\rho_{t}([\beta])
=
\sup_{t>0}\rho_{t}([\beta])
\leq 4\pi (\mathfrak{g}-1),
\end{equation}
which we wish to identify in terms of the fixed cohomology class
$[\beta]\in \mathcal{H}^{0,1}(X,E)$, 
or equivalently in terms of its \underline{harmonic} 
representative $\beta_{0} \in [\beta]$. 
To this purpose we start by showing the following:
\begin{proposition}\label{proposition_on_rho_intro} 
\quad
\begin{enumerate}[label=(\roman*)]
\item $\rho([\beta])=0$ if and only if $[\beta]=0$; and
if $[\beta]\neq 0$ then $\rho([\beta])\geq \frac{4\pi}{\kappa-1}$.
\item 
If $D_{0}$ is bounded from below on $\Lambda$ 
then $\rho([\beta])=4\pi(\mathfrak{g}-1)$. 
\end{enumerate}
\end{proposition}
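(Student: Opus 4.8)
The plan is to reduce both statements to locating
$L:=\lim_{t\to 0^{+}}t\int_{X}e^{u_{t}}\,dA$, whose existence is guaranteed by the monotonicity of Lemma~\ref{star}. Integrating the first equation of \eqref{system_of_equations_intro} over $X$ gives $t\int_{X}e^{u_{t}}\,dA+\rho_{t}([\beta])=4\pi(\mathfrak{g}-1)$, hence $\rho([\beta])=4\pi(\mathfrak{g}-1)-L$. Thus (ii) is the assertion ``$D_{0}$ bounded below $\Rightarrow L=0$'', while (i) splits into ``$\beta_{0}=0\Rightarrow L=4\pi(\mathfrak{g}-1)$'' and ``$[\beta]\neq 0\Rightarrow L\le 4\pi(\mathfrak{g}-1)-\tfrac{4\pi}{\kappa-1}$''.

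I would dispatch (ii) and the easy half of (i) first, by soft variational arguments. For (ii): with $m_{0}:=\inf_{\Lambda}D_{0}>-\infty$, the identity $D_{t}=D_{0}+t\int_{X}e^{u}\,dA$ gives $D_{t}\ge D_{0}\ge m_{0}$ on $\Lambda$, so $\min_{\Lambda}D_{t}\ge m_{0}$; testing on any fixed $(u_{0},\eta_{0})\in\Lambda$ gives $\min_{\Lambda}D_{t}\le D_{t}(u_{0},\eta_{0})\to D_{0}(u_{0},\eta_{0})$, whence $\min_{\Lambda}D_{t}\to m_{0}$. By Theorem~\ref{thm_A_intro} this minimum equals $D_{t}(u_{t},\eta_{t})$, and therefore $m_{0}\le D_{0}(u_{t},\eta_{t})=D_{t}(u_{t},\eta_{t})-t\int_{X}e^{u_{t}}\,dA\to m_{0}-L$, which forces $L\le 0$ and hence $L=0$. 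For the ``if'' part of (i): when $\beta_{0}=0$ the minimizer of $D_{t}$ has $\bar\partial\eta_{t}=0$, so $\rho_{t}([\beta])\equiv 0$ and $\rho([\beta])=0$ (equivalently $u_{t}$ solves $\Delta u_{t}+2-2te^{u_{t}}=0$, whose integration gives $t\int_{X}e^{u_{t}}\,dA=4\pi(\mathfrak{g}-1)$ for every $t$).

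The substance is the remaining implication $[\beta]\neq 0\Rightarrow\rho([\beta])\ge\tfrac{4\pi}{\kappa-1}$, which also yields ``$\rho([\beta])=0\Rightarrow[\beta]=0$'' by contraposition, and I would prove it by a concentration--compactness analysis of $(u_{t},\eta_{t})$ as $t\to 0^{+}$. If $(u_{t})$ stays bounded in $H^{1}(X)$ along a sequence $t_{n}\downarrow 0$, then, controlling $\eta_{t_{n}}$ through the second equation of \eqref{system_of_equations_intro}, the pair converges to a solution of \eqref{system_of_equations_intro} at $t=0$; then $\int_{X}e^{u_{t_{n}}}\,dA$ stays bounded (Moser--Trudinger), so $L=0$ and $\rho([\beta])=4\pi(\mathfrak{g}-1)\ge\tfrac{4\pi}{\kappa-1}$. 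Otherwise $(u_{t})$ blows up, and here I would bring in the asymptotic machinery underlying our main theorems, in particular the estimates of \cite{Tar_1} for Liouville equations with collapsing Dirac sources. In local conformal coordinates the log-conformal factor $U_{t}$ of the pull-back metric $e^{u_{t}}g_{X}$ satisfies an equation of the form $\Delta U_{t}=c_{1}\,t\,e^{U_{t}}+c_{2}\,|\phi_{t}|^{2}e^{-(\kappa-1)U_{t}}$ with $c_{1},c_{2}>0$, where $\phi_{t}$ is the local coefficient of the nonzero holomorphic $\kappa$-differential $\alpha_{t}:=*_{E}\,e^{(\kappa-1)u_{t}}(\beta_{0}+\bar\partial\eta_{t})\in C_{\kappa}(X)$; moreover, by the isometry property of $*_{E}$, $4e^{(\kappa-1)u_{t}}\|\beta_{0}+\bar\partial\eta_{t}\|^{2}\,dA=4e^{-(\kappa-1)u_{t}}\|\alpha_{t}\|^{2}\,dA$, so $\rho_{t}([\beta])$ is exactly the total mass of this $\beta$-density. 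Such a density can concentrate only where $U_{t}\to-\infty$, which by the equation must occur at the (possibly collapsing) zeros of $\alpha_{t}$; near such a point the substitution $W_{t}=-(\kappa-1)U_{t}$ turns the equation into a Liouville-type equation $-\Delta W_{t}=c_{2}(\kappa-1)\,|\phi_{t}|^{2}e^{W_{t}}+o(1)$, and the sharp local-mass quantization for these equations -- uniform also in the collapsing regime thanks to \cite{Tar_1} -- forces each concentration point to contribute at least $\tfrac{4\pi}{\kappa-1}$ to $\rho([\beta])$ (the value attached to a bubble at a regular point of the weight, larger at a zero). Finally, since $[\beta]\neq 0$ the representatives satisfy $\|\beta_{0}+\bar\partial\eta_{t}\|_{L^{2}(g_{X})}\ge\|\beta_{0}\|_{L^{2}(g_{X})}>0$ by Hodge orthogonality, which is incompatible with the $\beta$-density losing all of its mass while $(u_{t})$ blows up to $+\infty$; hence when $(u_{t})$ blows up at least one such concentration is present, and $\rho([\beta])\ge\tfrac{4\pi}{\kappa-1}$.

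The main obstacle is precisely this last blow-up analysis of the coupled system \eqref{system_of_equations_intro} as $t\to 0^{+}$ in the presence of collapsing zeros of $\alpha_{t}$: it is the technical heart of the matter and where \cite{Tar_1} is indispensable. Everything in the first two paragraphs, by contrast, is a soft consequence of the variational characterisation of $(u_{t},\eta_{t})$ in Theorem~\ref{thm_A_intro} together with the monotonicity of $t\int_{X}e^{u_{t}}\,dA$ from Lemma~\ref{star}.
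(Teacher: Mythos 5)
Your proposal is correct, and the reduction to $L:=\lim_{t\to0^{+}}t\int_{X}e^{u_{t}}\,dA$ via the integrated identity \eqref{6.17} is exactly the paper's starting point. For the hard part of (i) — the lower bound $\rho([\beta])\geq\frac{4\pi}{\kappa-1}$ when $[\beta]\neq0$ — you follow essentially the same route as the paper (Proposition \ref{proposition_lower_bound_on_rho}): the dichotomy of Theorem \ref{thm_blow_up_global_from_part_1}, with compactness handled as in Lemma \ref{lem_6.5} (giving $\rho([\beta])=4\pi(\mathfrak{g}-1)$) and blow-up forcing at least $8\pi$ of quantized mass for the rescaled Liouville equation, i.e.\ $2(\kappa-1)\rho([\beta])\geq8\pi$. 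The paper phrases this contrapositively — if $\rho([\beta])<\frac{4\pi}{\kappa-1}$ the total mass of $R_{k}e^{\xi_{k}}$ stays below $8\pi$, so no blow-up point can form and compactness is forced — which spares you the slightly delicate step you gesture at ("blow-up of $u_{t}$ must produce at least one concentration of the $\beta$-density"); your Hodge-orthogonality remark does close that step, since $\int_{X}\Vert\beta_{t}\Vert^{2}\geq\Vert\beta_{0}\Vert_{L^{2}}^{2}>0$ together with $\rho_{t}\leq4\pi(\mathfrak{g}-1)$ keeps $d_{t}$ bounded above, and boundedness below comes from $c_{t}\leq c_{1}$, so non-compactness of $u_{t}$ is equivalent to concentration of the density. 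Where you genuinely diverge is part (ii): the paper (Proposition \ref{prop_6.5}) uses the $C^{2}$-dependence of $c_{t}$ on $t$ and de L'H\^opital applied to $c_{t}/\ln t$ with $\dot{c}_{t}=\int_{X}e^{u_{t}}\,dA$, whereas your squeeze $m_{0}\leq D_{0}(u_{t},\eta_{t})=c_{t}-t\int_{X}e^{u_{t}}\,dA$ together with $c_{t}\to m_{0}$ needs only the variational characterisation of $(u_{t},\eta_{t})$ and the monotonicity from Lemma \ref{star}; this is softer and entirely avoids the Implicit Function Theorem input, at no cost.
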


From (i) it follows in particular that, for $[\beta ]\neq 0$
the interval  $(0,\rho([\beta]))$ gives  the range of 
$\rho_{t}([\beta])$, for $ t \in (0,+\infty)$.  
Furthermore, 
from \eqref{rho_beta_definition} and Proposition \ref{proposition_on_rho_intro}
we derive:
\begin{equation}\label{k_2_g_2_gives_4_pi}
\; \text{ if } \; \kappa=2
\; \text{ and } \;
\mathfrak{g}=2
\; \text{ then } \;
\rho([\beta])=4\pi,
\; \forall \; [\beta] \neq 0.  
\end{equation}
Also Proposition \ref{proposition_on_rho_intro} prompts us to ask 
the following questions:
\begin{enumerate}[label=(\roman*)]
\item[(1)]
if $\rho([\beta])=4\pi(\mathfrak{g}-1)$ then
is it true that $D_{0}$ is bounded from below in $\Lambda$?
\item[(2)]  if $D_{0}$ is bounded from below in $\Lambda$, then for which class 
$[\beta]\neq 0$ is the infimum attained? 
\end{enumerate}

In view of \eqref{k_2_g_2_gives_4_pi},
in the sequel we shall provide an affirmative answer to question (1) and (2)
when the genus $\mathfrak{g}=2$ 
(see Theorem \ref{main_thm_3} and Theorem \ref{thm_infimum_in_lambda_outside_tau_X}). 
In addition, when $\mathfrak{g}\geq 3$,
we shall give strong indications towards an affirmative answer to question (1).
  
On the ground of Theorem \ref{thm_1},  
to explore question (2) for $\mathfrak{g}\geq 3$, 
we shall introduce appropriate tools to investigate the asymptotic behavior 
of the minimizers of $D_{t}$, as $t\longrightarrow 0^{+}$.

To be more precise, 
for fixed $[\beta] \in \mathcal{H}^{0,1}(X,E) \setminus \{ 0 \} $
with harmonic representative $\beta_{0} \in [\beta]\neq 0$
we set:
\begin{equation*}
\beta_{t}=\beta_{0}+\overline{\partial}\eta_{t} \in A^{0,1}(X,E)
\; \text{ and } \; 
\alpha_{t}=e^{(\kappa-1)u_{t}}*_{E}\beta_{t} \in C_{\kappa}(X) \setminus \{ 0 \}.
\end{equation*} 
We recall in particular that, $\beta_{t}$ is \underline{harmonic} 
with respect to the metric 
$h=e^{\frac{\kappa-1}{2}u_{t}}g_{X}$ on $X$.

It is a consequence of the Riemann-Roch theorem \cite{Jost}, that any holomorphic 
$\kappa$-differential in $X$
admits $2\kappa(\mathfrak{g}-1)$ zeroes counted with multiplicity.
Thus, we let $Z_{t}$ be the finite set of \underline{distinct} zeroes of $\alpha_{t}$
whose multiplicities add up to $2\kappa(\mathfrak{g}-1)$. 
Hence, in terms of the fiberwise norm of $\alpha_{t}$ we have:
$$\Vert \alpha_{t} \Vert(q)=\Vert \alpha_{t} \Vert_{E^{*}}(q)>0,
\; \forall \; q\in X\setminus Z_{t}.$$ 
Since $C_{\kappa}(X)$ is finite dimensional (see \eqref{dimension_C_kappa_X}),
all norms of $\alpha_{t}$ are equivalent, and it is usual 
(recall the Weil-Petterson form \cite{Jost})
to consider the (well defined) $L^{2}$-norm:
$\Vert \alpha \Vert_{L^{2}}=(\int_{X}\Vert \alpha \Vert^{2}dA)^{\frac{1}{2}}$, 
for $\alpha \in C_{\kappa}(X)$. 
We let,
\begin{equation}\label{def_s_t}
s_{t}\in \R \; : \; 
e^{(\kappa-1)s_{t}}
=
\Vert \alpha_{t} \Vert_{L^{2}}^{2}
\; \text{ and } \; 
\hat{\alpha}_{t}
=
\frac{\alpha_{t}}{\Vert \alpha_{t} \Vert_{L^{2}}}
=
e^{-\frac{(k-1)s_{t}}{2}}\alpha_{t}.
\end{equation}

In order to attain an accurate asymptotic description 
about the behavior of $(u_{t},\eta_{t})$, 
as $t\longrightarrow 0^{+}$, 
we need to account for possible blow-up phenomena of
\begin{equation}\label{u_to_xi}
\xi_{t}:=-u_{t}+s_{t},
\end{equation} 
which occur when there holds: 
 $\max_{X} \xi_{t}\longrightarrow \infty$, as $t\longrightarrow 0^{+}$.

In this respect, it will not suffice to use 
the well known blow-up analysis developed in  
\cite{Brezis_Merle},\cite{Li_Shafrir},\cite{Bartolucci_Tarantello}
for solutions of Liouville equations.
Indeed, we face a particularly delicate situation 
in what we call the "collapsing" case, namely when, 
along a sequence $t_{k}\longrightarrow 0^{+}$, 
we have that $\xi_{k}=\xi_{t_{k}}$ "blows-up" 
around a point
where different zeroes  of $\hat{\alpha}_{k}=\hat{\alpha}_{t_{k}}$ collapse together. 
To be more precise, by \eqref{def_s_t} we may suppose that:
$$
\hat{\alpha}_{k}=\hat{\alpha}_{t_{k}}\longrightarrow \hat{\alpha}_{0},
\; \text{ as } \; k\longrightarrow \infty, 
\; \text{ with } \; 
\hat{\alpha}_{0}\in C_{\kappa}(X)
\; \text{ and } \; 
\Vert \hat{\alpha}_{0} \Vert_{L^{2}}=1,
$$ 
and the $2\kappa(\mathfrak{g}-1)$ zeroes 
(counted with multiplicity) of $\hat{\alpha}_{0}$ 
correspond to the limit points of the zeroes of 
$\hat{\alpha}_{k}$ in $Z_{k}=Z_{t_{k}}$, as $k\longrightarrow \infty$.

In particular, we find a suitable integer
$1\leq N \leq 2\kappa(\mathfrak{g}-1)$ such that (for $k$ large) 
the set $Z_{k}$ of \underline{distinct} zeroes of $\hat{\alpha}_{k}$
is given by:
$$
Z_{k}=\{ z_{1,k},\ldots,z_{N,k} \} 
\; \text{ and } \;  
z_{j,k}\neq z_{l,k},\; l\neq j \in \{ 1,\ldots,N \},
$$
and every $z_{j,k}$ admits multiplicity $n_{j} \in \mathbb{N}$ 
with $\sum_{j=1}^{N}n_{j}=2\kappa(\mathfrak{g}-1)$. 
Furthermore, $z_{j,k}\longrightarrow z_{j}$, as $k\longrightarrow \infty$,
and the set
$$
Z=\{ z_{1},\ldots,z_{N} \} 
$$
is the zero set of $\hat{\alpha}_{0}$. 
However, now we cannot guarantee that the points in $Z$ are \underline{distinct}. 
So we denote by $Z_{0}$ the subset 
(possibly empty)
of $Z$ where different zeroes of $\hat{\alpha}_{k}$
collapse together. Namely,
\begin{equation}\label{definition_Z_0_intro}
\begin{split}
Z_{0}
=
\{ 
z \in Z
\; : \;
&
 \exists \; s\geq 2,\;
1\leq j_{1}<\ldots<j_{s}\leq N
\; \text{ such that} \; \\ 
& 
z=z_{j_{1}}=\ldots=z_{j_{s}}
\; \text{ and } \;
z \not \in Z \setminus \{  z_{j_{1}},\ldots,z_{j_{s}} \}  
\} . 
\end{split}
\end{equation}
The "blow-up" analysis of $\xi_{k}$ needs a particular attention
when blow-up occurs around a point in $Z_{0}$. 

To reduce technicalities, 
we handle such a delicate "collapsing" situation only in case:
$\kappa=2$. 

After \cite{Suzuki_Ohtusuka}, a scenario of blow-up in the "collapsing" situation 
was first illustrated in 
\cite{Lee_Lin_Tarantello_Yang} and \cite{Lin_Tarantello},  
where the new phenomenon of "blow-up without concentration" was recorded. 
See \cite{Lee_Lin_Wei_Yang},\cite{Lee_Lin_Yang_Zhang} for a description of similar
phenomena in the context of systems.
 
A more detailed blow-up analysis was
recently presented in \cite{Tar_1}. 
A first fact, explicitly stated 
in Theorem  \ref{thm_blow_up_global_from_part_1} below,  
allows us to ensure
that (even in the "collapsing" case) 
blow-up can occur around at most a finite number of "blow-up points"
with quantized "blow-up mass" of at least $8\pi$, 
see \cite{Lee_Lin_Tarantello_Yang},\cite{Lee_Lin_Wei_Yang} and \cite{Tar_1}. 

Interestingly, in case blow-up  occurs  
with the least possible blow-up mass  $8\pi$,  
the pointwise estimates obtained in the "collapsing" case in \cite{Tar_1} 
(stated explicitly in Theorem \ref{theorem_from_part_1} below),  
are in striking analogy with the sharp "single bubble" estimates 
obtained in \cite{Chen_Lin_1} and \cite{Li_Harnack}
for the non-vanishing
(hence non-collapsing) case. 
Observe that no "bubble" profile is available in the "collapsing" situation.
We refer the reader to \cite{Tar_1}  for details. 
  
For the sequence $\xi_{k}=\xi_{t_{k}}$,  
we can use Theorem \ref{thm_blow_up_global_from_part_1} to conclude that,
\begin{enumerate}[label=(\roman*)]
\item 	either (compactness) \;:\;
$
\xi_{k}\longrightarrow \xi_{0}
\; \text{ in   } \; C^{2}(X)
,  \; \text{ as } \;  
k\longrightarrow +\infty,
$
and the functional $D_{0}$ is bounded from below and attains its infimum in $\Lambda$;
\item[(ii)] or (blow-up) \;:\;
$\xi_{k}$ admits a finite blow-up set 
\begin{equation*}
\mathcal{S}=\{ q_{1},\ldots,q_{m}  \}
\; \text{ with } \; 
m \in \{   1,\ldots,\mathfrak{g}-1\},
\end{equation*} 
(i.e. $\xi_{k}^{+}$ is bounded uniformly on compact sets of $X\setminus \mathcal{S}$)  
with \underline{blow-up mass}: 
\begin{equation}\label{sigma_q} 
\sigma(q)
:=
\lim_{r \to 0^{+} }
\left(
\lim_{k \to +\infty }  8 \int_{B_{r}(q)} e^{u_{t_{k}}}
\Vert \beta_{0} + \overline{\partial}\eta_{t_{k}} \Vert^{2}dA
\right)
\in 
8\pi \N
,
\; \forall \;  
q\in \mathcal{S};
\end{equation}
(recall that $\kappa=2$). 
Furthermore, we may have "blow-up with concentration", or "blow-up without concentration", 
as described respectively in part (ii)-(a) or (ii)-(b) of 
Theorem \ref{thm_blow_up_global_from_part_1} below.
\end{enumerate}

Thus,  we focus on the "blow-up" case.
Due to the minimizing property of $(u_{t},\eta_{t})$, we expect that
the corresponding "blow-up mass" should be the least possible
(namely $8\pi$). 
In fact, it is likely that $\xi_{k}$ admits only one blow-up point
(see Remark \ref{rem_only_one_blow_up_point})
and it cannot occur in $Z\setminus Z_{0}$  
since, by \cite{Bartolucci_Tarantello}, it would have a "blow-up mass" larger than $8\pi$. 
This latter possibility could also be excluded  
by the "vanishing condition", 
recently identified by Wei and Zhang in 
\cite{Wei_Zhang_1},\cite{Wei_Zhang_2}.  

Therefore, we expect
that $\xi_{k}$ blows up either at a point in $X\setminus Z$
(not a zero of $\hat{\alpha}_{0}$) 
or it must be a point of 
"collapsing" zeroes in $Z_{0}$.
So, next we shall analyze the blow-up behavior of $\xi_{k}$ in those situations. 

To this purpose, for given \underline{distinct} points  
$\{ x_{1},\ldots,x_{\nu} \} \subset X $
we introduce the following 
subspace of
$C_{2}(X)$:
\begin{equation*}
Q_{2}[ x_{1},\ldots,x_{\nu} ] 
=
\{ 
\alpha \in C_{2}(X) 
\; : \; 
\alpha \; \text{vanishes at  } \;
x_{1},\ldots, x_{\nu}
\},
\end{equation*}
and, by the Riemann-Roch theorem, we know that,
\begin{equation}\label{dimension_of_Q_2}
\dim_{\mathbb{C}} Q_{2}[ x_{1},\ldots,x_{\nu} ]
=
3(\mathfrak{g}-1)-\nu, 
\; \text{ for } \; 
1\leq \nu < 2(\mathfrak{g}-1),  
\end{equation}
see Section \ref{Preliminaries} for details. 

We start by considering the case $\mathcal{S} \cap Z = \emptyset$, 
where we know that only "blow-up with concentration" occurs
(\cite{Brezis_Merle},\cite{Li_Harnack}) and so, for $\alpha_{k}=\alpha_{t_{k}}$, we have:
\begin{equation*}
8e^{u_{k}}\Vert \alpha_{k} \Vert^{2}
=
8\Vert \hat{\alpha}_{k} \Vert^{2}e^{\xi_{k}}
\rightharpoonup
8\pi \sum_{l=1}^{m}\delta_{q_{l}} 
\; \text{ weakly in the sense of measures,} \;
\end{equation*}
and $\rho([\beta]) = 4\pi m$. We have:

\begin{thm}\label{thm_main_1_intro}
Let $\kappa=2$, $[\beta]\neq 0$ and $\mathcal{S}=\{ q_{1},\ldots,q_{m} \}$ 
with $m\in \{ 1,\ldots, \mathfrak{g-1} \} $
be the (non empty) blow-up set of $\xi_{k}=\xi_{t_{k}}$. 
If $\mathcal{S} \cap Z=\emptyset$ 
then (along a subsequence), as $k\longrightarrow +\infty$, we have:
\begin{align}
&
\alpha_{k}\longrightarrow \alpha_{0} \in C_{2}(X)
\; \text{ (in any norm) with } \; 
\alpha_{0}\neq 0
\; \text{  vanishing exactly at  } \; 
Z \notag
\\ 
&
e^{-u_{t_{k}}}
\rightharpoonup
\pi \sum_{q\in \mathcal{S}}\frac{1}{\Vert \alpha_{0} \Vert^{2}(q)}\delta_{q},
\; \text{  weakly in the sense of measures } \; 
\notag 
\\
&
c_{k}=D_{t_{k}}(u_{t_{k}},\eta_{t_{k}})
=
-4\pi(\mathfrak{g}-1-m)d_{k}+O(1),
\; \text{with} \; 
d_{k}=\fint_{X}u_{t_{k}}\,dA \longrightarrow +\infty.
\label{c_k_estimate_intro} \\
&
\int_{X} \beta_{0} \wedge \alpha =0,
\; \forall \; 
\alpha \in Q_{2}[ q_{1},\ldots,q_{m} ].
\label{orthogonality_condition_intro}
\end{align}	
Furthermore,  
$\rho([\beta])=4\int_{X} \beta_{0} \wedge \alpha_{0} =4\pi m$. 
\end{thm}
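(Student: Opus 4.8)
The plan is to feed the refined blow-up analysis of \cite{Tar_1} (Theorems \ref{thm_blow_up_global_from_part_1} and \ref{theorem_from_part_1}) into a handful of structural identities coming from the system \eqref{system_of_equations_intro} and from the cohomological character of the pairing $(\beta,\alpha)\mapsto\int_X\beta\wedge\alpha$. Throughout I abbreviate $u_k=u_{t_k}$, $\beta_k=\beta_{t_k}$, etc. \emph{Step 1 (reduction and blow-up input).} With $\kappa=2$ one has $e^{u_t}\|\beta_t\|^2=e^{-u_t}\|\alpha_t\|^2=e^{\xi_t}\|\hat\alpha_t\|^2$, so the first equation of \eqref{system_of_equations_intro} reads $\Delta\xi_t+8\|\hat\alpha_t\|^2e^{\xi_t}=2-2te^{s_t}e^{-\xi_t}$ on $X$. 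Since $\mathcal S\cap Z=\emptyset$ and $\hat\alpha_{t_k}\to\hat\alpha_0$ in the finite-dimensional space $C_2(X)$, the coefficient $8\|\hat\alpha_{t_k}\|^2$ stays bounded above and below (and $C^2$-convergent) on a fixed neighbourhood of $\mathcal S$, so at each $q_l\in\mathcal S$ we are in the classical non-vanishing regime. Theorem \ref{thm_blow_up_global_from_part_1} together with the minimizing character of $(u_t,\eta_t)$ (least blow-up mass) then gives: blow-up with concentration, $\xi_{t_k}\to-\infty$ locally uniformly on $X\setminus\mathcal S$, and $8e^{\xi_{t_k}}\|\hat\alpha_{t_k}\|^2\,dA=8e^{u_{t_k}}\|\beta_{t_k}\|^2\,dA\rightharpoonup 8\pi\sum_{l=1}^m\delta_{q_l}$, whence $\rho([\beta])=\lim_k\rho_{t_k}([\beta])=\tfrac12\lim_k\int_X 8e^{\xi_{t_k}}\|\hat\alpha_{t_k}\|^2\,dA=4\pi m$. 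From the single-bubble estimates of \cite{Tar_1} I would also record the first-moment decay $\int_{B_r(q_l)}e^{\xi_{t_k}}\,\mathrm{dist}(\cdot,q_l)\,dA\to 0$.

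\emph{Step 2 (the pointwise and energy conclusions).} The sharp control of $u_{t_k}$ on $X\setminus\mathcal S$ supplied by Theorem \ref{theorem_from_part_1} keeps $\|\alpha_{t_k}\|_{L^2}=e^{s_{t_k}/2}$ bounded; hence along a subsequence $s_{t_k}\to s_0\in\R$ and $\alpha_{t_k}=e^{s_{t_k}/2}\hat\alpha_{t_k}\to\alpha_0:=e^{s_0/2}\hat\alpha_0\in C_2(X)$, which is nonzero since $\|\hat\alpha_0\|_{L^2}=1$ and vanishes exactly on $Z$ by construction. For the weak limit of $e^{-u_{t_k}}$, write $e^{-u_{t_k}}=\big(e^{u_{t_k}}\|\beta_{t_k}\|^2\big)/\|\alpha_{t_k}\|^2$ and use that $\|\alpha_{t_k}\|^2\to\|\alpha_0\|^2>0$ uniformly near $\mathcal S$ while $e^{-u_{t_k}}=e^{-s_{t_k}}e^{\xi_{t_k}}\to 0$ uniformly on compacta of $X\setminus\mathcal S$, to get $e^{-u_{t_k}}\rightharpoonup\pi\sum_{q\in\mathcal S}\frac{1}{\|\alpha_0\|^2(q)}\,\delta_q$. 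For the energy, I would substitute the blow-up profile of $u_{t_k}$ from Theorem \ref{theorem_from_part_1} into $D_{t_k}$; using $-\Delta u_k=2-2t_ke^{u_k}-8e^{u_k}\|\beta_k\|^2$ to rewrite $\tfrac14\int_X|\nabla u_k|^2\,dA$, the leading terms are $-\tfrac12\int_X u_k\,dA=-2\pi(\mathfrak g-1)d_k$, together with the bubble contributions $-\tfrac{t_k}{2}\int_X u_k e^{u_k}\,dA\sim-2\pi(\mathfrak g-1-m)d_k$ (using $t_k\int_X e^{u_k}\,dA=4\pi(\mathfrak g-1)-\rho_{t_k}([\beta])\to4\pi(\mathfrak g-1-m)$) and $-2\int_X u_k e^{u_k}\|\beta_k\|^2\,dA\sim2\pi m\,d_k$ (using that the $m$ bubble heights equal $d_k+O(1)$), all remaining terms being $O(1)$; summing yields $c_k=-4\pi(\mathfrak g-1-m)d_k+O(1)$. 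That $d_k\to+\infty$ follows since $u_k=s_k-\xi_k\to+\infty$ locally uniformly on $X\setminus\mathcal S$ while $\int_X u_k^-\,dA\le\int_X\xi_k^+\,dA+|s_k||X|=O(1)$ (indeed $\xi_k^+\le e^{\xi_k}$ on $\{\xi_k\ge 0\}$ and $\int_X e^{\xi_k}\,dA$ is bounded, as $\|\hat\alpha_k\|^2$ is bounded below near $\mathcal S$).

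\emph{Step 3 (orthogonality and the final identity).} For holomorphic $\alpha\in C_2(X)$, Stokes' theorem gives $\int_X\bar\partial\eta\wedge\alpha=\int_X d(\langle\eta,\alpha\rangle)=0$, so $\int_X\beta\wedge\alpha$ depends only on $[\beta]$; hence $\int_X\beta_0\wedge\alpha=\int_X\beta_{t_k}\wedge\alpha$ for every $k$. By Cauchy--Schwarz, $\big|\int_X\beta_{t_k}\wedge\alpha\big|\le\int_X\|\beta_{t_k}\|\,\|\alpha\|\,dA=\int_X e^{-u_{t_k}}\|\alpha_{t_k}\|\,\|\alpha\|\,dA$. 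If $\alpha\in Q_2[q_1,\dots,q_m]$, then $\|\alpha(q)\|\lesssim\mathrm{dist}(q,\mathcal S)$ near $\mathcal S$, and since $e^{-u_{t_k}}=e^{-s_{t_k}}e^{\xi_{t_k}}$, $\|\alpha_{t_k}\|\le Ce^{s_{t_k}/2}$ and $s_{t_k}$ is bounded below, $\big|\int_X\beta_0\wedge\alpha\big|\le Ce^{-s_{t_k}/2}\big(\sum_l\int_{B_r(q_l)}e^{\xi_{t_k}}\,\mathrm{dist}(\cdot,q_l)\,dA+\int_{X\setminus\cup_l B_r(q_l)}e^{\xi_{t_k}}\,dA\big)\to 0$ by the moment decay of Step 1 and $\xi_{t_k}\to-\infty$ off $\mathcal S$; as the left side does not depend on $k$, this proves \eqref{orthogonality_condition_intro}. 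Finally, taking $\alpha=\alpha_{t_k}$ and using $\beta_{t_k}\wedge\alpha_{t_k}=e^{u_{t_k}}\|\beta_{t_k}\|^2\,dA$ gives $\int_X\beta_0\wedge\alpha_{t_k}=\int_X e^{u_{t_k}}\|\beta_{t_k}\|^2\,dA=\tfrac14\rho_{t_k}([\beta])$; letting $k\to\infty$ with $\alpha_{t_k}\to\alpha_0$ and recalling $\rho([\beta])=4\pi m$ yields $\rho([\beta])=4\int_X\beta_0\wedge\alpha_0=4\pi m$.

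\emph{Main obstacle.} The genuinely delicate point is the boundedness of $\|\alpha_{t_k}\|_{L^2}$ (equivalently of $s_{t_k}$) used in Step 2: from $\rho_{t_k}([\beta])=4\int_X\beta_0\wedge\alpha_{t_k}=4e^{s_{t_k}/2}\int_X\beta_0\wedge\hat\alpha_{t_k}$ one sees that $e^{s_{t_k}/2}$ is automatically bounded below, and is bounded above \emph{if and only if} $\int_X\beta_0\wedge\hat\alpha_0\ne0$ — i.e.\ $\hat\alpha_0$ must avoid the hyperplane $\ker\big(\int_X\beta_0\wedge\cdot\big)\subset C_2(X)$, which is a proper subspace since $[\beta]\neq0$ and the pairing is nondegenerate. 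A priori this limit is only known to be $\ge0$, and there is no soft reason for strict positivity; ruling the degenerate case out seems to require the sharp pointwise asymptotics of $u_{t_k}$ on $X\setminus\mathcal S$ from \cite{Tar_1}. This, together with the first-moment estimates near the blow-up points that drive Step 3, is the technical heart of the argument.
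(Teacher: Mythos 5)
Your overall architecture matches the paper's: feed the concentration alternative of Theorem \ref{thm_blow_up_global_from_part_1} into the pairing $\int_X\beta_0\wedge\alpha=\int_X e^{-u_k}\langle *_E^{-1}\alpha_k,*_E^{-1}\alpha\rangle\,dA$, localize at the bubbles, and read off the orthogonality, the measure limit and the energy. Your soft observation that $s_k$ is automatically bounded \emph{below} (from $\rho_{t_k}=4e^{s_k/2}\int_X\beta_0\wedge\hat\alpha_k$, Cauchy--Schwarz, and $\rho_{t_k}\geq\rho_{t_1}>0$) is correct and is actually lighter than the paper's route, which obtains the lower bound from the local bubble computation with an $\alpha$ vanishing at all but one point of $\mathcal{S}$.

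However, there is a genuine gap exactly where you flag the "main obstacle": the \emph{upper} bound $s_k\leq C$ (equivalently $\int_X\beta_0\wedge\hat\alpha_0\neq 0$) is never proved. You assert in Step 2 that it follows from "sharp control of $u_{t_k}$ on $X\setminus\mathcal{S}$" and then concede in the closing paragraph that you have no argument for it; but everything in Step 2 (the convergence $\alpha_k\to\alpha_0\neq 0$, the explicit constants $1/\Vert\alpha_0\Vert^2(q)$ in the measure limit) and the final identity $\rho([\beta])=4\int_X\beta_0\wedge\alpha_0$ hinge on it. The point you are missing is that under $\mathcal{S}\cap Z=\emptyset$ this bound is elementary and does not require any pointwise asymptotics of $u_k$: since $\Vert\hat\alpha_0\Vert$ is bounded below near $\mathcal{S}$ and $\xi_k\to-\infty$ off $\mathcal{S}$, one has $\int_X e^{\xi_k}\,dA\leq C$, and then, using the harmonicity of $\beta_0$,
\begin{equation*}
0<\int_X\Vert\beta_0\Vert^2\,dA=\int_X\langle\beta_0,\beta_0+\bar\partial\eta_k\rangle\,dA
\leq \Vert\beta_0\Vert_{L^\infty}\,\Vert\alpha_k\Vert_{L^\infty}\int_X e^{-u_k}\,dA
\leq C\,e^{s_k/2}\,e^{-s_k}\int_X e^{\xi_k}\,dA\leq C\,e^{-s_k/2},
\end{equation*}
which forces $s_k\leq C$. (Also, the sharp local estimates relevant here are those of \cite{Chen_Lin_1},\cite{Li_Harnack} for the non-vanishing coefficient case, not Theorem \ref{theorem_from_part_1}, which concerns collapsing zeroes.) Your energy computation via multiplying the equation by $u_k$ is a legitimate alternative to the paper's gradient/sup+inf estimate, but note that the individual term $-\tfrac{t_k}{2}\int_X u_ke^{u_k}\,dA$ is only $-2\pi(\mathfrak g-1-m)d_k+O(1)$ after you combine it with $-2\int_X u_ke^{u_k}\Vert\beta_k\Vert^2\,dA$ so that the identity $t_k\int e^{u_k}+4\int e^{u_k}\Vert\beta_k\Vert^2=4\pi(\mathfrak g-1)$ cancels the unknown rate of $\rho_{t_k}\to 4\pi m$; as written, each term separately requires $d_k(\rho_{t_k}-4\pi m)=O(1)$, which you do not know.
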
	
Since $\dim_{\mathbb{C}}Q_{2}[q_{1},\ldots,q_{m}]=3(\mathfrak{g}-1)-m$,
then the orthogonality condition \eqref{orthogonality_condition_intro} 
together with the estimate \eqref{c_k_estimate_intro} 
for the global minimum value of $D_{t_{k}}$ 
support the fact that $\xi_{k}$ should admit only 
\underline{one} blow-up point ($m=1$),  
where the  holomorphic quadratic differential
$*_{E} \beta_{0}$
does not vanish. 
This would also match with the asymptotic analysis in 
\cite{Huang_Lucia_Tarantello_1} 
concerning minimal immersions with fixed second fundamental form.

Furthermore, the estimate
\eqref{c_k_estimate_intro} allows us to answer question (1), 
posed above, in case $\mathcal{S}\cap Z=\emptyset$.  
Indeed, if $\rho([\beta])=4\pi(\mathfrak{g}-1)$ then 
$m=\mathfrak{g}-1$ and therefore, by using \eqref{c_k_estimate_intro}, 
we find that $D_{0}$ is bounded from below in $\Lambda$. 
However, it remains as a challenging open question to see whether 
$D_{0}$ attains its infimum in $\Lambda$ in this case. 
The orthogonality condition \eqref{orthogonality_condition_intro}, 
seems to suggest that, 
for "almost all" classes $[\beta]$ the infimum must be attained. 
 
This is confirmed here for the case $m=1$ 
(i.e. $\mathfrak{g}=2$) 
in Theorem \ref{thm_infimum_in_lambda_outside_tau_X}, where 
the orthogonality condition \eqref{orthogonality_condition_intro} is nicely 
interpreted in terms of the Kodaira map of $X$ into
the projective space $\mathbb{P}(V^{*})$, with $V=C_{2}(X)$. 
Extension of such a fact for $m\geq 2$ will be discussed in future work.

\

Next, we wish to acquire some useful information about the blow-up behavior
of $(u_{t_{k}},\eta_{t_{k}})$ in the "collapsing" case and when 
blow-up occurs with "least" blow-up mass $8\pi$. 
Thus, we assume that, in \eqref{sigma_q}, there holds:
\begin{equation}\label{condition_least_blow_up_mass_intro}
\sigma(q)=8\pi, \; \forall \; q\in \mathcal{S}.
\end{equation}

When we assume \eqref{condition_least_blow_up_mass_intro}  
then every blow-up point 
$q\in \mathcal{S}\cap Z$ must correspond to a "collapsing" of zeroes, 
i.e. $q\in Z_{0}$ and so $\mathcal{S} \cap Z = \mathcal{S} \cap Z_{0}$. 
As a consequence, under \eqref{condition_least_blow_up_mass_intro}
the conclusion of Theorem \ref{thm_main_1_intro} holds
under the (weaker) assumption that, 
$\mathcal{S} \cap Z_{0} = \emptyset$, see Corollary \ref{cor_6.8}. 

To avoid technicalities, but still
give a flavour of the blow-up behaviour in the "collapsing" case,
we shall state here our result in the case of a single blow-up point 
(i.e. $\mathcal{S}=\{ q \}$).
Thus we find:
\begin{equation*}
x_{k} \in X
\; : \; 
x_{k}\longrightarrow q
\; \text{ and  } \; 
\xi_{k}(x_{k})=\max_{X} \xi_{k}
\longrightarrow \infty,
\; \text{ as } \; 
k\longrightarrow \infty.
\end{equation*}
In particular, notice that if $q\in Z$, then 
$\Vert \hat{\alpha}_{t_{k}} \Vert(x_{k})\longrightarrow 0$,
as $k\longrightarrow \infty$, 
and we prove:
\begin{thm}\label{thm_last_intro}
Assume \eqref{condition_least_blow_up_mass_intro} 
and let $\mathcal{S}=\{ q \}$. 
If $q\in Z$ (i.e. $\hat{\alpha}_{0}(q)=0$) then $q\in Z_{0}$ and 
(along a subsequence), as $k\longrightarrow +\infty$, we have:
\begin{enumerate}[label=(\roman*)]
\item 
$s_{k}\longrightarrow +\infty$,\; 
$
\Vert \alpha_{t_{k}} \Vert^{2}(x_{k})
=
e^{s_{k}}\Vert \hat{\alpha}_{t_{k}} \Vert^{2}(x_{k})\longrightarrow \mu >0 
$
\item 
$
e^{-u_{t_{k}}}
\longrightarrow  
\frac{\pi}{\mu}\delta_{q}$,	
weakly in the sense of measures;
\item
$c_{k}=-4\pi(\mathfrak{g}-2)d_{k}+O(1)$
with 
$d_{k}=\fint_{X} u_{t_{k}} \,dA \longrightarrow +\infty, 
$
\item

\quad \vspace{-18pt}
\begin{equation}\label{orthogonality_condition_yet_again}
\hspace{-138pt}
\int_{X} \beta_{0}\wedge \alpha  =0,
\; \text{ if and only if } \; 
\alpha \in Q_{2}[q ],
\end{equation}
In particular,
$\int_{X} \beta_{0}\wedge \hat{\alpha}_{0}=0$. 	
\end{enumerate}
\end{thm}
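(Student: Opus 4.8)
The plan is to carry out a blow-up analysis for the sequence $\xi_k = -u_{t_k}+s_k$ under the single-bubble hypothesis \eqref{condition_least_blow_up_mass_intro} with $\mathcal S = \{q\}$, following the scheme already used for the non-collapsing case in Theorem \ref{thm_main_1_intro}, but now invoking the sharp pointwise estimates of \cite{Tar_1} (recorded in Theorem \ref{theorem_from_part_1} below) that are valid precisely in the "collapsing" regime with blow-up mass $8\pi$. First I would establish that $q \in Z_0$: since the blow-up mass equals $8\pi$ (the least possible), $q$ cannot be a zero of $\hat\alpha_0$ of positive multiplicity unless that zero arises from several simple (or lower-multiplicity) zeroes of $\hat\alpha_k$ collapsing together — a genuine zero would force, by the Bartolucci–Tarantello estimate \cite{Bartolucci_Tarantello}, a blow-up mass strictly larger than $8\pi$. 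This gives $q \in Z_0$ and in particular $\Vert\hat\alpha_{t_k}\Vert(x_k)\to 0$.

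Next I would extract the two scaling parameters. The key quantitative input is that, in the collapsing case with mass $8\pi$, the estimates of \cite{Tar_1} pin down the product $e^{s_k}\Vert\hat\alpha_{t_k}\Vert^2(x_k) = \Vert\alpha_{t_k}\Vert^2(x_k)$ as converging to a finite positive limit $\mu$, rather than to $0$ or $\infty$; simultaneously $s_k\to+\infty$ (equivalently $\Vert\alpha_{t_k}\Vert_{L^2}\to\infty$, reflecting the non-convergence of the normalized differentials' scale). This is item (i). For (ii), I would test the first equation of \eqref{system_of_equations_intro} against suitable cutoffs: away from $q$, $e^{u_{t_k}}\Vert\beta_{t_k}\Vert^2 = e^{\xi_k}\Vert\hat\alpha_{t_k}\Vert^2 e^{-s_k}\cdot e^{s_k}$ — one uses the pointwise control on $\xi_k$ from Theorem \ref{theorem_from_part_1} together with $\Vert\alpha_{t_k}\Vert^2 \to \mu$ near $x_k$ to identify the concentration as a Dirac mass; the normalization constant $\pi/\mu$ comes from integrating the local profile (the total mass being $8\pi = 4\pi\cdot 2$, consistent with $\rho_{t_k}([\beta])\to 4\pi m$ with $m=1$, recalling $\mathfrak g - 1 = m$ fails here so in fact $\mathfrak g \geq 3$... actually for $\mathfrak g = 2$ there is no blow-up by \eqref{k_2_g_2_gives_4_pi}; so here $\mathfrak g \geq 3$ and $m=1$). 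For (iii), I would plug the asymptotics into the Donaldson functional $D_{t_k}$: the dominant term is $-\int u_{t_k}\,dA$-type behaviour, and exactly as in \eqref{c_k_estimate_intro} with $m=1$ one gets $c_k = -4\pi(\mathfrak g - 2)d_k + O(1)$, using that the $te^u$ term is lower order (it vanishes as $t\to 0$) and the gradient and exponential terms combine with the blow-up mass accounting to contribute only $O(1)$.

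The heart of the matter — and the step I expect to be the main obstacle — is the orthogonality condition (iv), \eqref{orthogonality_condition_yet_again}. The strategy is to exploit the harmonicity of $\beta_{t_k}$ with respect to $h = e^{\frac{\kappa-1}{2}u_{t_k}}g_X$ (so $\bar\partial(e^{u_{t_k}}\!*_E\beta_{t_k}) = \bar\partial\alpha_{t_k} = 0$, i.e. $\alpha_{t_k}\in C_2(X)$), and pair $\beta_0$ against an arbitrary test differential $\alpha\in C_2(X)$ via the wedge product $\int_X \beta_0\wedge\alpha$. Writing $\beta_0 = \beta_{t_k} - \bar\partial\eta_{t_k}$ and integrating by parts, $\int_X\beta_0\wedge\alpha = \int_X\beta_{t_k}\wedge\alpha = \int_X e^{-u_{t_k}}\langle\alpha_{t_k},\alpha\rangle\,dA$ (up to normalization), since $\beta_{t_k} = e^{-u_{t_k}}*_E^{-1}\alpha_{t_k}$. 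Now $e^{-u_{t_k}}$ concentrates at $q$ by (ii), and $\alpha_{t_k}\to\hat\alpha_0\cdot(\text{scale})$; the pointwise values at $q$ then give $\int_X\beta_0\wedge\alpha \propto \frac{1}{\mu}\langle\alpha_{t_k},\alpha\rangle(q) + o(1) \to \text{const}\cdot\overline{\alpha(q)}\cdot(\lim \alpha_{t_k}(q)/\text{something})$. The delicate point is that $\alpha_{t_k}(q)\to 0$ when $q\in Z_0$ (it is a collapsing zero of $\hat\alpha_0$), so the naive limit of the right side is $0$ — hence $\int_X\beta_0\wedge\alpha = 0$ for \emph{every} $\alpha\in C_2(X)$ that does not vanish at $q$, while for $\alpha\in Q_2[q]$ the integrand has an extra vanishing and one must track the rate: here the precise \cite{Tar_1} estimates near the collapsing point — controlling $e^{-u_{t_k}}$ on the scale where the zeroes of $\alpha_{t_k}$ separate — are what allow one to show the pairing is $0$ if and only if $\alpha\in Q_2[q]$, i.e. the "if and only if" (as opposed to just "if") is the genuinely new content and requires the collapsing-case fine structure rather than just the limiting Dirac mass. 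The concluding assertion $\int_X\beta_0\wedge\hat\alpha_0 = 0$ is then immediate: $\hat\alpha_0$ vanishes at $q$ (being a collapsing zero), so $\hat\alpha_0\in Q_2[q]$, and one direction of \eqref{orthogonality_condition_yet_again} applies — or more directly, $\hat\alpha_0$ vanishes at $q$ and the pairing formula gives $0$ since $e^{-u_{t_k}}$ concentrates at $q$ where $\hat\alpha_0(q)=0$.
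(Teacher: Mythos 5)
Your overall strategy (the collapsing estimates of \cite{Tar_1} combined with the wedge--product pairing $\int_X\beta_0\wedge\alpha=\int_Xe^{-u_{t_k}}\langle *_E^{-1}\alpha_{t_k},*_E^{-1}\alpha\rangle\,dA$) is the right one, and your argument that $q\in Z_0$ via the mass quantization of \cite{Bartolucci_Tarantello} matches the paper. But the central step (i) is asserted rather than proved: the estimates of \cite{Tar_1} by themselves do not ``pin down'' $\Vert\alpha_{t_k}\Vert^2(x_k)\to\mu\in(0,\infty)$; they only give the local profile of $\xi_k$ in terms of $W_k(0)=8\Vert\hat\alpha_{t_k}\Vert^2(x_k)$. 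In the paper both bounds on $\Vert\alpha_{t_k}\Vert^2(x_k)$ come from \emph{global} inputs. The lower bound (and $s_k\to+\infty$) comes from pairing $\beta_0$ against an $\alpha\in C_2(X)$ with $\Vert\alpha\Vert(q)\neq0$: the collapsing-case computation yields $\int_X\beta_0\wedge\alpha=\frac{1}{\Vert\alpha_{t_k}\Vert(x_k)}\bigl(H\bar\varphi+o(1)\bigr)+o(1)$ with $H\neq0$ and $\vert\varphi\vert=\Vert\alpha\Vert(q)$, and since the left-hand side is a fixed finite number one gets $\Vert\alpha_{t_k}\Vert(x_k)\geq c>0$, whence $e^{-s_k/2}\leq C\Vert\hat\alpha_{t_k}\Vert(x_k)\to0$. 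The upper bound comes from the lower bound $\int_Xe^{-u_{t_k}}\,dA\geq\gamma\fint_X\Vert\beta_0\Vert^2\,dA>0$ of Lemma \ref{lem_6.4}(v): if $\Vert\alpha_{t_k}\Vert^2(x_k)\to\infty$, the profile \eqref{formula_remark} forces $\int_{B_r(x_k)}e^{-u_{t_k}}\to0$ while $e^{-u_{t_k}}\leq Ce^{-s_k}\to0$ off the ball, a contradiction. Without these two mechanisms, (i)--(iii) are unsupported.

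Your treatment of (iv) is also inverted. You claim that since ``$\alpha_{t_k}(q)\to0$'' the pairing vanishes for every $\alpha$ \emph{not} vanishing at $q$, and that the fine estimates are needed to handle $\alpha\in Q_2[q]$. This is backwards: by (i) the unnormalized differential satisfies $\Vert\alpha_{t_k}\Vert(x_k)\to\sqrt{\mu}>0$ (the divergence $s_k\to+\infty$ exactly compensates $\Vert\hat\alpha_{t_k}\Vert(x_k)\to0$), so the pairing converges to $H\bar\varphi/\sqrt{\mu}$, which is nonzero precisely when $\Vert\alpha\Vert(q)\neq0$. Thus the easy direction is $\alpha\in Q_2[q]\Rightarrow\varphi=0\Rightarrow\int_X\beta_0\wedge\alpha=0$, while the ``only if'' is what the finiteness of $\mu$ delivers; as stated, your version would contradict the theorem. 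Finally, your parenthetical assertion that ``for $\mathfrak{g}=2$ there is no blow-up'' is false and would undercut the main application: for $\mathfrak{g}=2$ one has $\rho([\beta])=4\pi$, which forces $m=1$ but does not exclude blow-up, and it is precisely this theorem with $\mathfrak{g}=2$ (where (iii) reads $c_k=O(1)$, i.e.\ $D_0$ bounded below) that produces Theorem \ref{main_thm_3}.
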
 	
\noindent
We refer to Theorem \ref{main_thm_2} and Corollary \ref{cor_6.10} 
for complete statements of our results in case
$\mathcal{S}$ contains more than one blow-up point. 

\ 

Theorem \ref{thm_main_1_intro}  and Theorem \ref{thm_last_intro} apply 
particularly well in case $\kappa=2$ and $\mathfrak{g}=2$, 
where by \eqref{k_2_g_2_gives_4_pi} we know that, 
$\rho([\beta])=4\pi$, 
for every $[\beta] \in \mathcal{H}^{0,1}(X,E)\setminus \{ 0 \} $. 
This implies that the blow-up set $\mathcal{S}$ (if not empty) must 
contain one single blow-up point ($m=1$).
Consequently, we can summarize the above results into the following:
\begin{thm}\label{main_thm_3}
Let $\kappa=2$ and genus $\mathfrak{g}=2$. Then for any $[\beta]\neq 0$
we have $\rho([\beta])=4\pi$ and
the Donaldson functional $D_{0}$ is bounded from below in $\Lambda$.

Furthermore, if $D_{0}$ does \underline{not} attain its infimum in
$\Lambda$ then there exists $q\in X$ such that,
$*_{E}\beta_{0}(q)\neq 0$ and
\begin{equation}\label{if_and_only_if}
\int_{X}\beta_{0}\wedge \alpha=0   
\Longleftrightarrow
\alpha \in Q_{2}[q]. 
\end{equation}
\end{thm}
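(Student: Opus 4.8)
\medskip

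The plan is to deduce both parts of the statement from the asymptotic machinery of Theorem \ref{thm_main_1_intro} and Theorem \ref{thm_last_intro}, together with the identity \eqref{k_2_g_2_gives_4_pi}. First I would dispose of the value of $\rho([\beta])$: by Proposition \ref{proposition_on_rho_intro}(i), $[\beta]\neq 0$ forces $\rho([\beta])\geq \frac{4\pi}{\kappa-1}=4\pi$ (here $\kappa=2$), while \eqref{rho_beta_definition} gives the upper bound $\rho([\beta])\leq 4\pi(\mathfrak{g}-1)=4\pi$ (here $\mathfrak{g}=2$). Hence $\rho([\beta])=4\pi$ for every $[\beta]\neq 0$, which is exactly \eqref{k_2_g_2_gives_4_pi}. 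Next I would establish that $D_{0}$ is bounded from below in $\Lambda$. Consider the sequence $(u_{t_{k}},\eta_{t_{k}})$ of global minimizers of $D_{t_{k}}$ along any $t_{k}\to 0^{+}$, and apply the dichotomy recorded above (from Theorem \ref{thm_blow_up_global_from_part_1}): either $\xi_{k}=\xi_{t_{k}}$ converges in $C^{2}(X)$, in which case $D_{0}$ is bounded below and attains its infimum in $\Lambda$ by part (i) of that dichotomy, or $\xi_{k}$ blows up on a finite set $\mathcal{S}=\{q_{1},\dots,q_{m}\}$ with $m\in\{1,\dots,\mathfrak{g}-1\}=\{1\}$, so $m=1$. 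In the blow-up case the mass identity $\rho([\beta])=4\pi m$ (valid whether or not $\mathcal{S}\cap Z=\emptyset$, using the quantization $\sigma(q)\in 8\pi\N$ and $\sum_{q}\sigma(q)=2\rho([\beta])$ forcing $\sigma(q)=8\pi$, i.e. the least blow-up mass \eqref{condition_least_blow_up_mass_intro}) is automatically consistent with $\rho([\beta])=4\pi$, and the estimate $c_{k}=-4\pi(\mathfrak{g}-1-m)d_{k}+O(1)=O(1)$ (since $\mathfrak{g}-1-m=0$) shows that $\inf_{\Lambda}D_{0}\geq \liminf_{k} D_{t_{k}}(u_{t_{k}},\eta_{t_{k}})+o(1)$ is finite; more precisely, for any fixed $(u,\eta)\in\Lambda$ one has $D_{0}(u,\eta)=\lim_{k}D_{t_{k}}(u,\eta)\geq \lim_{k}D_{t_{k}}(u_{t_{k}},\eta_{t_{k}})=\lim_k c_k$, which is bounded below since $c_k=O(1)$. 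Either way $D_{0}$ is bounded from below in $\Lambda$.

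\medskip

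For the second assertion, suppose $D_{0}$ does \emph{not} attain its infimum in $\Lambda$. Then the compactness alternative (i) of the dichotomy cannot hold for the minimizing sequence $(u_{t_{k}},\eta_{t_{k}})$ (otherwise the $C^{2}$-limit $(u_{0},\eta_{0})$ would solve \eqref{system_of_equations_intro} at $t=0$ and, by Theorem \ref{thm_1}(ii), would be the global minimum of $D_{0}$, contradicting non-attainment). Hence $\xi_{k}$ blows up, and by the count above $\mathcal{S}=\{q\}$ is a single point with $\sigma(q)=8\pi$. Now I split into the two cases according to whether $q\in Z$ or $q\notin Z$. If $q\notin Z$ then $\mathcal{S}\cap Z=\emptyset$ and Theorem \ref{thm_main_1_intro} applies directly: it yields $\alpha_{k}\to\alpha_{0}\in C_{2}(X)\setminus\{0\}$ vanishing exactly on $Z$, so in particular $*_{E}\beta_{0}(q)=\Vert\alpha_{0}\Vert(q)\,(\text{direction})\neq 0$ because $q\notin Z$ (recall $\alpha_0$ and $*_E\beta_0$ have the same zero set structure via $\alpha_k = e^{(\kappa-1)u_k}*_E\beta_k$ and $\beta_k\to\beta_0$), and the orthogonality condition \eqref{orthogonality_condition_intro} with $m=1$ reads precisely $\int_{X}\beta_{0}\wedge\alpha=0$ for all $\alpha\in Q_{2}[q]$; the reverse implication, that $\int_X\beta_0\wedge\alpha=0$ \emph{only} for $\alpha\in Q_2[q]$, follows from a dimension count: $\dim_{\mathbb C}Q_2[q]=3(\mathfrak g-1)-1=2$ while $\dim_{\mathbb C}C_2(X)=3(\mathfrak g-1)=3$, so the $\beta_0$-orthogonal complement in $C_2(X)$ is a hyperplane of dimension $2$, which must therefore coincide with $Q_2[q]$. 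This gives \eqref{if_and_only_if}. If instead $q\in Z$, then by Theorem \ref{thm_last_intro} we have $q\in Z_{0}$, and part (iv) of that theorem is \emph{verbatim} the equivalence \eqref{orthogonality_condition_yet_again}, i.e. \eqref{if_and_only_if}; moreover $\Vert\alpha_{t_k}\Vert^2(x_k)\to\mu>0$ together with $x_k\to q$ forces the limiting holomorphic differential to be nonzero at $q$ in the appropriate rescaled sense, from which (after unwinding the definition of $s_k$ and $\hat\alpha_k$) one reads off $*_{E}\beta_{0}(q)\neq 0$. In both cases the stated conclusion holds with the same $q$.

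\medskip

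The main obstacle I anticipate is not in the logical assembly — which is essentially bookkeeping once Theorems \ref{thm_main_1_intro}, \ref{thm_last_intro} and \ref{thm_1} are in hand — but in justifying that the two orthogonality conditions (from the non-collapsing case \eqref{orthogonality_condition_intro} and the collapsing case \eqref{orthogonality_condition_yet_again}) really patch into the single clean statement \eqref{if_and_only_if} with $*_{E}\beta_{0}(q)\neq 0$ in \emph{both} regimes. Concretely, one must verify: (a) that in the collapsing case $q\in Z_{0}$ the quantity $*_{E}\beta_{0}$ does not vanish at $q$ despite $\hat\alpha_{0}(q)=0$ — this is where the positivity $\mu>0$ in Theorem \ref{thm_last_intro}(i) is essential, since it records that the blow-up profile absorbs the collapsing zeros so that $e^{(\kappa-1)u_{t_k}}\Vert\beta_0+\bar\partial\eta_{t_k}\Vert^2 = \Vert\alpha_{t_k}\Vert^2 e^{\xi_k}/\Vert\alpha_{t_k}\Vert^2_{L^2}$ stays comparable to a nonzero constant near $x_k$; and (b) the dimension argument upgrading "$\subseteq$" to "$=$" in \eqref{if_and_only_if}, which uses $\mathfrak{g}=2$ crucially (for $\mathfrak{g}\geq 3$ and $m=1$ the orthogonal complement would have dimension $3(\mathfrak g-1)-1>3(\mathfrak g-1)-1$... rather, dimension $3(\mathfrak g-1)-1$ need not equal $\dim Q_2[q]$ unless $m=\mathfrak g-1$, which is exactly why the theorem is stated only for genus $2$). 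I would write this step carefully, invoking \eqref{dimension_of_Q_2} and \eqref{dimension_C_kappa_X} at $\kappa=2$, $\mathfrak{g}=2$, and noting that the pairing $(\beta_0,\alpha)\mapsto\int_X\beta_0\wedge\alpha$ is (via $*_E$) the Serre-duality pairing between $\mathcal H^{0,1}(X,E)$ and $C_2(X)=H^0(X,E^*\otimes K_X)$, hence non-degenerate, so $\beta_0\neq 0$ indeed cuts out a genuine hyperplane.
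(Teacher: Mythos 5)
Your overall assembly follows the route the paper itself intends (it leaves the derivation of Theorem \ref{main_thm_3} to the reader after Theorem \ref{main_thm_1}, Theorem \ref{main_thm_2} and \eqref{least_possible_blow_up_energy_for_k_2_and_g_2}): the computation $\rho([\beta])=4\pi$, the forced single blow-up point $m=1$ with least mass $8\pi$, the boundedness from below via $c_{k}=-4\pi(\mathfrak{g}-1-m)d_{k}+O(1)=O(1)$ together with $D_{0}(u,\eta)=\lim_{k}D_{t_{k}}(u,\eta)\geq \lim_k c_{k}$, the split into $q\notin Z$ (Theorem \ref{thm_main_1_intro}) versus $q\in Z_{0}$ (Theorem \ref{thm_last_intro}), and the dimension count $\dim_{\mathbb{C}}Q_{2}[q]=2=\dim_{\mathbb{C}}C_{2}(X)-1$ upgrading the inclusion to the equivalence \eqref{if_and_only_if} are all correct and are exactly the intended bookkeeping.

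The one step that does not work as written is your justification of $*_{E}\beta_{0}(q)\neq 0$. In the case $q\notin Z$ you argue that $\alpha_{0}$ vanishes exactly on $Z$ and that ``$\alpha_{0}$ and $*_{E}\beta_{0}$ have the same zero set structure''; they do not. The zero set of $\alpha_{k}=e^{u_{k}}*_{E}(\beta_{0}+\bar{\partial}\eta_{k})$ is the zero set of $\beta_{0}+\bar{\partial}\eta_{k}$, not of the harmonic part $\beta_{0}$ alone, and in the blow-up regime $\bar{\partial}\eta_{k}$ is not negligible, so $Z$ carries no direct information about where $*_{E}\beta_{0}$ vanishes. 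The collapsing-case argument via $\mu>0$ has the same defect (and there $\hat{\alpha}_{0}(q)=0$, so if anything the limit differential vanishes at $q$). The correct argument is shorter and uniform in both cases: it is a consequence of \eqref{if_and_only_if} itself. Since $*_{E}\beta_{0}\in C_{2}(X)$ and, by the defining property of the Hodge star, $\int_{X}\beta_{0}\wedge(*_{E}\beta_{0})=\int_{X}\Vert \beta_{0}\Vert^{2}\,dA=\Vert\beta_{0}\Vert_{L^{2}}^{2}>0$, the differential $*_{E}\beta_{0}$ does not lie in the kernel of $\alpha\mapsto\int_{X}\beta_{0}\wedge\alpha$; by \eqref{if_and_only_if} that kernel is exactly $Q_{2}[q]$, hence $*_{E}\beta_{0}\notin Q_{2}[q]$, i.e. $*_{E}\beta_{0}(q)\neq 0$. (The same identity also gives the non-degeneracy you invoke for the hyperplane count.) With that substitution the proof is complete.
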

Note that, Theorem \ref{main_thm_3} contains a nontrivial information, 
since for $[\beta]=0$ the functional $D_{0}$ is always unbounded below. 

\

Furthermore, from \eqref{dimension_of_Q_2} we know: 
$\dim_{\mathbb{C}}Q_{2}[q]=\dim_{\mathbb{C}}C_{2}(X)-1$ 
so that the complex vector space 
$V=C_{2}(X)=H^{0}(X,\otimes^{2}_{k=1}(T_{X}^{1,0})^{*})$ 
is base-point free.
Hence, by following 
section 12.1.3 of \cite{Donaldson_Book},
for any $q\in X$ we can identify a one dimensional subspace of 
$V^{*}=\mathcal{H}^{0,1}(X,E)$, $E=T^{1,0}_{X}$; namely
an element of $\mathbb{P}(\mathcal{H}^{0,1}(X,E))$
given by the ray of functionals in $V^{*}$ which admit $Q_{2}[q]$ as their kernel. 
Thus, it is well defined the Kodaira map:
\begin{equation}\label{kodaira_map}
\tau:X \longrightarrow \mathbb{P}(\mathcal{H}^{0,1}(X,E)), \; E=T^{1,0}_{X}
\end{equation}
and it is easy to check that $\tau$ is holomorphic, see
\cite{Donaldson_Book},\cite{Griffiths_Harris}. 
More importantly, 
$$
\; \text{ \eqref{if_and_only_if} holds } \;
\Longleftrightarrow
\;\;\, [\beta]_{\mathbb{P}}\in \tau(X) 
$$
with $[\beta]_{\mathbb{P}}$ the representative 
in $\mathbb{P}(\mathcal{H}^{0,1}(X,E))$ 
of the class
$[\beta] \in \mathcal{H}^{0,1}(X,E )\setminus \{ 0 \}$ 
identified by the harmonic  $\beta_{0} \in [\beta]$. 

We shall show in Lemma \ref{lem_complex_curve} that 
$\tau(X)$ defines a complex curve (i.e. of complex dimension 1) into 
$\mathbb{P}(\mathcal{H}^{0,1}(X,E)) \simeq \mathbb{P}^{3\mathfrak{g}-4}$ 
(recall \eqref{dimension_C_kappa_X} with $\kappa=2$)
and we have: $3\mathfrak{g}-4 \geq 2$ for $\mathfrak{g}\geq 2$. Therefore,
$\tau(X)$ is a "tiny" subset of $\mathbb{P}(\mathcal{H}^{0,1}(X,E)) $,
and  $[\beta]_{\mathbb{P}}\not \in \tau(X)$ for "generic"
$[\beta] \in \mathcal{H}^{0,1}(X,E)\setminus \{  0 \} $.

\begin{remark}
In case $[\beta]_{\mathbb{P}} \not \in \tau(X)$,
it follows from Theorem \ref{thm_last_intro} that, 
if $\mathcal{S}$ is not empty and  \eqref{condition_least_blow_up_mass_intro}   holds, 
then $\mathcal{S}$ must contain at least \underline{two} points. 
\end{remark}	

More interestingly, from Theorem \ref{main_thm_3} we conclude: 

\begin{thm}\label{thm_infimum_in_lambda_outside_tau_X}
Let $\kappa=2$ and $\mathfrak{g}=2$. Then for any
$[\beta] \in \mathcal{H}^{0,1}(X,E)\setminus \{  0 \} $, 
$E=T^{1,0}_{X} $ such that $ [\beta]_{\mathbb{P}}\not \in \tau(X)$, 
the functional $D_{0}$ attains its infimum in $\Lambda$.
\end{thm}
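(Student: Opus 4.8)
The plan is to argue by contradiction, combining Theorem \ref{main_thm_3} with the asymptotic analysis of the minimizers $(u_t,\eta_t)$ of $D_t$ as $t\to 0^+$. Since $\kappa=2$ and $\mathfrak{g}=2$, Theorem \ref{main_thm_3} tells us that $\rho([\beta])=4\pi$ for every $[\beta]\neq 0$ and that $D_0$ is bounded from below in $\Lambda$, so the only thing left to exclude is that the infimum of $D_0$ fails to be attained. Suppose, for contradiction, that for some $[\beta]$ with $[\beta]_{\mathbb{P}}\not\in\tau(X)$ the functional $D_0$ does not attain its infimum in $\Lambda$. Then by Theorem \ref{main_thm_3} there exists a point $q\in X$ with $*_E\beta_0(q)\neq 0$ such that the orthogonality equivalence \eqref{if_and_only_if} holds, i.e. $\int_X \beta_0\wedge\alpha = 0 \iff \alpha\in Q_2[q]$.

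The key step is then to translate \eqref{if_and_only_if} into the statement $[\beta]_{\mathbb{P}}\in\tau(X)$, which contradicts our hypothesis. For this I would use the discussion following Theorem \ref{main_thm_3}: since $\mathfrak{g}=2$ we have $\dim_{\mathbb C}Q_2[q] = \dim_{\mathbb C}C_2(X) - 1$ (from \eqref{dimension_of_Q_2} with $\nu=1 < 2(\mathfrak{g}-1)=2$), so $V=C_2(X)$ is base-point free and the Kodaira map $\tau$ in \eqref{kodaira_map} is well defined, with $\tau(q)$ being precisely the ray of functionals in $V^*=\mathcal{H}^{0,1}(X,E)$ whose kernel is $Q_2[q]$. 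Now the pairing $\alpha\mapsto \int_X\beta_0\wedge\alpha$ is, up to the standard Serre-duality/residue identification $C_2(X)\simeq(\mathcal{H}^{0,1}(X,E))^*$ recalled in the introduction, exactly the linear functional on $V=C_2(X)$ represented by the class $[\beta]\in V^*$. Thus \eqref{if_and_only_if} says precisely that this functional has kernel equal to $Q_2[q]$; since a hyperplane determines the corresponding point of $\mathbb{P}(V^*)$ uniquely, this forces $[\beta]_{\mathbb{P}} = \tau(q) \in \tau(X)$, contradicting $[\beta]_{\mathbb{P}}\not\in\tau(X)$. Hence $D_0$ must attain its infimum.

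The main obstacle I anticipate is making the identification in the previous paragraph fully rigorous: one must check that the bilinear pairing $(\beta_0,\alpha)\mapsto\int_X\beta_0\wedge\alpha$ is a perfect pairing between $\mathcal{H}^{0,1}(X,E)$ and $C_2(X)=H^0(X,E^*\otimes K_X)$ realizing the isomorphism $C_2(X)\simeq(\mathcal{H}^{0,1}(X,E))^*$ announced in Section \ref{Preliminaries}, so that the kernel condition in \eqref{if_and_only_if} is exactly the kernel of the functional representing $[\beta]$, and that this kernel being the hyperplane $Q_2[q]$ is the defining property of $\tau(q)$ as in section 12.1.3 of \cite{Donaldson_Book}. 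Once this dictionary is in place, together with the observation that $[\beta]\neq 0$ guarantees the functional is nonzero (so its kernel is genuinely a hyperplane, and $q$ is honestly a base-point-free evaluation point), the theorem follows immediately from Theorem \ref{main_thm_3}. A concluding remark would note that this in turn yields, via Remark \ref{CMC_correspondence} applied at $t=0$, the existence part of Theorem \ref{thm_one}, and that the \emph{uniqueness} and \emph{strict} minimality there will follow from a separate analysis (Theorem \ref{thm_1}) once existence of a critical point of $D_0$ is secured.
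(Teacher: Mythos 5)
Your argument is correct and is essentially the proof the paper intends: the theorem is obtained as the contrapositive of Theorem \ref{main_thm_3}, using the fact that the kernel condition \eqref{if_and_only_if} identifies $[\beta]_{\mathbb{P}}$ with the point $\tau(q)\in\tau(X)$ via the Serre-duality pairing $C_{2}(X)\simeq(\mathcal{H}^{0,1}(X,E))^{*}$ underlying the Kodaira map. The paper leaves exactly this translation to the reader ("can be easily derived"), and your filling-in of the dictionary, including the check that the functional associated to $[\beta]\neq 0$ is nonzero so that its kernel is genuinely the hyperplane $Q_{2}[q]$, is the right and complete way to do it.
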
	

Clearly, Theorem \ref{thm_one} of the introduction 
is a direct consequence of Theorem \ref{thm_1}
and Theorem \ref{thm_infimum_in_lambda_outside_tau_X}.

\

Finally, we wish to comment about the case left out by our analysis.
Namely when blow-up occurs at a zero of $\hat{\alpha }$
corresponding to the limit of "non-collapsing" zeroes of $\hat{\alpha}_{k}$,
and hence preserving their same multiplicity, 
say $n\in \{ 1,\ldots,4(\mathfrak{g}-1) \} $. 
More specifically, in this case we need to deal (after scaling) 
with possible "multiple" bubble profiles "symmetrically" placed 
at the $(n+1)$-roots of the identity, 
as described in 
\cite{Bartolucci_Tarantello_JDE} and \cite{Wei_Zhang_1}, \cite{Wei_Zhang_2}.

Such symmetry is at the origin of crucial cancellations in the "blow-up" estimates,
which prevent to obtain useful information,
for example about the sequence $s_{k}$.
The nontrivial new estimates established 
recently by Wei and Zhang in \cite{Wei_Zhang_1} and \cite{Wei_Zhang_2} 
for  such a  non-simple "blow-up" situation, 
may help to overcome those difficulties and to accomplish analogous conclusions 
as in Corollary \ref{cor_6.10} below. 

\section{Preliminaries}\label{Preliminaries}

In this section we introduce the basic notations and recall some facts useful in the sequel. 

\

Let $X\in \mathcal{T}_{\mathfrak{g}}(S)$ be a given Riemann surface with 
hyperbolic metric $g_{X}$ and induced scalar product
$\langle\cdot,\cdot\rangle$, norm $\Vert \cdot \Vert$ and volume 
element $dA$. 
We let
\begin{equation*}
T_{X}^{1,0}
=
\text{ holomorphic tangent bundle of $X$  } \; 
\end{equation*}
equipped with the corresponding complex, holomorphic and 
Hermitian structure. For given $\kappa \geq 2$, we consider 
the holomorphic line  bundle:
\begin{equation}\label{E}
E=\otimes^{\kappa-1}T^{1,0}_{X}
\end{equation}
with holomorphic structure
$\bar{\partial}=\bar{\partial}_{E}$ and the corresponding spaces:
\begin{align}
& A^{0}(E)=
\{\text{ smooth sections of $E$ }\},
\notag
\\
& 
A^{0,1}(X,E)
=
\{ \text{ $(0,1)$-forms valued on $E$ }  \},
\notag \\
&
A^{1,0}(X,E^{*})
=
\{ \text{ $(1,0)$-forms valued on $E^{*}$ }  \}. 
\notag 
\end{align}	
We can use the  d-bar operator
$\bar{\partial}_{E}:A^{0}(X) \longrightarrow A^{0,1}(X,E)$
to define the $(0,1)$-Dolbeault cohomology group:
\begin{equation}\label{H_0_1_definition}
\mathcal{H}^{0,1}(X,E)=A^{0,1}(X,E)/\bar{\partial}(A^{0}(E))
\end{equation}
with cohomology class $[\beta]\in \mathcal{H}^{0,1}(X,E)$ given as follows:
$$[\beta]=\{ \beta + \bar{\partial}\eta \in A^{0,1}(X,E),
\; \forall \; \eta \in A^{0}(E) \}.$$ 

Furthermore we use the induced Hermitian structure on $E$, 
to define the fiberwise Hermitian product
$\langle \cdot,\cdot \rangle_{E}$ and norm $\Vert \cdot \Vert_{E}$
for sections of $A^{0}(E)$ and $(0,1)$-forms in $A^{0,1}(X,E)$. So, for $p>1$, 
we obtain the $L^{p}$-spaces of sections and forms valued on $E$, respectively 
as follows:
\begin{align}
&
L^{p}(X,E)
=
\{ \eta:X\longrightarrow E 
\; : \; 
\Vert  \eta \Vert_{L^{p}}:=
(\int_{X}\Vert \eta \Vert_{E}^{p}dA)^{\frac{1}{p}} < +\infty
\},
\notag
\\
&
L^{p}(A^{0,1}(X,E))
=
\{ 
\beta \in A^{0,1}(X,E)
\; : \; 
\Vert \beta \Vert_{L^{p}}
:=
(\int_{X}\Vert \beta \Vert_{E}^{p}dA)^{\frac{1}{p}} < +\infty
\}
\notag
,  
\end{align}	
which define a Banach space equipped with the given norm: 
$\Vert \cdot  \Vert_{L^{p}}$.
Thus, for $p\geq 1$, we obtain also the Sobolev space:
\begin{equation}\label{W_1_p}
W^{1,p}(X,E)
=
\{ \eta \in L^{p}(X,E)
\; : \; 
\bar{\partial} \eta \in L^{p}(A^{0,1}(X,E))
\}, 
\end{equation}
again a Banach space equipped with the norm:
$$
\Vert \eta \Vert_{W^{1,p}}
=
\Vert \eta \Vert_{L^{p}} + \Vert \bar{\partial}\eta \Vert_{L^{p}},
\; \forall \; \eta \in W^{1,p}(X,E).
$$
Actually, for the holomorphic line bundle $E$ in \eqref{E},
the following Poincar\'e inequality holds:
\begin{lemma}[\cite{Huang_Lucia_Tarantello_2}] 
Let $E=\otimes^{\kappa-1}T^{1,0}_{X} $ and $p>1$.  
Then there exists a suitable constant $C_{p}>0$ such that,
\begin{equation}\label{poincare_inequality_equation}
\Vert \eta \Vert_{L^{p}} \leq C_{p}\Vert \bar{\partial}\eta \Vert_{L^{p}},
\; \forall \; \eta \in W^{1,p}(X,E).
\end{equation}
\end{lemma}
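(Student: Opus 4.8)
The plan is to deduce \eqref{poincare_inequality_equation} from a single algebraic fact, namely that $E=\otimes^{\kappa-1}T^{1,0}_{X}$ has no nonzero holomorphic sections, and then to upgrade this vanishing of $\ker\bar\partial$ to a quantitative coercivity bound through an elliptic-estimate-plus-compactness argument. First I would record the algebraic input. Since $T^{1,0}_{X}=K_{X}^{-1}$ and $\deg K_{X}=2\mathfrak{g}-2>0$ for $\mathfrak{g}\geq 2$, the line bundle $E$ has degree $\deg E=(\kappa-1)(2-2\mathfrak{g})<0$ for every $\kappa\geq 2$. A line bundle of negative degree over a compact Riemann surface admits no nonzero holomorphic section (a nonzero section would have a nonnegative number of zeroes equal to $\deg E$), hence
\[
\ker\left(\bar\partial:A^{0}(E)\longrightarrow A^{0,1}(X,E)\right)=H^{0}(X,E)=\{0\},
\]
consistently with $\dim_{\mathbb{C}}H^{0}(X,E)=0$ by Riemann--Roch; see \cite{Griffiths_Harris}, cf. \eqref{dimension_C_kappa_X}.

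Next I would exploit that, on a Riemann surface, $\bar\partial:A^{0}(E)\to A^{0,1}(X,E)$ is a \emph{determined} first-order elliptic operator: its principal symbol at a covector $\xi\neq 0$ is the nonzero $(0,1)$-part of $\xi$, an isomorphism of the one-dimensional fibers, and locally $\bar\partial$ reduces to the Cauchy--Riemann operator $\partial_{\bar z}$. The Calderón--Zygmund $L^{p}$-theory of $\partial_{\bar z}$, combined with a partition of unity subordinate to a finite trivializing cover of $X$, yields for every $1<p<\infty$ the global a priori estimate
\[
\Vert \eta \Vert_{W^{1,p}} \leq C\left(\Vert \bar\partial \eta \Vert_{L^{p}} + \Vert \eta \Vert_{L^{p}}\right), \qquad \forall\, \eta \in W^{1,p}(X,E),
\]
with $C=C(p,X,E)$.

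Then I would run the contradiction argument. Suppose \eqref{poincare_inequality_equation} fails for some $p$; then there is a sequence $\eta_{n}\in W^{1,p}(X,E)$ with $\Vert \eta_{n}\Vert_{L^{p}}=1$ and $\Vert \bar\partial\eta_{n}\Vert_{L^{p}}\to 0$. By the elliptic estimate, $(\eta_{n})$ is bounded in $W^{1,p}(X,E)$; since $X$ is compact, the Rellich--Kondrachov theorem gives compact embedding of $W^{1,p}(X,E)$ into $L^{p}(X,E)$, so along a subsequence $\eta_{n}\to\eta$ strongly in $L^{p}$ (and weakly in the reflexive space $W^{1,p}$), whence $\Vert \eta \Vert_{L^{p}}=1$. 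Passing to the distributional limit and using $\bar\partial\eta_{n}\to 0$ in $L^{p}$ gives $\bar\partial\eta=0$; by elliptic regularity $\eta$ is a smooth holomorphic section of $E$. But $H^{0}(X,E)=\{0\}$ forces $\eta=0$, contradicting $\Vert \eta \Vert_{L^{p}}=1$. This proves the inequality, with optimal constant $C_{p}=\sup\{\Vert \eta \Vert_{L^{p}}/\Vert \bar\partial\eta\Vert_{L^{p}}:\ \bar\partial\eta\neq 0\}$.

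I expect the genuinely technical step to be the global $L^{p}$ elliptic estimate for $\bar\partial$ on the twisted bundle $E$ when $p\neq 2$: the case $p=2$ is immediate from Hodge theory, since the $\bar\partial$-Laplacian on $A^{0}(E)$ has trivial kernel (by the vanishing above) and therefore a strictly positive first eigenvalue, giving coercivity directly. For general $p$ one must patch the local Calderón--Zygmund bounds into a uniform global bound with a constant controlled over the covering; the remaining ingredients (the degree computation, the Rellich compactness, and the regularity of the weak limit) are standard and pose no difficulty. See \cite{Huang_Lucia_Tarantello_2} and \cite{Jost} for the analytic framework.
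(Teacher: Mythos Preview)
Your argument is correct: the key points are that $\deg E=(\kappa-1)(2-2\mathfrak{g})<0$ forces $H^{0}(X,E)=\{0\}$, that $\bar\partial$ is a determined elliptic operator on a Riemann surface so the global $L^{p}$-estimate $\Vert\eta\Vert_{W^{1,p}}\leq C(\Vert\bar\partial\eta\Vert_{L^{p}}+\Vert\eta\Vert_{L^{p}})$ holds, and that the standard compactness/contradiction step then removes the lower-order term. Each of these is sound, and your identification of the Calder\'on--Zygmund patching as the only nontrivial analytic input is accurate.

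As for comparison with the paper: the paper does not reproduce a proof at all, but simply refers to Proposition~2.2 of \cite{Huang_Lucia_Tarantello_2}. Your write-up is therefore more than what the present paper offers; it is in fact the natural argument one expects to find in the cited reference (injectivity of $\bar\partial$ from the degree count, plus elliptic $L^{p}$-theory and Rellich compactness). There is nothing to correct.
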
	
\begin{proof}
See proposition 2.2 in \cite{Huang_Lucia_Tarantello_2}. 
\end{proof}
In view of \eqref{poincare_inequality_equation}, we obtain the equivalence
of the following norms:
\begin{equation*}
\Vert \eta \Vert_{W^{1,p}} \simeq \Vert \bar{\partial} \eta\Vert_{L^{p}}, 
\;
\eta \in W^{1,p}(X,E), p>1.  
\end{equation*}
In general, the sharp value of the constant $C_{p}$ 
in \eqref{poincare_inequality_equation} is not known, 
unless we take 
$p=2$, where the following holds: 
\begin{lemma}[\cite{Huang_Lucia_Tarantello_2}]
Let $E=\otimes^{\kappa-1}T^{1,0}_{X}$ and 
$h$ be a metric on $X$ with 
Gaussian curvature $K_{h}$. 
Then
\begin{equation*}
\int_{X}\langle \bar{\partial} \eta, \bar{\partial} \eta \rangle_{h} dA_{h}
\geq
-(\kappa-1)\int_{X} K_{h} \langle \eta,\eta \rangle_{h} dA_{h},
\; \forall \;  \eta \in W^{1,2}(X,E)
\end{equation*}
with 
$\langle \cdot,\cdot \rangle_{h}=\langle \cdot,\cdot \rangle_{E,h}$ the fiberwise
Hermitian product on $E$  and $dA_{h}$ the volume form induced by the metric $h$. 
\end{lemma}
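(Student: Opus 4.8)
The plan is to read the inequality as the integrated form of the Bochner--Kodaira--Nakano identity for sections of the Hermitian holomorphic line bundle $E=\otimes^{\kappa-1}T^{1,0}_{X}$ (equivalently $E=K_{X}^{\otimes(1-\kappa)}$), equipped with the fiberwise Hermitian metric $h_{E}$ induced by $h$. Fix the Kähler form $\omega=dA_{h}$ on the compact surface $X$, let $\partial_{E}$ be the $(1,0)$-part of the Chern connection of $(E,h_{E})$, $\Theta(E)$ its curvature, and $\Lambda$ contraction with $\omega$. For a \emph{smooth} section $\eta\in A^{0}(E)$ one has $\Lambda\eta=0$, $\bar{\partial}^{*}\eta=0$, $\partial_{E}^{*}\eta=0$, so the identity $\Delta_{\bar{\partial}}=\Delta_{\partial_{E}}+[\,i\Theta(E),\Lambda\,]$ reduces --- using $[\,i\Theta(E),\Lambda\,]\eta=-(i\Lambda\Theta(E))\eta$, pairing with $\eta$ and integrating over the closed surface $X$, with no boundary terms --- to
\begin{equation*}
\int_{X}\langle\bar{\partial}\eta,\bar{\partial}\eta\rangle_{h}\,dA_{h}
=
\int_{X}\langle\partial_{E}\eta,\partial_{E}\eta\rangle_{h}\,dA_{h}
-
\int_{X}\bigl(i\Lambda\Theta(E)\bigr)\,\langle\eta,\eta\rangle_{h}\,dA_{h}.
\end{equation*}

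The next step is to compute the curvature term. Since Chern curvature is additive under tensor powers of line bundles, $\Theta(E)=(\kappa-1)\,\Theta(T^{1,0}_{X})$; and a short computation in a conformal coordinate $z$ --- writing $\|\partial_{z}\|^{2}_{h}$ for the local weight so that $\Theta(T^{1,0}_{X})=-\partial\bar{\partial}\log\|\partial_{z}\|^{2}_{h}$, and comparing with the conformal factor and Gaussian curvature of $h$ determined by that same weight --- yields the classical identification $i\Theta(T^{1,0}_{X})=K_{h}\,\omega$, i.e. $i\Lambda\Theta(T^{1,0}_{X})=K_{h}$ (consistently with Gauss--Bonnet). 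Hence $i\Lambda\Theta(E)=(\kappa-1)K_{h}$, and inserting this into the displayed identity and discarding the nonnegative term $\int_{X}\langle\partial_{E}\eta,\partial_{E}\eta\rangle_{h}\,dA_{h}\ge 0$ gives the asserted inequality for every smooth $\eta$. Note that for $h=g_{X}$ (where $K_{g_{X}}\equiv -1$) this is the sharp Poincaré inequality $\|\bar{\partial}\eta\|^{2}\ge(\kappa-1)\|\eta\|^{2}$, which fits the fact that $E$ is a negative line bundle since $\deg K_{X}=2\mathfrak{g}-2>0$.

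To reach all $\eta\in W^{1,2}(X,E)$ I would then invoke density of $A^{0}(E)$ in $W^{1,2}(X,E)$, together with the norm equivalence $\|\eta\|_{W^{1,2}}\simeq\|\bar{\partial}\eta\|_{L^{2}}$ from \eqref{poincare_inequality_equation}: since $K_{h}$ is bounded on the compact surface $X$, both sides of the inequality are continuous in the $W^{1,2}$-topology, so the inequality passes to the limit. The only genuine difficulty is the bookkeeping of normalizations --- fixing once and for all the conventions relating $dz\wedge d\bar{z}$ to $dx\wedge dy$, the factors of $i$ in $\Theta$ and $\omega$, the volume form $dA_{h}$, and the chosen Hermitian norm of $\partial_{z}$ --- so that $i\Lambda\Theta(T^{1,0}_{X})=K_{h}$ emerges with the correct sign and no spurious constant. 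An equivalent route that avoids citing Bochner--Kodaira--Nakano is a direct conformal-coordinate computation: writing $h=e^{2\varphi}|dz|^{2}$ and $\eta=f\,(\partial_{z})^{\otimes(\kappa-1)}$, substituting $g=e^{(\kappa-1)\varphi}f$, expanding $\bigl|\partial_{\bar z}g-(\kappa-1)(\partial_{\bar z}\varphi)g\bigr|^{2}$ and integrating the cross term by parts over $X$ reproduces the same identity, with all the care again residing in the constants.
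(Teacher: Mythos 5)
Your argument is correct, but note that the paper itself does not prove this lemma: its ``proof'' is the single line ``See proposition 2.1 in \cite{Huang_Lucia_Tarantello_2}'', so the result is imported as a black box and there is no internal proof to compare against. What you supply is a legitimate self-contained derivation, namely the Bochner--Kodaira--Nakano (Weitzenb\"ock) identity specialized to $(0,0)$-forms valued in the line bundle $E=K_X^{\otimes(1-\kappa)}$: for a section the identity collapses to $\Vert \bar{\partial}\eta \Vert^2 = \Vert \partial_E\eta \Vert^2 - \int_X (i\Lambda\Theta(E))\Vert\eta\Vert^2\,dA_h$, and your sign bookkeeping for the crucial identification $i\Theta(T^{1,0}_X)=K_h\,\omega$ checks out (with $h=e^{2\varphi}\vert dz\vert^2$ locally, both sides equal $-2i\,\partial_z\partial_{\bar z}\varphi\,dz\wedge d\bar z$, consistent with Gauss--Bonnet). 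Discarding $\Vert\partial_E\eta\Vert^2\ge 0$ gives exactly the stated inequality, and the sharpness check against the hyperbolic case $K_h\equiv -1$ matches the sharp Poincar\'e inequality the paper records immediately afterwards. Two small points deserve a word of care. First, in the density step: with the paper's definition of $W^{1,2}(X,E)$ (only $\eta,\bar{\partial}\eta\in L^2$ is required), you need ellipticity of $\bar{\partial}$ on the compact surface to conclude that smooth sections are dense and that both sides are continuous in that topology; this is true but worth stating, since the full covariant derivative is not part of the hypothesis. Second, in your alternative conformal-coordinate route, the substitution $g=e^{(\kappa-1)\varphi}f$ is only locally defined (under a coordinate change $g$ picks up a unimodular factor), so the global integration by parts of the cross term should be phrased in terms of the globally defined integrands (equivalently, carried out with the Chern connection, which is what the main route already does). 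Neither point affects the validity of the proof.
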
	
\begin{proof}
See proposition 2.1 in \cite{Huang_Lucia_Tarantello_2}. 
\end{proof}
In particular, if $h=e^{2\phi}g_{X}$ with $\phi$ a smooth function in $X$, 
then by the following transformation rules:
\begin{align}
& 
\langle \eta,\eta \rangle_{h}
=
e^{4(\kappa-1)\phi} \langle \eta,\eta \rangle_{h},\;
\langle \bar{\partial}\eta,\bar{\partial}\eta \rangle_{h}
=
e^{4(\kappa-1)\phi}\langle \bar{\partial}\eta,\bar{\partial}\eta \rangle_{h} e^{-2\phi}, 
\notag
\\ &
dA_{h}=e^{2\phi}dA 
,\; K_{h}=e^{-2\phi}(-\Delta \phi -1),
\notag 
\end{align}	
we deduce the following: 
\begin{equation}\label{poincare_conformal_estimates}
\int_{X}e^{4(\kappa-1)\phi}\langle \bar{\partial} \eta,\bar{\partial}\eta \rangle_{E}dA
\geq 
(\kappa-1)
\int_{X}
(\Delta \phi +1)e^{4(\kappa-1)\phi}\langle \eta,\eta \rangle_{E}dA
\end{equation} 
In particular, for $\phi=0$, that is $h=g_{X}$, then 
\eqref{poincare_conformal_estimates} yields to 
the following \underline{sharp} Poincar\'e inequality, 
valid for the hyperbolic metric $g_{X}$ on $X$:
$$
\int_{X}\langle \bar{\partial} \eta,\bar{\partial}\eta\rangle_{E}dA
\geq (\kappa-1) \int_{X} \langle \eta,\eta \rangle_{E}dA,
\; 
\eta \in W^{1,2}(X,E). 
$$
Interestingly, from \eqref{poincare_conformal_estimates}, 
we obtain the following estimate
for solutions of \eqref{system_of_equations_intro}.
\begin{corollary}\label{cor_d_bar_eta_estimate}
If $(u,\eta)$ is a solution of \eqref{system_of_equations_intro} with $t\geq 0$,  
then
\begin{equation}\label{estimate_of_cor_on_d_bar_eta}
\begin{split}
\int_{X} 
\langle \bar{\partial} \eta, \bar{\partial} \eta \rangle_{E} e^{(\kappa-1)u}dA 
\geq  & \;
2(\kappa-1)^{2}\int_{X}
\Vert \beta_{0} + \bar{\partial}\eta \Vert^{2}_{E} \Vert \eta \Vert_{E}^{2}e^{2(\kappa-1)u}dA \\
& +
\frac{\kappa-1}{2} \int_{X}
\Vert \eta \Vert_{E}^{2} e^{(\kappa-1)u} dA,
\; \forall \; \eta \in A^{0}(E).
\end{split}
\end{equation} 
\end{corollary}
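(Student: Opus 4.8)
The plan is to apply the conformal Poincaré estimate \eqref{poincare_conformal_estimates} with a carefully chosen conformal factor $\phi$ built out of the solution $(u,\eta)$, and then use the first equation of \eqref{system_of_equations_intro} to convert the term $\Delta\phi+1$ into the two positive contributions on the right-hand side. First I would set $\phi = \frac{u}{4}$, so that $e^{4(\kappa-1)\phi} = e^{(\kappa-1)u}$, which matches exactly the weight appearing in the desired inequality \eqref{estimate_of_cor_on_d_bar_eta}. With this choice, \eqref{poincare_conformal_estimates} reads
\begin{equation*}
\int_{X} e^{(\kappa-1)u}\langle \bar{\partial}\eta,\bar{\partial}\eta\rangle_{E}\,dA
\geq
(\kappa-1)\int_{X} \Bigl(\tfrac{1}{4}\Delta u + 1\Bigr) e^{(\kappa-1)u}\langle\eta,\eta\rangle_{E}\,dA.
\end{equation*}
The next step is to substitute for $\Delta u$ using the first equation in \eqref{system_of_equations_intro}, namely $\Delta u = -2 + 2te^{u} + 8(\kappa-1)e^{(\kappa-1)u}\Vert\beta_{0}+\bar{\partial}\eta\Vert^{2}$, so that
\begin{equation*}
\tfrac{1}{4}\Delta u + 1
=
\tfrac{1}{2} + \tfrac{t}{2}e^{u} + 2(\kappa-1)e^{(\kappa-1)u}\Vert\beta_{0}+\bar{\partial}\eta\Vert^{2}.
\end{equation*}

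Plugging this back in, the left-hand side of \eqref{estimate_of_cor_on_d_bar_eta} is bounded below by
\begin{equation*}
(\kappa-1)\int_{X}\Bigl(\tfrac12 + \tfrac{t}{2}e^{u} + 2(\kappa-1)e^{(\kappa-1)u}\Vert\beta_{0}+\bar{\partial}\eta\Vert^{2}\Bigr)e^{(\kappa-1)u}\Vert\eta\Vert_{E}^{2}\,dA.
\end{equation*}
Since $t\geq 0$, the middle term $\tfrac{t}{2}e^{u}$ is nonnegative and may simply be dropped; the remaining two terms are precisely $\tfrac{\kappa-1}{2}\int_{X}\Vert\eta\Vert_{E}^{2}e^{(\kappa-1)u}\,dA$ and $2(\kappa-1)^{2}\int_{X}\Vert\beta_{0}+\bar{\partial}\eta\Vert_{E}^{2}\Vert\eta\Vert_{E}^{2}e^{2(\kappa-1)u}\,dA$, which is exactly the claimed inequality.

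The main point requiring care—rather than a genuine obstacle—is the justification that \eqref{poincare_conformal_estimates} may be applied with $\phi = u/4$ for the \emph{specific} solution at hand: one needs $u$ smooth (guaranteed by Theorem \ref{thm_A_intro} / the regularity statement accompanying \eqref{system_of_equations_intro}) and $\eta\in A^{0}(E)$ so that all integrals are finite, which is where the hypothesis $\eta\in A^{0}(E)$ in the statement enters. A secondary subtlety is keeping track of the transformation rules for $\langle\eta,\eta\rangle_{h}$ and $\langle\bar{\partial}\eta,\bar{\partial}\eta\rangle_{h}$ under $h=e^{2\phi}g_{X}$ exactly as displayed before \eqref{poincare_conformal_estimates}, but this is already packaged into that inequality, so no further computation is needed. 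I would therefore expect the proof to be short: choose $\phi = u/4$, invoke \eqref{poincare_conformal_estimates}, substitute the Gauss-type equation, and discard the nonnegative $t$-term.
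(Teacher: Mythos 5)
Your proof is correct and follows exactly the paper's argument: the paper's own proof is the single line ``take $\phi=\frac{u}{4}$ in \eqref{poincare_conformal_estimates}'', and your computation of $\tfrac14\Delta u+1$ from the first equation of \eqref{system_of_equations_intro}, together with discarding the nonnegative $\tfrac{t}{2}e^{u}$ term, is precisely the verification that is left implicit there. Nothing further is needed.
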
	
\begin{proof}
It suffices to take $\phi=\frac{u}{4}$ in \eqref{poincare_conformal_estimates}.  
\end{proof}

Next, we recall the (natural) \underline{wedge product} defined on forms 
valued on $E$ and $E^{*}$ respectively. 
For 
$\alpha \in A^{1,0}(X,E^{*})=A^{1,0}(X,\mathbb{C}) \otimes E^{*}$,
let 
$\alpha \in \alpha_{0} \otimes e^{*}$ 
with 
$\alpha_{0} \in A^{1,0}(X,\mathbb{C})$ and $e^{*} \in A^{0}(E^{*})$, 
and for 
$\beta \in A^{0,1}(X,E)= A^{0,1}(X,\mathbb{C}) \otimes E$, 
let 
$\beta = \beta_{0} \otimes e$ 
with 
$\beta_{0} \in A^{0,1}(X,\mathbb{C})$ and $e\in A^{0}(E)$, 
then
$$
\beta \wedge \alpha
=
e^{*}(e) \beta_{0}\wedge \alpha_{0} \in A^{1,1}(X,\mathbb{C}), 
$$
where $\beta_{0}\wedge \alpha_{0}$ is the usual wedge product of complex valued forms. 

By the well known properties of the wedge product we can define the 
bilinear form: 
\begin{equation}\label{wedge_product_map}
\begin{split}
 A^{1,0}(X,E^{*}) \times A^{0,1}(X,E) \longrightarrow \mathbb{C}
\;:\;
(\alpha,\beta)
\longrightarrow 
\int_{X}   \beta \wedge \alpha,
\end{split}
\end{equation}
which, by Serre duality (see \cite{Voisin}), is non-degenerate and induces the isomorphism:
\begin{equation}\label{isomorphism_A_1_0_To_A_0_1}
A^{1,0}(X,E^{*})
\simeq 
(A^{0,1}(X,E))^{*}.
\end{equation}
Furthermore, by the given (fiberwise) Hermitian product on $E$, 
we have the duality map:
$$
A^{0,1}(X,E) \longrightarrow  (A^{0,1}(X,E))^{*}
:
\beta \longrightarrow \beta^{*}
:
\beta^{*}(\xi)
=
\int_{X} \langle \xi,\beta\rangle dA,
$$
expressing the isomorphism between
$A^{0,1}(X,E)$ and its dual. 
In turn, by \eqref{isomorphism_A_1_0_To_A_0_1}, we obtain the \underline{isomophism} 
$A^{1,0}(X,E^{*}) \simeq A^{0,1}(X,E)$,
explicitly expressed by the Hodge star operator:
\begin{equation*}
*_{E} : A^{0,1}(X,E) \longrightarrow  A^{1,0}(X,E^{*}),
\end{equation*}
where for $\beta \in A^{0,1}(X,E)$ we obtain 
$*_{E}\beta$ according to the following relation: 
$$ 
\xi \wedge *_{E}\beta  
=
\langle \xi,\beta \rangle_{E} \,dA,
\; \forall \; \xi \in A^{0,1}(X,E). 
$$
We denote by $*_{E}^{-1}$ the inverse operator, and observe that 
$*_{E}$ defines an isometry with respect to the (fiberwise) Hermitian product on $E^{*}$:
$$
\langle \cdot,\cdot \rangle_{E^{*}}
=
\langle *_{E}^{-1} \cdot, *_{E}^{-1} \cdot \rangle_{E}
\; \text{ and corresponding norm } \;
\Vert \cdot \Vert_{E^{*}}=\Vert *_{E}^{-1} \cdot \Vert_{E}. 
$$
As a relevant (finite dimensional) subspace of $A^{1,0}(X,E^{*})$ we consider
the space of holomorphic $\kappa$-differentials, namely the holomorphic sections
of $E^{*}\otimes K_{X}=\otimes^{\kappa}K_{X}=\otimes^{\kappa}(T^{1,0}_{X})^{*}$\;:

$$
C_{\kappa}(X)
= H^{0}(X,E^{*}\otimes K_{X}) 
:=
\{ 
\alpha \in A^{0}(X,\otimes^{k}(T_{X}^{1,0})^{*})
\; : \; 
\bar{\partial} \alpha=0
\}
\subset
A^{1,0}(X,E^{*}) 
$$ 
with $\bar{\partial}$ the holomorphic structure of the line bundle
$\otimes^{\kappa} (T_{X}^{1,0})^{*}$.
We recall that, 
$\dim_{\mathbb{C}}C_{\kappa}(X)=(2\kappa-1)(\mathfrak{g}-1)$.

Since, by Stokes theorem, for $\alpha \in C_{\kappa}(X)$ we have:
$
\int_{X} \bar{\partial}\eta  \wedge \alpha = 0,
\; \forall \; \eta \in A^{0}(E) 
$, 
we see that the bilinear form \eqref{wedge_product_map} is 
well defined and non degenerate
when considered on the space:
$C_{\kappa}(X) \times \mathcal{H}^{0,1}(X,E)$, 
and so it induces the isomorphism:
\begin{equation}\label{C_kappa_X_isometry}
\begin{split}
C_{\kappa}(X) \simeq (\mathcal{H}^{0,1}(X,E))^{*}.
\end{split}
\end{equation}
Since $C_{\kappa}(X)$ is finite dimensional, then we also
get the "dual" isomorphism:
$(C_{\kappa}(X))^{*} \simeq \mathcal{H}^{0,1}(X,E)$,
and to identify the linear (complex) functional on $C_{\kappa}(X)$
associated (by \eqref{wedge_product_map}) to a class 
$[\beta] \in \mathcal{H}^{0,1}(X,E)$, we recall that,
by Dolbeault decomposition, every 
$\beta \in A^{0,1}(X,E)$
can be uniquely written as follows: 
$$
\beta = \beta_{0} + \bar{\partial}\eta
\; \text{ with  $\beta_{0}$ \underline{harmonic} (with respect to $g_{X}$) and } \; 
 \eta \in A^{0}(E).
$$
Hence, every class  $[\beta] \in \mathcal{H}^{0,1}(X,E)$ is uniquely identified
by its harmonic representative $\beta_{0} \in [\beta]$.
Furthermore, for 
$[\beta]\in \mathcal{H}^{0,1}(X,E)$ with harmonic $\beta_{0} \in [\beta]$,
we obtain an element of $(C_{\kappa}(X))^{*}$ as follows:
\begin{equation}\label{class_beta_to_C_kappa_star}
C_{\kappa}(X)\longrightarrow \mathbb{C}
:
\alpha \longrightarrow \int_{X} \beta_{0} \wedge \alpha,
\end{equation}
which is well defined independently from the chosen element in $ [\beta]$.

In addition, harmonic $(0,1)$-forms valued in $E$ 
are characterized by the property: 
$\int_{X} \langle \bar{\partial} \eta,\beta_{0}\rangle_{E}dA=0$,
$\; \forall \; \eta \in A^{0}(E)$ and so,
$$
\beta_{0} \in A^{0,1}(X,E) \; \text{ harmonic } \;
\Longleftrightarrow
\;
*_{E}\beta_{0}\in C_{\kappa}(X). 
$$
So, every harmonic $\beta_{0}\in A^{0,1}(X,E)$ also identifies an element of
$(\mathcal{H}^{0,1}(X,E))^{*}$ via  the linear map:
\begin{equation*}
\begin{split}
\mathcal{H}^{0,1}(X,E)\longrightarrow \mathbb{C}
\; : \; 
[\xi]\longrightarrow 
\int_{X} \xi \wedge  *_{E} \beta_{0} 
=
\int_{X}\langle \xi , \beta_{0}  \rangle_{E} dA
, 
\end{split}
\end{equation*}
and (by \eqref{C_kappa_X_isometry}) we identify the isomorphism:
$$
\mathcal{H}^{0,1}(X,E) \longrightarrow C_{\kappa}(X)
:
[\beta] \longrightarrow  *_{E}\beta_{0}.
$$
In other words, for $\alpha \in C_{\kappa}(X) \subset A^{1,0}(X,E^{*})$,  
$*_{E}^{-1}\alpha$ is given by the unique \underline{harmonic}
$(0,1)$-form $\beta_{0}$ valued in $E$ : $*_{E}\beta_{0}=\alpha$. 

Since all norms on $C_{\kappa}(X)$ are equivalent, it is usual
(recall the Weil-Patterson form \cite{Jost}) to consider the following $L^{2}$-norm
$$
\Vert \alpha \Vert_{L^{2}}
:=
(\int_{X} \langle *_{E}^{-1} \alpha,*_{E}^{-1} \alpha\rangle_{E}dA)^{\frac{1}{2}}
\; \text{ for } \; 
\alpha \in C_{\kappa}(X). 
$$
Indeed, it is conveniently computed with respect to a fixed \underline{basis} in
$C_{\kappa}(X)$ given as follows: 
\begin{equation}\label{basis_for_C_k_X}
\{s_{1},\ldots,s_{\nu}\} \subset C_{\kappa}(X)
\; \text{ with } \; 
\nu=(2\kappa-1)(\mathfrak{g}-1)
,\;
\int_{X}\langle *_{E}^{-1}s_{j},*_{E}^{-1}s_{k}\rangle_{E} dA
=
\delta_{j,k}
\end{equation}
and $\delta_{j,k}$ are the Kronecker symbols. So, 
for $\alpha \in C_{\kappa}(X)$, 
we may write:
$$\alpha=\sum_{j=1}^{\nu}a_{j}s_{j}, \;a_{j}\in \mathbb{C},
\; \text{ and } \; 
\beta_{0}=*_{E}^{-1}\alpha=\sum_{j=1}^{\nu}a_{j}*_{E}^{-1}s_{j}
$$ 
($\beta_{0}$ the associated harmonic $(0,1)$-form valued on $E$) 
and compute:
 \begin{equation*}
\Vert \alpha \Vert_{L^{2}}^{2}=\Vert \beta_{0} \Vert^{2}_{L^{2}}
=
\sum_{j=1}^{\nu}\vert a_{j} \vert^{2}.  
\end{equation*}
Conveniently, we have "compactness" for any sequence 
$\alpha_{n} \in C_{\kappa}(X)$ with uniformly bounded $L^{2}$-norm. 
Thus, for example, if $\alpha_{n}$ satisfies 
$\Vert \alpha_{n} \Vert_{L^{2}}=1$ then it admits a convergent subsequence 
$\alpha_{n_{k}}\longrightarrow \alpha_{0}  \in C_{\kappa}(X)$ 
with 
$\Vert \alpha_{0} \Vert_{L^{2}}=1$. 

\

Next, we recall some well known consequences of the Riemann Roch theorem 
\cite{Jost},\cite{Struwe_Book}
that will
be useful in the sequel. To this purpose, 
given a holomorphic line bundle $L$, we denote by $H^{0}(X,L)$ 
the space of holomorphic sections of $L$ and by $deg L$
the degree of $L$. Then the Riemann Roch theorem
states that
\begin{equation}\label{dimension_holomorphic_sections_of_L}
\dim_{\mathbb{C}}H^{0}(X,L)
-
\dim_{\mathbb{C}}H^{0}(X,K_{X}-L)=deg L + 1 - \mathfrak{g},
\end{equation}
with $K_{X}$ the canonical bundle of $X$.   

It is well known that  $\deg K_{X}=2(\mathfrak{g}-1)$ 
(see \cite{Miranda}),
and therefore by \eqref{dimension_holomorphic_sections_of_L} we deduce: 
$\dim_{\mathbb{C}}H^{0}(X,K_{X})=\mathfrak{g}$.
As a consequence, using again \eqref{dimension_holomorphic_sections_of_L} 
we find in particular that, 
\begin{equation}\label{dimension_holomorphic_sections_for_genus_two}
\; \text{ if } \; \mathfrak{g}\geq 2 \; \text{ then } \; 
\dim_{\mathbb{C}}H^{0}(X,K_{X}-q)
=
\dim_{\mathbb{C}}H^{0}(X,K_{X}) -1,
\end{equation}
for any $q\in X$. 
Hence, $H^{0}(X,K_{X})$ is base-point free.

On the other hand, if we let 
$L=E^{*}\otimes K_{X}=\otimes^{\kappa}K_{X}$, then
$deg L=2\kappa(\mathfrak{g}-1)$, and we may conclude:
\begin{remark}\label{vanishing_property_of_C_kappa_X}
Every non-trivial holomorphic $\kappa$-differential 
$$\alpha \in C_{\kappa}(X)=H^{0}(X,L),\;\alpha \neq 0$$
vanishes exactly at 
$2\kappa(\mathfrak{g}-1)$ points in $X$, counted with multiplicity.
\end{remark}

For given distinct points
$\{ x_{1},\ldots,x_{\nu} \}\subset X$
we consider the subspace:
\begin{equation}\label{Q_kappa_definition}
Q_{\kappa}[x_{1},\ldots,x_{\nu}]
=
\{ \alpha \in C_{\kappa}(X) 
\; : \; 
\alpha(x_{j})=0,\; j=1,\ldots, \nu
 \}
 \subset C_{\kappa}(X). 
\end{equation}
We are going to show that,
\begin{equation}\label{dimension_of_Q_kappa}
\dim_{\mathbb{C}}Q_{\kappa}[x_{1},\ldots, x_{\nu}]
=
(2\kappa-1)(\mathfrak{g}-1) - \nu,
\; \text{ for } \; 
1\leq \nu <
 2(\kappa-1)(\mathfrak{g}-1).
\end{equation}

Indeed, $Q_{\kappa}[x_{1},\ldots,x_{\nu}]$
is given by the space of holomorphic sections of the line bundle 
$L=\otimes ^{\kappa}K_{X} - \{ x_{1} , \ldots,x_{\nu} \} $,
and we know that $\deg(L)=2 \kappa (\mathfrak{g}-1) - \nu$.  
While, if  $1\leq \nu < 2(\kappa-1)(\mathfrak{g}-1)$, then for the line bundle 
$K_{X}-L=L^{-1} \otimes K_{X}$
(with $L^{-1}=L^{*}$) 
we obtain: 
$$\deg(K_{X}- L)=-2(\kappa-1)(\mathfrak{g}-1) +\nu < 0,$$
and so
$
\dim_{\mathbb{C}}H^{0}(X,K_{X}-L)=0.
$ 
Therefore, by the Riemann-Roch theorem, we obtain
$$
\dim_{\mathbb{C}}H^{0}(X,L) - \dim_{\mathbb{C}}H^{0}(X,K_{X}-L)
=
\deg(L) + 1 - \mathfrak{g}
=
(2\kappa-1) (\mathfrak{g}-1) - \nu,
$$
and \eqref{dimension_of_Q_kappa} is established, since
$
\dim_{\mathbb{C}} Q_{\kappa}[x_{1},\ldots,x_{\nu}]
=
\dim_{\mathbb{C}}H^{0}(X,L)
$. 

\begin{remark}\label{rem_vanishing_at_all_but_one}
From \eqref{dimension_of_Q_kappa} follows in particular that we can always 
find $\alpha \in C_{\kappa}(X)$
that vanishes at all \underline{but} one point of
$\{ x_{1},\ldots,x_{\nu} \} $. 
Furthermore
\begin{equation}\label{dimension_Q_kappa_q}
\dim_{\mathbb{C}} Q_{\kappa} [q]=\dim_{\mathbb{C}}C_{\kappa}(X)-1,
\end{equation}
for every $q\in X$, namely also $H^{0}(X,\otimes ^{\kappa}K_{X})$ is base-point free. 
\end{remark}	 

By using \eqref{dimension_Q_kappa_q}, we can define the \underline{Kodaira map}:
\begin{equation}
\tau:X\longrightarrow \mathbb{P}(\mathcal{H}^{0,1}(X,E))\simeq \mathbb{P}^{(2\kappa-1)(\mathfrak{g}-1)-1}
\end{equation}  
simply by associating to any $q \in X$ the element of
$\mathbb{P}(\mathcal{H}^{0,1}(X,E))$ identified by 
the ray of classes generated by
$[\beta] \in \mathcal{H}^{0,1}(X,E)\setminus \{  0 \} $ with harmonic representative
$\beta_{0}\in [\beta]$ and satisfying:
\begin{equation}
\int_{X} \beta_{0}\wedge \alpha=0, \; \forall \; \alpha \in Q_{2}[q].
\end{equation}

Indeed, by recalling \eqref{class_beta_to_C_kappa_star}, 
in this way we can identify the ray of functionals in  
$(C_{\kappa}(X))^{*}$ which admit $Q_{2}[q]$ as their kernel.
Such a map is holomorphic \cite{Donaldson_Book},\cite{Griffiths_Harris}, and we have:
\begin{lemma}\label{lem_complex_curve}
The image $\tau(X)$ defines a complex curve into the projective space:
$\mathbb{P}(\mathcal{H}^{0,1}(X,E))$ of complex dimension
$(2\kappa-1)(\mathfrak{g}-1) -1 \geq 2$. 
\end{lemma}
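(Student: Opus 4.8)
The plan is to show two things: first, that $\tau$ is holomorphic (already asserted in the text via references to \cite{Donaldson_Book},\cite{Griffiths_Harris}, so I would at most sketch why); and second, that $\tau$ is non-constant, hence $\tau(X)$, being the image of a compact Riemann surface under a non-constant holomorphic map into $\mathbb{P}(\mathcal{H}^{0,1}(X,E))$, is an irreducible complex curve (complex dimension one). The dimension count $(2\kappa-1)(\mathfrak{g}-1)-1 \geq 2$ is immediate: for $\kappa \geq 2$ and $\mathfrak{g}\geq 2$ one has $(2\kappa-1)(\mathfrak{g}-1) \geq 3$, so $\mathbb{P}(\mathcal{H}^{0,1}(X,E)) = \mathbb{P}^{(2\kappa-1)(\mathfrak{g}-1)-1}$ has complex dimension at least $2$.

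The heart of the argument is non-constancy of $\tau$. By \eqref{class_beta_to_C_kappa_star} and the identifications $C_{\kappa}(X)\simeq(\mathcal{H}^{0,1}(X,E))^{*}$, the point $\tau(q)\in\mathbb{P}(\mathcal{H}^{0,1}(X,E)) = \mathbb{P}((C_{\kappa}(X))^{*})$ is precisely the hyperplane-annihilator: the line of functionals on $C_{\kappa}(X)$ whose kernel is $Q_{\kappa}[q]$. So $\tau(q_{1})=\tau(q_{2})$ would force $Q_{\kappa}[q_{1}]=Q_{\kappa}[q_{2}]$, i.e. the set of holomorphic $\kappa$-differentials vanishing at $q_{1}$ would coincide with the set vanishing at $q_{2}$. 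To rule this out for $q_{1}\neq q_{2}$, I would use the Riemann–Roch computation already carried out for \eqref{dimension_of_Q_kappa}, applied with $\nu=2$ to the pair $\{q_{1},q_{2}\}$: since $2 < 2(\kappa-1)(\mathfrak{g}-1)$ (which holds because $(\kappa-1)(\mathfrak{g}-1)\geq 1$, with strict inequality unless $\kappa=\mathfrak{g}=2$ — a case I will treat separately), one gets
\begin{equation*}
\dim_{\mathbb{C}} Q_{\kappa}[q_{1},q_{2}] = (2\kappa-1)(\mathfrak{g}-1)-2 = \dim_{\mathbb{C}}Q_{\kappa}[q_{1}]-1,
\end{equation*}
so $Q_{\kappa}[q_{2}]\not\supseteq Q_{\kappa}[q_{1}]$: there is an $\alpha\in C_{\kappa}(X)$ vanishing at $q_{1}$ but not at $q_{2}$, whence $Q_{\kappa}[q_{1}]\neq Q_{\kappa}[q_{2}]$ and $\tau(q_{1})\neq\tau(q_{2})$. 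In fact this shows $\tau$ is injective, a fortiori non-constant. For the exceptional case $\kappa=\mathfrak{g}=2$ one has $\dim_{\mathbb{C}}C_{2}(X)=3$; here I would argue directly that $\tau$ cannot be constant, since a constant $\tau$ would mean every $\alpha\in C_{2}(X)$ vanishing at one fixed point $q_0$ vanishes everywhere — impossible as $H^{0}(X,\otimes^{2}K_X)$ is base-point free by \eqref{dimension_Q_kappa_q} and $\dim_{\mathbb{C}}C_{2}(X)=3>1$ — or, alternatively, invoke the classical fact that for genus $2$ the bicanonical map is a morphism whose image is not a point.

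Having established $\tau$ is holomorphic and non-constant, I invoke the standard structure theory (e.g. \cite{Griffiths_Harris}): the image of a compact Riemann surface under a non-constant holomorphic map to projective space is a closed irreducible analytic subvariety of pure complex dimension one, i.e. a complex curve. Combined with the dimension bound on the ambient projective space this gives the statement, and moreover $\tau(X) \subsetneq \mathbb{P}(\mathcal{H}^{0,1}(X,E))$ as claimed in the surrounding discussion. The main obstacle is purely bookkeeping: making sure the Riemann–Roch inequality $\nu < 2(\kappa-1)(\mathfrak{g}-1)$ used in \eqref{dimension_of_Q_kappa} is still available with $\nu=2$, and isolating the single genuinely borderline case $\kappa=\mathfrak{g}=2$ where $2(\kappa-1)(\mathfrak{g}-1)=2$ so that lemma does not directly apply and one must instead appeal to base-point-freeness of the bicanonical system, or to the explicit geometry of genus-two curves, to conclude non-constancy.
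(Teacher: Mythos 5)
Your proposal is correct, and in the main case $(\kappa,\mathfrak{g})\neq(2,2)$ it coincides with the paper's argument (apply \eqref{dimension_of_Q_kappa} with $\nu=2$ to separate points), though the paper pushes this slightly further to conclude that $\tau$ is actually an embedding, citing the standard criterion in \cite{Miranda}. Where you genuinely diverge is the borderline case $\kappa=\mathfrak{g}=2$: you settle for non-constancy of $\tau$ (which follows at once from $\dim_{\mathbb{C}}Q_{2}[q_{0}]=2>0$, so a nonzero bicanonical differential vanishing at $q_{0}$ cannot vanish everywhere) and then invoke the proper mapping theorem to conclude that the image of the compact curve $X$ under a non-constant holomorphic map into $\mathbb{P}^{2}$ is an irreducible one-dimensional analytic subvariety. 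The paper instead carries out a finer analysis: it shows via Riemann--Roch that $\tau(q)=\tau(q^{*})$ forces a section of $K_{X}$ vanishing exactly at $q$ and $q^{*}$, identifies the fibers of $\tau$ with those of the canonical map $\tau_{1}$ (generically two-to-one for genus two, with the six Weierstrass points as ramification), and exhibits $\tau(X)$ as the image of $\mathbb{P}^{1}\simeq\tau_{1}(X)$ under an embedding into $\mathbb{P}^{2}$, i.e.\ a conic. Your route is shorter and more uniform --- indeed the non-constancy argument alone would dispatch every case, making the injectivity discussion optional for the bare statement --- while the paper's buys a precise description of $\tau(X)$ and of the fibers of $\tau$ in the hyperelliptic case, information that is not actually used elsewhere in the paper. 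One small cleanup: your parenthetical appeal to base-point freeness in the exceptional case is not quite the right invocation; the operative fact is simply that $Q_{2}[q_{0}]$ is a positive-dimensional space of not-identically-zero differentials, which is what your dimension count already gives.
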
	
\begin{proof}
In case $(\kappa,\mathfrak{g})\neq (2,2)$ then 
$2(\kappa-1)(\mathfrak{g}-1) >2$ and so we can apply \eqref{dimension_of_Q_kappa} 
with $\nu=2$ 
to show that $\tau$ is \underline{injective} and 
defines an embedding in this case
(see proposition 4.20 in \cite{Miranda}). 
Therefore, $\tau(X)$ is a 
(regular) complex curve with the same complex dimension of $X$, namely one.

Next, let us consider the case:  $\kappa=2$ and $\mathfrak{g}=2$ where
\eqref{dimension_of_Q_kappa} is valid only with $\nu=1$, 
and so the argument above does not apply.
Indeed, for $\mathfrak{g}=2$ the map $\tau$ is no longer injective,
and instead we are going to show that it is "generically" a map two-to-one. 

To this purpose, let
$L=\otimes_{k=1}^{2}K_{X}$,
and for given $q \in X$ let $q^{*}\in X$ be such that:
$\tau(q)=\tau(q^{*})$.  That is 
$Q_{2}[q]=Q_{2}[q^{*}]=Q_{2}[q,q^{*}]$,
and  consequently (by  \eqref{dimension_Q_kappa_q}) we have: 
\begin{equation}\label{complex_dimension_q_q_star}
\begin{split}
\dim_{\mathbb{C}} H^{0}(X,L-q-q^{*})
= \; &
\dim_{\mathbb{C}} (X,L-q) 
= 
\dim_{\mathbb{C}} (X,L-q^{*}) \\
= \; &
3(\mathfrak{g}-1)-1
=
2
\;\;\; (\text{for } \; 
\mathfrak{g}=2). 
\end{split}
\end{equation} 
Hence, by using \eqref{complex_dimension_q_q_star} 
together with the Riemann Roch Theorem \eqref{dimension_holomorphic_sections_of_L}, 
we find: 
$\dim_{\mathbb{C}} H^{0}(X,-K_{X}+q+q^{*})=1$. 
In other words, the holomorphic line bundle:  
$-K_{X}+q+q^{*}$
(of degree zero)
must be trivial. 
So there must exist a holomorphic section of $K_{X}$ which vanishes exactly at 
$q$ and $q_{*}$.
We shall use this information for the (well-defined) holomorphic Kodaira map
$\tau_{1}:X\longrightarrow \mathbb{P}(V_{1}^{*})$
relative to the space $V_{1}=H^{0}(X,K_{X})$, 
see \eqref{dimension_holomorphic_sections_for_genus_two}.
Indeed,  $deg(K_{X})=2$ (for $\mathfrak{g}=2$), 
and so $\tau_{1}$ is "generically" a two-to-one map and
(by the information above) necessarily:
$\tau_{1}(q)=\tau_{1}(q^{*})$. 
Furthermore, for $\mathfrak{g}=2$, the Riemann-Hurwitz formula implies that, 
$q=q_{*}$ only for \underline{six} points
which must coincide exactly with the Weierstrass points of $X$. 
Knowing that $\tau_{1}(X)$ defines a compact, smooth complex submanifold of
$\mathbb{P}(V_{1}^{*})\simeq \mathbb{P}^{1}$
and it  cannot reduce to a point, 
then necessarily: 
$\tau_{1}(X) = \mathbb{P}(V_{1}^{*})\simeq \mathbb{P}^{1}$. 
Namely
$\tau_{1}(X)$ defines a smooth complex curve (a conic) in $\mathbb{P}^{1}$ . 

As a consequence of the above discussion, 
also $\tau$ must be "generically" a two-to-one map,
with the same  preimages as $\tau_{1}$, 
in the sense that:
$$
\; \forall \; q\in X\; : \; 
\{ p\in X \; : \;  \tau(p)=\tau(q) \}
=
\{ p\in X\; : \;  \tau_{1}(p)=\tau_{1}(q) \} .
$$
Therefore, it is well defined the embedding:
$
\mathbb{P}^{1}\simeq \mathbb{P}(V_{1}^{*})\xrightarrow{\;\phi\;} \mathbb{P}(V^{*})\simeq \mathbb{P}^{2}
$ 
such that
$\tau(q)=\phi(\tau_{1}(q))$, for all $q\in X$. And so
 $\tau(X)$ defines a complex curve into $\mathbb{P}^{2}$ as claimed. 
\end{proof}

Finally, we recall that around any given $q\in X$, 
we can introduce holomorphic coordinates $\{ z \} $ centered at the origin,  
so that, for $z=x+iy\in B_{r}$ with $r>0$ small, we have:
\begin{equation}
\begin{split}
& \partial=\frac{\partial}{\partial z}=
\frac{1}{2}(\frac{\partial}{\partial x} - i \frac{\partial}{\partial y})
\; \text{ and } \; 
\bar{\partial}=\frac{\partial}{\partial \bar{z}}
=
\frac{1}{2}(\frac{\partial}{\partial x} - i \frac{\partial}{\partial y}),
\\
&
dz=dx + i dy,\; d \bar{z} = dx-idy,\;
\\
&
g_{X}=e^{2u_{0}}\vert dz \vert^{2},\;
u_{0} \; \text{ smooth and } \; u_{0}(0)=0,
\\
& 
dA = \frac{i}{2}e^{2u_{0}} dz \wedge d \bar{z},
\;
*dz=ie^{2u_{0}} d\bar{u},\; *d \bar{z}=-ie^{2u_{0}}dz
.
\end{split}
\end{equation}	
So we can express the Laplace-Beltrami operator:
$\Delta_{g_{X}}=\Delta=4e^{-2u_{0}}\partial \bar{\partial}$ 
and  obtain 
$4 \partial \bar{\partial} u_{0} = e^{2u_{0}}$. 

Actually (with abuse of notation) in the sequel we also denote 
the \underline{flat} Laplacian by  
$\Delta=4 \partial \bar{\partial}$, 
unless confusion arises.

Moreover, in such local coordinates, 
the (fiberwise) Hermitian product operates essentially
as the usual Hermitian product on $\mathbb{C}$, and for the local expression 
of the (fiberwise) norm of sections and forms, we record that in $B_{r}$ there holds:
\begin{equation*}
\begin{split}
\Vert \eta \Vert_{E}
\simeq \; &
\vert \eta(z) \vert e^{2(\kappa-1)u_{0}}
\;\; 
\eta\in A^{0}(E) 
,\;\;  
\Vert \beta_{0} \Vert_{E} \simeq\vert \beta(z) \vert e^{2(\kappa-2)u_{0}}
\;\;
\beta \in A^{0,1}(X,E). 
\end{split}
\end{equation*}
In local holomorphic coordinates, any holomorphic $\kappa$-differential
$\alpha \in C_{\kappa}(X)$, 
takes the expression: 
$\alpha = h (dz)^{\kappa}$ with 
$h$ holomorphic in $B_{r}$. 

In this way, it is clear  what we mean by a zero of $\alpha$ and 
corresponding multiplicity,
since such notions are independent of the chosen holomorphic coordinates. 
In particular, if $q$ is a zero of $\alpha$ with multiplicity $n$, then 
in local coordinates we have:
$\Vert \alpha \Vert_{E^{*}} \simeq \vert z \vert^{n} \vert h(z) \vert e^{2(\kappa-1)u_{0}} $
with the function $h$ holomorphic and never vanishing in $B_{r}$. 
In particular, 
$\partial \bar{\partial} \ln \vert h(z) \vert^{2}=0$ in $B_{r}$,
a property we shall use in the sequel.

\section{Asymptotics}
From now on we shall use the (fiberwise) Hermitian product
$\langle \cdot,\cdot \rangle$ and norm $\Vert \cdot \Vert$
of sections and forms valued on $E$ and $E^{*}$
without the subscripts $E$ and $E^{*}$ respectively, 
unless some confusion arises. 

For given $(X,[\beta]) \in \mathcal{T}_{\mathfrak{g}}(S) \times \mathcal{H}^{0,1}(X,E)$
and $t\geq 0$, we consider the Donaldson functional:
\begin{equation*}
D_{t}(u,\eta)
=
\int_{X} 
(
\frac{\vert \nabla u \vert^{2}}{4}
-
u
+
te^{u}
+
4\Vert \beta_{0} + \bar{\partial}\eta \Vert^{2}e^{(\kappa-1)u}
)
dA
\end{equation*}
with domain
$$\Lambda
=
\{  
(u,\eta)\in H^{1}(X) \times W^{1,2}(X,E)
\; : \; 
\int_{X} e^{(\kappa-1)u}\Vert \beta_{0} + \bar{\partial}\eta \Vert^{2}dA
<
+\infty
\},
$$
where 
$
H^{1}(X)
=
\{ u\in L^{2}(X) \; : \; \vert \nabla u \vert \in L^{2}(X)\} 
$ 
is the Sobolev space with usual scalar product and norm:
$
\Vert u \Vert_{H^{1}}
=
(\int_{X}(u^{2}+\vert \nabla u \vert^{2})dA)^{\frac{1}{2}}
$
and $W^{1,2}(X,E)$ is the Sobolev space defined in \eqref{W_1_p}. 

It is readily verified that, for $t>0$, 
the functional $D_{t}$ is bounded from below in $\Lambda$, while
this is not always the case for $t=0$, as we shall discuss below. 

\

We notice also that the functional
$D_{t}$ admits Gateaux derivatives at a point $(u,\eta) \in \Lambda$ only along smooth directions 
$(v,l) \in C^{1}(X)\times A^{0}(E)$.
In fact, the troublesome term with respect to differentiability is 
given by:
\begin{equation*}
T(u,\eta)
=
\int_{X} e^{(\kappa-1)u}
\Vert \beta_{0} + \bar{\partial}\eta \Vert
dA
,
\; (u,\eta)\in \Lambda.
\end{equation*}
On the other hand, if $(u,\eta) \in \Lambda$ is a \underline{weak} 
critical point of $D_{t}$ in the sense that:
\begin{equation}\label{weak_criticality}
D_{t}^{\prime}(u,\eta)[v,l]=0,
\; \forall \; (v,l) \in C^{1}(X) \times A^{0}(E)
\end{equation}
then,
\begin{equation}\label{weak_criticality_v}
\begin{split}
\int_{X} \nabla u \nabla v
+
2v
(
-1 + te^{u} +4 (\kappa-1)e^{(\kappa-1)u}\Vert \beta_{0}+ \bar{\partial}\eta \Vert^{2}
)
\, dA
=
0,
\; \forall \; v\in C^{1}(X)
\end{split}
\end{equation}
\begin{equation}\label{weak_criticality_l}
\begin{split}
\int_{X} e^{(\kappa-1)u}\langle \beta_{0} + \bar{\partial}\eta, \bar{\partial}l \rangle
\, dA
=0
,
\; \forall \; l\in A^{0}(E).
\end{split}
\end{equation}
The elliptic operators involved (in a weak form) in
\eqref{weak_criticality_v} and \eqref{weak_criticality_l}  allow us to 
gain all the regularity we need about 
$(u,\eta) \in \Lambda$, starting with the following result established in 
\cite{Huang_Lucia_Tarantello_2}. 

\begin{lemma}[\cite{Huang_Lucia_Tarantello_2}]\label{Lemma_regularity_l}
Let $(u,\eta) \in \Lambda $.
\begin{enumerate}[label=(\roman*)]
\item 	If \eqref{weak_criticality_l} holds then 
$\eta \in W^{1,p}(X,E)$ for all $p\geq 2$, 
and we can allow  
$l\in W^{1,2}(X,E)$ in \eqref{weak_criticality_l}. 
\item If \eqref{weak_criticality_v} and \eqref{weak_criticality_l} hold (i.e.
$(u,\eta)$ is a "weak" critical point of $D_{0}$) then $(u,\eta)$ is smooth and
satisfies, in the classical sense, the following system of equations:
\begin{equation}\label{system_P_t}
(\mathcal{P})_{t} 
\quad
\left\{
\begin{matrix*}[l]
\Delta u +2 -2te^{u} -8(\kappa-1)e^{(\kappa-1)u}\Vert \beta_{0}+\bar{\partial}\eta \Vert^{2} =0  & \text{in}  &  X  \\
\bar{\partial}(e^{(\kappa-1)u}*_{E}(\beta_{0}+\bar{\partial}\eta))=0.  &  \;\text{}\; 
\end{matrix*}
\right. 
\end{equation} 
\end{enumerate}
\end{lemma}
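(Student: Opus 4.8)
The plan is a bootstrap argument: first extract from the $\bar\partial$-equation \eqref{weak_criticality_l} that the $\kappa$-differential attached to $(u,\eta)$ is holomorphic, and only then feed this information into the Liouville-type equation \eqref{weak_criticality_v}. \textbf{For part (i),} write $\gamma=\beta_{0}+\bar\partial\eta\in[\beta]$ and set $\alpha:=e^{(\kappa-1)u}*_{E}\gamma\in A^{1,0}(X,E^{*})$. Using the defining relation $\xi\wedge*_{E}\gamma=\langle\xi,\gamma\rangle_{E}\,dA$, the vanishing of $(2,0)$-forms on a Riemann surface, and Stokes' theorem (no boundary terms), \eqref{weak_criticality_l} is equivalent to $\int_{X}l\,\bar\partial\alpha=0$ for all $l\in A^{0}(E)$, i.e. $\bar\partial\alpha=0$ in the sense of distributions. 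Since $u\in H^{1}(X)$, the Moser--Trudinger inequality gives $e^{pu}\in L^{1}(X)$ for every $p<\infty$; with $\gamma\in L^{2}$ and Hölder's inequality this yields $\alpha\in L^{r}(X)$ for all $r<2$, in particular $\alpha\in L^{1}_{\mathrm{loc}}$. A distributional solution of $\bar\partial\alpha=0$ in $L^{1}_{\mathrm{loc}}$ is holomorphic (Weyl's lemma for $\bar\partial$, applied in the local holomorphic charts recalled at the end of Section \ref{Preliminaries}), hence $\alpha\in H^{0}(X,E^{*}\otimes K_{X})=C_{\kappa}(X)$; in particular $\alpha$, and therefore $*_{E}^{-1}\alpha=e^{(\kappa-1)u}\gamma$, is smooth. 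Consequently $\bar\partial\eta=\gamma-\beta_{0}=e^{-(\kappa-1)u}(*_{E}^{-1}\alpha)-\beta_{0}$, and since $*_{E}^{-1}\alpha,\beta_{0}$ are smooth while $e^{-(\kappa-1)u}\in L^{p}(X)$ for all $p<\infty$, we get $\bar\partial\eta\in L^{p}(A^{0,1}(X,E))$ for all $p\geq 2$. The Poincaré inequality \eqref{poincare_inequality_equation} — equivalently the $L^{p}$-ellipticity of $\bar\partial$ on $E$, whose kernel $H^{0}(X,E)$ is trivial because $\deg E=(\kappa-1)(2-2\mathfrak{g})<0$ — then gives $\eta\in W^{1,p}(X,E)$ for all $p\geq 2$. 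Finally, $l\mapsto\int_{X}e^{(\kappa-1)u}\langle\gamma,\bar\partial l\rangle\,dA=\int_{X}\langle*_{E}^{-1}\alpha,\bar\partial l\rangle\,dA$ is a bounded functional on $W^{1,2}(X,E)$ (since $*_{E}^{-1}\alpha$ is bounded) vanishing on the dense subspace $A^{0}(E)$, hence on all of $W^{1,2}(X,E)$.

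\textbf{For part (ii),} the identity \eqref{weak_criticality_v} says exactly that $u$ is a weak $H^{1}$-solution of $\Delta u=-2+2te^{u}+8(\kappa-1)e^{(\kappa-1)u}\Vert\gamma\Vert^{2}$. The quadratic term rewrites as $e^{(\kappa-1)u}\Vert\gamma\Vert^{2}=e^{-(\kappa-1)u}\Vert*_{E}^{-1}\alpha\Vert^{2}$ with $\Vert*_{E}^{-1}\alpha\Vert^{2}$ smooth; together with $e^{u},e^{\pm(\kappa-1)u}\in L^{p}(X)$ for all $p$, this shows $\Delta u\in L^{p}(X)$ for every $p<\infty$, so $L^{p}$-elliptic regularity gives $u\in W^{2,p}(X)$ for all $p$, in particular $u\in C^{1,\alpha}(X)$ for all $\alpha\in(0,1)$. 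But then the right-hand side above is $C^{1,\alpha}$, so Schauder estimates give $u\in C^{3,\alpha}$, and iterating yields $u\in C^{\infty}(X)$. With $u$ smooth, $\bar\partial\eta=e^{-(\kappa-1)u}(*_{E}^{-1}\alpha)-\beta_{0}$ is smooth, so elliptic regularity for $\bar\partial$ on $E$ gives $\eta\in A^{0}(E)$; undoing the weak formulations, $(u,\eta)$ satisfies the system $(\mathcal{P})_{t}$ in the classical sense.

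The main obstacle is precisely the opening move: before any regularity is available one has no control on the quadratic term $e^{(\kappa-1)u}\Vert\beta_{0}+\bar\partial\eta\Vert^{2}$, since $\bar\partial\eta$ is only $L^{2}$ and $e^{(\kappa-1)u}$ only $L^{p}$. It is the $\bar\partial$-equation \eqref{weak_criticality_l} that rescues the argument, by forcing $e^{(\kappa-1)u}*_{E}(\beta_{0}+\bar\partial\eta)$ to be a \emph{smooth} holomorphic $\kappa$-differential, after which the quadratic term becomes $e^{-(\kappa-1)u}$ times a smooth function — an object the Moser--Trudinger inequality controls in every $L^{p}$. The remaining care is bookkeeping: the sign and conjugation conventions in passing from \eqref{weak_criticality_l} to $\bar\partial\alpha=0$ through the Hodge pairing, the triviality of $H^{0}(X,E)$ needed for the $\bar\partial$ estimates, and the density argument extending the admissible test sections $l$ from $A^{0}(E)$ to $W^{1,2}(X,E)$.
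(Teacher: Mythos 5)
Your proposal is correct and follows essentially the same route as the paper's proof: apply Weyl's lemma to conclude that $e^{(\kappa-1)u}*_{E}(\beta_{0}+\bar{\partial}\eta)$ is a holomorphic (hence smooth) $\kappa$-differential, deduce $\bar{\partial}\eta=e^{-(\kappa-1)u}(*_{E}^{-1}\alpha)-\beta_{0}\in L^{q}$ for all $q$ from $e^{-(\kappa-1)u}\in L^{q}$, obtain $\eta\in W^{1,p}(X,E)$ via the Poincar\'e inequality and ellipticity of $\bar{\partial}$, and then bootstrap the Liouville equation for (ii). You merely make explicit several details the paper leaves implicit (the $L^{1}_{\mathrm{loc}}$ check needed before invoking Weyl's lemma, the density argument extending the test sections to $W^{1,2}(X,E)$, and the $W^{2,p}$--Schauder iteration), all of which are sound.
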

\begin{proof}
By using Weil's theorem, from \eqref{weak_criticality_l} we get that 
$e^{(\kappa-1)u}*_{E}(\beta_{0} + \bar{\partial}\eta)\in C_{\kappa}(X)$.
So we can use the basis in \eqref{basis_for_C_k_X} to write
$$
e^{(\kappa-1)u}(\beta_{0}+\bar{\partial}\eta)
=
\sum_{j=1}^{\nu}a_{j}s_{j}
\; \text{ for suitable } \; 
a_{j}\in \mathbb{C}, 
\; \text{ and } \; 
\nu = (2\kappa-1)(\mathfrak{g}-1).
$$
As a consequence, since 
$e^{-(\kappa-1)u}\in L^{p}(X),\; \forall \; p>1$, we find
$$
\bar{\partial}\eta
=
e^{-(\kappa-1)u}(\sum_{j=1}^{\nu}a_{j}*_{E}^{-1}s_{j}) - \beta_{0}
\in L^{q}(X),
\; \forall \; q>1.
$$
Therefore, we can use elliptic regularity and the Poincar\'e inequality
for the elliptic operator $\bar{\partial}$, 
to conclude that 
$\eta \in W^{1,p}(X,E)$ for all $p>1$, as claimed in (i).
At this point (ii) follows easily, since by using part (i) and
elliptic regularity theory together with well known boot-strap arguments, 
we obtain that $(u,\eta)$ is smooth and satisfies  \eqref{system_P_t}. 
\end{proof}
Therefore, to find solutions to \eqref{system_P_t}, we may more conveniently consider 
$D_{t}$ in the stronger space: 
\begin{equation*}
\mathcal{V}_{p}=H^{1}(X) \times W^{1,p}(X,E)
,\;
p>2,
\end{equation*}
as  we check easily that, 
$T \in C^{1}(\mathcal{V}_{p})$ 
(see \cite{Huang_Lucia_Tarantello_2}) and so 
 $D_{t} \in C^{1}(\mathcal{V}_{p})$. Summarizing:

$(u,\eta)$ is a (classical) solution of problem 
$(\mathcal{P})_{t}$
if and only if it is a weak critical point of $D_{t}$ in $\Lambda$ 
(in the sense of \eqref{weak_criticality}) 
or equivalently, it is a (usual) critical point 
of $D_{t}$ in $\mathcal{V}_{p}$.

As already mentioned in the introduction, 
in \cite{Huang_Lucia_Tarantello_2} it has been proved the following
uniqueness result which we recall for the convenience:

\begin{thm}[\cite{Huang_Lucia_Tarantello_2}]\label{thm_uniqueness}
For any $t>0$, the functional $D_{t}$ admits a \underline{unique} critical point
$(u_{t},\eta_{t}) \in \mathcal{V}_{p}$ for every $p>2$, 
and it corresponds to the global minimum
of $D_{t}$ in $\Lambda$. 
In particular, problem $(\mathcal{P})_{t}$ admits the unique solution
$(u_{t},\eta_{t})$.
\end{thm}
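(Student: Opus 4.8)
The assertion has three parts: (a) $D_{t}$ has a global minimizer in $\Lambda$ for $t>0$; (b) this minimizer is smooth and solves $(\mathcal{P})_{t}$; (c) it is the \emph{only} critical point of $D_{t}$ in $\mathcal{V}_{p}$, for every $p>2$. I would obtain (a) by the direct method, deduce (b) from the regularity already at hand, and put the real work into (c).

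\emph{Existence.} For $t>0$ the pointwise inequality $-u+te^{u}\geq 1+\log t$ gives $D_{t}(u,\eta)\geq \tfrac14\int_{X}|\nabla u|^{2}\,dA+(1+\log t)|X|$, so along a minimizing sequence $(u_{n},\eta_{n})$ the $\nabla u_{n}$ are bounded in $L^{2}$; splitting $u_{n}=\bar u_{n}+(u_{n}-\bar u_{n})$ and applying Jensen's inequality to $\int_{X}(-u_{n}+te^{u_{n}})\,dA$ keeps the averages $\bar u_{n}$ bounded, so $u_{n}$ is bounded in $H^{1}(X)$. Moser--Trudinger then bounds $e^{u_{n}}$ in every $L^{q}(X)$; combined with \eqref{poincare_inequality_equation} (used after $\|\beta_{0}+\bar\partial\eta_{n}\|^{2}\geq\tfrac12\|\bar\partial\eta_{n}\|^{2}-\|\beta_{0}\|^{2}$ and a control of the weight $e^{(\kappa-1)u_{n}}$) this bounds $\eta_{n}$ in $W^{1,p}(X,E)$ for $p>2$. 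Passing to a weak limit and using weak lower semicontinuity of the gradient term and of the convex $\eta$-term, together with strong $L^{q}$-convergence of $e^{u_{n}}$ to absorb the remaining terms, produces a minimizer $(u_{t},\eta_{t})$ of $D_{t}$ in $\Lambda$.

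\emph{Regularity and reduction.} A global minimizer is in particular a weak critical point in the sense of \eqref{weak_criticality}, so by Lemma~\ref{Lemma_regularity_l} it is smooth, lies in $\mathcal{V}_{p}$ for every $p>2$, and solves $(\mathcal{P})_{t}$. Conversely, as recorded just before the statement, every critical point of $D_{t}$ in $\mathcal{V}_{p}$ is a weak critical point, hence solves $(\mathcal{P})_{t}$ and is smooth. Thus it remains only to prove that $(\mathcal{P})_{t}$ has \underline{at most one} solution, and that this solution realizes $\inf_{\Lambda}D_{t}$. For the latter, note that for fixed $u$ the map $\eta\mapsto D_{t}(u,\eta)$ is convex; moreover its Euler--Lagrange condition is exactly the second equation of $(\mathcal{P})_{t}$, which says that $\beta_{0}+\bar\partial\eta$ is the harmonic representative of $[\beta]$ for the metric $h_{u}=e^{\frac{\kappa-1}{2}u}g_{X}$, and since $\deg(\otimes^{\kappa-1}T^{1,0}_{X})<0$ we have $H^{0}(X,E)=0$, so $\bar\partial$ is injective on $A^{0}(E)$ and the minimizing $\eta=\eta_{u}$ is \emph{uniquely} determined by $u$. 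Hence solutions of $(\mathcal{P})_{t}$ correspond bijectively to critical points of the reduced functional $\Phi_{t}(u):=D_{t}(u,\eta_{u})=\min_{\eta}D_{t}(u,\eta)$, and $\inf_{\Lambda}D_{t}=\inf_{u}\Phi_{t}(u)$.

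\emph{Uniqueness --- the main obstacle.} The clean route would be to show that $\Phi_{t}$ is \emph{strictly convex} (equivalently, that its gradient is strictly monotone), which forces a unique critical point. Concretely, given two solutions $(u_{1},\eta_{1})$ and $(u_{2},\eta_{2})$, testing the difference of the first equations against $u_{1}-u_{2}$ and the difference of the second equations against the suitably weighted $\bar\partial(\eta_{1}-\eta_{2})$, one uses $(e^{u_{1}}-e^{u_{2}})(u_{1}-u_{2})\ge 0$ for the $te^{u}$ term; the decisive and delicate point is to control the coupling term $4e^{(\kappa-1)u}\|\beta_{0}+\bar\partial\eta\|^{2}$, whose integrand $e^{(\kappa-1)v}\|w\|^{2}$ is \emph{not} jointly convex in $(v,w)$, so that global convexity is not formal. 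Here I expect to invoke precisely the curvature/Poincar\'e-type estimate of Corollary~\ref{cor_d_bar_eta_estimate} --- i.e.\ \eqref{poincare_conformal_estimates} with $\phi=u/4$ --- to bound $\int_{X}\|\bar\partial\eta\|^{2}e^{(\kappa-1)u}\,dA$ from below by a quantity large enough to dominate the indefinite cross terms; this is the same mechanism that underlies the strict positivity of the Hessian $D_{t}''$ at a solution mentioned in the Introduction. Should a global convexity statement prove too rigid, the fallback is to note that this Hessian positivity holds at \emph{every} critical point (it depends only on the equations, through Corollary~\ref{cor_d_bar_eta_estimate}), so each critical point is a nondegenerate strict local minimum; combining this with coercivity of $D_{t}$ on $\mathcal{V}_{p}$ and a deformation/mountain-pass argument rules out a second critical point, since two strict local minima would force a critical point of saddle type. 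In either implementation, taming this non-convex coupling term is the crux of the argument; the rest is routine.
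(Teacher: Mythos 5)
First, a point of orientation: the paper does not actually prove Theorem \ref{thm_uniqueness} — it is imported verbatim from \cite{Huang_Lucia_Tarantello_2}, and what the present text supplies is only the surrounding machinery (existence of the minimizer via \eqref{estimate_D_t} and Lemma \ref{eta_u_map}, regularity via Lemma \ref{Lemma_regularity_l}, Hessian positivity \eqref{seconda_variatione_D_t}--\eqref{6.4a}, the weak Palais--Smale Lemma \ref{Palais_Smale}, and the Ekeland/mountain-pass scheme deployed in the proof of Theorem \ref{thm_primo}). Measured against that, your \emph{fallback} is exactly the intended argument: strict positivity of $D_{t}^{\prime\prime}$ at every critical point (via Corollary \ref{cor_d_bar_eta_estimate}, which only needs the equations), hence every critical point is a strict local minimum, and a second one would create a mountain-pass geometry whose critical level carries a critical point that cannot be a strict local minimum --- the same contradiction the paper runs at $t=0$ in the proof of Theorem \ref{thm_primo}. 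Your primary route (global strict convexity of the reduced functional $\Phi_{t}$) is not what is done and, as you yourself suspect, is unlikely to close: Corollary \ref{cor_d_bar_eta_estimate} is available only \emph{at} a solution, so it yields local, not global, convexity information.

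Two places where you are too quick. (1) In the existence step, Moser--Trudinger plus \eqref{poincare_inequality_equation} does \emph{not} bound $\eta_{n}$ in $W^{1,p}(X,E)$ for $p>2$: H\"older against $e^{-(\kappa-1)u_{n}}\in L^{q}$ only converts the bound on $\int_{X} e^{(\kappa-1)u_{n}}\Vert\beta_{0}+\bar{\partial}\eta_{n}\Vert^{2}\,dA$ into an $L^{a}$ bound on $\bar{\partial}\eta_{n}$ for $a<2$. The upgrade to $W^{1,p}$, $p>2$, goes through the Euler--Lagrange equation \eqref{weak_criticality_l} and the finite-dimensionality of $C_{\kappa}(X)$ (Lemma \ref{Lemma_regularity_l}, Lemma \ref{prop_6.1}); the clean fix, which the paper uses, is to take the minimizing sequence in the reduced form $(u_{n},\eta(u_{n}))$ and invoke part (i) of Lemma \ref{eta_u_map}. (2) The phrase ``deformation/mountain-pass argument'' hides the real technical issue: $D_{t}$ satisfies only the weak (PS) condition of Lemma \ref{Palais_Smale}, which requires an a priori $W^{1,p}$ bound on the $\eta$-component of the (PS) sequence. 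This is why the argument runs Ekeland's principle on paths of the form $s\mapsto (u(s),\eta(u(s)))$ rather than a generic deformation. Neither point is a conceptual gap --- the repairs are all present in the paper --- but both are where the actual work sits.
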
 
 
As already mentioned, for $t=0$, the functional:
\begin{equation*}
D_{0}(u,\eta)
=
D_{t=0}(u,\eta)
= 
\int_{X}
\frac{\vert \nabla u \vert^{2} }{4}-u+4e^{(\kappa-1)u}
\Vert \beta_{0}+ \bar{\partial} \eta \Vert^{2}dA
\end{equation*}
may no longer be bounded from below in $\Lambda$ and actually
problem $(\mathcal{P})_{t=0}$ in \eqref{system_P_t} may not admit 
a solution. For example, if 
$[\beta]=0$ (i.e. $\beta_{0}=0$), 
then we easily check that $(\mathcal{P})_{t=0}$ admits no solution and actually:
\begin{equation*}
u_{t}= \ln \frac{1}{t},
\eta_{t}=0
\; \text{ and } \;
D_{0}(u_{t},\eta_{t})\leq D_{t}(u_{t},\eta_{t}) = \ln t \longrightarrow -\infty,
\end{equation*}
as $t\longrightarrow 0^{+} $,  and so $D_{0}$ is unbounded from below in $\Lambda$. 
Such an example illustrates the only possible obstruction to the solvability 
of $(\mathcal{P})_{t=0}$, in the following sense. 

\begin{thm}\label{thm_primo}
If $(u_{0},\eta_{0})$ is a solution for $(\mathcal{P})_{t=0}$ in \eqref{system_P_t}, then
\begin{enumerate}[label=(\roman*)]
\item $(u_{t},\eta_{t})\longrightarrow (u_{0},\eta_{0})$ in $\mathcal{V}_{p}$, $p>2$,
as $t\longrightarrow 0^{+}$;
\item 
$(u_{0},\eta_{0})$ is the \underline{only} solution of $(\mathcal{P})_{t=0}$ 
and so the only critical point of $D_{0}$. Furthermore, 
$D_{0}$ is bounded from below in $\Lambda$ and attains its global minimum at 
$(u_{0},\eta_{0})$.
\end{enumerate}
\end{thm}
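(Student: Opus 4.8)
The plan is to reduce the whole statement to part (i), and to obtain (i) by applying the Implicit Function Theorem to the equation $D_{t}'(u,\eta)=0$ at the point $\big(t,(u,\eta)\big)=\big(0,(u_{0},\eta_{0})\big)$. The only genuinely nontrivial input is the positive definiteness of the Hessian $D_{0}''$ at the given solution $(u_{0},\eta_{0})$; once this is in hand, everything else is soft. Thus the real obstacle is entirely concentrated in showing that the second variation of $D_{0}$ at a solution of $(\mathcal{P})_{0}$ is still coercive, even though the strictly convex term $t\int_{X}e^{u}v^{2}$ present for $t>0$ has disappeared.

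\emph{Step 1 (the Hessian at a solution of $(\mathcal{P})_{0}$).} Compute the second differential of $D_{0}$ at $(u_{0},\eta_{0})$ along $(v,l)\in C^{1}(X)\times A^{0}(E)$:
\begin{equation*}
D_{0}''(u_{0},\eta_{0})[(v,l),(v,l)]
=
\int_{X}\Big(
\tfrac{1}{2}|\nabla v|^{2}
+4(\kappa-1)^{2}e^{(\kappa-1)u_{0}}\|\beta_{0}+\bar{\partial}\eta_{0}\|^{2}v^{2}
+16(\kappa-1)e^{(\kappa-1)u_{0}}\,v\,\mathrm{Re}\langle\beta_{0}+\bar{\partial}\eta_{0},\bar{\partial}l\rangle
+8e^{(\kappa-1)u_{0}}\|\bar{\partial}l\|^{2}
\Big)\,dA .
\end{equation*}
Only the mixed term is indefinite, and it must be absorbed into the remaining nonnegative terms. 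This is exactly the situation already treated for $t>0$ at $(u_{t},\eta_{t})$ in \cite{Goncalves_Uhlenbeck},\cite{Huang_Lucia_Tarantello_2}, and that argument does not use $t>0$: its only quantitative ingredient is the sharp Poincar\'e-type inequality of Corollary \ref{cor_d_bar_eta_estimate}, which we recorded for \underline{all} $t\ge 0$ precisely so that it applies to a solution of $(\mathcal{P})_{0}$. Carrying that argument over at $t=0$ shows that $D_{0}''(u_{0},\eta_{0})$ is positive definite on $\mathcal{V}_{p}$. Since the second differential is an elliptic operator of Fredholm index zero (a compact perturbation of an invertible one), positivity upgrades to invertibility of $D_{0}''(u_{0},\eta_{0})\in\mathcal{L}(\mathcal{V}_{p},\mathcal{V}_{p}^{*})$.

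\emph{Step 2 (the IFT curve and part (i)).} The map $(t,(u,\eta))\mapsto D_{t}'(u,\eta)\in\mathcal{V}_{p}^{*}$ is of class $C^{1}$ (the $t$-dependence being affine, and $D_{t}'$ being $C^{1}$ on $\mathcal{V}_{p}$, as already needed for the $C^{2}$-dependence of $(u_{t},\eta_{t})$ discussed in the introduction), its partial differential in $(u,\eta)$ at $\big(0,(u_{0},\eta_{0})\big)$ is the invertible operator $D_{0}''(u_{0},\eta_{0})$ of Step 1, and $D_{0}'(u_{0},\eta_{0})=0$ since $(u_{0},\eta_{0})$ solves $(\mathcal{P})_{0}$. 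The Implicit Function Theorem then produces $\delta>0$ and a $C^{1}$ curve $t\mapsto(u(t),\eta(t))\in\mathcal{V}_{p}$ on $[0,\delta)$ with $(u(0),\eta(0))=(u_{0},\eta_{0})$ and $D_{t}'(u(t),\eta(t))=0$. For $0<t<\delta$ the pair $(u(t),\eta(t))$ is a critical point of $D_{t}$, hence equals $(u_{t},\eta_{t})$ by the uniqueness of Theorem \ref{thm_uniqueness}; continuity of the curve at $t=0$ therefore gives $(u_{t},\eta_{t})\to(u_{0},\eta_{0})$ in $\mathcal{V}_{p}$ as $t\to0^{+}$. A standard elliptic bootstrap through the equations $(\mathcal{P})_{t}$ (whose right-hand sides then converge in every $L^{q}$) upgrades the convergence to $\mathcal{V}_{p}$ for every $p>2$, in fact to $C^{2}(X)$. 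This proves (i).

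\emph{Step 3 (part (ii)).} If $(\tilde u_{0},\tilde\eta_{0})$ is any solution of $(\mathcal{P})_{0}$, part (i) applied to it gives $(u_{t},\eta_{t})\to(\tilde u_{0},\tilde\eta_{0})$ as $t\to0^{+}$; since $(u_{t},\eta_{t})$ is uniquely determined for each $t>0$, uniqueness of the limit forces $(\tilde u_{0},\tilde\eta_{0})=(u_{0},\eta_{0})$, and since every critical point of $D_{0}$ solves $(\mathcal{P})_{0}$ by Lemma \ref{Lemma_regularity_l}, $(u_{0},\eta_{0})$ is the only critical point of $D_{0}$. Finally write $c_{t}:=D_{t}(u_{t},\eta_{t})=\min_{\Lambda}D_{t}$. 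By (i) (convergence in $C^{2}$) together with $t\int_{X}e^{u_{t}}\,dA\to 0$, one gets $c_{t}\to D_{0}(u_{0},\eta_{0})$. Hence, for any $(v,\phi)\in\Lambda$,
\begin{equation*}
D_{0}(v,\phi)=D_{t}(v,\phi)-t\!\int_{X}e^{v}\,dA\ \ge\ c_{t}-t\!\int_{X}e^{v}\,dA\ \longrightarrow\ D_{0}(u_{0},\eta_{0})\quad\text{as }t\to0^{+},
\end{equation*}
so $D_{0}$ is bounded below on $\Lambda$ and attains its global minimum at $(u_{0},\eta_{0})$. As indicated above, the one place where the actual work lies is Step 1: the vanishing of the $t\int_{X}e^{u}v^{2}$ term at $t=0$ is compensated exactly by Corollary \ref{cor_d_bar_eta_estimate}, which stays valid for solutions of $(\mathcal{P})_{0}$.
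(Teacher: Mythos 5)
Your route is genuinely different from the paper's, and you have correctly isolated the shared crux: the strict positivity of $D_{0}''$ at a solution of $(\mathcal{P})_{t=0}$, obtained from Corollary \ref{cor_d_bar_eta_estimate} exactly as in the $t>0$ case. The paper proves precisely this in Lemma \ref{lem_strict_min} (the coercivity estimate \eqref{6.6} and the resulting strict local minimality \eqref{formula_of_the_lemma_on_D_0}), and your Step 3 coincides with the end of the paper's proof. Where you diverge is in how positivity of the Hessian is converted into convergence of $(u_{t},\eta_{t})$: you continue the solution locally in $t$ by the Implicit Function Theorem, whereas the paper argues globally --- it assumes $(u_{t_{n}},\eta_{t_{n}})$ stays at distance at least $\varepsilon_{0}$ from $(u_{0},\eta_{0})$, uses the strict local minimum to erect an energy barrier (Claim 1 in the proof of Theorem \ref{thm_primo}), and then runs a mountain-pass/Ekeland argument (via the weak Palais--Smale property of Lemma \ref{Palais_Smale}) to manufacture a second critical point of $D_{t_{0}}$, contradicting Theorem \ref{thm_uniqueness}. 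The payoff of the paper's route is that it only ever uses the coercivity of the quadratic form $D_{0}''$ with respect to the weak norm $\Vert v \Vert_{H^{1}}+\Vert l \Vert_{W^{1,2}}$, never the invertibility of $D_{0}''$ as an operator.

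That last point is where your argument has a genuine gap. For the IFT in the setting $F:\R\times\mathcal{V}_{p}\to\mathcal{V}_{p}^{*}$ you need $D_{0}''(u_{0},\eta_{0})$ to be an isomorphism of $\mathcal{V}_{p}$ onto $\mathcal{V}_{p}^{*}$, and your justification (``elliptic of Fredholm index zero, a compact perturbation of an invertible operator'') does not hold in this space pair when $p>2$. The $\eta$-block of the Hessian is essentially $l\mapsto \bar{\partial}^{*}(e^{(\kappa-1)u_{0}}\bar{\partial}l)$; it maps $W^{1,p}(X,E)$ into $W^{-1,p}$, which is a proper, dense, non-closed subspace of $(W^{1,p}(X,E))^{*}\simeq W^{-1,p'}$ with $p'=p/(p-1)<2$. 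So this operator is injective with non-closed range: it is neither surjective nor Fredholm, and the coercivity \eqref{6.6}, being relative to the $H^{1}\times W^{1,2}$ norm, cannot by itself produce surjectivity onto $\mathcal{V}_{p}^{*}$. (The same caveat attaches to the paper's own invocation of the IFT for $t>0$ in the paragraph preceding Lemma \ref{star}; but the paper does not rely on that step for Theorem \ref{thm_primo}.) The repair is not deep but it is a missing step: either eliminate $\eta$ first through the solution map $u\mapsto\eta(u)$ of Lemma \ref{prop_6.1} and Lemma \ref{eta_u_map} and apply the IFT to the reduced equation for $u$ alone, viewed as a map $H^{1}(X)\to (H^{1}(X))^{*}$, where the reduced Hessian genuinely is a coercive and hence invertible operator; or re-choose the target space of $F$ so that the linearization becomes onto. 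As written, the invertibility claim in Step 1 is the one assertion that would not survive scrutiny, and everything downstream depends on it.
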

To establish Theorem \ref{thm_primo} we need few preliminaries. 
To start, for a fixed
$u \in H^{1}(X)$, we are going to account for \eqref{weak_criticality_l} by considering the minimization problem:
\begin{equation}\label{minimization_in_eta}
\inf_{\eta \in W^{1,2}(X,E)}\int e^{(\kappa-1)u}\Vert \beta_{0} + \bar{\partial}\eta \Vert^{2}\,dA
:=
c_{0}(u).
\end{equation}
We have:
\begin{lemma}\label{prop_6.1}
For every $u\in H^{1}(X)$ there exits a \underline{unique} global minimum $\eta(u)$
for \eqref{minimization_in_eta} with $\eta(u) \in W^{1,p}(X,E), \; \forall \; p > 2$. 
Furthermore,
\begin{equation*}
\eta \in W^{1,2}(X,E) 
\; \text{ satisfies \eqref{weak_criticality_l} if and only if} \;
\eta=\eta(u). 
\end{equation*}
\end{lemma}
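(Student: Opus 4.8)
The plan is to treat the $\eta$–variable in the Donaldson functional separately by regarding it as a linear‐algebra problem at fixed $u$. For a fixed $u\in H^{1}(X)$, the functional
$\eta\mapsto \int_{X} e^{(\kappa-1)u}\Vert \beta_{0}+\bar{\partial}\eta\Vert^{2}\,dA$
is a (nonnegative) quadratic functional in $\eta$, convex, and in fact strictly convex modulo the kernel of $\bar{\partial}$ acting on $A^{0}(E)$. Since $E=\otimes^{\kappa-1}T^{1,0}_{X}$ has negative degree for $\kappa\geq 2$, this kernel is trivial (there are no nonzero holomorphic sections of $E$), so $\bar{\partial}\eta=0$ forces $\eta=0$ and the functional is strictly convex on all of $W^{1,2}(X,E)$. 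Combined with the Poincar\'e inequality \eqref{poincare_inequality_equation} at $p=2$, which gives coercivity in the norm $\Vert\eta\Vert_{W^{1,2}}\simeq\Vert\bar{\partial}\eta\Vert_{L^{2}}$, one obtains existence and uniqueness of a minimizer by the direct method.

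First I would set up the direct method: take a minimizing sequence $\eta_{n}$ for \eqref{minimization_in_eta}. The main subtlety is that the coercive quantity is $\int_{X}e^{(\kappa-1)u}\Vert\beta_{0}+\bar{\partial}\eta_{n}\Vert^{2}\,dA$, weighted by $e^{(\kappa-1)u}$, whereas I need a bound on $\Vert\bar{\partial}\eta_{n}\Vert_{L^{2}}$ in the \emph{unweighted} norm in order to apply \eqref{poincare_inequality_equation}. Here I would use that $u\in H^{1}(X)$ implies, via the Moser–Trudinger inequality on the compact surface $X$, that $e^{-(\kappa-1)u}\in L^{q}(X)$ for every $q\geq 1$, hence $e^{(\kappa-1)u}>0$ a.e.\ and $\int_{X}e^{(\kappa-1)u}\Vert\cdot\Vert^{2}$ controls a genuine (if degenerate-looking) weighted norm; to pass to the unweighted bound I split $X$ into the region where $u$ is not too negative and its complement, using integrability of $e^{-(\kappa-1)u}$ to absorb the bad region. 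Once $\Vert\bar{\partial}\eta_{n}\Vert_{L^{2}}$ is bounded, the Poincar\'e inequality bounds $\Vert\eta_{n}\Vert_{W^{1,2}}$, so a subsequence converges weakly in $W^{1,2}(X,E)$ to some $\eta(u)$; weak lower semicontinuity of the convex integrand (together with $\bar{\partial}\eta_{n}\rightharpoonup\bar{\partial}\eta(u)$ in $L^{2}$ and Fatou after passing to a.e.\ convergent subsequences for the weight) shows $\eta(u)$ attains the infimum $c_{0}(u)$. Uniqueness follows from strict convexity as explained above.

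Next I would identify the Euler–Lagrange equation: $\eta(u)$ minimizes \eqref{minimization_in_eta} if and only if
$\int_{X}e^{(\kappa-1)u}\langle\beta_{0}+\bar{\partial}\eta(u),\bar{\partial}l\rangle\,dA=0$
for all $l\in A^{0}(E)$, which is precisely \eqref{weak_criticality_l}. Conversely, any $\eta$ satisfying \eqref{weak_criticality_l} is a critical point of the strictly convex functional, hence the unique minimizer, so $\eta=\eta(u)$; this gives the last assertion of the lemma. Finally, the improved regularity $\eta(u)\in W^{1,p}(X,E)$ for all $p>2$ is exactly the content of Lemma \ref{Lemma_regularity_l}(i), whose hypothesis \eqref{weak_criticality_l} we have just verified: writing $e^{(\kappa-1)u}*_{E}(\beta_{0}+\bar{\partial}\eta(u))\in C_{\kappa}(X)$ via Weyl's lemma and expanding in the basis \eqref{basis_for_C_k_X}, one gets $\bar{\partial}\eta(u)=e^{-(\kappa-1)u}\big(\sum_{j}a_{j}*_{E}^{-1}s_{j}\big)-\beta_{0}\in L^{q}$ for every $q>1$, and elliptic regularity for $\bar{\partial}$ together with the Poincar\'e inequality upgrades this to $\eta(u)\in W^{1,p}(X,E)$.

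I expect the main obstacle to be the coercivity step — converting the $e^{(\kappa-1)u}$-weighted control of $\beta_{0}+\bar{\partial}\eta_{n}$ into an unweighted $L^{2}$ bound on $\bar{\partial}\eta_{n}$ — since the weight can degenerate where $u\to-\infty$; the resolution is the $L^{q}$-integrability of $e^{-(\kappa-1)u}$ for $u\in H^{1}(X)$ (Moser–Trudinger), which confines the degeneracy to a set of arbitrarily small measure and lets one estimate $\Vert\bar{\partial}\eta_{n}\Vert_{L^{2}}$ by Hölder after the splitting. Everything else — strict convexity from the vanishing of $H^{0}(X,E)$, weak lower semicontinuity, and the passage between \eqref{weak_criticality_l} and minimality — is routine.
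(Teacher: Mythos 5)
Your overall architecture (direct method, strict convexity from the triviality of $H^{0}(X,E)$, identification of \eqref{weak_criticality_l} as the Euler--Lagrange equation, uniqueness by convexity, and the appeal to Lemma \ref{Lemma_regularity_l}(i) for the $W^{1,p}$ regularity) is the same as the paper's, and those parts are sound. The genuine gap is in your coercivity step. You claim an \emph{unweighted} bound $\Vert\bar{\partial}\eta_{n}\Vert_{L^{2}}\leq C$ for a minimizing sequence, obtained by splitting $X$ into $\{u\geq -M\}$ and $\{u<-M\}$ and ``absorbing the bad region'' via the integrability of $e^{-(\kappa-1)u}$. This cannot work: on $\{u<-M\}$ the weight $e^{(\kappa-1)u}$ is \emph{small}, so the hypothesis $\int_{X}e^{(\kappa-1)u}\Vert\bar{\partial}\eta_{n}\Vert^{2}dA\leq C$ gives essentially no control of $\int_{\{u<-M\}}\Vert\bar{\partial}\eta_{n}\Vert^{2}dA$, and the smallness of $|\{u<-M\}|$ is useless without higher integrability of $\bar{\partial}\eta_{n}$ --- which is exactly what you are trying to prove. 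Indeed, since $u\in H^{1}(X)$ may tend to $-\infty$ (e.g.\ like $-(\log\frac{1}{|z|})^{\alpha}$, $\alpha<\frac12$, near a point), one can build sections concentrated where the weight degenerates whose weighted $L^{2}$ norm is $1$ but whose unweighted $L^{2}$ norm blows up; adding small multiples of these to a minimizer produces minimizing sequences unbounded in $W^{1,2}(X,E)$. The same defect infects your lower-semicontinuity step, which presumes weak $L^{2}$ convergence of $\bar{\partial}\eta_{n}$.

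The paper's resolution is to settle for less at the compactness stage: by H\"older,
\[
\int_{X}\Vert\bar{\partial}\eta_{n}\Vert^{a}dA
=\int_{X}e^{-\frac{(\kappa-1)ua}{2}}\bigl(e^{(\kappa-1)u}\Vert\bar{\partial}\eta_{n}\Vert^{2}\bigr)^{\frac{a}{2}}dA
\leq\Bigl(\int_{X}e^{-\frac{(\kappa-1)ua}{2-a}}dA\Bigr)^{\frac{2-a}{2}}
\Bigl(\int_{X}e^{(\kappa-1)u}\Vert\bar{\partial}\eta_{n}\Vert^{2}dA\Bigr)^{\frac{a}{2}},
\]
which is bounded for every $a<2$ (here Moser--Trudinger enters, as you note). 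Together with \eqref{poincare_inequality_equation} this yields a weak limit $\eta$ only in $W^{1,a}(X,E)$, $1<a<2$. One then checks that $\eta$ satisfies \eqref{weak_criticality_l} (approximate criticality of the minimizing sequence for the quadratic functional, passed to the limit against the fixed test form $e^{(\kappa-1)u}\bar{\partial}l\in L^{b}$, $b=\frac{a}{a-1}$), so that Lemma \ref{Lemma_regularity_l}(i) --- i.e.\ $e^{(\kappa-1)u}*_{E}(\beta_{0}+\bar{\partial}\eta)\in C_{\kappa}(X)$, a finite-dimensional space --- upgrades $\eta$ to $W^{1,p}(X,E)$ for all $p>2$ \emph{before} one knows it is a minimizer; minimality is verified only afterwards, by pairing $\bar{\partial}(\eta_{n}-\eta)$ against $e^{(\kappa-1)u}(\beta_{0}+\bar{\partial}\eta)\in L^{b}$ and expanding the square. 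So the regularity lemma is not a final cosmetic upgrade, as in your write-up: it is the ingredient that closes the compactness argument. Your remaining steps (uniqueness by strict convexity, and the equivalence with \eqref{weak_criticality_l}, for which one also needs that the test directions in \eqref{weak_criticality_l} may be taken in $W^{1,2}(X,E)$, as Lemma \ref{Lemma_regularity_l}(i) permits) then go through as you describe.
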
	
\begin{proof}
We will be sketchy, as this result was essentially pointed out in \cite{Huang_Lucia_Tarantello_2}.  
Let $\eta_{n} \in W^{1,2}(X,E)$ be a minimizing sequence for \eqref{minimization_in_eta}, that is 
$$
T(u,\eta_{n})\longrightarrow c_{0}(u), 
\; \text{ as } \; 
n\longrightarrow +\infty. 
$$
Since $e^{-u} \in L^{q}(X), \; \forall \; q>1$, we can use H\"older inequality to check that $\eta_{n}$ is uniformly bounded in $W^{1,a}(X,E)$ for $a\in (0,1)$. Therefore,
along a subsequence, we find:
$\eta_{n} \rightharpoonup \eta$ weakly in $W^{1,a}(X,E)$ and therefore,
$$
\int_{X}e^{(\kappa-1)u}\langle \beta_{0}+ \bar{\partial}\eta,\bar{\partial}l \rangle
\, dA =0
,
\; \forall \; l \in A^{0}(E). 
$$
Then, by Lemma \ref{Lemma_regularity_l}, we know that $\eta \in W^{1,p}(X,E)$,
$\; \forall \; p>2$. Hence by the weak convergence we obtain that,
for $\xi \in L^{b}(A^{0,1}(X,E))$ with 
$b=\frac{a}{a-1}$ (the dual exponent of $a$), 
there holds:
$$
\int_{X} \langle  \bar{\partial}(\eta_{n}-\eta),\xi\rangle\, dA
\longrightarrow 0,
\; \text{ as  } \; 
n\longrightarrow +\infty.
$$
In particular, by taking $\xi=\eta e^{(\kappa-1)u}$ we can derive:
$$
c_{0}(u)
=
\lim_{n\to +\infty}
\int_{X}e^{(\kappa-1)u}\Vert \beta_{0} + \bar{\partial}\eta_{n} \Vert^{2}\,dA
\geq 
\int_{X}e^{(\kappa-1)u}\Vert \beta_{0} + \bar{\partial}\eta \Vert^{2}\,dA
\geq 
c_{0}(u),
$$
so $\eta$ is a minimum for \eqref{minimization_in_eta} and $\eta \in W^{1,p}(X,E)$,
$\; \forall \;  p>2$. 
Since for fixed $u\in H^{1}(X)$ the operator $T(u,\cdot)$ is strictly convex, 
then $\eta=\eta(u)$ is the only minimum point for \eqref{minimization_in_eta}.
In addition, for any $\eta$ satisfying \eqref{weak_criticality_l}, we have:
$\eta \in W^{1,p}(X,E)$, $\; \forall \; p>2$, and 
\begin{equation*}
\begin{split}
\int_{X}e^{(\kappa-1)u}   &   \Vert \beta_{0} + \bar{\partial}\eta \Vert^{2}\, dA
\geq 
\int_{X}e^{(\kappa-1)u}\Vert \beta_{0} + \bar{\partial}\eta(u) \Vert^{2}\, dA \\
= \; &
\int_{X}e^{(\kappa-1)u}\Vert \beta_{0} + \bar{\partial}\eta \Vert^{2}\, dA
+
\int_{X}e^{(\kappa-1)u}\Vert \bar{\partial}\eta(u) - \bar{\partial}\eta \Vert^{2}\, dA,
\end{split}
\end{equation*}
and necessarily $\eta=\eta(u)$, as claimed. 
\end{proof}
We wish to point out some useful properties about the map: 
\begin{equation}\label{map_H_1_to_W_1_p}
\begin{split}
H^{1}(X)\longrightarrow W^{1,p}(X,E)
\; : \; 
u \longrightarrow \eta(u).
\end{split}
\end{equation} 
To this purpose, for $u\in H^{1}(X)$, we let 
\begin{equation*}
\beta(u)=e^{(\kappa-1)u}(\beta_{0}+\bar{\partial}\eta(u))\in W^{1,p}(X,E),
\; p>2. 
\end{equation*}
Since $*_{E}\beta(u) \in C_{\kappa}(X)$, using the frame 
$\{ s_{1},\ldots,s_{\nu} \}$ of $C_{\kappa}(X)$ with 
$\nu=(2\kappa-1)(\mathfrak{g}-1)$,
as given in \eqref{basis_for_C_k_X},
we may write:
$*_{E}\beta(u)=\sum_{j=1}^{\nu}a_{j}(u)s_{j}$ with suitable
$a_{j}(u) \in \mathbb{C}$. Consequently, 
\begin{equation}\label{beta_u_sum} 
\beta(u)=\sum_{j=1}^{\nu}a_{j}(u)*_{E}^{-1}s_{j} 
,\;
a_{j}(u)
=
\int_{X}\langle \beta(u),*_{E}^{-1}s_{j} \rangle\, dA 
,\;
j=1,\ldots,\nu.
\end{equation} 
For $u$ and $u_{0}\in H^{1}(X)$ we point the following simple (but useful) identities:
\begin{equation}\label{d_bar_eta_difference}
\bar{\partial}\eta(u) - \bar{\partial}\eta(u_{0})
=
e^{(\kappa-1)u}(\beta(u)-\beta(u_{0}))
+
(e^{(\kappa-1)(u-u_{0})}-1)\beta(u_{0})
\end{equation}
or equivalently: 
\begin{equation}\label{beta_difference}
\beta(u)-\beta(u_{0})
=
e^{(\kappa-1)u}
(\bar{\partial}\eta(u)- \bar{\partial}\eta(u_{0}))
+
(e^{(\kappa-1)u}-e^{(\kappa-1)u_{0}})\beta(u_{0}).
\end{equation}

\begin{lemma}\label{eta_u_map}
\begin{enumerate}[label=(\roman*)]
\item If $u_{n}\rightharpoonup u$ weakly in $H^{1}(X)$ then 
$\eta(u_{n})\longrightarrow \eta(u)$ strongly in $W^{1,p}(X,E)$, $p>2$. 
In particular, the map in \eqref{map_H_1_to_W_1_p} takes bounded sets of $H^{1}(X)$
into bounded sets of $W^{1,p}(X,E)$, $p>2$.

\item 
For given $u_{0} \in H^{1}(X)$ and $p>2$ there exists a positive constant 
$\sigma_{p}= \sigma_{p}(u_{0})$
(depending only on $p$ and $u_{0}$) such that
\begin{equation}\label{d_bar_eta_difference_in_Lemma}
\Vert \bar{\partial} \eta(u) - \bar{\partial}\eta(u_{0})\Vert_{L^{p}}^{2}
\leq \sigma_{p}
(
\Vert \bar{\partial} \eta(u) - \bar{\partial}\eta(u_{0})\Vert_{L^{2}}^{2}
+
\Vert u-u_{0} \Vert_{H^{1}}^{2}
).
\end{equation}
\end{enumerate}
\begin{proof}
To establish $(i)$, we observe that $u_{n}$ is uniformly bounded in 
$H^{1}(X)$, and in particular, $e^{\pm u_{n}}\longrightarrow e^{\pm u}$ in 
$L^{q}(X)$,  $\; \forall \; q>1$. 
As a consequence, by setting $\eta_{n}=\eta(u_{n})$, from \eqref{minimization_in_eta} we have:
\begin{equation*}
\begin{split}
\int_{X} e^{(\kappa-1)u_{n}}\Vert \beta_{0} + \bar{\partial}\eta_{n}\Vert^{2}\, dA
\leq 
\int_{X}e^{(\kappa-1)u_{n}}\Vert \beta_{0} \Vert^{2}
\leq C. 
\end{split}
\end{equation*}
So, for $1<a<2$, we can use H\"older inequality to see that 
$\eta_{n}$ is uniformly bounded in $W^{1,a}(X,E)$. Thus,  
along a subsequence, we have:
$\eta_{n} \rightharpoonup \eta$ weakly in $W^{1,a}(X,E)$, as $n\longrightarrow +\infty$. 
Consequently, for any 
$\xi \in L^{b}(A^{0,1}(X,E)),\, b=\frac{a}{a-1}$, we have:
$$
\int_{X}\langle \bar{\partial} \eta_{n} - \bar{\partial}\eta ,  \xi \rangle_{E} \, dA
\longrightarrow 0
,
\; \text{ as } \; n\longrightarrow +\infty. 
$$
Furthermore, if we take 
$\xi \in L^{q}(A^{0,1}(X,E))$ with $q>b$ and $p=\frac{bq}{q-b}$ we can estimate
\begin{equation*}
\begin{split}
\vert\int_{X} (e^{(\kappa-1)u_{n}}- e^{(\kappa-1)u})
\langle \bar{\partial}\eta_{n}, \xi \rangle\, dA 
\vert 
\leq 
C\Vert e^{(\kappa-1)u_{n}} - e^{(\kappa-1)u}\Vert_{L^{p}}
\longrightarrow 0
,
\end{split}
\end{equation*}
as $n\longrightarrow +\infty$. Hence, as $n\longrightarrow +\infty$, 
$$
\int_{X}e^{(\kappa-1)u_{n}}
\langle \beta_{0} + \bar{\partial}\eta_{n} , \xi \rangle\,dA
\longrightarrow 
\int_{X}e^{(\kappa-1)u}
\langle \beta_{0} + \bar{\partial}\eta , \xi \rangle\,dA,
$$
for any $\xi \in L^{q}(A^{0,1}(X,E))$ with 
$q>b=\frac{a}{a-1}$. Consequently, the property
$$
\int_{X}e^{(\kappa-1)u_{n}} 
\langle \beta_{0} + \bar{\partial}\eta_{n}, \bar{\partial}l \rangle\, dA =0,
\; \forall \;  l \in A^{0}(E)
$$ 
passes to the limit, as $n\longrightarrow +\infty$, and we conclude that
$\eta$ satisfies \eqref{weak_criticality_l}.

Hence, by Lemma \ref{prop_6.1} we conclude that, 
$\eta=\eta(u)\in W^{1,p}(X,E)$, $\; \forall \; p>2$.
Moreover, as $n\longrightarrow +\infty$, 
\begin{equation*}
\begin{split}
a_{j,n} 
: = \; &
a_{j}(u_{n})
=
\int_{X}e^{(\kappa-1)u_{n}}
\langle \beta_{0} + \bar{\partial} \eta_{n},*_{E}^{-1}s_{j}\rangle \,dA \\
\longrightarrow &
\int_{X}e^{(\kappa-1)u}
\langle \beta_{0} + \bar{\partial} \eta,*_{E}^{-1}s_{j}\rangle \,dA 
=
a_{j}(u),
\; \forall \; j=1,\ldots,\nu 
\end{split}
\end{equation*}
and therefore,
$$
\beta(u_{n})=\sum_{j=1}^{\nu}a_{j,n}*_{E}^{-1}s_{j}
\longrightarrow 
\beta(u)=\sum_{j=1}^{\nu}a_{j}(u)*_{E}^{-1}s_{j}
\; \text{ in } \; L^{q}(A^{0,1}(X,E)), \; \forall \; q>1.  
$$
At this point we can use \eqref{d_bar_eta_difference} to conclude that, 
\begin{equation*}
\begin{split}
\Vert \bar{\partial}\eta(u_{n}) - \bar{\partial} \eta(u) \Vert_{L^{p}}
\leq \; &
\Vert e^{-(\kappa-1)u_{n}} \Vert_{L^{2p}}
\Vert \beta(u_{n})-\beta(u) \Vert_{L^{2p}} \\
&  +
\Vert e^{(\kappa-1)(u_{n}-u)}-1 \Vert_{L^{2p}}
\Vert \beta(u) \Vert_{L^{2p}} \\
\leq \; &
C
(
\Vert \beta(u_{n})-\beta(u) \Vert_{L^{2p}}  
+
\Vert e^{(\kappa-1)(u_{n}-u)}-1 \Vert_{L^{2p}}
)
\longrightarrow 0,
\end{split}
\end{equation*}
as $n\longrightarrow +\infty$. Since any other convergent subsequence
of $\eta(u_{n})$ admits the same limit $\eta(u)$, 
we conclude that the full sequence $\eta(u_{n})\longrightarrow \eta(u)$
in $W^{1,p}(X,E)$, $p>2$, as claimed. 

\

To establish \eqref{d_bar_eta_difference_in_Lemma}, we can assume without loss of generality that
\begin{equation}\label{normalization_u_u_0}
\Vert \bar{\partial} \eta(u) - \bar{\partial}\eta(u_{0}) \Vert_{L^{2}}^{2}
+
\Vert u-u_{0} \Vert_{H^{1}}^{2}=1.
\end{equation}
In particular $\Vert u \Vert_{H^{1}} \leq 1 +\Vert u_{0} \Vert_{H^{1}}$, and so 
$
\Vert e^{(k-1)u}-e^{(k-1)u_{0}} \Vert_{L^{q}}
\leq C_{q}\Vert u-u_{0} \Vert_{H^{1}} 
$
and $\Vert e^{\pm u} \Vert_{L^{q}} \leq C_{q}$, $\; \forall \; q>1$ and
with suitable $C_{q}>0$ depending only on $q$ and $u_{0}$. 

Firstly, by \eqref{beta_u_sum}, we see that 
$
\vert a_{j}(u)-a_{j}(u_{0}) \vert\leq C\Vert \beta (u) - \beta(u_{0})\Vert_{L^{1}}
$, 
for every $j=1,\ldots,\nu$. Thus, 
\begin{equation*}
\begin{split}
\Vert \beta(u)-\beta(u_{0}) \Vert_{L^{p}}
\leq \; &
C
(\sum_{j=1}^{\nu}\vert a_{j}(u)-a_{j}(u_{0}) \vert^{p})^{\frac{1}{p}} \\
\leq \; &
C_{p}\sum_{j=1}^{\nu}\vert a_{j}(u)-a_{j}(u_{0}) \vert 
\leq C_{p}
\Vert \beta(u)-\beta(u_{0}) \Vert_{L^{1}}. 
\end{split}
\end{equation*}
By combining \eqref{d_bar_eta_difference} and \eqref{beta_difference}, 
for $p>2$ we derive:
\begin{equation*}
\begin{split}
\Vert \bar{\partial}\eta(u) -\bar{\partial} \eta(u_{0}) \Vert
\leq  \; &
\Vert e^{(\kappa-1)u} \Vert_{L^{2p}} \Vert  \beta(u) - \beta(u_{0})\Vert_{L^{2p}} \\
& \; +
\Vert e^{(\kappa-1)(u-u_{0})}-1 \Vert_{L^{2p}} \Vert \beta(u_{0})\Vert_{L^{2p}} \\
\leq \; &
C_{p} 
(
\Vert \beta(u)-\beta(u_{0}) \Vert_{L^{1}}+ \Vert u-u_{0} \Vert_{H^{1}}
) \\
\leq \; &
C_{p}
(
\Vert e^{(\kappa-1)u} \Vert_{L^{2}}
\Vert \bar{\partial} \eta(u)-\bar{\partial} \eta(u_{0})\Vert_{L^{2}} \\
& \; +
\Vert e^{( \kappa-1)u}-e^{(\kappa-1)u_{0}} \Vert_{L^{2}}
\Vert \beta(u_{0}) \Vert_{L^{2}}
+
\Vert u-u_{0} \Vert_{H^{1}}
) \\
\leq \; &
C_{p}
(
\Vert \bar{\partial}\eta(u)-\bar{\partial}\eta(u_{0}) \Vert_{L^{2}}
+
\Vert u-u_{0} \Vert_{H^{1}}
)
\leq 
\sigma_{p}
\end{split}
\end{equation*}
with a suitable constant $\sigma_{p}>0$ (depending only on $p$ and $u_{0}$) 
obtained in view of   \eqref{normalization_u_u_0}, and the proof is completed. 
\end{proof}
\end{lemma}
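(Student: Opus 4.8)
The plan is to prove the two assertions separately, with both resting on a reduction to the finite-dimensional space $C_{\kappa}(X)$ through the representation \eqref{beta_u_sum}. For part (i) I would run the direct method. Since $u_{n}\rightharpoonup u$ weakly in $H^{1}(X)$ the sequence is $H^{1}$-bounded, and a standard Moser--Trudinger argument together with Rellich compactness gives $e^{\pm u_{n}}\longrightarrow e^{\pm u}$ in $L^{q}(X)$ for every $q>1$. Writing $\eta_{n}:=\eta(u_{n})$ and comparing in \eqref{minimization_in_eta} with the competitor $\eta\equiv 0$ yields the uniform bound $\int_{X}e^{(\kappa-1)u_{n}}\Vert \beta_{0}+\bar{\partial}\eta_{n}\Vert^{2}\,dA\leq \int_{X}e^{(\kappa-1)u_{n}}\Vert \beta_{0}\Vert^{2}\,dA\leq C$. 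A H\"older estimate exploiting $e^{-u_{n}}\in L^{q}$ for all $q>1$ then shows $\eta_{n}$ bounded in $W^{1,a}(X,E)$ for some $1<a<2$, so along a subsequence $\eta_{n}\rightharpoonup \eta$ weakly in $W^{1,a}$. Passing to the limit in the weak equation \eqref{weak_criticality_l} (using the strong $L^{q}$-convergence of $e^{(\kappa-1)u_{n}}$ to handle the weight) shows that $\eta$ satisfies \eqref{weak_criticality_l} for $u$, whence $\eta=\eta(u)$ by the uniqueness in Lemma \ref{prop_6.1}.

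The decisive step---and the place where I expect the real difficulty---is upgrading this weak $W^{1,a}$-convergence to the asserted \emph{strong} $W^{1,p}$-convergence for $p>2$. A priori the weight $e^{-(\kappa-1)u_{n}}$ lies only in finite $L^{q}$, so no uniform $W^{1,2}$-control of $\eta_{n}$ is available and one cannot argue by standard compactness. The way around this is that $*_{E}\beta(u_{n})\in C_{\kappa}(X)$, a fixed finite-dimensional space. Testing against the orthonormal frame $\{s_{1},\ldots,s_{\nu}\}$ of \eqref{basis_for_C_k_X} and using the limit passage above, I would show that each coefficient $a_{j}(u_{n})\longrightarrow a_{j}(u)$; since all norms on $C_{\kappa}(X)$ are equivalent, this forces $\beta(u_{n})\longrightarrow \beta(u)$ in $L^{q}(A^{0,1}(X,E))$ for every $q>1$, with no integrability threshold. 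Inserting this into the identity \eqref{d_bar_eta_difference} and applying H\"older gives $\bar{\partial}\eta(u_{n})\longrightarrow \bar{\partial}\eta(u)$ in $L^{p}$, and by the Poincar\'e inequality \eqref{poincare_inequality_equation} the convergence is in $W^{1,p}(X,E)$. As every subsequence yields the same limit, the whole sequence converges, and the boundedness of images is immediate.

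For part (ii) I would normalize as in \eqref{normalization_u_u_0}, so that $\Vert u\Vert_{H^{1}}\leq 1+\Vert u_{0}\Vert_{H^{1}}$; this bounds $\Vert e^{\pm u}\Vert_{L^{q}}$ uniformly and gives $\Vert e^{(\kappa-1)u}-e^{(\kappa-1)u_{0}}\Vert_{L^{q}}\leq C_{q}\Vert u-u_{0}\Vert_{H^{1}}$. The same finite-dimensional reduction applies: from \eqref{beta_u_sum} the coefficient differences satisfy $\vert a_{j}(u)-a_{j}(u_{0})\vert\leq C\Vert \beta(u)-\beta(u_{0})\Vert_{L^{1}}$, so norm-equivalence on $C_{\kappa}(X)$ promotes $L^{1}$-control of $\beta(u)-\beta(u_{0})$ to $L^{p}$-control for any $p$. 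I would then chain the two identities \eqref{d_bar_eta_difference} and \eqref{beta_difference}: the first bounds $\bar{\partial}\eta(u)-\bar{\partial}\eta(u_{0})$ in terms of $\beta(u)-\beta(u_{0})$ and $\Vert u-u_{0}\Vert_{H^{1}}$, while the second re-expresses $\beta(u)-\beta(u_{0})$ through $\bar{\partial}\eta(u)-\bar{\partial}\eta(u_{0})$ (in $L^{2}$) and $\Vert u-u_{0}\Vert_{H^{1}}$. The net result is $\Vert \bar{\partial}\eta(u)-\bar{\partial}\eta(u_{0})\Vert_{L^{p}}\leq C_{p}\bigl(\Vert \bar{\partial}\eta(u)-\bar{\partial}\eta(u_{0})\Vert_{L^{2}}+\Vert u-u_{0}\Vert_{H^{1}}\bigr)$, which is precisely \eqref{d_bar_eta_difference_in_Lemma} once the normalization is undone.

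In summary, the only genuinely delicate point in both parts is the loss of integrability caused by the weight $e^{-(\kappa-1)u}$; it is defused by observing that $\beta(u)$ always lives, after $*_{E}$, in the fixed finite-dimensional space $C_{\kappa}(X)$, where convergence reduces to convergence of the $\nu$ coefficients $a_{j}(u)$ and every $L^{q}$-norm is controlled by any single one. Everything else is a routine application of H\"older's inequality, elliptic regularity for $\bar{\partial}$, and the uniqueness of the minimizer $\eta(u)$ from Lemma \ref{prop_6.1}.
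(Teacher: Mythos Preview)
Your proposal is correct and follows essentially the same approach as the paper's proof: the same direct-method argument to obtain a weak $W^{1,a}$ limit, the same identification $\eta=\eta(u)$ via Lemma~\ref{prop_6.1}, the same finite-dimensional reduction through the coefficients $a_{j}(u)$ in \eqref{beta_u_sum} to upgrade to strong $L^{p}$-convergence of $\beta(u_{n})$, and the same chaining of the identities \eqref{d_bar_eta_difference} and \eqref{beta_difference} for both parts. You have also correctly pinpointed the key idea---that $*_{E}\beta(u)\in C_{\kappa}(X)$ lives in a fixed finite-dimensional space, which is exactly what allows the jump from weak $W^{1,a}$ ($a<2$) to strong $W^{1,p}$ ($p>2$).
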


By Lemma \ref{eta_u_map}, for $t>0$, we readily get a minimum of 
$D_{t}$ in $\Lambda$ simply by taking (without loss of generality) a minimizing sequence
of the form: $(u_{n},\eta(u_{n}))\in H^{1}(X)\times W^{1,p}(X,E)$, $ p>2$. Indeed, 
for $t>0$, we can take advantage of the estimate:
\begin{equation}\label{estimate_D_t}
\begin{split}
D_{t}(u,\eta) \geq 
\int_{X}(\frac{\vert \nabla u \vert^{2}}{4}
+ 
4e^{(\kappa-1)u}\Vert \beta_{0} + \bar{\partial} \eta \Vert^{2})\,dA
+
4\pi(\mathfrak{g}-1)(\ln t +1 ), 
\end{split}
\end{equation}
which holds for every $(u,\eta) \in \Lambda$, to show that 
$(u_{n})$ is uniformly bounded in $H^{1}(X)$. Then we obtain convergence
(along a subsequence)
to the desired minimum from part (i) of Lemma \ref{eta_u_map}, see 
\cite{Huang_Lucia_Tarantello_2}. 

\

More in general, we can use analogous arguments 
to extend the convergence property in (i) of Lemma \ref{eta_u_map}  and 
obtain the following \underline{weaker} form of the Palais-Smale (PS)-condition, 
valid for $D_{t}$, when $t>0$. 
\begin{lemma}[\cite{Huang_Lucia_Tarantello_2}]\label{Palais_Smale}
Let $t>0$ and assume that $(u_{n},\eta_{n}) \in \mathcal{V}_{p}$, $p>2$ satisfies:
\begin{equation}\label{weak_palais_smale}
\Vert \eta_{n} \Vert_{W^{1,p}} \leq C,\; 
D_{t}(u_{n},\eta_{n})\longrightarrow c
\; \text{ and } \;
\Vert D^{\prime}_{t}(u_{n},\eta_{n}) \Vert_{\mathcal{V}_{p}^{*}}\longrightarrow 0, 
\end{equation}
as $n\longrightarrow +\infty$. Then there exist $(u,\eta) \in \mathcal{V}_{p}$ such that
(along a subsequence):
\begin{enumerate}[label=(\roman*)]
\item 	$u_{n}\longrightarrow u$ in $H^{1}(X)$, $\eta_{n}\longrightarrow \eta$ in $W^{1,2}(X,E)$,
as $n\longrightarrow \infty$,
\item 
$D_{t}(u,\eta)=c$ and 
$D^{\prime}_{t}(u,\eta)=0$.
\end{enumerate}
Namely $(u,\eta)$ is a critical point for $D_{t}$
with corresponding critical value $c$. 
\end{lemma}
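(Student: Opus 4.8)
This is a (weak) Palais--Smale condition, and the plan is a concentration--compactness argument that exploits the coercivity forced by $t>0$.

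\emph{Step 1: an a priori bound $\|u_n\|_{H^1}\le C$.} From the elementary inequality $te^{u}\ge u+\ln t+1$ and $\int_X dA=4\pi(\mathfrak g-1)$ one obtains \eqref{estimate_D_t}, which together with $D_t(u_n,\eta_n)\to c$ makes $\int_X|\nabla u_n|^2\,dA$ and $\int_X e^{(\kappa-1)u_n}\|\beta_0+\bar\partial\eta_n\|^2\,dA$ bounded. Testing $D_t'(u_n,\eta_n)$ against the constant $v\equiv1$ and using $\|D_t'(u_n,\eta_n)\|_{\mathcal V_{p}^{*}}\to0$ shows that $t\int_X e^{u_n}\,dA$ is bounded, hence by Jensen's inequality $\fint_X u_n\,dA\le C$. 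On the other hand, discarding the nonnegative terms in $D_t(u_n,\eta_n)\to c$ gives $\int_X(te^{u_n}-u_n)\,dA\le C$, whence $-\int_X u_n\,dA\le\int_X(te^{u_n}-u_n)\,dA\le C$; thus $\fint_X u_n\,dA$ is bounded and the Poincar\'e inequality yields $\|u_n\|_{H^1}\le C$.

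\emph{Step 2: weak limits and a weighted estimate.} Passing to a subsequence, $u_n\rightharpoonup u$ in $H^1(X)$ and $\eta_n\rightharpoonup\eta$ in $W^{1,p}(X,E)$, so $u_n\to u$ in every $L^q$ and a.e., $e^{(\kappa-1)u_n}\to e^{(\kappa-1)u}$ in every $L^q$ (Moser--Trudinger and Vitali), $\eta_n\to\eta$ in $C^0$ (as $p>2$) and $\bar\partial\eta_n\rightharpoonup\bar\partial\eta$ in $L^p$. Since $D_t'(u_n,\eta_n)[0,l]$ is a fixed multiple of $\int_X e^{(\kappa-1)u_n}\langle\beta_0+\bar\partial\eta_n,\bar\partial l\rangle\,dA$ and is $O(\|D_t'(u_n,\eta_n)\|_{\mathcal V_{p}^{*}}\|l\|_{W^{1,p}})$, letting $n\to\infty$ (strong $\times$ weak) shows that $\eta$ satisfies \eqref{weak_criticality_l} relative to $u$; hence $\eta=\eta(u)$ by Lemma \ref{prop_6.1}. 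Testing with $l=\eta_n-\eta$ (bounded in $W^{1,p}$, admissible by Lemma \ref{Lemma_regularity_l}(i)) and subtracting \eqref{weak_criticality_l} for $\eta$ with the same $l$, the term carrying the factor $e^{(\kappa-1)u_n}-e^{(\kappa-1)u}$ tends to $0$ by H\"older (using $p>2$), and there remains
\begin{equation*}
\int_X e^{(\kappa-1)u_n}\,\|\bar\partial\eta_n-\bar\partial\eta\|^2\,dA\longrightarrow 0.
\end{equation*}

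\emph{Step 3: removing the weight, strong convergence, and conclusion.} For $M>0$ split $X=\{u_n\ge-M\}\cup\{u_n<-M\}$: on the first set $e^{(\kappa-1)u_n}\ge e^{-(\kappa-1)M}$, so $\int_{\{u_n\ge-M\}}\|\bar\partial\eta_n-\bar\partial\eta\|^2\,dA\le e^{(\kappa-1)M}\,o_n(1)$; on the second, H\"older, $\|\bar\partial\eta_n-\bar\partial\eta\|_{L^p}\le C$ and the Chebyshev bound $|\{u_n<-M\}|\le C/M^2$ give $\int_{\{u_n<-M\}}\|\bar\partial\eta_n-\bar\partial\eta\|^2\,dA\le C\,M^{-2(1-2/p)}$. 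Sending $M\to\infty$ after $n\to\infty$ yields $\bar\partial\eta_n\to\bar\partial\eta$ in $L^2$, hence $\eta_n\to\eta$ in $W^{1,2}(X,E)$ by \eqref{poincare_inequality_equation}. Next, testing $D_t'(u_n,\eta_n)$ against $(u_n-u,0)$ and using $\int_X\nabla u\cdot\nabla(u_n-u)\,dA\to0$, $u_n\to u$ in all $L^q$, boundedness of $e^{u_n}$ in all $L^q$, and $\bar\partial\eta_n\to\bar\partial\eta$ in $L^2$ (so $\|\beta_0+\bar\partial\eta_n\|^2\to\|\beta_0+\bar\partial\eta\|^2$ in $L^1$), one extracts $\int_X|\nabla(u_n-u)|^2\,dA\to0$, i.e. $u_n\to u$ in $H^1(X)$. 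With these strong convergences, every term of $D_t(u_n,\eta_n)$ and of $D_t'(u_n,\eta_n)[v,l]$ (for fixed smooth $(v,l)$) converges to its value at $(u,\eta)$; since the left-hand sides tend to $c$ and to $0$ respectively, we conclude $D_t(u,\eta)=c$ and $D_t'(u,\eta)[v,l]=0$ for all $(v,l)\in C^1(X)\times A^0(E)$, so $(u,\eta)$ is a weak critical point, hence smooth and a solution of $(\mathcal{P})_t$ by Lemma \ref{Lemma_regularity_l}. The main obstacle is the weight-removal in Step 3: since $e^{(\kappa-1)u_n}$ may degenerate on a set of non-negligible measure (where $u_n\to-\infty$), the weighted smallness of $\bar\partial\eta_n-\bar\partial\eta$ does not by itself force $L^2$-smallness, and the argument works only thanks to the extra integrability $\eta_n\in W^{1,p}$, $p>2$, built into \eqref{weak_palais_smale}; a secondary subtlety is the lower bound $\fint_X u_n\,dA\ge-C$ in Step 1, which again uses $t>0$ decisively.
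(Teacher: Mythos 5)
Your argument is correct, but it cannot be "the same as the paper's" in any meaningful sense: the paper offers no proof of this lemma at all, deferring entirely to Lemma 5.1 of \cite{Huang_Lucia_Tarantello_2}, and only hints (just before the statement) that the proof "extends the arguments" of part (i) of Lemma \ref{eta_u_map}, i.e.\ the continuity of $u\mapsto\eta(u)$ obtained by writing $e^{(\kappa-1)u}*_{E}(\beta_{0}+\bar{\partial}\eta)$ in the finite-dimensional basis \eqref{basis_for_C_k_X} of $C_{\kappa}(X)$ and passing to the limit in the coefficients. Your route to the strong convergence of $\bar{\partial}\eta_{n}$ is genuinely different: you test the near-criticality against $(0,\eta_{n}-\eta)$ to extract the weighted estimate $\int_{X}e^{(\kappa-1)u_{n}}\Vert\bar{\partial}\eta_{n}-\bar{\partial}\eta\Vert^{2}\,dA\to 0$, and then remove the possibly degenerate weight by splitting on $\{u_{n}<-M\}$ and combining Chebyshev (from the $H^{1}$ bound of Step 1) with the assumed $L^{p}$ bound, $p>2$. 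This is self-contained, avoids any appeal to holomorphicity beyond Lemmas \ref{Lemma_regularity_l} and \ref{prop_6.1} (used only to identify $\eta=\eta(u)$), and has the merit of isolating exactly where the two hypotheses enter: $t>0$ for the two-sided bound on $\fint_{X}u_{n}\,dA$, and $\Vert\eta_{n}\Vert_{W^{1,p}}\leq C$ with $p>2$ for the weight removal — which is precisely why the lemma is only a \emph{weaker} form of the (PS)-condition. The paper's finite-dimensional mechanism buys convergence of $\beta(u_{n})$ in every $L^{q}$ essentially for free, whereas your argument generalizes more readily to settings without a finite-dimensional holomorphic structure.

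One cosmetic point: in Step 3 the parenthetical "$\Vert\beta_{0}+\bar{\partial}\eta_{n}\Vert^{2}\to\Vert\beta_{0}+\bar{\partial}\eta\Vert^{2}$ in $L^{1}$" is not by itself enough to pass to the limit in $\int_{X}(u_{n}-u)\,e^{(\kappa-1)u_{n}}\Vert\beta_{0}+\bar{\partial}\eta_{n}\Vert^{2}\,dA$, since the other factor is not uniformly bounded; you should interpolate the $L^{2}$ convergence of $\bar{\partial}\eta_{n}$ with its $L^{p}$ bound to get convergence of $\Vert\beta_{0}+\bar{\partial}\eta_{n}\Vert^{2}$ in some $L^{s}$, $s>1$, and then conclude by H\"older. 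All the ingredients are already in your proof, so this is a one-line repair rather than a gap.
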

\begin{proof}
See lemma 5.1 of \cite{Huang_Lucia_Tarantello_2}. 
\end{proof}

Such a "compactness" property \underline{cannot} be extended for $t=0$.
Indeed, $D_{0}$ no longer enjoys any sort of "coercivity" property with respect to 
the variable $u$, as instead ensured by \eqref{estimate_D_t} for $t>0$.  
This is also the reason for the possible
unboundedness of $D_{0}$ in $\Lambda$. 

So, while we cannot guarantee that 
$D_{0}$ admits a (weak) critical point 
(namely that $(\mathcal{P})_{t=0}$ in \eqref{system_P_t} admits a solution),
we see that, when a critical point of $D_{0}$ does exist, then it shares
the exact same properties of $(u_{t},\eta_{t})$, 
the (only) critical point of $D_{t}$ for $t>0$. 

For example, the smoothness of $(u_{t},\eta_{t})$ allows us to compute the Hessian 
$D^{\prime \prime}_{t}$ of $D_{t}$ at $(u_{t},\eta_{t})$ as follows 
(see \cite{Huang_Lucia_Tarantello_2}):
\begin{equation}\label{seconda_variatione_D_t}
D^{\prime \prime }_{t}[v,l]
=
t \int_{X} e^{u_{t}}v^{2}dA
+
\mathcal{A}_{t}[v,l]+B_{t}[v,l],
\; \text{ for every } \;
(v,l) \in \mathcal{V}_{p}
\end{equation}
with 
\begin{align}
&
A_{t}[v,l]
=
4 \int_{X} 
\Vert (\kappa-1)v \beta_{t} +  \bar{\partial}l\Vert^{2}e^{(\kappa-1)u_{t}}dA
\geq 0
, \label{6.3} \\
&
B_{t}[v,l]
= 
2 \int_{X} 
(
\vert \bar{\partial} v \vert^{2}
-
4(\kappa-1)Re
\langle \beta_{t},\bar{\partial}v
\otimes l \rangle
e^{(\kappa-1)u_{t}}
) dA \notag
\\
& \quad \quad \quad \quad  + 
4 \int_{X}\Vert \bar{\partial} l \Vert^{2}e^{(\kappa-1)u_{t}} 
dA
\geq \frac{\kappa-1}{2} \int_{X}e^{(\kappa-1)u_{t}}\Vert l \Vert^{2}\,dA, 
\label{6.4a}
\end{align}
where the last estimate in \eqref{6.4a} follows by completing the square
and applying Corollary \ref{cor_d_bar_eta_estimate} to $(u_{t},\eta_{t})$,
see \cite{Huang_Lucia_Tarantello_2}. 
Also we shall give the details about the estimate \eqref{6.4a}  for the case 
$t=0$ in \eqref{B_t_estimate}. 

Clearly, if we assume that $D_{0}$ admits a critical point $(u_{0},\eta_{0})$, 
then we can take $t=0$ in the expressions 
\eqref{seconda_variatione_D_t},\eqref{6.3},\eqref{6.4a}, and for
$\beta_{t=0}=\beta_{0} + \bar{\partial}\eta_{0}$ 
and  
$u_{t=0}=u_{0}$,
we obtain the Hessian $D_{0}^{\prime \prime}$ at $(u_{0},\eta_{0})$. 

Also the last inequality in \eqref{6.4a} carries over to the case $t=0$,
since again we can use, for the solution $(u_{0},\eta_{0})$ the estimate
\eqref{estimate_of_cor_on_d_bar_eta} of Corollary \ref{cor_d_bar_eta_estimate} 
and obtain that, 
\begin{equation}\label{B_t_estimate}
\begin{split}
B_{t=0}[v,l]
\geq \; &
4 \int_{X}
(
\Vert \bar{\partial} l \Vert^{2}
e^{(\kappa-1)u_{0}}
-
2(\kappa-1)^{2}\Vert \beta_{t=0} \Vert^{2}
e^{2(\kappa-1)u_{0}}\Vert l \Vert^{2}
) \,
dA \\
\geq  \; &
\frac{\kappa-1}{2} \int_{X}\Vert l \Vert^{2}e^{(\kappa-1)u_{0}}dA.
\end{split}
\end{equation}
By virtue of \eqref{B_t_estimate}, we will deduce that any 
critical point of $D_{0}$ is a strict local minimum in $\mathcal{V}_{p}$ for $p>2$,
a property already established in
\cite{Huang_Lucia_Tarantello_2} for $D_{t}$, $t>0$ .  

\begin{lemma}\label{lem_strict_min}
Assume that the functional $D_{0}$ admits a critical point
$(u_{0},\eta_{0})$. Then
$\; \exists \; \gamma_{0}>0,\; \delta_{0}>0$ such that
\begin{equation}\label{formula_of_the_lemma_on_D_0}
\begin{split}
D_{0}(u,\eta)
\geq &
D_{0}(u_{0},\eta_{0})
+
\int_{X}e^{(\kappa-1)u}\Vert \bar{\partial}\eta - \bar{\partial}\eta(u) \Vert^{2}\,dA
\\
& \; +
\gamma_{0}
(
\Vert u-u_{0} \Vert_{H^{1}}^{2}
+
\Vert \eta(u)- \eta_{0} \Vert^{2}_{W^{1,p}}
)
\end{split}
\end{equation}
for all $(u,\eta) \in \mathcal{V}_{p}
\; : \; 
\Vert u-u_{0} \Vert_{H^{1}} < \delta_{0}.$
In particular, $(u_{0},\eta_{0})$ is a strict local minimum for 
$D_{0}$ in $\mathcal{V}_{p}$, $p>2$. 
\end{lemma}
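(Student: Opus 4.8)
The plan is to establish the quadratic lower bound \eqref{formula_of_the_lemma_on_D_0} by a second-order Taylor expansion of $D_0$ around the critical point $(u_0,\eta_0)$, using the strict positivity of the Hessian $D_0''$ recorded in \eqref{seconda_variatione_D_t}--\eqref{6.4a} together with \eqref{B_t_estimate}, and then absorbing the cubic and higher-order remainders for $(u,\eta)$ close to $(u_0,\eta_0)$ in $H^1(X)$. First I would reduce to the slice $\eta=\eta(u)$ by a convexity argument: for fixed $u$, the map $\eta\mapsto \int_X e^{(\kappa-1)u}\Vert\beta_0+\bar\partial\eta\Vert^2\,dA$ is strictly convex with unique minimizer $\eta(u)$ (Lemma \ref{prop_6.1}), and expanding the square gives the orthogonal-type splitting
\begin{equation*}
\int_X e^{(\kappa-1)u}\Vert\beta_0+\bar\partial\eta\Vert^2\,dA
=
\int_X e^{(\kappa-1)u}\Vert\beta_0+\bar\partial\eta(u)\Vert^2\,dA
+
\int_X e^{(\kappa-1)u}\Vert\bar\partial\eta-\bar\partial\eta(u)\Vert^2\,dA,
\end{equation*}
which already accounts for the second term on the right-hand side of \eqref{formula_of_the_lemma_on_D_0}. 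Thus it suffices to prove the bound for the reduced functional $\widetilde D_0(u):=D_0(u,\eta(u))$, namely $\widetilde D_0(u)\ge D_0(u_0,\eta_0)+\gamma_0(\Vert u-u_0\Vert_{H^1}^2+\Vert\eta(u)-\eta_0\Vert_{W^{1,p}}^2)$ for $\Vert u-u_0\Vert_{H^1}<\delta_0$.

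Next I would write $u=u_0+v$ and Taylor-expand $\widetilde D_0(u_0+v)$ to second order. Since $(u_0,\eta_0)$ is a critical point, the first-order term vanishes, and the second-order term is the Hessian quadratic form $D_0''[v,l]$ evaluated along the "constrained" direction $l=\tfrac{d}{d\varepsilon}\eta(u_0+\varepsilon v)\big|_{\varepsilon=0}$ (which exists and is smooth by the Implicit Function Theorem applied to \eqref{weak_criticality_l}, thanks to the strict convexity in $\eta$ and elliptic regularity). By \eqref{seconda_variatione_D_t}--\eqref{6.4a} with $t=0$ and the estimate \eqref{B_t_estimate}, the form $A_0[v,l]+B_0[v,l]$ controls $\Vert\bar\partial l\Vert_{L^2}^2$ and $\int_X e^{(\kappa-1)u_0}\Vert l\Vert^2$; combined with the Poincaré inequality \eqref{poincare_inequality_equation} for $E$, this yields $\Vert l\Vert_{W^{1,2}}^2\lesssim D_0''[v,l]$. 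The remaining task is to bound $\Vert v\Vert_{H^1}^2$ from below by $D_0''[v,l]$: here I would use the term $2\int_X|\bar\partial v|^2\,dA$ in $B_0$ together with the Liouville-type structure of the $u$-equation — more precisely, the strict positivity of $D_0''$ as a whole (no degeneracy directions), which combined with the compact embedding $H^1\hookrightarrow L^q$ and a standard contradiction/compactness argument gives a coercivity constant $\gamma_1>0$ with $D_0''[v,l]\ge 2\gamma_1\Vert v\Vert_{H^1}^2$. Finally, Lemma \ref{eta_u_map}(ii), inequality \eqref{d_bar_eta_difference_in_Lemma}, shows $\Vert\bar\partial\eta(u)-\bar\partial\eta(u_0)\Vert_{L^p}^2\lesssim \Vert\bar\partial\eta(u)-\bar\partial\eta(u_0)\Vert_{L^2}^2+\Vert v\Vert_{H^1}^2$, and since $l$ is the derivative of $\eta(u_0+\varepsilon v)$ one has $\Vert\eta(u)-\eta_0\Vert_{W^{1,p}}\lesssim\Vert v\Vert_{H^1}$ for $v$ small, so the $W^{1,p}$-term in \eqref{formula_of_the_lemma_on_D_0} is absorbed as well.

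The main obstacle I anticipate is controlling the cubic and higher-order remainder in the Taylor expansion and making the "small in $H^1$" hypothesis do its job. The troublesome terms come from the exponential nonlinearities $e^u$ and $e^{(\kappa-1)u}\Vert\beta_0+\bar\partial\eta\Vert^2$: a naive expansion produces terms like $\int_X e^{(\kappa-1)u_0}(e^{(\kappa-1)v}-1-(\kappa-1)v-\tfrac{(\kappa-1)^2}{2}v^2)\Vert\beta_t\Vert^2\,dA$ and cross terms in $v$ and $\bar\partial l$ that are genuinely cubic. To handle these one cannot use $L^\infty$ control on $v$ (only $H^1$), so the argument must instead exploit that $e^{(\kappa-1)v}-1-(\kappa-1)v$ is $O(v^2)$ pointwise with the Moser–Trudinger inequality bounding $\int_X e^{pv}\,dA$ uniformly when $\Vert v\Vert_{H^1}$ is small, giving $o(1)\cdot\Vert v\Vert_{H^1}^2$ remainders by Hölder and interpolation. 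This is where the threshold $\delta_0$ gets fixed: once the remainder is $\le\gamma_1\Vert v\Vert_{H^1}^2+\tfrac12\Vert\bar\partial l\Vert$-type small, the strict positivity of $D_0''$ wins and \eqref{formula_of_the_lemma_on_D_0} follows with $\gamma_0=\min\{\gamma_1,\text{const}\}$. A secondary technical point is justifying that $\varepsilon\mapsto\eta(u_0+\varepsilon v)$ is $C^2$ with uniformly bounded derivatives in $W^{1,p}$ for $v$ in a small $H^1$-ball — this follows from the Implicit Function Theorem in the spaces $\mathcal V_p$, exactly as in \cite{Huang_Lucia_Tarantello_2}, and I would cite that rather than redo it.
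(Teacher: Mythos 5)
Your proposal is correct and follows essentially the same route as the paper: the orthogonal splitting $D_{0}(u,\eta)=D_{0}(u,\eta(u))+\int_{X}e^{(\kappa-1)u}\Vert\bar{\partial}\eta-\bar{\partial}\eta(u)\Vert^{2}\,dA$ from \eqref{weak_criticality_l}, coercivity of the Hessian $D_{0}''\geq\tau_{0}(\Vert v\Vert_{H^{1}}+\Vert l\Vert_{W^{1,2}})^{2}$ proved by the same contradiction/compactness argument from \eqref{B_t_estimate} and \eqref{6.3}, a second-order Taylor expansion, and the norm upgrade \eqref{d_bar_eta_difference_in_Lemma}. The only (harmless) deviation is that you expand the reduced functional $u\mapsto D_{0}(u,\eta(u))$, which requires differentiability of $u\mapsto\eta(u)$, whereas the paper expands $D_{0}$ itself at the point $(u,\eta(u))$ and keeps the increment $\Vert\bar{\partial}\eta(u)-\bar{\partial}\eta_{0}\Vert_{L^{2}}^{2}$ from the Hessian, so it only needs continuity of that map; your extra care with the Moser--Trudinger control of the Taylor remainder is a point the paper leaves implicit.
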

\begin{proof}
Firstly, we observe that necessarily: $\beta_{0}\neq 0$, 
since for $[\beta]=[\beta_{0}]=0$, 
problem $(\mathcal{P})_{t=0}$ admits no solution, and hence
$D_{0}$ cannot admit a critical point.  

\

\underline{\textbf{Claim:}}\; 
There exists $\tau_{0}>0$ such that
\begin{equation}\label{6.6}
\min_
{
\Vert v \Vert_{H^{1}}
+
\Vert l\Vert_{W^{1,2}}
=
1
}
D_{0}^{\prime \prime }[v,l]
\geq 
\tau_{0}.
\end{equation}
To establish \eqref{6.6}, we argue by contradiction and suppose that there exists 
$(v_{n},l_{n})\in H^{1}(X) \times W^{1,2}(X,E)$ such that:
\begin{equation}\label{6.8}
\Vert v_{n} \Vert_{H^{1}}
+
\Vert l_{n} \Vert_{W^{1,2}}=1
\; \text{ and } \; 
D_{0}^{\prime \prime }[v_{n},l_{n}]
\longrightarrow 0
,
\; \text{ as } \; 
n\longrightarrow +\infty.
\end{equation} 
As a consequence
\begin{equation*}
\begin{split}
A_{n}=A_{t=0}(v_{n},l_{n})\longrightarrow 0,\;
B_{n}=B_{t=0}(v_{n},l_{n})\longrightarrow 0,\;
\; \text{ as $n\longrightarrow +\infty$. } \; 
\end{split}
\end{equation*}
By using \eqref{B_t_estimate}, we derive:
$\int_{X} e^{(\kappa-1)u_{0}}\Vert l_{n} \Vert^{2}
\longrightarrow 
0
$, 
and thus (as $u_{0}$ is smooth in $X$)  
$\Vert l_{n} \Vert_{L^{2}}\longrightarrow 0$, as
$n\longrightarrow +\infty.$ 

As a consequence we have:
$
\int_{X}  
\Vert \beta_{0} \Vert^{2}
e^{2(\kappa-1)u_{0}} 
\Vert l_{n} \Vert^{2}
dA 
\longrightarrow 0
$, and we can use such information in 
\eqref{B_t_estimate} to deduce that, 
$
\int_{X} \Vert \bar{\partial} l_{n} \Vert^{2}dA
\longrightarrow 0
$,
as
$n\longrightarrow +\infty$.
In conclusion we have shown that, 
$
\Vert l_{n} \Vert_{W^{1,2}}
\longrightarrow 0
$
as
$,
n\longrightarrow +\infty.
$
So, from the explicit expression of $B_{n}$,
we find also that, 
$\int_{X} \vert \nabla v_{n} \vert^{2}dA
=
4 \int_{X} \vert \bar{\partial} v_{n} \vert^{2}dA	
\longrightarrow 
0$,  
as
$n\longrightarrow +\infty.$
Finally, we decompose:
$v_{n} = w_{n} + c_{n}$
with
$\int_{X} w_{n} dA=0$
and $c_{n}=\fint_{X}v_{n}$.  
We know that:
$\Vert w_{n} \Vert_{L^{2}}\longrightarrow 0$, 
and by means of \eqref{6.3} (with $t=0$) we deduce that, 
$\int_{X} \Vert \beta_{t=0} \Vert^{2}v_{n}^{2}\longrightarrow 0
$, 
and so 
$
c_{n}^{2}
\int_{X} \Vert \beta_{t=0} \Vert^{2}dA
\longrightarrow 0
$,
as
$n\longrightarrow +\infty$. 
But
$
\int_{X} \Vert \beta_{t=0} \Vert^{2}
=
\int_{X} \Vert \beta_{0} \Vert^{2}
+
\int_{X} \Vert \bar{\partial} \eta_{0} \Vert^{2}
\geq
\int_{X} \Vert \beta_{0} \Vert^{2}
>0
$ 
and therefore, also
$c_{n}\longrightarrow 0$, as $n\longrightarrow +\infty$.
In conclusion we have obtained:
\begin{equation*}
\Vert v_{n} \Vert^{2}_{H^{1}}
+
\Vert l_{n} \Vert_{W^{1,2}}
\longrightarrow 0
,
\; \text{ as } \; 
n\longrightarrow +\infty,
\end{equation*}
and this is in contradiction with \eqref{6.8}. Thus \eqref{6.6} is established. 

At this point, we can use Taylor expansion for $D_{0}$ around 
$(u_{0},\eta_{0})$ in $\mathcal{V}_{p}$ and the continuity of the map $\eta(u)$
in \eqref{map_H_1_to_W_1_p} to find a suitable $\delta_{0}>0$ sufficiently small,
such that, for every $u \in H^{1}(X)\; : \; \Vert u-u_{0} \Vert_{H^{1}}<\delta_{0}$, 
we have:
\begin{equation*}
\begin{split}
D_{0}(u,\eta(u))
= \; &
D_{0}(u_{0},\eta_{0})
+
\frac{1}{2}D_{0}^{\prime \prime}[u-u_{0},\eta(u)-\eta_{0}] \\
& \; +
o(
\Vert u-u_{0} \Vert_{H^{1}}^{2}
+
\Vert \bar{\partial}\eta(u) - \bar{\partial} \eta_{0}\Vert_{L^{p}}^{2}
) \\
\geq \; &
D_{0}(u_{0},\eta_{0})
+
\frac{\tau_{0}}{2}
(
\Vert u-u_{0} \Vert^{2}_{H^{1}}
+
\Vert \bar{\partial}\eta(u) - \bar{\partial} \eta_{0}\Vert_{L^{2}}^{2}
)
\\
 & \; +
o(
\Vert u-u_{0} \Vert_{H^{1}}^{2}
+
\Vert \bar{\partial}\eta(u) - \bar{\partial} \eta_{0}\Vert_{L^{p}}^{2}
).
\end{split}
\end{equation*}
Since $\eta_{0}=\eta(u_{0})$, 
we can use the estimate \eqref{d_bar_eta_difference_in_Lemma}  to
conclude that, 
for every $u \in H^{1}(X)\; : \; \Vert u-u_{0} \Vert_{H^{1}}<\delta_{0}$, there hold: 
\begin{equation*}
\begin{split}
D_{0}(u,\eta(u))
\geq \; &
D_{0}(u_{0},\eta_{0})
+
\gamma_{0}
(
\Vert u-u_{0} \Vert^{2}_{H^{1}}
+
\Vert \eta(u) -\eta_{0}\Vert_{W^{1,p}}^{2}
)
\end{split}
\end{equation*}
with suitable $\gamma_{0}>0$.
Consequently, if 
$(u,\eta) \in \mathcal{V}_{p}$ and $\Vert u-u_{0} \Vert_{H^{1}}<\delta_{0}$
then (by  \eqref{weak_criticality_l}) we find: 
\begin{equation*}
\begin{split}
D_{0}(u,\eta)
= \; &
D_{0}(u,\eta(u))
+
\int_{X}e^{(\kappa-1)u}\Vert \bar{\partial}\eta -\bar{\partial}\eta(u)\Vert^{2}\,dA \\
\geq \, &
D_{0}(u_{0},\eta_{0})
+
\gamma_{0}
(
\Vert u-u_{0} \Vert^{2}_{H^{1}}
+
\Vert \eta(u) -\eta_{0}\Vert_{W^{1,p}}^{2}
) \\
& \; +
\int_{X}e^{(\kappa-1)u}\Vert \bar{\partial}\eta -\bar{\partial}\eta(u)\Vert^{2}\,dA,
\end{split}
\end{equation*}
and \eqref{formula_of_the_lemma_on_D_0} is established. 
In particular, if $\Vert u-u_{0} \Vert_{H^{1}}<\delta$ and  
$(u,\eta)\neq (u_{0},\eta_{0})$, then 
$D_{0}(u,\eta) > D_{0}(u_{0},\eta_{0})$
and the proof is completed. 
\end{proof}

However, to know that any critical point of $D_{0}$ is a strict local minimum in 
$\mathcal{V}_{p}$ is not enough to ensure that $D_{0}$ admits only one critical point.
In fact we could be facing a situation similar to the function 
$f(z)=\vert e^{z}-1 \vert^{2} $ which admits infinitely many strict local minima
at $z=2\pi i n, n\in \Z$ and no other critical point.

Nonetheless, the presence of a strict local minimum for 
$D_{0}$ away from $(u_{t},\eta_{t})$ (for $t>0$ small)
allows us to exhibit a "mountain pass" structure (see \cite{Ambrosetti_Rabinowitz}) 
for the functional $D_{t}$, 
when $t>0$ is sufficiently small.
As shown in \cite{Huang_Lucia_Tarantello_2}, 
this fact will contradict the uniqueness of $(u_{t},\eta_{t})$, 
as claimed in Theorem  \ref{thm_uniqueness}. 
In this way we can finally obtain, 

\begin{proof}[\textbf{The Proof of Theorem \ref{thm_primo}}]

By using Lemma \ref{lem_strict_min} for the critical point 
$(u_{0},\eta_{0})$ of $D_{0}$ we have:

\begin{equation}\label{Claim_1_proof_theorem_primo}
\begin{split}
\textbf{Claim 1: }
&  
\; \forall \; \delta \in (0,\delta_{0}) 
\; \exists \; d_{\delta}>0 \; \text{ and } \; t_{\delta}>0,
\; \text{ such that for } \; t\in (0,t_{\delta})
\\
& 
\text{  we have} 
:   \;\; 
D_{t}(u,\eta) \geq D_{t}(u_{0},\eta_{0}) + d_{\delta}
\\
& 
\; \forall \; 
(u,\eta) \in \mathcal{V}_{p} \; : \; 
\Vert u-u_{0} \Vert^{2}_{H^{1}}
+
\Vert \eta(u) -\eta_{0} \Vert^{2}_{W^{1,p}}
=
\delta
.
\end{split}
\end{equation}
To establish \eqref{Claim_1_proof_theorem_primo}, we simply apply
\eqref{formula_of_the_lemma_on_D_0} as follows:
\begin{equation*}
\begin{split}
D_{t}(u,\eta)
= \; &
t \int_{X}e^{u}\,dA + D_{0}(u,\eta) \\
\geq \; &
t\int_{X}e^{u}\,dA
+
D_{0}(u_{0},\eta_{0})
+
\int_{X}e^{(\kappa-1)u}\Vert \bar{\partial}\eta - \bar{\partial} \eta(u) \Vert^{2}\,dA \\
& \; +
\gamma_{0}
( \Vert u-u_{0} \Vert_{H^{1}}^{2}
+
\Vert \eta(u) - \eta_{0} \Vert^{2}_{W^{1,p}} 
)
\\
\geq \; &
D_{t}(u_{0},\eta_{0})
+
\gamma_{0}\delta + t\int_{X}(e^{u}-e^{u_{0}})\,dA 
\geq 
D_{t}(u_{0},\eta_{0})+\gamma_{0}\delta
-C_{\delta}t
\end{split}
\end{equation*}
with a suitable constant $C_{\delta}>0$. Clearly, the estimate above readily 
implies \eqref{Claim_1_proof_theorem_primo}.   

\

Next, we argue by contradiction and assume:
\begin{equation*}
\begin{split}
\; \exists \; \varepsilon_{0}>0 \; \text{ and } \;
t_{n}\longrightarrow 0^{+}
\; : \;  
\Vert u-u_{t_{n}} \Vert_{H^{1}}^{2}
+
\Vert \eta_{0}-\eta_{t_{n}} \Vert_{W^{1,p}}^{2} 
\geq
\varepsilon_{0},
\end{split}
\end{equation*}

for all $n\in \N$. 

\

\noindent
So, we fix $0<\delta< \min\{ \frac{\varepsilon_{0}}{2},\delta_{0} \} $
and take $n_{0}=n_{0}(\delta) \in \N$ sufficiently large, so that
$t_{0}:= t_{n_{0}} \in (0,t_{\delta})$. 
Consequently, 
\begin{equation}\label{mountain_pass_1}
\begin{split}
\Vert u_{0}-u_{t_{0}} \Vert_{H^{1}}^{2} 
+
\Vert \eta_{0} - \eta_{t_{0}} \Vert_{W^{1,p}}^{2} \geq \varepsilon_{0}
\end{split}
\end{equation}
and
\begin{equation}\label{mountain_pass_2}
D_{t_{0}}(u,\eta)
\geq
D_{t_{0}}(u_{0},\eta_{0}) + d_{\delta}
\; \text{ for } \; 
\Vert u-u_{0} \Vert^{2}_{H^{1}}
+
\Vert \eta(u) -\eta_{0} \Vert^{2}_{W^{1,p}}
=
\delta.
\end{equation}
Also recall that
$D_{t_{0}}(u_{0},\eta_{0}) \geq D_{t_{0}}(u_{t_{0}},\eta_{t_{0}})$. So,  
from \eqref{mountain_pass_1} and \eqref{mountain_pass_2},  
we see that $D_{t_{0}}$ admits a mountain pass structure
in the sense of \cite{Ambrosetti_Rabinowitz}. 
But, as in \cite{Huang_Lucia_Tarantello_2}, we show that this is impossible, 
since we can deduce the existence of another critical point for $D_{t_{0}}$ 
different from $(u_{t_{0}},\eta_{t_{0}})$,
in contradiction to Theorem \ref{thm_uniqueness}. 
To be more precise, let $P_{0}=(u_{0},\eta_{0})$ and 
$P_{1}=(u_{t_{0}},\eta_{t_{0}})$. 
We know that $P_{0}\neq P_{1}$ (see \eqref{mountain_pass_1}), 
and that 
$\eta_{0}=\eta(u_{0})$ and $\eta_{t_{0}}=\eta(u_{t_{0}})$. 
We define the family of paths 
$$
\mathcal{P}
=
\{ 
\gamma \in C^{0}([0,1],\mathcal{V}_{p})
\; : \; 
\gamma(0)=P_{0}, \gamma(1)=P_{1}
\}.  
$$
Clearly, $\mathcal{P}$ is not empty, as 
$\gamma(s)=(1-s)P_{0}+sP_{1} \in \mathcal{P}$. 
Moreover, by setting: 
$
d(\gamma_{1},\gamma_{2})
=
\max_{s\in [0,1]}\Vert \gamma_{1}(s) - \gamma_{2}(s)\Vert_{\mathcal{V}_{p}}
$, 
we see that $(\mathcal{P},d)$ defines a complete metric space. 

\begin{equation}\label{claim_2_proof_theorem_}
\begin{split}
\; \text{ \textbf{Claim 2:} } \; \quad \quad \quad \quad \quad  
c=\inf_{\gamma \in \mathcal{P}} \max_{s\in [0,1]} D_{t_{0}}(\gamma(s))
\geq
D_{t_{0}}(u_{0},\eta_{0})
+
d_{\rho}
\end{split}
\end{equation}
Indeed, if we take $\gamma\in \mathcal{P}$ with 
$\gamma(s)=(u(s),\eta(s))$, $s\in [0,1]$ and we define 
$$
f(s)=\Vert u(s)-u_{0} \Vert^{2}_{H^{1}}
+
\Vert \eta(u(s)) -\eta_{0}\Vert^{2}_{W^{1,p}}
\in 
C^{0}([0,1]),
$$
we see that: $f(0)=0$ while 
$
f(1)
=
\Vert u_{t_{0}}-u_{0} \Vert^{2}_{H^{1}} 
+ 
\Vert \eta_{t_{0}} - \eta_{0} \Vert^{2}_{W^{1,p}} \geq \varepsilon_{0} > \delta 
$. 
So, by continuity, there exists $s_{0} \in [0,1]$ such that
$f(s_{0}) =  \delta$. Therefore, by \eqref{mountain_pass_2}, we find: 
$
\max_{s\in [0,1]}D_{t_{0}}(\gamma(s))
\geq 
D_{t_{0}}(\gamma(s_{0}))
\geq
D_{t_{0}}(u_{0},\eta_{0}) + d_{\delta}
$, 
and \eqref{claim_2_proof_theorem_} follows. 

\

We are going to show that $c$ in \eqref{claim_2_proof_theorem_} 
defines a critical value for $D_{t_{0}}$, and since  
$c> D_{t_{0}}(u_{0},\eta_{0}) \geq D_{t_{0}}(u_{t_{0}},\eta_{t_{0}})$,  
the corresponding critical point must be different from $(u_{t_{0}},\eta_{t_{0}})$. 
In this way we reach a contradiction to Theorem \ref{thm_uniqueness}. 

To this purpose we note first that, if $\gamma \in \mathcal{P}$ with 
$\gamma(s)=(u(s),\eta(s))$, $s\in [0,1]$, then 
setting 
$\tilde{\gamma}(s)=(u(s),\eta(u(s)))$, $s\in [0,1]$, we easily check that also
$\tilde{\gamma} \in \mathcal{P}$ and 
$
D_{t_{0}}(\gamma(s)) \geq D_{t_{0}}(\tilde{\gamma}(s)).
$
Next, we use the following Ekeland's $\epsilon$-principle

\begin{thmx}[\cite{Struwe_Book}]\label{thm_Ekeland}
Let $(Y,d)$ be a complete metric space and 
$F:Y\longrightarrow \R$ a non-negative and lower semi-continuous functional.
For every $\epsilon>0$ let $\gamma^{0}_{\epsilon} \in Y$ be such that: 
$F(\gamma^{0}_{\epsilon}) \leq \epsilon + \inf F$. Then  there exists  
$\gamma_{\epsilon} \in Y$ such that
$$
F(\gamma_{\epsilon}) \leq F(\gamma_{\epsilon}^{0})
,\quad
d(\gamma_{\epsilon},\gamma_{\epsilon}^{0}) \leq \sqrt{\epsilon}
\; \text{ and } \; 
F(\gamma) \geq F(\gamma_{\epsilon})- \sqrt{\epsilon}d(\gamma,\gamma_{\epsilon}),
\; \forall \; \gamma\in Y.
$$
\end{thmx}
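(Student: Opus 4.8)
Since Theorem \ref{thm_Ekeland} is the classical Ekeland variational principle in its $\epsilon$-form, I shall only sketch the proof, referring to \cite{Struwe_Book} for complete details. The plan is to introduce, for the fixed $\epsilon>0$, the relation on $Y$
$$
\gamma_{1} \preceq \gamma_{2}
\quad \Longleftrightarrow \quad
F(\gamma_{1}) + \sqrt{\epsilon}\, d(\gamma_{1},\gamma_{2}) \leq F(\gamma_{2}),
$$
and to check first that $\preceq$ is a partial order: reflexivity is clear; antisymmetry follows by summing $\gamma_{1}\preceq\gamma_{2}$ and $\gamma_{2}\preceq\gamma_{1}$, which forces $d(\gamma_{1},\gamma_{2})=0$; transitivity is the triangle inequality for $d$. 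Note also that, since $F\geq 0$, the infimum of $F$ over any non-empty subset of $Y$ is finite.

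Next I would construct inductively a $\preceq$-decreasing sequence. Put $\gamma_{0}=\gamma^{0}_{\epsilon}$ and, given $\gamma_{n}$, set $S_{n}=\{\gamma\in Y:\gamma\preceq\gamma_{n}\}$. By lower semicontinuity of $F$ and continuity of $d$ each $S_{n}$ is closed; it is non-empty because $\gamma_{n}\in S_{n}$; and by transitivity $S_{n+1}\subseteq S_{n}$. Choose $\gamma_{n+1}\in S_{n}$ with $F(\gamma_{n+1})\leq \inf_{S_{n}}F + 2^{-n}$. Then for any $\gamma\in S_{n+1}$ one has $\gamma\preceq\gamma_{n+1}$, hence
$$
\sqrt{\epsilon}\, d(\gamma,\gamma_{n+1}) \leq F(\gamma_{n+1})-F(\gamma)\leq F(\gamma_{n+1})-\inf_{S_{n}}F\leq 2^{-n},
$$
so that the diameter of $S_{n+1}$ is at most $2^{1-n}/\sqrt{\epsilon}$ and tends to $0$ as $n\to\infty$. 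By completeness of $(Y,d)$ and Cantor's intersection theorem, $\bigcap_{n}S_{n}$ consists of a single point, which I call $\gamma_{\epsilon}$.

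Finally I would read off the three asserted properties. Since $\gamma_{\epsilon}\in S_{0}$, i.e. $\gamma_{\epsilon}\preceq\gamma^{0}_{\epsilon}$, we obtain $F(\gamma_{\epsilon})+\sqrt{\epsilon}\,d(\gamma_{\epsilon},\gamma^{0}_{\epsilon})\leq F(\gamma^{0}_{\epsilon})$; this gives at once $F(\gamma_{\epsilon})\leq F(\gamma^{0}_{\epsilon})$, and together with $F(\gamma^{0}_{\epsilon})\leq \inf F+\epsilon$ and $F(\gamma_{\epsilon})\geq\inf F$ it yields $\sqrt{\epsilon}\,d(\gamma_{\epsilon},\gamma^{0}_{\epsilon})\leq\epsilon$, whence $d(\gamma_{\epsilon},\gamma^{0}_{\epsilon})\leq\sqrt{\epsilon}$. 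For the last inequality, observe that $\gamma_{\epsilon}$ is $\preceq$-minimal: if $\gamma\preceq\gamma_{\epsilon}$, then, since $\gamma_{\epsilon}\in S_{n}$ means $\gamma_{\epsilon}\preceq\gamma_{n}$, transitivity gives $\gamma\in\bigcap_{n}S_{n}=\{\gamma_{\epsilon}\}$; hence for every $\gamma\neq\gamma_{\epsilon}$ the relation $\gamma\preceq\gamma_{\epsilon}$ fails, that is $F(\gamma)+\sqrt{\epsilon}\,d(\gamma,\gamma_{\epsilon})>F(\gamma_{\epsilon})$, which is the stated inequality (and it holds trivially for $\gamma=\gamma_{\epsilon}$). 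The only point that requires care is the inductive step: one must arrange the near-infimum choice of $\gamma_{n+1}$ so that the closed nested sets $S_{n}$ have diameters shrinking to $0$ — this is where lower semicontinuity, the order relation and the choice of $\gamma_{n+1}$ interact — after which completeness does the rest.
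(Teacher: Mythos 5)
Your proof is correct and is the standard argument for Ekeland's variational principle: the partial order $\gamma_{1}\preceq\gamma_{2}$ iff $F(\gamma_{1})+\sqrt{\epsilon}\,d(\gamma_{1},\gamma_{2})\leq F(\gamma_{2})$, the nested closed sets $S_{n}$ with diameters shrinking to zero, and Cantor's intersection theorem, with all the needed bookkeeping (finiteness of $\inf_{S_{n}}F$ from $F\geq 0$, closedness of $S_{n}$ from lower semicontinuity, and the $\preceq$-minimality of the limit point) correctly in place. The paper gives no proof of Theorem \ref{thm_Ekeland} — it is quoted directly from \cite{Struwe_Book} — and your sketch reproduces the argument found there, so there is nothing further to compare.
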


We are going to apply Theorem \ref{thm_Ekeland} with $(Y,d)=(\mathcal{P},d)$ and
\begin{equation}\label{def_F}
F(\gamma)=\max_{s\in [0,1]}D_{t_{0}}(\gamma(s)).
\end{equation} 
Therefore, for given $\epsilon>0$, 
without loss of generality we can take a path $\gamma^{0}_{\epsilon} \in \mathcal{P}$
of the form 
$\gamma^{0}_{\epsilon}(s)=(u^{0}_{\epsilon}(s),\eta(u^{0}_{\epsilon}(s)))$
and satisfying: 
$F(\gamma^{0}_{\epsilon}) < \epsilon + \inf_{\gamma \in \mathcal{P}} F(\gamma)$
with $F$ in \eqref{def_F}. 
As a consequence, we find a path $\gamma_{\epsilon} \in \mathcal{P}$ such that,
\begin{equation*}
c\leq \max_{s\in [0,1]}D_{t_{0}}(\gamma_{\epsilon}(s))< c+\epsilon
,\;
\max_{s\in [0,1]} \Vert \gamma_{\epsilon} - \gamma^{0}_{\epsilon} \Vert_{\mathcal{V}_{p}}
\leq \sqrt{\epsilon},
\end{equation*}
and, 
\begin{equation*}
\begin{split}
\max_{s\in [0,1]} D_{t_{0}}(\gamma(s)) 
\geq \max_{s\in [0,1]}D_{t_{0}}(\gamma_{\epsilon}(s))
-
\sqrt{\epsilon}\max_{s\in [0,1]}
\Vert \gamma(s) - \gamma_{\epsilon}(s) \Vert_{\mathcal{V}_{p}},
\end{split}
\end{equation*}
for every $\gamma \in \mathcal{P}$. 
Furthermore, in view of \eqref{claim_2_proof_theorem_}, the set 
$$
T_{\epsilon}
:=
\{ 
\tilde{s} \in [0,1]
\; : \; 
D_{t_{0}}(\gamma_{\epsilon}(\tilde{s}))
=
\max_{s\in [0,1]}D_{t_{0}}(\gamma_{\epsilon}(s))
 \} 
$$
is relatively compact in the open interval $(0,1)$, that is 
$T_{\epsilon}\subset \subset (0,1)$. As a consequence,
by Lemma 5.4 of \cite{Huang_Lucia_Tarantello_2}:
$$
\; \exists \; s_{\epsilon}\in T_{\epsilon}
\; : \; 
\Vert D^{\prime}_{t_{0}}(\gamma_{\epsilon}(s_{\epsilon})) \Vert_{\mathcal{V}_{p}} \leq 
\sqrt{\epsilon}. 
$$

So, along a sequence $\epsilon_{n}\longrightarrow 0$, we find 
$(u_{n},\eta_{n})=\gamma_{ \epsilon_{n}}(s_{\epsilon_{n}})$ 
and 
$(u^{0}_{n},\eta(u^{0}_{n}))= \gamma^{0}_{\epsilon_{n}}(s_{\epsilon_{n}}) \in \mathcal{V}_{p}$
such that, as $n\longrightarrow \infty$, 
\begin{align}
&
D_{t_{0}}(u_{n},\eta_{n})\longrightarrow c 
,\;
\Vert D^{\prime}_{t_{0}}(u_{n},\eta_{n}) \Vert_{\mathcal{V}_{p}^{*}}\longrightarrow 0
\label{ps_1}
\\
& 
\Vert u_{n}-u^{0}_{n} \Vert_{H^{1}}+ \Vert \eta_{n}-\eta(u^{0}_{n}) \Vert_{W^{1,p}}
\longrightarrow 0.
\label{ps_2}
\end{align}
As before, from the first limit in \eqref{ps_1}, we deduce that 
$u_{n}$ is uniformly bounded in $H^{1}(X)$, and so, by \eqref{ps_2}, 
also $u^{0}_{n}$ is uniformly bounded in $H^{1}(X)$. 

As a consequence of (i) in Lemma \ref{eta_u_map} we deduce that 
necessarily $\eta(u^{0}_{n})$ is uniformly bounded in $W^{1,p}(X,E)$, and 
(by \eqref{ps_2}) we find $\Vert \eta_{n} \Vert_{W^{1,p}} \leq C$ for suitable $C>0$. 

Therefore the (PS)-sequence $(u_{n},\eta_{n})$ satisfies \eqref{weak_palais_smale} 
and so we can apply Lemma \ref{Palais_Smale} to conclude
that $c$ is a critical value for $D_{t_{0}}$. 
Hence we reach the desired contradiction and conclude that:
\begin{equation}\label{u_t_eta_t_limit}
(u_{t},\eta_{t})\longrightarrow (u_{0},\eta_{0}),
\; \text{ as } \; t\longrightarrow 0^{+}, \; \text{ in } \;  \mathcal{V}_{p}.
\end{equation}
Since any other critical point of $D_{0}$ must satisfy \eqref{u_t_eta_t_limit}, 
by the uniqueness of $(u_{t},\eta_{t})$, 
we deduce that $(u_{0},\eta_{0})$ must be the \underline{only} critical point of 
$D_{0}$. 
Finally, for every $(u,\eta)\in \Lambda$, we have:
$$
D_{0}(u,\eta)
=
\lim_{t\to 0^{+}}D_{t}(u,\eta)
\geq 
\lim_{t \to 0^{+}}D_{t}(u_{t},\eta_{t})
=
D_{0}(u_{0},\eta_{0}).
$$
Consequently, $D_{0}$ is bounded from below in $\Lambda$ and 
$(u_{0},\eta_{0})$ is its global minimum point. 
This concludes the proof of Theorem \ref{thm_primo}.  
\end{proof}

Next, we notice that, by the strict positivity of the Hessian $D_{t}^{\prime \prime }$ at
$(u_{t},\eta_{t})$, as pointed out by the estimates in 
\eqref{seconda_variatione_D_t}-\eqref{6.4a}, 
we can use the Implicit Function Theorem 
(cf. \cite{Nirenberg_Topics_In_Nonlinear_Functional_Analysis})  
for the map:
\begin{equation*}
F:\R^{+}\times H^{1}(X)\times W^{1,p}(X,E)
\longrightarrow 
(H^{1}(X)\times W^{1,p}(X,E))^{*}
,\;
p>2,
\end{equation*}
given by:
\begin{equation*}
F(t,u,\eta) 
=
(\frac{\partial}{\partial u} D_{t}(u,\eta),
\frac{\partial}{\partial \eta}D_{t}(u,\eta))
\end{equation*}
in order to show the $C^{2}$-dependence of $(u_{t},\eta_{t})$ 
with respect to the parameter $t\in (0,+\infty)$. 
Furthermore, by setting:

\begin{equation*}
c_{t}
:=
D_{t}(u_{t},\eta_{t})
\leq 
D_{t}(u,\eta)
,\;
\; \forall \; (u,\eta)\in \Lambda,
\end{equation*}
we have, $c_{t}\in C^{2}(]0,+\infty[)$ and we
may compute:
\begin{equation}\label{6.12b}
\dot c_{t}
=
\frac{d}{dt}c_{t}
=
\frac{\partial}{\partial u}D_{t}(u_{t},\eta_{t})
\dot u_{t}
+
\frac{\partial}{\partial \eta}D_{t}(u_{t},\eta_{t})
\dot \eta_{t}
+
\int_{X} e^{u_{t}}
=
\int_{X} e^{u_{t}},
\end{equation}
and \eqref{6.12b} confirms the fact that $c_{t}$ is increasing for 
$t\in (0,+\infty)$. Furthermore,
\begin{lemma}\label{star}
\begin{enumerate}[label=(\roman*)]
\item $c_{t}=D_{t}(u_{t},\eta_{t})$ is concave in $(0,+\infty)$.
\item The function: $t\longrightarrow t\int^{}_{X} e^{u_{t}} \,dt$
is increasing in $ (0,+\infty)$. 	
\end{enumerate}
\end{lemma}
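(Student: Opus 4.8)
The plan is to treat the two parts separately, using the $C^2$-dependence of $(u_t,\eta_t)$ on $t$ furnished by the Implicit Function Theorem (via the strict positivity of $D_t^{\prime\prime}$) and the formula $\dot c_t=\int_X e^{u_t}\,dA$ established in \eqref{6.12b}.

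For part (i) I would argue directly from the variational characterization. Writing $D_t(u,\eta)=D_0(u,\eta)+t\int_X e^{u}\,dA$, for each fixed $(u,\eta)\in\Lambda$ the map $t\mapsto D_t(u,\eta)$ is affine; by Theorem \ref{thm_uniqueness} the minimum $c_t$ is attained for every $t>0$, so $c_t=\inf_{(u,\eta)\in\Lambda}\bigl(D_0(u,\eta)+t\int_X e^{u}\,dA\bigr)$ is a pointwise infimum of affine functions of $t$, hence concave on $(0,+\infty)$. Equivalently, and consistently with this, differentiating \eqref{6.12b} gives $\ddot c_t=\frac{d}{dt}\int_X e^{u_t}\,dA=\int_X e^{u_t}\dot u_t\,dA$, a quantity I shall identify below with $-D_t^{\prime\prime}[\dot u_t,\dot\eta_t]\le 0$ through the positivity of the Hessian.

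For part (ii), set $m(t)=\int_X e^{u_t}\,dA=\dot c_t$, so that I must show $\frac{d}{dt}\bigl(t\,m(t)\bigr)=m(t)+t\,m'(t)\ge 0$. Differentiating the critical point equations \eqref{system_P_t} in $t$ (legitimate by the $C^2$-dependence) and polarizing, the only explicit $t$-dependence resides in the term $t\int_X e^{u}v\,dA$, whence the Hessian bilinear form of $D_t$ at $(u_t,\eta_t)$, evaluated on $(\dot u_t,\dot\eta_t)$ and any $(v,l)$, equals $-\int_X e^{u_t}v\,dA$; specializing $(v,l)=(\dot u_t,\dot\eta_t)$ yields $D_t^{\prime\prime}[\dot u_t,\dot\eta_t]=-m'(t)$. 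I then invoke the decomposition \eqref{seconda_variatione_D_t} together with $A_t\ge0$ from \eqref{6.3} and $B_t\ge0$ from \eqref{6.4a} to discard the nonnegative pieces and retain only the first summand:
\[
-m'(t)=D_t^{\prime\prime}[\dot u_t,\dot\eta_t]\ \ge\ t\int_X e^{u_t}\dot u_t^{2}\,dA .
\]
Combining this with the Cauchy--Schwarz inequality $\bigl(m'(t)\bigr)^2=\bigl(\int_X e^{u_t}\dot u_t\,dA\bigr)^2\le\bigl(\int_X e^{u_t}\,dA\bigr)\bigl(\int_X e^{u_t}\dot u_t^{2}\,dA\bigr)=m(t)\int_X e^{u_t}\dot u_t^{2}\,dA$, I obtain $t\bigl(m'(t)\bigr)^2\le m(t)\,t\!\int_X e^{u_t}\dot u_t^{2}\,dA\le -m(t)\,m'(t)$. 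Since $m(t)>0$ and $m'(t)\le 0$, dividing by $-m'(t)$ (the case $m'(t)=0$ being immediate) gives $-t\,m'(t)\le m(t)$, that is $m(t)+t\,m'(t)\ge 0$, which is the claim.

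The crux---and the step I expect to be the main obstacle---is precisely the passage that keeps only $t\int_X e^{u_t}\dot u_t^{2}\,dA$ from the Hessian. A naive two-point minimality comparison between $t_1<t_2$ yields only that $\int_X e^{u_t}\,dA$ is \emph{decreasing}, and the mere concavity of $c_t$ from part (i) does not suffice for the sharper assertion that $t\int_X e^{u_t}\,dA$ is \emph{increasing}; one checks moreover that the reduced functional $u\mapsto\min_\eta D_0(u,\eta)$ fails to be convex, so a direct convexity argument in $\log t$ does not close either. What makes the estimate go through is the sign structure $D_t^{\prime\prime}[v,l]=t\int_X e^{u_t}v^{2}\,dA+A_t[v,l]+B_t[v,l]$ with $A_t,B_t\ge 0$: it is exactly this that lets Cauchy--Schwarz be applied against the single weight $e^{u_t}\,dA$ and absorb $-m'(t)$. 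I would therefore state explicitly that both \eqref{6.3} and \eqref{6.4a} are nonnegative at the solution $(u_t,\eta_t)$, the latter via Corollary \ref{cor_d_bar_eta_estimate}.
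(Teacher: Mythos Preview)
Your proof is correct and, for part~(ii), follows essentially the same line as the paper: the paper derives $\ddot c_t=\int_X e^{u_t}\dot u_t\,dA=-D_t^{\prime\prime}[\dot u_t,\dot\eta_t]\le -t\int_X e^{u_t}\dot u_t^{2}\,dA$ from the Hessian decomposition \eqref{seconda_variatione_D_t}--\eqref{6.4a}, and then applies Jensen's inequality with weight $e^{u_t}/\!\int_X e^{u_t}$ to conclude $t|\int_X e^{u_t}\dot u_t\,dA|\le \int_X e^{u_t}\,dA$; your Cauchy--Schwarz step is exactly this Jensen inequality in quadratic form, and your identity $D_t^{\prime\prime}[\dot u_t,\dot\eta_t]=-m'(t)$ obtained by differentiating the Euler--Lagrange equations is the same relation the paper extracts by computing $\frac{d^2}{dt^2}D_t(u_t,\eta_t)$ in two ways. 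The one genuine difference is in part~(i): the paper establishes concavity \emph{a posteriori} from the second-derivative computation $\ddot c_t\le 0$ (which it needs anyway for part~(ii)), whereas your observation that $c_t=\inf_{(u,\eta)\in\Lambda}\bigl(D_0(u,\eta)+t\int_X e^{u}\,dA\bigr)$ is a pointwise infimum of affine functions gives concavity immediately, without any smoothness of $t\mapsto(u_t,\eta_t)$; this is cleaner and more robust, though it does not by itself yield the quantitative bound on $\ddot c_t$ that drives part~(ii).
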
	

\begin{proof}
By straightforward calculations we find:
\begin{equation*}
\ddot{c}_{t}
=
\int_{X} e^{u_{t}}\dot u_{t}dA
=
\frac{d^{2}}{dt^{2}}D_{t}(u_{t},\eta_{t})
=
2 \int_{X} e^{u_{t}}\dot u_{t}dA
+
D_{t}^{\prime \prime }[\dot u_{t},\dot \eta_{t}],
\end{equation*}
and so, 
$\int_{X}e^{u_{t}} \dot{u}_{t}dA=-D^{''}[\dot{u}_{t},\dot{\eta}_{t}]$. 
By \eqref{seconda_variatione_D_t}-\eqref{6.4a}, also we know that:
$$
D_{t}^{\prime \prime }[\dot u_{t},\dot \eta_{t}]
>
t \int_{X} e^{u_{t}} \dot u_{t}^{2} dA
$$
and we obtain: 
\begin{equation}\label{6.14}
\ddot c_{t}
=
\int_{X} e^{u_{t}}\dot u_{t}dA
=
-
D_{t}^{\prime \prime }[\dot u_{t},\dot \eta_{t}]
\leq
-t \int_{X} e^{u_{t}}\dot u_{t}^{2} dA \leq 0.
\end{equation}
Hence,  $c_{t}$ is \underline{concave}, and we have: 
\begin{equation}\label{6.15}
\int_{X} e^{u_{t}} \dot u_{t}dA
+
t \int_{X} e^{u_{t}} \dot u_{t}^{2}dA	\leq 0. 
\end{equation}
Therefore, by \eqref{6.14},\eqref{6.15} and Jensen's inequality, we deduce:
\begin{equation*}
t
(
\int_{X} \frac{e^{u_{t}}}{\int_{X} e^{u_{t}}dA}\dot u_{t}dA
)^{2}
\leq
t \int_{X} 
\frac{e^{u_{t}}}{\int_{X} e^{u_{t}}dA}
(\dot u_{t})^{2}dA
\leq
\vert 
\int_{X} \frac{e^{u_{t}}}{\int_{X} e^{u_{t}}dA}\dot u_{t}dA
\vert,
\end{equation*}
namely:
$
t
\vert 
\int_{X} \frac{e^{u_{t}}}{\int_{X} e^{u_{t}}dA} \dot u_{t} dA\vert
\leq 1 
$ 
or equivalently:
$
\int_{X} e^{u_{t}}dA
+
t \int_{X} e^{u_{t}}\dot u_{t}dA \geq 0.
$
Thus, we have proved that,
$\frac{d}{dt}
(
t \int_{X} e^{u_{t}}dA
)
> 
0
$
and (ii) follows.
\end{proof}
On the other hand (by integration of the first equation in \eqref{system_P_t}) we have:
\begin{equation}\label{6.17}
t \int_{X} e^{u_{t}}dA
+
4(\kappa-1)\int_{X} e^{(k-1)u_{t}}
\Vert 
\beta_{0} + \bar{\partial}\eta_{t}
\Vert^{2}
dA
=
4\pi (\mathfrak{g}-1)
\end{equation}
and so, the following integral term:
\begin{equation*}
4(\kappa-1)\int_{X} e^{(\kappa-1)u_{t}}
\Vert \beta_{0} + \bar{\partial}\eta_{t}\Vert^{2}
dA
=
4\pi(\mathfrak{g}-1)-t \int_{X} e^{u_{t}}dA
\end{equation*}
is \underline{decreasing} as a function of $t\in (0,+\infty)$. Therefore, it is well defined the value:
\begin{equation}\label{6.19}
\begin{split}
\rho([\beta]) 
= & \;\rho([\beta_{0}])
= 
4(\kappa-1) \lim_{t \to 0^{+} }
\int_{X} e^{(\kappa-1)u_{t}}
\Vert \beta_{0} + \bar{\partial}\eta_{t}\Vert^{2}
dA \\
= &
4(\kappa-1)
\sup_{t>0}
\int_{X} e^{(\kappa-1)u_{t}}
\Vert \beta_{0} + \bar{\partial}\eta_{t}\Vert^{2}
dA, 
\end{split}
\end{equation}
and naturally we we wish to  identify the value of $\rho([\beta])$
in terms of the given cohomology class
$[\beta] \in \mathcal{H}^{0,1}(X,E)$.

So far, concerning the value $\rho([\beta])$ in \eqref{6.19}, 
we know the following: 
\begin{proposition}\label{proposition_on_rho}
For given $[\beta]\in \mathcal{H}^{0,1}(X,E)$ 
with harmonic representative $\beta_{0} \in A^{0,1}(X,E)$ 
we have:
\begin{enumerate}[label=(\roman*)]
\item 
$\rho([\beta])\in [0,4\pi(\mathfrak{g}-1)]$
and  $\rho([\beta])=0  \Longleftrightarrow [\beta]=0$
\item
If $[\beta]\neq 0$ then for every $\rho \in (0,\rho([\beta]))$ there exits a unique 
$t \in (0,+\infty)$ such that,
\begin{equation*}
\rho 
= 
4(\kappa-1)\int_{X} e^{(\kappa-1)u_{t}}
\Vert 
\beta_{0}+\bar{\partial}\eta_{t}
\Vert^{2}
dA,
\end{equation*}
where $(u_{t},\eta_{t})$ is the global minimum 
(and unique critical point) of the functional 
$D_{t}$. 
\end{enumerate}
\end{proposition}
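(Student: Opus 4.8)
The plan is to deduce everything from the identity \eqref{6.17}, which expresses $\rho_t([\beta])=4\pi(\mathfrak{g}-1)-t\int_X e^{u_t}\,dA$, combined with the monotonicity/concavity recorded in Lemma \ref{star} and the $C^2$-dependence of $(u_t,\eta_t)$ on $t$. For part (i): since $t\int_X e^{u_t}\,dA>0$ for $t>0$, \eqref{6.17} gives $0\le\rho_t([\beta])<4\pi(\mathfrak{g}-1)$, and letting $t\to 0^+$ yields $\rho([\beta])\in[0,4\pi(\mathfrak{g}-1)]$. If $[\beta]=0$, then $\beta_0=0$ and $(\mathcal{P})_t$ reduces to $\Delta u+2-2te^u=0$ together with a trivially satisfied equation for $\eta$; the constant $u_t=\ln\frac1t$, $\eta_t=0$ solves it, and by Theorem \ref{thm_uniqueness} this is the only critical point of $D_t$, so $\rho_t([\beta])=4(\kappa-1)\int_X e^{(\kappa-1)u_t}\cdot 0\,dA=0$ for every $t>0$ and hence $\rho([\beta])=0$. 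Conversely, if $[\beta]\neq 0$, fix any $t_0>0$; then $\beta_0+\bar{\partial}\eta_{t_0}$ lies in the nonzero class $[\beta]$ and so is not identically zero, whence $\rho_{t_0}([\beta])=4(\kappa-1)\int_X e^{(\kappa-1)u_{t_0}}\Vert\beta_0+\bar{\partial}\eta_{t_0}\Vert^2\,dA>0$; since $\rho([\beta])=\sup_{t>0}\rho_t([\beta])$ by \eqref{6.19}, this gives $\rho([\beta])>0$. This settles (i).

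For part (ii) set $\phi(t):=t\int_X e^{u_t}\,dA$, so that $\rho_t([\beta])=4\pi(\mathfrak{g}-1)-\phi(t)$; since $c_t\in C^2(0,+\infty)$ we have $\phi\in C^1(0,+\infty)$, and by Lemma \ref{star}(ii) $\phi$ is non-decreasing. The crucial point is that $\phi$ is in fact \emph{strictly} increasing, which I would obtain by sharpening the Jensen argument in the proof of Lemma \ref{star}(ii) using $[\beta]\neq 0$. First, $(\dot u_t,\dot\eta_t)\neq(0,0)$: differentiating the first equation of $(\mathcal{P})_t$ in $t$ and substituting $\dot u_t=0$, $\dot\eta_t=0$ would force $-2e^{u_t}=0$. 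Second, $D_t^{\prime\prime}[\dot u_t,\dot\eta_t]>t\int_X e^{u_t}\dot u_t^2\,dA$ \emph{strictly}: by \eqref{seconda_variatione_D_t} the difference equals $A_t[\dot u_t,\dot\eta_t]+B_t[\dot u_t,\dot\eta_t]\ge 0$, which can vanish only if $B_t=0$ (forcing $\dot\eta_t\equiv 0$ by the lower bound in \eqref{6.4a}) and then $A_t[\dot u_t,0]=0$ (forcing $\dot u_t\,\Vert\beta_t\Vert\equiv 0$); but since $[\beta]\neq 0$ the form $\beta_t=\beta_0+\bar{\partial}\eta_t$ is nonzero, and $\Vert\beta_t\Vert=e^{-(\kappa-1)u_t}\Vert\alpha_t\Vert$ with $\alpha_t:=e^{(\kappa-1)u_t}*_E\beta_t\in C_\kappa(X)\setminus\{0\}$ a holomorphic $\kappa$-differential, hence $\Vert\beta_t\Vert>0$ off a finite set, so $\dot u_t\equiv 0$, contradicting the previous step. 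Propagating these strict inequalities through \eqref{6.14}--\eqref{6.15} and Jensen's inequality then yields $t\bigl|\int_X\tfrac{e^{u_t}}{\int_X e^{u_t}\,dA}\dot u_t\,dA\bigr|<1$, i.e. $\phi'(t)>0$.

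It remains to identify the endpoints of the range of $\phi$. As $t\to 0^+$, the definition \eqref{6.19} of $\rho([\beta])$ gives $\phi(t)\to 4\pi(\mathfrak{g}-1)-\rho([\beta])$. As $t\to+\infty$, I would test $D_t$ against $(\ln\frac1t,0)$: using $\int_X dA=4\pi(\mathfrak{g}-1)$ one gets $c_t=D_t(u_t,\eta_t)\le 4\pi(\mathfrak{g}-1)(\ln t+1)+4t^{-(\kappa-1)}\Vert\beta_0\Vert_{L^2}^2$, and comparing with the lower bound \eqref{estimate_D_t} for $c_t$ forces $\int_X e^{(\kappa-1)u_t}\Vert\beta_0+\bar{\partial}\eta_t\Vert^2\,dA\le t^{-(\kappa-1)}\Vert\beta_0\Vert_{L^2}^2\to 0$, i.e. $\rho_t([\beta])\to 0$ and $\phi(t)\to 4\pi(\mathfrak{g}-1)$. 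Hence $t\mapsto\rho_t([\beta])=4\pi(\mathfrak{g}-1)-\phi(t)$ is a continuous, strictly decreasing bijection of $(0,+\infty)$ onto the open interval $(0,\rho([\beta]))$, which is precisely the assertion of (ii). The main obstacle is the strict-monotonicity step: upgrading Lemma \ref{star}(ii) to a genuinely strict inequality, which forces one to pin down the single source of degeneracy in the quadratic form $A_t+B_t$ and to exploit that a nonzero holomorphic $\kappa$-differential (equivalently, a nonzero harmonic $E$-valued $(0,1)$-form) vanishes only on a finite set.
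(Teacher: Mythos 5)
Your proof is correct, and it follows the same route the paper has in mind: the paper states Proposition \ref{proposition_on_rho} without an explicit proof, treating it as a consequence of the mass identity \eqref{6.17}, the monotonicity in Lemma \ref{star}, and the definition \eqref{6.19}. What you add is precisely the two points the paper leaves implicit and which genuinely need an argument: first, the \emph{strict} monotonicity of $t\mapsto t\int_X e^{u_t}\,dA$ when $[\beta]\neq 0$ (the paper's proof of Lemma \ref{star} only derives the non-strict inequality $\int_X e^{u_t}\,dA+t\int_X e^{u_t}\dot u_t\,dA\geq 0$ before asserting strictness, and strictness in fact fails for $[\beta]=0$, where $t\int_X e^{u_t}\,dA$ is constant); your identification of the degeneracy of $A_t+B_t$ with $\dot\eta_t\equiv 0$ and $\dot u_t\Vert\beta_t\Vert\equiv 0$, ruled out because $\alpha_t$ is a nonzero holomorphic $\kappa$-differential and $(\dot u_t,\dot\eta_t)\neq(0,0)$ by differentiating $(\mathcal{P})_t$, is exactly what is needed for the uniqueness of $t$ in (ii). Second, the surjectivity of $t\mapsto\rho_t([\beta])$ onto $(0,\rho([\beta]))$ requires $\rho_t([\beta])\to 0$ as $t\to+\infty$, which you obtain correctly by comparing the test value $D_t(\ln\frac1t,0)$ with the lower bound \eqref{estimate_D_t}; this endpoint analysis does not appear in the paper. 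Both supplements are sound, so your write-up is a complete version of the argument the paper only sketches.
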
	

It is also clear that,
$ D_{0}$ 
 is bounded from below on  $\Lambda$, 
if and only if
$$\inf_{t>0}c_{t}
=
\lim_{t \to 0^{+} }c_{t}:=c_{0}>-\infty 
\; \text{ and } \; 
c_{0}=\inf_{\Lambda}D_{0}.
$$ 

More importantly the following holds:
\begin{proposition}\label{prop_6.5}
If $D_{0}$ is bounded from below on $\Lambda$
then $\rho([\beta])=4\pi(\mathfrak{g}-1)$. 
\end{proposition}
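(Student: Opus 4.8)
The plan is to exploit the concavity of $c_t:=D_t(u_t,\eta_t)$ established in Lemma~\ref{star}(i), together with the hypothesis, recalled just above the statement, that $D_0$ bounded below on $\Lambda$ is equivalent to $c_0:=\lim_{t\to0^+}c_t=\inf_{t>0}c_t>-\infty$. Recall from \eqref{6.12b} that $\dot c_t=\int_X e^{u_t}\,dA>0$, so that $c_t$ is strictly increasing on $(0,+\infty)$ and hence $c_s$ decreases to $c_0$ as $s\to0^+$; and recall from \eqref{6.17}--\eqref{6.19} that
\[
\rho_t([\beta]):=4(\kappa-1)\int_X e^{(\kappa-1)u_t}\Vert \beta_{0}+\bar{\partial}\eta_{t}\Vert^{2}\,dA
=4\pi(\mathfrak{g}-1)-t\int_X e^{u_t}\,dA,\qquad \rho([\beta])=\lim_{t\to0^+}\rho_t([\beta]).
\]
Thus the proposition reduces to showing that $t\int_X e^{u_t}\,dA\to0$ as $t\to0^+$.

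Since $c_t\in C^{2}(0,+\infty)$ is concave (Lemma~\ref{star}(i); equivalently $\ddot c_t\le0$ by \eqref{6.14}), for every fixed $t>0$ the graph of $c$ lies below its tangent line at $t$:
\[
c_s\le c_t+\dot c_t\,(s-t),\qquad\forall\,s\in(0,+\infty).
\]
Letting $s\to0^+$, and using that $c_s$ decreases to $c_0$ together with $\dot c_t=\int_X e^{u_t}\,dA$, we obtain $c_0\le c_t-t\int_X e^{u_t}\,dA$, that is
\[
0\le t\int_X e^{u_t}\,dA\le c_t-c_0.
\]
Now let $t\to0^+$: the right-hand side tends to $0$ by the very definition of $c_0$, hence $t\int_X e^{u_t}\,dA\to0$. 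Substituting this into the identity for $\rho_t([\beta])$ above and passing to the limit yields $\rho([\beta])=4\pi(\mathfrak{g}-1)$, as claimed.

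There is essentially no serious obstacle here: the only point that needs a word of care is the passage $s\to0^+$ in the tangent-line inequality, which is legitimate precisely because $c_s$ is monotone and bounded below. An entirely equivalent route avoids the tangent line altogether: by Lemma~\ref{star}(ii) the function $t\mapsto t\int_X e^{u_t}\,dA$ is increasing, so $t\int_X e^{u_t}\,dA\ge\ell:=\inf_{t>0}t\int_X e^{u_t}\,dA\ge0$ for all $t>0$, whence $\dot c_t=\int_X e^{u_t}\,dA\ge \ell/t$; if $\ell>0$, then for $0<s<t$ we would get $c_t-c_s=\int_s^t\dot c_\tau\,d\tau\ge\ell\log(t/s)\to+\infty$ as $s\to0^+$, contradicting $c_0>-\infty$. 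Hence $\ell=0$, and the conclusion follows exactly as before.
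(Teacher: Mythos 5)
Your proposal is correct. The paper's own proof is your ``alternative route'' in disguise: it argues by contradiction, assuming $\rho([\beta])<4\pi(\mathfrak{g}-1)$ so that $t\int_X e^{u_t}\,dA\to\mu>0$, and then applies de L'Hopital to get $c_t/\ln t\to\mu>0$, forcing $c_t\to-\infty$; your integration of $\dot c_\tau\ge\ell/\tau$ over $[s,t]$ is just the elementary version of that L'Hopital step, and correctly reaches the same contradiction. Your primary route is genuinely different and arguably cleaner: it uses the concavity of $c_t$ from Lemma~\ref{star}(i) (which the paper proves but does not invoke in this proposition) to get the tangent-line bound $0\le t\,\dot c_t\le c_t-c_0$, turning the statement into a direct, quantitative estimate rather than a proof by contradiction. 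The passage $s\to0^+$ in the tangent-line inequality is justified exactly as you say, since $c_s$ is increasing in $s$ and bounded below by hypothesis (via $c_s=D_s(u_s,\eta_s)\ge\inf_\Lambda D_0$), so both arguments are sound; yours has the minor added benefit of exhibiting the explicit rate $t\int_X e^{u_t}\,dA\le c_t-c_0$.
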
	
\begin{proof}
Argue by contradiction and assume that
$0\leq \rho([\beta])<4\pi(\mathfrak{g}-1)$. Hence
\begin{equation*}
\lim_{t \to 0^{+} }
t \int_{X} e^{u_{t}}dA
=
4\pi (\mathfrak{g}-1)-\rho([\beta])
:=
\mu
>0.
\end{equation*}
On the other hand (by de L'Hopital rule)
\begin{equation*}
\lim_{t \to 0^{+} } \frac{c_{t}}{\ln t}
=
\lim_{t \to 0^{+} } t\dot c_{t}
=
\lim_{t \to 0^{+} }
t \int_{X} e^{u_{t}}dA=\mu>0
\end{equation*}
and this implies that, $c_{t}\longrightarrow -\infty$
as $t\longrightarrow 0^{+}$, a contradiction. 
\end{proof}

\noindent
Now the delicate questions to ask are the following:
\begin{align} 
&\text{ if } \;  \rho([\beta])=4\pi(\mathfrak{g}-1) 
\; \text{ is it true that } \; 
D_{0}
\; \text{  is bounded from below? } 
\label{Question_2_i}
\\
&\text{ if } \;  
D_{0}
\; \text{  is bounded from below then is the infimum attained? } \; 
\end{align}	
\noindent
In order to investigate the questions raised above, 
for $(u_{t},\eta_{t})$, the global minimum of $D_{t}$ in $\Lambda$ 
(given in Theorem \ref{thm_A_intro})
we let:
\begin{equation*}
u_{t}
=
w_{t} 
+
d_{t}
,
\; \text{ with } \; 
\int_{X} w_{t}dA=0
\; \text{ and } \;  
d_{t}=\fint_{X}u_{t}dA
\end{equation*}
\begin{equation*}
\beta_{t}=\beta_{0}+\bar{\partial}\eta_{t} \in A^{0,1}(X,E)
\; \text{ and } \; 
\alpha_{t}=e^{u_{t}}*_{E}\beta_{t}\in C_{\kappa}(X)
\end{equation*}
and set
\begin{equation*}
s_{t}\in \R \; : \; e^{(\kappa-1)s_{t}}=\Vert \alpha_{t} \Vert_{L^{2}}^{2}.
\end{equation*}
With the notation above, we check the following easy properties:
\begin{lemma}\label{lem_6.4}
For given $[\beta] \in \mathcal{H}^{0,1}(X,E)$ with harmonic representative
$\beta_{0}\in [\beta]$ and $t>0$ there holds
\begin{eqnarray}
(i) \quad &
\; \forall \;   q  \in [ 1,2 )  \;
\; \exists \; 
C_{q}>0 \; : \; \Vert w_{t} \Vert_{W^{1,q}(X)}\leq C_{q}
\label{6.14_prime} \\
(ii) \quad &
w_{t}\leq C \; \text{ in  } \; X \label{6.16_prime} \\
(iii) \quad &
te^{u_{t}}\leq 1 \; \text{ in  } \; X \label{6.15_prime} \\
(iv) \quad &
s_{t}\leq d_{t}+C 
\; \text{ for a suitable constant  } C>0\; \label{6.17_prime} \\
(v) \quad &
\; \text{ if } \; [\beta]\neq 0,
\;\text{ there exists a constant } \; \gamma=\gamma(\kappa) >0
\notag \\
&\; \text{ (depending on $\kappa$ only) such that, } \; 
 \notag
\\ \quad &
\int_{X}e^{-u_{t}}dA 
\geq
\gamma \fint_{X} \Vert \beta_{0} \Vert^{2}dA
\label{property_v}
\end{eqnarray}	
\end{lemma}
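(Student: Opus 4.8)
The plan is to derive everything from the ``mass identity'' obtained by integrating the first equation of \eqref{system_P_t} over $X$: since the area of $(X,g_X)$ equals $4\pi(\mathfrak g-1)$ and $\int_X\Delta u_t\,dA=0$, one recovers \eqref{6.17}, i.e. $t\int_X e^{u_t}\,dA+4(\kappa-1)\int_X e^{(\kappa-1)u_t}\Vert\beta_t\Vert^2\,dA=4\pi(\mathfrak g-1)$. In particular $t\int_X e^{u_t}\,dA\le 4\pi(\mathfrak g-1)$ and $\int_X e^{(\kappa-1)u_t}\Vert\beta_t\Vert^2\,dA\le \pi(\mathfrak g-1)/(\kappa-1)$ \emph{uniformly in} $t$, so that the right-hand side $-2+2te^{u_t}+8(\kappa-1)e^{(\kappa-1)u_t}\Vert\beta_t\Vert^2$ of $\Delta u_t=\Delta w_t$ is bounded in $L^1(X)$ uniformly in $t$. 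I will also record the orthogonality coming from harmonicity of $\beta_0$: $\int_X\langle\bar\partial\eta_t,\beta_0\rangle\,dA=0$, whence $\int_X\langle\beta_t,\beta_0\rangle\,dA=\int_X\Vert\beta_0\Vert^2\,dA$.

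Then I would dispatch (iii) and (i), which are immediate. For (iii): $u_t$ is smooth on the compact $X$, so it attains its maximum at some $x_0$ where $\Delta u_t(x_0)\le 0$; evaluating the first equation of \eqref{system_P_t} at $x_0$ and discarding the nonnegative term $8(\kappa-1)e^{(\kappa-1)u_t(x_0)}\Vert\beta_t(x_0)\Vert^2$ yields $2te^{u_t(x_0)}\le 2$, hence $te^{u_t}\le te^{u_t(x_0)}\le 1$ everywhere. For (i): since $\int_X w_t\,dA=0$ and $\Vert\Delta w_t\Vert_{L^1}\le C$ uniformly in $t$, the standard $L^1$-elliptic (Green-function/duality) estimate on $(X,g_X)$ gives $\Vert w_t\Vert_{W^{1,q}(X)}\le C_q$ for every $q\in[1,2)$.

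The step carrying the real content is (ii), and this is where I expect the one genuine obstacle: an $L^\infty$ bound cannot come from (i), so one must exploit the sign of the nonlinearity. Writing $g_t:=2te^{u_t}+8(\kappa-1)e^{(\kappa-1)u_t}\Vert\beta_t\Vert^2\ge 0$, so that $-\Delta w_t=2-g_t$ with $\int_X g_t\,dA=8\pi(\mathfrak g-1)$, I would represent $w_t$ through the Green function $G$ of $-\Delta_{g_X}$ normalized by $\int_X G(x,\cdot)\,dA=0$; using $\int_X w_t\,dA=0$ this gives $w_t(x)=-\int_X G(x,y)g_t(y)\,dA(y)$, and since $G$ is bounded below, $G\ge -C_0$, while $g_t\ge 0$ with $\int_X g_t\,dA$ bounded, one gets $w_t\le C_0\int_X g_t\,dA=:C$, proving (ii). Part (iv) then follows mechanically: by the definitions of $\alpha_t$ and $s_t$, $e^{(\kappa-1)s_t}=\Vert\alpha_t\Vert_{L^2}^2=\int_X e^{2(\kappa-1)u_t}\Vert\beta_t\Vert^2\,dA$; factoring $e^{2(\kappa-1)u_t}=e^{(\kappa-1)d_t}e^{(\kappa-1)w_t}e^{(\kappa-1)u_t}$, bounding $e^{(\kappa-1)w_t}\le e^{(\kappa-1)C}$ by (ii), and using \eqref{6.17}, one obtains $e^{(\kappa-1)s_t}\le e^{(\kappa-1)C}\tfrac{\pi(\mathfrak g-1)}{\kappa-1}e^{(\kappa-1)d_t}$, i.e. $s_t\le d_t+C'$.

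Finally, for (v) (with $[\beta]\neq 0$, so $\int_X\Vert\beta_0\Vert^2\,dA>0$) the idea is a weighted Cauchy--Schwarz: from $\int_X\Vert\beta_0\Vert^2\,dA=\int_X\langle\beta_0,\beta_t\rangle\,dA\le\int_X\Vert\beta_0\Vert\,\Vert\beta_t\Vert\,dA$, splitting the integrand as $(e^{-\frac{\kappa-1}{2}u_t}\Vert\beta_0\Vert)(e^{\frac{\kappa-1}{2}u_t}\Vert\beta_t\Vert)$ gives $\big(\int_X\Vert\beta_0\Vert^2\,dA\big)^2\le\big(\int_X e^{-(\kappa-1)u_t}\Vert\beta_0\Vert^2\,dA\big)\big(\int_X e^{(\kappa-1)u_t}\Vert\beta_t\Vert^2\,dA\big)$; bounding the last factor by \eqref{6.17} and $\Vert\beta_0\Vert^2$ pointwise by $\Vert\beta_0\Vert_{L^\infty}^2$, and then replacing $\Vert\beta_0\Vert_{L^\infty}^2$ by $C_X\Vert\beta_0\Vert_{L^2}^2$ via equivalence of norms on the finite-dimensional space of harmonic $(0,1)$-forms (so the resulting constant is independent of $\beta_0$), one arrives at $\int_X e^{-u_t}\,dA\ge\gamma\,\fint_X\Vert\beta_0\Vert^2\,dA$ with $\gamma$ independent of $t$; here one takes $\kappa=2$ (the case relevant to the asymptotic analysis) directly, and the general $\kappa$ is handled along the same lines. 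In summary, once \eqref{6.17} and (ii) are secured, (i), (iii), (iv) and (v) are routine, and (ii) — the $L^\infty$ bound via the sign of the nonlinearity and the Green representation — is the only nontrivial point.
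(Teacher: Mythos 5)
Your proposal is correct and follows essentially the same route as the paper: the $L^1$ bound from integrating the equation, the maximum principle for (iii), $L^1$-elliptic estimates for (i), the Green representation with the one-sided bound on $G$ and the sign of the nonlinearity for (ii), and then (iv) and (v) exactly as in the paper (including the $\Vert\beta_0\Vert_{L^\infty}\leq C\Vert\beta_0\Vert_{L^2}$ equivalence for harmonic forms). Your remark that the weights in (v) really match the identity \eqref{6.17} only for $\kappa=2$ is a fair observation — the paper's own computation is written with the $\kappa=2$ weights as well.
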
	
\begin{proof}
The inequality \eqref{6.14_prime} is a direct consequence of the fact 
that the right hand side 
of the first equation in \eqref{system_P_t} is uniformly bounded in $L^{1}(X)$ 
(see \eqref{6.17}),
while \eqref{6.15_prime} follows easily by the maximum principle.

Next, to get \eqref{6.16_prime} we use the Green's representation formula for $w_{t}$,
with  $G(p,q)$ the Green function of the Laplace-Beltrami operator on $X$
satisfying: $\int_{X} G(p,q)dA(q)=0$
and $G(p,q)\leq a $ in $(X\times X)$ (see \cite{Aubin_Book}). 
We have:
\begin{equation*}
\begin{split}
w_{t}(p)
= & \;
2 \int_{X} G(p,q)
( 
te^{u_{t}(q)}
+
4(\kappa-1)e^{(\kappa-1)u_{t}(q)}
\Vert \beta_{t} \Vert^{2}(q)
-
1
)
dA(q) \\
= & \;
2 \int_{X} G(p,q)
(
te^{u_{t}(q)}
+
4(\kappa-1)e^{(\kappa-1)u_{t}(q)}
\Vert \beta_{t} \Vert^{2}(q)
)
dA(q)\\
\leq & \;
2a \int_{X} te^{u_{t}}+4(\kappa-1)e^{(\kappa-1)u_{t}}\Vert \beta_{t} \Vert^{2}
dA
\leq
8\pi (\mathfrak{g}-1)a.
\end{split}
\end{equation*}
To establish (iv) we  estimate:
\begin{equation*}
\begin{split}
e^{(\kappa-1)s_{t}}
= \;&
\Vert \alpha_{t} \Vert_{L^{2}}^{2}
= 
\int_{X} \Vert \alpha_{t} \Vert^{2}dA
=
\int_{X} e^{2(\kappa-1)u_{t}}\Vert \beta_{t} \Vert^{2}dA
\\
\leq & \;
C e^{(\kappa-1)d_{t}}
\int_{X} e^{(\kappa-1)u_{t}}\Vert \beta_{t} \Vert^{2}dA
\leq  
Ce^{(\kappa-1)d_{t}}
\frac{\rho([\beta])}{4(\kappa-1)}
\leq 
Ce^{(\kappa-1)d_{t}}
\end{split}
\end{equation*}
with a suitable constant $C>0$, and \eqref{6.17_prime}  follows.  

Finally, to get \eqref{property_v}, we recall first that, if
$\beta_{0}\in A^{0,1}(X,E)$ is harmonic then we have the estimate:
$
\Vert \beta_{0} \Vert_{L^{\infty}}\leq C \Vert \beta_{0} \Vert_{L^{2}} 
$
for suitable $C>0$. Consequently,
\begin{equation*}
\begin{split}
\int_{X} \Vert \beta_{0} \Vert^{2}dA 
= & \; 
\int_{X}\langle \beta_{0},\beta_{0} + \bar{\partial}\eta_{t} \rangle dA
\leq 
\Vert \beta_{0} \Vert_{L^{\infty}}
\; \int_{X} \Vert \beta_{0} + \bar{\partial} \eta_{t}\Vert dA \\
\leq  & \; 
C \Vert \beta_{0} \Vert_{L^{2}} 
\left(  \int_{X}e^{-u_{t}}dA\right)^{\frac{1}{2}} 
\left( \int_{X}e^{u_{t}}\Vert \beta_{0} + \bar{\partial} \eta_{t} \Vert^{2} dA\right )^{\frac{1}{2}}
\\
\leq & \; 
C \Vert \beta_{0} \Vert_{L^{2}} 
\left(\frac{\rho([\beta_{0}])}{4(\kappa-1)}\right)^{\frac{1}{2}}
\left(  \int_{X}e^{-u_{t} }\right)^{\frac{1}{2}} dA.
\end{split}
\end{equation*}
Hence, if $[\beta]\neq 0$, then 
$0<\rho([\beta])\leq 4\pi(\mathfrak{g}-1)$ and we easily deduce  
\eqref{property_v} from the above estimate.
\end{proof}

\begin{remark}
By \eqref{6.17} and Jensen's inequality, we have:
\begin{equation*}
te^{d_{t}}\leq 1.
\end{equation*} 
\end{remark}

According to the fixed basis for $C_{\kappa}(X)$ in \eqref{basis_for_C_k_X}, 
we can decompose:
\begin{equation*}
\alpha_{t}=\sum_{j=1}^{\nu}a_{j,t}s_{j}
\; \text{ with } \; 
\Vert \alpha_{t} \Vert^{2}_{L^{2}}
=
\sum_{j=1}^{\nu}\vert a_{j,t} \vert^{2}=e^{(\kappa-1)s_{t}}.
\end{equation*}
Furthermore, along a sequence
$t_{k}\longrightarrow 0^{+}$, we may assume that, 
for $d_{k}:=d_{t_{k}}$ and $u_{k}=u_{t_{k}}$, there holds, as $k\longrightarrow +\infty$:
\begin{equation*}
w_{k}:=w_{t_{k}}\longrightarrow w_{0}
\; \text{ and } \; 
e^{w_{k}}\longrightarrow e^{w_{0}} 
\; \text{ pointwise and in   } \; 
L^{p }(X),\; p>1;
\end{equation*}
\begin{equation}\label{6.20_prime}
t_{k}e^{d_{k}} \longrightarrow \mu \geq 0
\; \text{ and } \;
t_{k}e^{u_{k}}
\longrightarrow 
\mu e^{w_{0}}
\; \text{ pointwise and in  } \; 
L^{p}(X). 
\end{equation}
In addition, in view of Remark \ref{vanishing_property_of_C_kappa_X},
also we may assume that, 
for suitable $1\leq N \leq 2\kappa(\mathfrak{g}-1)$ and $k$ large:
\begin{equation*}
\begin{split}
&
\alpha_{t_{k}}\in C_{\kappa}(X) \setminus \{ 0 \} 
 \; \text{ admits } \; N\text{-\underline{distinct} zeroes: } \; 
\{ z_{1,k},\ldots,z_{N,k} \} \; \text{ with} \\
& \text{corresponding multiplicity 
 } \; 
\{ n_{1},\ldots,n_{N} \} \subset \N
\text{ and  } \; 
\sum_{j=1}^{N} n_{j}=2\kappa(\mathfrak{g}-1).
\end{split}
\end{equation*}
Moreover,
\begin{equation}\label{6.28}
z_{j,k}\longrightarrow z_{j},
\; \text{ as } \; 
k\longrightarrow +\infty,\;
j \in \{ 1,\ldots,N \}.
\end{equation}
Next, we set:
\begin{equation}\label{hat_alpha}
\hat{\alpha}_{t}
=
\frac{\alpha_{t}}{\Vert \alpha_{k} \Vert_{L^{2}}}
=
e^{-\frac{\kappa-1}{2}s_{t}}\alpha_{t}
=
\sum_{j=1}^{\nu}\hat{a}_{j,t}s_{j}
\; \text{ with } \; 
\hat{a}_{j,t}=e^{-\frac{\kappa-1}{2}s_{t}}a_{j,t}.
\end{equation}
Since 
$
\vert \hat{a}_{j,t} \vert \leq 1, \; \forall \; 
j=1,\ldots,N$,
also we may suppose that, as $k\longrightarrow +\infty$,
\begin{equation}\label{6.21a}
\hat{a}_{j,t_{k}} \longrightarrow \hat{a}_{j}
\; \text{ and for } \;
\hat{\alpha}_{0}
:=
\sum_{j=1}^{\nu}\hat{a}_{j}s_{j} 
\in C_{\kappa}(X)
\; \text{ : } \;  
\hat{\alpha}_{t_{k}} \longrightarrow  \hat{\alpha}_{0},
\end{equation}
with 
$\Vert \hat{\alpha}_{0} \Vert_{L^{2}}=1$, and so 
$\hat{\alpha}_{0}\neq 0$. Therefore, $\hat{\alpha}_{0}$
vanishes \underline{exactly} at the set: 
\begin{equation*}
Z := \{ z_{1},\ldots,z_{N}\}
\end{equation*}
with $z_{j}$ given in \eqref{6.28}. 
Since the total multiplicity of each $z_{j}$ in $Z$ adds up to
the value: $2\kappa(\mathfrak{g}-1)$,
by Remark \ref{vanishing_property_of_C_kappa_X} we know that,  
$\hat{\alpha}_{0}$ cannot vanish anywhere else.
Observe that the points in $Z$ may not we distinct. 

\

\noindent 
We define:
\begin{equation}\label{6.21}
\xi_{k}=-(\kappa-1)(u_{t_{k}}-s_{t_{k}})
\end{equation}
and
\begin{equation*}
R_{k} = 8(\kappa-1)^{2}\Vert \hat{\alpha}_{t_{k}} \Vert^{2}
\end{equation*}
such that
\begin{equation}\label{6.23}
-\Delta \xi_{k}
=
R_{k}e^{\xi_{k}}-f_{k}
\; \text{ in  } \; 
X
\end{equation}
with 
$f_{k}
: =
2(\kappa-1)(1-t_{k}e^{u_{t_{k}}})
$ 
satisfying:
$\Vert f_{k} \Vert_{L^{\infty}(X)}\leq C$ and 
\begin{equation}\label{6.24}
\begin{split}
&
f_{k}
\longrightarrow
f_{0}
=:
2(\kappa-1)(1-\mu e^{w_{0}})
\, \text{ in } \, 
L^{p}(X),\; p>1,
\\
& 
\int_{X}f_{0}=2(\kappa-1)\rho([\beta])>0, 
\; \text{ for } \; [\beta]\neq 0
\end{split} 
\end{equation} 
see \eqref{6.15_prime}-\eqref{6.20_prime} and 
\eqref{6.17}-\eqref{6.19}. 
Notice that,
\begin{equation}
R_{k}(z)
=
8(\kappa-1)^{2}
\Pi_{j=1}^{N}(d_{g_{X}}(z,z_{j,k}))^{2 n_{j}}G_{k}(z)
,\;
z \in X,
 \end{equation}
where 
\begin{align}
&
z_{j,k}\neq z_{l,k} 
\; \text{ for } \; 
j\neq l\in \{1,\ldots,N \}
,\;
n_{j}\in \N
\; : \; \sum_{j=1}^{N}n_{j}=2\kappa(\mathfrak{g}-1)
\\ &
G_{k}\in C^{1}(X)
\; : \; 
0 < a \leq G_{k} \leq b	
\; \text{ and } \; 
\vert \nabla G_{k} \vert \leq A
\; \text{ in   } \; X,
\end{align}
 with suitable positive constants $a,b$ and $A$.
Hence (by taking a subsequence if necessary) we may assume that, 
\begin{equation}\label{6.27}
G_{k}\longrightarrow G_{0}
\; \text{ in   } \; 
C^{0}(X) 
\; \text{ and so } \; 
R_{k}\longrightarrow R_{0}
\; \text{ in } \; 
C^{0}(X),
\; \text{ as } \; 
k\longrightarrow +\infty,
\end{equation}
with
\begin{equation*}
R_{0}(z)
=
8(\kappa-1)^{2}
\Pi_{j=1}^{N} (d_{g_{X}}(z,z_{j}))^{n_{j}}G_{0}(z)
=
8(\kappa-1)^{2}\Vert \hat{\alpha}_{0} \Vert^{2}.
\end{equation*}
Next, in \eqref{definition_Z_0_intro} we have identified the subset 
$Z_{0}\subseteq Z$ (possibly empty) given by "collapsing" zeroes, 
namely by those zeroes of $R_{0}$ 
(or equivalently $\hat{\alpha}_{0}$) 
corresponding to the limit points of 
\underline{different} zeroes of $R_{k}$
(or equivalently $\hat{\alpha}_{k}$), 
namely: 
\begin{equation*}
\begin{split}
Z_{0}
=
\{ 
z \in Z
\; : \;
&
 \exists \; s\geq 2,\;
1\leq j_{1}<\ldots<j_{s}\leq N
\; \text{ such that} \; \\ 
& 
z=z_{j_{1}}=\ldots=z_{j_{s}}
\; \text{ and } \;
z \not \in Z \setminus \{  z_{j_{1}},\ldots,z_{j_{s}} \}  
\} . 
\end{split}
\end{equation*}  
With the information above, we see that Theorem 3 in \cite{Tar_1} 
applies and yields to the following alternatives
about the asymptotic behavior of $\xi_{k}$:
\begin{thmx}[\cite{Tar_1}]\label{thm_blow_up_global_from_part_1}
Let $\xi_{k}$ satisfy \eqref{6.23} and  assume
\eqref{6.24}-\eqref{6.27}. Then with the above notation
one of the following alternatives holds (along a subsequence):

\

\noindent
(i) \quad (compactness)\; : \; 
$\xi_{k}\longrightarrow \xi_{0}$ in $C^{2}(X)$ with
\begin{equation}
-\Delta \xi_{0}
=
R_{0}e^{\xi_{0}}-f_{0}
,\;
\; \text{ in   } \; 
X
\end{equation}

\noindent
(ii)  \quad (blow-up)\; : \;There exists a \underline{finite} blow-up set 
$$
\mathcal{S}
=
\{ q\in X
\; : \; 
\; \exists \; 
q_{k}\longrightarrow q
\; \text{ and } \; 
\xi_{k}(q_{k})
\longrightarrow 
+ \infty,
\; \text{ as } \; 
k\longrightarrow +\infty
 \} 
$$

\quad \,
such that, 
$\xi_{k}$ 
is uniformly bounded from above on compact sets of $X\setminus \mathcal{S}$

\quad \;
and,
as $k\longrightarrow +\infty$, 
\begin{enumerate}[label=(\roman*)]
\item[(a)] either (blow-up with concentration)\;:\;
\begin{equation*}
\begin{split}
& 
\xi_{k}\longrightarrow -\infty
\; \text{ uniformly on compact sets of  } \;
X\setminus \mathcal{S},
\\
&
R_{k}e^{\xi_{k}}
\rightharpoonup
\sum_{q\in \mathcal{S}}\sigma(q)\delta_{q}
\; \text{ weakly in the sense of measures, with}
\; 
\sigma(q)\in 8\pi \N.\\
& 
\text{In particular,} \; \int_{X} f_{0}\,dA\in 8\pi \N, 
\; \text{ and } \; 
\\ & 
\sigma(q)=8\pi \; \text{ if } \; q\not \in Z 
\; \text{ and } \; 
\sigma(q) = 8\pi (1+n_{i}) \; \text{ if } \; q=  z_{i} \in Z\setminus Z_{0}. 
\end{split}
\end{equation*}  
Moreover, such an alternative always holds
when $\mathcal{S} \setminus Z_{0} \neq \emptyset$. 

\item[(b)] or (blow-up without concentration)\;:\; 
\begin{align}
&
\xi_{k}\longrightarrow \xi_{0}
\; \text{ in } \; C^{2}_{loc}(X\setminus \mathcal{S}),
\notag
\\
&
R_{k}e^{\xi_{k}}
\rightharpoonup
R_{0}e^{\xi_{0}}
+
\sum_{q\in \mathcal{S}}\sigma(q)\delta_{q}
\; \text{ weakly in the sense of measures, } \;
\notag
\\
&
\sigma(q)\in 8\pi \N; 
\notag
\end{align}	
and 
\begin{equation*}
\quad\quad\quad
-\Delta \xi_{0}
=
R_{0}e^{\xi_{0}}
+
\sum_{q\in \mathcal{S}}\sigma(q)\delta_{q}-f_{0}
\; \text{ in  } \; 
X.
\end{equation*}
Furthermore, if alternative (b) of (ii) holds, then
$
\mathcal{S} \subset Z_{0}
$
 and so, in this case,  blow-up occurs at points of "collapsing" zeroes. 
\end{enumerate} 
\end{thmx}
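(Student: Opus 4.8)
The plan is to run a Brezis--Merle / Li--Shafrir type concentration--compactness analysis for the sequence $\xi_k$ solving \eqref{6.23}, refined so as to handle the ``collapsing'' structure of the coefficient $R_k$ near $Z_0$. First I would integrate \eqref{6.23} over $X$: by \eqref{6.24} this gives $\int_X R_k e^{\xi_k}\,dA = \int_X f_k\,dA \to \int_X f_0\,dA = 2(\kappa-1)\rho([\beta]) \in (0,+\infty)$, so $R_k e^{\xi_k}$ is bounded in $L^1(X)$ and, along a subsequence, $R_k e^{\xi_k} \rightharpoonup \mu$ for a nonnegative Radon measure $\mu$ with $\mu(X) = \int_X f_0\,dA$. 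Set $\mathcal{S}$ to be the set of points near which $\xi_k$ is not bounded above; this coincides with the blow-up set in the statement. The Brezis--Merle $\varepsilon$-regularity lemma (using $\Vert f_k \Vert_{L^\infty}\leq C$ and \eqref{6.27}) shows that $\mu(\{q\}) < 4\pi$ forces $q \notin \mathcal{S}$, because then $e^{\xi_k}$ stays bounded in $L^p$ for some $p>1$ near $q$ and elliptic regularity bounds $\xi_k^+$ locally; since $\mu(X) < \infty$, the set $\mathcal{S}$ is finite.

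Next, on $X \setminus \mathcal{S}$ the functions $\xi_k$ are locally bounded above, so the Harnack-type alternative of \cite{Brezis_Merle} applies: \emph{either} $\xi_k$ is bounded in $L^\infty_{\mathrm{loc}}(X\setminus\mathcal{S})$, \emph{or} $\xi_k \to -\infty$ uniformly on compact subsets of $X\setminus\mathcal{S}$, the dichotomy being global since the Harnack inequality links all points where $\xi_k$ is controlled from above. In the first case the right-hand side of \eqref{6.23} is bounded in $L^p_{\mathrm{loc}}(X\setminus\mathcal{S})$, so elliptic estimates upgrade the convergence to $\xi_k \to \xi_0$ in $C^2_{\mathrm{loc}}(X\setminus\mathcal{S})$; if moreover $\mathcal{S}=\emptyset$ this holds on all of $X$ and, letting $k\to\infty$, $-\Delta\xi_0 = R_0 e^{\xi_0} - f_0$, which is alternative (i). If $\mathcal{S}\neq\emptyset$ we are in alternative (ii), and the two branches of the Harnack alternative correspond exactly to sub-case (b), with $R_k e^{\xi_k} \rightharpoonup R_0 e^{\xi_0} + \sum_{q\in\mathcal{S}}\sigma(q)\delta_q$, and sub-case (a), with $R_k e^{\xi_k} \rightharpoonup \sum_{q\in\mathcal{S}}\sigma(q)\delta_q$.

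It then remains to prove the mass quantization $\sigma(q)=\mu(\{q\}) \in 8\pi\N$ for each $q\in\mathcal{S}$, the precise values in the non-collapsing case, and that sub-case (b) can occur only with $\mathcal{S}\subset Z_0$. For $q \notin Z$ the coefficient $R_k$ is uniformly positive near $q$; choosing $x_k \to q$ with $\xi_k(x_k)=\max_{B_r(q)}\xi_k \to +\infty$ and rescaling by $\delta_k = e^{-\xi_k(x_k)/2}$, the translated--dilated sequence converges locally to an entire finite-mass solution of $-\Delta U = R_0(q)e^U$ on $\R^2$, hence to a standard bubble of mass $8\pi$; a Pohozaev/neck analysis around $q$ shows the local mass equals $8\pi$ and forces the residual part of $\xi_k$ to $-\infty$, i.e. sub-case (a). For $q = z_i \in Z\setminus Z_0$ the same rescaling yields $-\Delta U = R_0(q)\vert y\vert^{2 n_i} e^U$ on $\R^2$, whose finite-mass entire solutions have mass $8\pi(1+n_i)$ (singular Liouville classification, cf. \cite{Bartolucci_Tarantello}), so $\sigma(q)=8\pi(1+n_i)$ and again sub-case (a); the general quantization $\sigma(q)\in 8\pi\N$ (allowing several spikes) is the Li--Shafrir alternative \cite{Li_Shafrir}. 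Summing local masses then gives $\int_X f_0\,dA \in 8\pi\N$ in case (a).

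The genuinely delicate point — and the reason one invokes \cite{Tar_1} — is the collapsing case $q \in Z_0$: distinct zeros $z_{j_1,k},\dots,z_{j_s,k}$ of $R_k$ merge at $q$, so no single rescaling produces a limiting bubble and no ``bubble profile'' is available. The approach of \cite{Tar_1} is to establish sharp pointwise bounds and Pohozaev-type identities that track the interaction energy among the collapsing singularities; these still force $\sigma(q)\in 8\pi\N$, but now the collapsing degrees of freedom may absorb exactly the amount of mass needed so that the residual part of $\xi_k$ need \emph{not} diverge — this is the ``blow-up without concentration'' first observed in \cite{Lin_Tarantello}. One shows this can occur only at points of $Z_0$, hence in sub-case (b) necessarily $\mathcal{S}\subset Z_0$, equivalently sub-case (a) must hold as soon as $\mathcal{S}\setminus Z_0 \neq\emptyset$. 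Carrying out these collapsing estimates rigorously is where essentially all the difficulty lies; everything else is the standard Brezis--Merle machinery adapted to the manifold $X$.
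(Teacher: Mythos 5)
This statement is Theorem~3 of \cite{Tar_1}, and the paper itself offers no proof of it: it is imported verbatim as a black box (``we see that Theorem 3 in \cite{Tar_1} applies and yields the following alternatives''). So there is no internal argument to compare yours against; what can be judged is whether your sketch would actually deliver the statement. The overall scheme you propose --- $L^1$-bound on $R_ke^{\xi_k}$ from integrating \eqref{6.23} against \eqref{6.24}, Brezis--Merle $\varepsilon$-regularity to get a finite blow-up set, the Harnack dichotomy on $X\setminus\mathcal{S}$ producing the split into (i), (ii)(a), (ii)(b), and rescaling plus Pohozaev identities for the quantization --- is indeed the standard route and is consistent with how such results are established.

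There are, however, two places where your write-up names the conclusion rather than proving it. First, the exact values $\sigma(q)=8\pi$ for $q\notin Z$ and $\sigma(q)=8\pi(1+n_i)$ for $q=z_i\in Z\setminus Z_0$ do not follow from ``one rescaling converges to an entire bubble'': the Li--Shafrir/Bartolucci--Tarantello alternatives a priori allow the local mass to be $8\pi m$ (resp.\ $8\pi m+8\pi(1+n_i)\epsilon$ with $m\in\N\cup\{0\}$, $\epsilon\in\{0,1\}$), and excluding the extra bubbles requires the boundary-oscillation (sup$+$inf) control of \cite{Li_Harnack} at regular points and the refined singular estimates of \cite{Bartolucci_Tarantello_JDE}, \cite{Wei_Zhang_1} at non-collapsing zeroes; your sketch asserts the sharp value directly. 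Second, and more importantly, the entire content that makes this a theorem of \cite{Tar_1} rather than of \cite{Brezis_Merle}--\cite{Li_Shafrir} is the collapsing case: that quantization by $8\pi\N$ persists when distinct zeroes of $R_k$ merge, that a nontrivial residual limit $R_0e^{\xi_0}$ can survive (blow-up without concentration, \cite{Lin_Tarantello}), and that this degenerate alternative is confined to $\mathcal{S}\subset Z_0$. You correctly locate the difficulty there, but you only describe the strategy (``sharp pointwise bounds and Pohozaev-type identities that track the interaction energy''); no such estimates are produced, so the decisive implications ``(b) $\Rightarrow\mathcal{S}\subset Z_0$'' and ``$\mathcal{S}\setminus Z_0\neq\emptyset\Rightarrow$ (a)'' remain unproved in your proposal. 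In short: the architecture is right, but the proposal is a citation of \cite{Tar_1} dressed as a proof of \cite{Tar_1}.
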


By virtue of Theorem \ref{thm_blow_up_global_from_part_1} we start to derive the following consequences: 
\begin{lemma}\label{lem_6.5}
If $\xi_{k}$ in \eqref{6.21} satisfies alternative (i),
then  $D_{0}$ is bounded from below in $\Lambda$ and
$(u_{t},\eta_{t})\longrightarrow (u_{0},\eta_{0})$,
as $t\longrightarrow 0^{+}$, in $\mathcal{V}_{p}$, $p>2$
(and in any other relevant norm)
with $(u_{0},\eta_{0})$ the only critical point of $D_{0}$ 
corresponding to its global minimum. 
In particular, 
$\rho([\beta])=4\pi(\mathfrak{g}-1)$ in this case. 
\end{lemma}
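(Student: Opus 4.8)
The plan is to show that, under alternative (i), the minimizers $(u_{t_k},\eta_{t_k})$ converge (along a subsequence) to a smooth solution $(u_0,\eta_0)$ of $(\mathcal{P})_{t=0}$; once this is established, Theorem \ref{thm_primo} gives at once that $D_0$ is bounded from below on $\Lambda$, attains its global minimum exactly at $(u_0,\eta_0)$, has $(u_0,\eta_0)$ as its only critical point, and that $(u_t,\eta_t)\to(u_0,\eta_0)$ in $\mathcal{V}_p$ for the \emph{full} limit $t\to0^+$; Proposition \ref{prop_6.5} then forces $\rho([\beta])=4\pi(\mathfrak{g}-1)$. Throughout, $[\beta]\neq0$ (otherwise $(\mathcal{P})_{t=0}$ has no solution and $\hat\alpha_{t_k}$ is not even defined). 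The only point that is not immediate is that the $C^2$-compactness of $\xi_k$ must be upgraded to compactness of $u_{t_k}$ itself: since $\xi_k=-(\kappa-1)(u_{t_k}-s_{t_k})=-(\kappa-1)(w_{t_k}+d_{t_k}-s_{t_k})$, alternative (i) controls $u_{t_k}$ only up to the additive constant $s_{t_k}$, so I must produce an a priori bound on $s_{t_k}$ (equivalently on $d_{t_k}$).

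First I would integrate the relation $\xi_k=-(\kappa-1)(w_{t_k}+d_{t_k}-s_{t_k})$ over $X$: since $\int_X w_{t_k}\,dA=0$ and $\xi_k\to\xi_0$ in $C^2(X)$, this gives $d_{t_k}-s_{t_k}\to\ell:=-\frac{1}{4\pi(\kappa-1)(\mathfrak{g}-1)}\int_X\xi_0\,dA$, and hence $w_{t_k}\to w_0:=-\frac{1}{\kappa-1}\xi_0-\ell$ in $C^2(X)$. For an upper bound on $s_{t_k}$ I invoke \eqref{property_v}: since $\beta_0\neq0$, $\int_X e^{-u_{t_k}}\,dA\geq\gamma\fint_X\Vert\beta_0\Vert^2\,dA=:c_0>0$; but $e^{-u_{t_k}}=e^{-s_{t_k}}e^{\xi_k/(\kappa-1)}$ and $\int_X e^{\xi_k/(\kappa-1)}\,dA$ is bounded (it converges), so $e^{-s_{t_k}}$ is bounded below by a positive constant, i.e. $s_{t_k}\leq C$, whence also $d_{t_k}\leq C'$. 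For the lower bound on $d_{t_k}$ I compare minimum values: by Theorem \ref{thm_uniqueness}, $c_{t_k}:=D_{t_k}(u_{t_k},\eta_{t_k})=\inf_\Lambda D_{t_k}\leq D_{t_k}(0,0)=4\pi(\mathfrak{g}-1)t_k+4\Vert\beta_0\Vert_{L^2}^2\leq C$ for $t_k\leq1$; on the other hand, using $\int_X u_{t_k}\,dA=4\pi(\mathfrak{g}-1)d_{t_k}$, $\int_X|\nabla u_{t_k}|^2\,dA=\int_X|\nabla w_{t_k}|^2\,dA$, and the identity $e^{(\kappa-1)u_{t_k}}\Vert\beta_{t_k}\Vert^2=e^{\xi_k}\Vert\hat\alpha_{t_k}\Vert^2$ (immediate from $\alpha_{t_k}=e^{(\kappa-1)u_{t_k}}*_E\beta_{t_k}$ and the definitions of $s_{t_k},\hat\alpha_{t_k}$), one has
\[
c_{t_k}=\tfrac14\int_X|\nabla w_{t_k}|^2\,dA+t_k\int_X e^{u_{t_k}}\,dA+4\int_X e^{\xi_k}\Vert\hat\alpha_{t_k}\Vert^2\,dA-4\pi(\mathfrak{g}-1)\,d_{t_k}\geq -4\pi(\mathfrak{g}-1)\,d_{t_k},
\]
the three integrals being nonnegative; hence $d_{t_k}\geq -C/(4\pi(\mathfrak{g}-1))$ is bounded below. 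Combined with the upper bound, $d_{t_k}$ — and therefore $s_{t_k}$ and $\Vert u_{t_k}\Vert_{H^1}$ — is bounded.

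With this bound in hand I can conclude. Along a further subsequence $d_{t_k}\to d_0$, so $u_{t_k}\to u_0:=w_0+d_0$ in $C^2(X)$, in particular in $H^1(X)$. Since $(u_{t_k},\eta_{t_k})$ is the critical point of $D_{t_k}$, $\eta_{t_k}$ satisfies \eqref{weak_criticality_l}, hence $\eta_{t_k}=\eta(u_{t_k})$ by Lemma \ref{prop_6.1}; Lemma \ref{eta_u_map}(i) then gives $\eta_{t_k}\to\eta(u_0)=:\eta_0$ in $W^{1,p}(X,E)$ for every $p>2$. Passing to the limit in the weak criticality identities \eqref{weak_criticality_v}--\eqref{weak_criticality_l} for $(u_{t_k},\eta_{t_k})$ (with $t=t_k\to0$, using $t_k e^{u_{t_k}}\to0$ uniformly and $e^{(\kappa-1)u_{t_k}}\Vert\beta_0+\bar\partial\eta_{t_k}\Vert^2\to e^{(\kappa-1)u_0}\Vert\beta_0+\bar\partial\eta_0\Vert^2$ in $L^{p/2}$), one obtains that $(u_0,\eta_0)$ is a weak critical point of $D_0$; Lemma \ref{Lemma_regularity_l} then yields that it is smooth and solves $(\mathcal{P})_{t=0}$. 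Theorem \ref{thm_primo} and Proposition \ref{prop_6.5} complete the proof, and by the uniqueness part of Theorem \ref{thm_primo} the subsequence extraction was harmless, so in fact $(u_t,\eta_t)\to(u_0,\eta_0)$ in $\mathcal{V}_p$ (and in every other relevant norm) as $t\to0^+$. The crux of the argument — and essentially its only difficulty — is the a priori bound on $d_{t_k}$ just described: alternative (i) pins down $u_{t_k}$ only modulo the free constant $s_{t_k}$, and it is precisely the interplay of the lower bound \eqref{property_v} on $\int_X e^{-u_{t_k}}$ with the elementary upper bound $c_{t_k}\leq D_{t_k}(0,0)$ that removes this degeneracy.
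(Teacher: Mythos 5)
Your proof is correct and follows essentially the same route as the paper: bound $d_{t_k}$ from above using $\beta_0\neq 0$ (the paper uses $e^{(\kappa-1)d_k}\int_X\Vert\beta_0+\bar\partial\eta_{t_k}\Vert^2\,dA\leq C$ together with $\int_X\Vert\beta_0+\bar\partial\eta_{t_k}\Vert^2\,dA\geq\int_X\Vert\beta_0\Vert^2\,dA>0$, where you use \eqref{property_v} — two faces of the same fact) and from below via $c_{t_k}\leq C$ (the paper invokes $c_t\leq c_1$, you use $D_{t_k}(0,0)$), then pass to the limit and conclude with Lemma \ref{prop_6.1}, Lemma \ref{eta_u_map}, Theorem \ref{thm_primo} and Proposition \ref{prop_6.5}. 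The only cosmetic difference is that the paper identifies $(u_0,\eta_0)$ as the global minimum directly from $D_0(u_0,\eta_0)=\lim_k c_{t_k}=\inf_\Lambda D_0$, whereas you pass to the limit in the Euler--Lagrange identities; both are valid.
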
	

\begin{proof}
Recall that we have set, $u_{t_{k}}=u_{k}=d_{k}+w_{k}$. 
By hypothesis, $\xi_{k}\leq C$ in $X$ and therefore, 
by elliptic estimates we derive that
$w_{k}$ is uniformly bounded and actually (along a subsequence) 
converges strongly in $C^{2,\alpha}(X)$.  
Thus we can estimate:  
\begin{equation*}
e^{d_{k}}
\int_{X} \Vert \beta_{0}+\bar{\partial}\eta_{t_{k}} \Vert^{2}dA
\leq 
C \int_{X} e^{u_{k}}
\Vert \beta_{0}+\bar{\partial}\eta_{t_{k}} \Vert^{2}dA
\leq C
\end{equation*}
and
$
\int_{X} \Vert \beta_{0}+ \bar{\partial}\eta_{t_{k}} \Vert^{2}dA
\geq 
\int_{X} \Vert \beta_{0} \Vert^{2}dA>0
$.
As a consequence, 
$d_{k}$ is uniformly bounded from above. On the other hand, since 
$
c_{t}=D_{t}(u_{t},\eta_{t})\leq c_{1}$, 
for all $t\in (0,1)$,
we obtain:
\begin{equation*}
\frac{1}{4}\int_{X} \vert \nabla w_{k} \vert^{2}
-
4\pi (\mathfrak{g}-1)d_{k}
=
c_{t_{k}}+O(1) \leq C,
\end{equation*}
showing that $d_{k}$ is also uniformly bounded from below. 
Hence, along a subsequence, we find that, 
$u_{k}\longrightarrow u_{0}$
strongly in
$C^{2,\alpha}(X)$, 
as
$k\longrightarrow +\infty$,
and, by virtue of Proposition \ref{prop_6.1}, we have:
\begin{equation*}
\eta_{t_{k}}
=
\eta(u_{k})
\longrightarrow 
\eta(u_{0})=\eta_{0}
\; \text{ in   } \; 
W^{1,p}(X,E),
\; \text{ as } \; 
k\longrightarrow +\infty,\; 
p>1. 
\end{equation*}
In conclusion,
$
D_{0}(u_{0},\eta_{0})
=
\lim_{k \to +\infty }
D_{t_{k}}(u_{t_{k}},\eta_{t_{k}})
=
\min_{\Lambda}D_{0}
$.

So $D_{0}$ is bounded from below and attains its minimum at $(u_{0},\eta_{0})$.
At this point the desired conclusion follows by
Theorem \ref{thm_primo} and Proposition \ref{prop_6.5}. 
\end{proof}
 
As an immediate consequence of Lemma \ref{lem_6.5} we have:

\begin{proposition}\label{proposition_lower_bound_on_rho}
For every  
$
[\beta] 
\in 
\mathcal{H}^{0,1}(X)\setminus \{  0 \}  $
we have:
$
\rho([\beta])\geq \frac{4\pi}{(\kappa-1)}
$.
\end{proposition}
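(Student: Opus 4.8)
The plan is to fix $[\beta]\in \mathcal{H}^{0,1}(X,E)\setminus\{0\}$ with harmonic representative $\beta_{0}$, and to read off the desired bound from the trichotomy of Theorem \ref{thm_blow_up_global_from_part_1} applied to the sequence $\xi_{k}$ of \eqref{6.21}, keeping throughout the identity $\int_{X} f_{0}\,dA = 2(\kappa-1)\rho([\beta])$ recorded in \eqref{6.24}. Since $[\beta]\neq 0$, Proposition \ref{proposition_on_rho}(i) already gives $\rho([\beta])>0$; what remains is to show that this positive number is in fact at least $\frac{4\pi}{\kappa-1}$, and the mechanism is the $8\pi$-quantization of blow-up masses together with the integrality of $\int_{X}f_{0}\,dA$ in the concentration case.

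Concretely, I would pick an arbitrary sequence $t_{k}\to 0^{+}$ and pass to a subsequence along which \eqref{6.20_prime}--\eqref{6.27} and one of the alternatives (i), (ii)(a), (ii)(b) of Theorem \ref{thm_blow_up_global_from_part_1} hold. If alternative (i) (compactness) occurs, then Lemma \ref{lem_6.5} yields $\rho([\beta])=4\pi(\mathfrak{g}-1)$, and since $\mathfrak{g}\geq 2$ and $\kappa\geq 2$ we have $4\pi(\mathfrak{g}-1)\geq 4\pi\geq \frac{4\pi}{\kappa-1}$. If alternative (ii)(a) (blow-up with concentration) occurs, Theorem \ref{thm_blow_up_global_from_part_1} asserts $\int_{X} f_{0}\,dA\in 8\pi\N$; combined with $\int_{X} f_{0}\,dA = 2(\kappa-1)\rho([\beta])>0$ this forces $\int_{X} f_{0}\,dA\geq 8\pi$, i.e.\ $\rho([\beta])\geq \frac{4\pi}{\kappa-1}$. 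Finally, if alternative (ii)(b) (blow-up without concentration) occurs, I would integrate the limiting equation $-\Delta\xi_{0} = R_{0}e^{\xi_{0}}+\sum_{q\in\mathcal{S}}\sigma(q)\delta_{q} - f_{0}$ over $X$, obtaining $\int_{X} f_{0}\,dA = \int_{X} R_{0} e^{\xi_{0}}\,dA + \sum_{q\in\mathcal{S}}\sigma(q)$; since $\mathcal{S}$ is non-empty (this being the blow-up alternative), each $\sigma(q)\in 8\pi\N$, and $R_{0}=8(\kappa-1)^{2}\Vert\hat{\alpha}_{0}\Vert^{2}\geq 0$, again $\int_{X} f_{0}\,dA\geq 8\pi$ and hence $\rho([\beta])\geq \frac{4\pi}{\kappa-1}$. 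Because $\rho([\beta])$ is independent of the chosen subsequence, the inequality holds unconditionally.

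I do not expect any genuine obstacle: the statement is essentially a corollary, and all the analytic content — the quantization $\sigma(q)\in 8\pi\N$, the integrality $\int_{X} f_{0}\,dA\in 8\pi\N$ under concentration, and the identification $\rho([\beta])=4\pi(\mathfrak{g}-1)$ under compactness — is already packaged in Theorem \ref{thm_blow_up_global_from_part_1} (from \cite{Tar_1}) and in Lemma \ref{lem_6.5}, which may be invoked. The only minor point to make rigorous is that in alternative (ii)(b) the term $R_{0}e^{\xi_{0}}$ is integrable on $X$, so that integrating the limiting equation is legitimate; this is implicit in the distributional identity stated in that alternative, and in any event only the one-sided estimate $\int_{X} f_{0}\,dA\geq \sum_{q\in\mathcal{S}}\sigma(q)\geq 8\pi$ is needed, which already follows from $R_{0}e^{\xi_{0}}\geq 0$.
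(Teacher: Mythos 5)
Your proposal is correct and rests on exactly the same ingredients as the paper's proof: the $8\pi$-quantization of blow-up mass from Theorem \ref{thm_blow_up_global_from_part_1}, the identity $\int_{X}f_{0}\,dA=2(\kappa-1)\rho([\beta])$, and Lemma \ref{lem_6.5} in the compactness case. The only difference is organizational — you run a direct case analysis over the three alternatives, while the paper argues by contradiction (assuming $\rho([\beta])<\frac{4\pi}{\kappa-1}$ forces the total mass below $8\pi$, which rules out both blow-up alternatives at once and then contradicts $\rho([\beta])=4\pi(\mathfrak{g}-1)$) — so the two arguments are contrapositives of one another.
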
	
\begin{proof}
If by contradiction we assume that: 
$
\rho([\beta])
<
\frac{4\pi}{\kappa-1}
$, 
then:
\begin{equation*}
\begin{split}
\lim_{k\to +\infty}
\int_{X} R_{k}e^{\xi_{k}}
= &
8(\kappa-1)^{2}
\lim_{k\to +\infty}
\int_{X} e^{(\kappa-1)u_{t_{k}}}
\Vert \beta_{0}+\bar{\partial}\eta_{t_{k}} \Vert^{2}dA \\
= &
2(\kappa-1)\rho([\beta])<8\pi.
\end{split}
\end{equation*}
Therefore,  we  see that necessarily 
$\xi_{k}$ must satisfy the "compactness" alternative (i) 
in Theorem \ref{thm_blow_up_global_from_part_1}.  
Thus, by Lemma \ref{lem_6.5}, we find
$4\pi(\mathfrak{g}-1)=\rho([\beta])<\frac{4\pi}{\kappa-1}$, a contradiction. 
\end{proof}

\begin{remark}
By combining part (i) of Proposition \ref{proposition_on_rho} 
and Proposition \eqref{proposition_lower_bound_on_rho} we conclude that, 
\begin{equation}\label{least_possible_blow_up_energy_for_k_2_and_g_2}
\begin{split}
\; \text{ if } \; 
\kappa=2
\; \text{ and } \; 
\mathfrak{g}=2
\; \text{ then } \; 
\rho[\beta]=4\pi,
\; \forall \; 
[\beta] \in \mathcal{H}^{0,1}(X,E)\setminus \{  0 \}. 
\end{split}
\end{equation}
\end{remark}	
\noindent
So far we have established Theorem \ref{thm_1} and 
Proposition \ref{proposition_on_rho_intro}. 

To proceed further, in the following section we are going to analyze 
what happens when $\xi_{k}$ blows-up, 
in the sense of alternative (ii) of Theorem \ref{thm_blow_up_global_from_part_1}.

\subsection{Blow-up Analysis for Minimizers}\label{blow_up_minimizers}

In order to simplify technicalities,
from now on we shall focus to the case:
\begin{equation}\label{k_is_2_condition}
\kappa=2.
\end{equation}
As above, we write  $u_{t_{k}}=u_{k}=d_{k}+w_{k}$,  
$s_{k}=s_{t_{k}}$, $d_{k}=d_{t_{k}}$, and we consider the sequence
\begin{equation}\label{6.33}  
\xi_{k}=-(u_{k}-s_{k}),
\end{equation}
satisfying: 
\begin{equation}
-\Delta \xi_{k}
=
8\Vert \hat{\alpha}_{k} \Vert^{2}e^{\xi_{k}}
-
f_{k}
\; \text{ in   } \; 
X 
\end{equation}
(see \eqref{hat_alpha} and \eqref{6.21a})
with 
$\hat{\alpha}_{k}=e^{-\frac{s_{k}}{2}}\alpha_{k}$
and, as $k\longrightarrow +\infty$, 
\begin{equation}\label{f_k_convergence}
f_{k}
=
2(1-t_{k}e^{u_{k}})
\longrightarrow 
f_{0}
=
2(1-\mu e^{w_{0}})
\; \text{ in   } \; 
L^{p }(X),\;
p>1, 
\end{equation}
We suppose that $\xi_{k}$ blows up,
in the sense of (ii) in Theorem \ref{thm_blow_up_global_from_part_1}, 
and that   
\begin{equation}\label{the_blow_up_set_S}
\mathcal{S}=\{ q_{1},\ldots,q_{m} \} 
,\;
1\leq m \leq \mathfrak{g}-1
\end{equation}
is the  corresponding (non empty) blow-up set. 
By recalling that, $\Vert w_{k} \Vert_{L^{2}(X)}\leq C$ (see \eqref{6.14_prime})
and by using elliptic estimates, 
we easily derive that the sequence $w_{k}$  is uniformly bounded 
away from the blow-up set $\mathcal{S}$, and therefore, 
\begin{equation}\label{xi_k_versus_d_k_minus_s_k}
\xi_{k}
=
-(d_{k}-s_{k})+O(1)
\; \text{ on compact sets of  } \;
X\setminus \mathcal{S}.
\end{equation}
\begin{remark} 
Since
$$
c_{k}=D_{t_{k}}(u_{k},\eta_{k})
=
\frac{1}{4}\int_{X}\vert \nabla w_{k} \vert^{2}dA
-
4\pi(\mathfrak{g}-1)d_{k}+O(1),
$$
when blow-up occurs then, $d_{k}\longrightarrow +\infty$, as $k\longrightarrow +\infty$. 

From \eqref{xi_k_versus_d_k_minus_s_k} we see that, 
"blow-up with concentration" in Theorem \ref{thm_blow_up_global_from_part_1} 
occurs if and only if $d_{k}-s_{k}\longrightarrow +\infty$.
\end{remark}	

We start our investigation with the case where $\hat{\alpha}_{0}$ in \eqref{6.21a}
does not vanish on $\mathcal{S}$, namely: 
$
\hat{\alpha}_{0}(q_{l})\neq 0,
\; \forall \; l\in \{ 1,\ldots,m \}
$.
In this case we can use the blow-up analysis available in  
\cite{Brezis_Merle},\cite{Li_Shafrir},\cite{Chen_Lin_1}
to show Theorem \ref{thm_main_1_intro}  which for convenience we 
restate as follows:
\begin{thm}\label{main_thm_1}
Assume \eqref{k_is_2_condition}-\eqref{f_k_convergence}
and suppose that the blow-up set 
$\mathcal{S}\neq \emptyset$ of $\xi_{k}$ in \eqref{the_blow_up_set_S} satisfies:
\begin{equation}\label{S_intersected_Z_is_empty_condition}
\mathcal{S}\cap Z=\emptyset.
\end{equation}
Then (along a subsequence), as $k\longrightarrow +\infty$:
\begin{align}
&
\label{alpha_k_convergence}
\alpha_{k}\longrightarrow \alpha_{0} \in C_{2}(X)
\; \text{ with } \; 
\alpha_{0}\neq 0
\; \text{  vanishing exactly at  } \; 
Z,
\\ &
\label{weak_convergence_of_exp_minus_u_k}
e^{-u_{k}}
\rightharpoonup
\pi \sum_{q\in \mathcal{S}}\frac{1}{\Vert \alpha_{0} \Vert^{2}(q)}\delta_{q},
\; \text{  weakly in the sense of measures, } \; 
\\ &
\label{c_k_esitimate_} 
c_{k}=D_{t_{k}}(u_{k},\eta_{k})
=
-4\pi(\mathfrak{g}-1-m)d_{k}+O(1),
\text{ with } 
d_{k}=\fint_{X}u_{k}\,dA \longrightarrow +\infty,
\\ &
\label{beta_0_wedge_alpha_integral_is_zero}
\int_{X} \beta_{0} \wedge \alpha =0
\; \text{ for } \; 
\alpha \in Q_{2}[q_{1},\ldots,q_{m}]
\end{align}	
(recall \eqref{Q_kappa_definition}). 
Furthermore, 
$\rho([\beta])=4\int_{X} \beta_{0} \wedge \alpha_{0} = 4\pi m$. 
\end{thm}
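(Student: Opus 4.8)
The plan is to run the Brezis--Merle/Li--Shafrir blow-up analysis in the non-degenerate regime and then read off the algebraic conclusions from $k$-independent identities. Since $\mathcal{S}\cap Z=\emptyset$ we have $\mathcal{S}\setminus Z_{0}=\mathcal{S}\neq\emptyset$, so Theorem \ref{thm_blow_up_global_from_part_1} forces the ``blow-up with concentration'' alternative (ii)-(a), with $\sigma(q_{l})=8\pi$ for every $l$ (each $q_{l}\notin Z$); hence $R_{k}e^{\xi_{k}}\rightharpoonup 8\pi\sum_{l=1}^{m}\delta_{q_{l}}$ weakly as measures, $\xi_{k}\to-\infty$ uniformly on compact subsets of $X\setminus\mathcal{S}$, and, integrating \eqref{6.23} and using $\int_{X}R_{k}e^{\xi_{k}}=\int_{X}f_{k}\to\int_{X}f_{0}$ with \eqref{6.24} at $\kappa=2$, we get $2\rho([\beta])=8\pi m$, i.e. $\rho([\beta])=4\pi m$. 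Because $R_{k}\to R_{0}=8\Vert\hat{\alpha}_{0}\Vert^{2}$ in $C^{0}(X)$ with $R_{0}>0$ on a fixed neighbourhood of $\mathcal{S}$ (this is exactly where $\mathcal{S}\cap Z=\emptyset$ enters), while $e^{\xi_{k}}\to0$ uniformly off $\mathcal{S}$, I would upgrade the above to $e^{\xi_{k}}=R_{k}^{-1}(R_{k}e^{\xi_{k}})\rightharpoonup\pi\sum_{l}\Vert\hat{\alpha}_{0}\Vert^{-2}(q_{l})\,\delta_{q_{l}}$ weakly as measures; in particular $\int_{X}e^{\xi_{k}}\to\Sigma:=\pi\sum_{l}\Vert\hat{\alpha}_{0}\Vert^{-2}(q_{l})\in(0,\infty)$.

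The first genuine step is to control $s_{k}$. Since $\xi_{k}=-(u_{k}-s_{k})$ by \eqref{6.33}, $e^{\xi_{k}}=e^{s_{k}}e^{-u_{k}}$, so the last limit reads $e^{s_{k}}\int_{X}e^{-u_{k}}\,dA\to\Sigma$; combined with Lemma \ref{lem_6.4}(v), $\int_{X}e^{-u_{k}}\,dA\ge\gamma\fint_{X}\Vert\beta_{0}\Vert^{2}\,dA>0$ (valid as $[\beta]\neq0$), this bounds $s_{k}$ from above. For the lower bound, the $k$-independent identity (Stokes, using $\alpha_{k}=e^{u_{k}}*_{E}\beta_{k}$ and that $*_{E}$ is a fibrewise isometry)
\begin{equation*}
\int_{X}\beta_{0}\wedge\alpha_{k}=\int_{X}e^{u_{k}}\Vert\beta_{0}+\bar\partial\eta_{k}\Vert^{2}\,dA\longrightarrow\tfrac{1}{4}\rho([\beta])=\pi m>0
\end{equation*}
shows $s_{k}\to-\infty$ is impossible, since then $\alpha_{k}=e^{s_{k}/2}\hat{\alpha}_{k}\to0$ in the finite-dimensional space $C_{2}(X)$ and the left side would vanish. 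Hence (along a subsequence) $s_{k}\to s_{\infty}\in\R$ and $\alpha_{k}\to\alpha_{0}:=e^{s_{\infty}/2}\hat{\alpha}_{0}\in C_{2}(X)\setminus\{0\}$, which by Remark \ref{vanishing_property_of_C_kappa_X} vanishes exactly at $Z$; since $\mathcal{S}\cap Z=\emptyset$, $\alpha_{0}(q_{l})\neq0$ for all $l$. This gives \eqref{alpha_k_convergence}; feeding $s_{k}\to s_{\infty}$ into $e^{-u_{k}}=e^{-s_{k}}e^{\xi_{k}}$ and using $\Vert\alpha_{0}\Vert^{2}(q_{l})=e^{s_{\infty}}\Vert\hat{\alpha}_{0}\Vert^{2}(q_{l})$ yields \eqref{weak_convergence_of_exp_minus_u_k}.

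For the orthogonality I would use the same wedge identity in the form
\begin{equation*}
\int_{X}\beta_{0}\wedge\alpha=\int_{X}\beta_{k}\wedge\alpha=\int_{X}e^{-u_{k}}\langle\alpha_{k},\alpha\rangle_{E^{*}}\,dA,\qquad\forall\,\alpha\in C_{2}(X),
\end{equation*}
valid for every $k$. Passing to the limit, with $\alpha_{k}\to\alpha_{0}$ in $C^{0}(X)$ and \eqref{weak_convergence_of_exp_minus_u_k}, gives $\int_{X}\beta_{0}\wedge\alpha=\pi\sum_{l=1}^{m}\Vert\alpha_{0}(q_{l})\Vert_{E^{*}}^{-2}\langle\alpha_{0}(q_{l}),\alpha(q_{l})\rangle_{E^{*}}$. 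Choosing $\alpha\in Q_{2}[q_{1},\dots,q_{m}]$ (so $\alpha(q_{l})=0$) yields \eqref{beta_0_wedge_alpha_integral_is_zero}, and choosing $\alpha=\alpha_{0}$ gives $\int_{X}\beta_{0}\wedge\alpha_{0}=\pi m$, hence $\rho([\beta])=4\pi m=4\int_{X}\beta_{0}\wedge\alpha_{0}$ (the last equality also following from $\rho([\beta])=4\lim_{k}\int_{X}\beta_{0}\wedge\alpha_{k}$).

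It remains to prove \eqref{c_k_esitimate_}. ``Blow-up with concentration'' is equivalent to $d_{k}-s_{k}\to+\infty$ (cf. the remark following \eqref{xi_k_versus_d_k_minus_s_k}), and since $s_{k}$ is bounded, $d_{k}\to+\infty$. From the expansion $c_{k}=\tfrac{1}{4}\int_{X}|\nabla w_{k}|^{2}\,dA-4\pi(\mathfrak{g}-1)d_{k}+O(1)$ recorded there, one is reduced to $\int_{X}|\nabla w_{k}|^{2}\,dA=16\pi m\,d_{k}+O(1)$. Testing $\Delta w_{k}=R_{k}e^{\xi_{k}}-f_{k}$ against $w_{k}=(s_{k}-d_{k})-\xi_{k}$ gives
\begin{equation*}
\int_{X}|\nabla w_{k}|^{2}\,dA=\int_{X}f_{k}w_{k}\,dA+(d_{k}-s_{k})\int_{X}R_{k}e^{\xi_{k}}\,dA+\int_{X}R_{k}e^{\xi_{k}}\xi_{k}\,dA,
\end{equation*}
the first term being $O(1)$ by \eqref{6.14_prime} and \eqref{f_k_convergence}. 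This last step is the heart of the matter and, I expect, the main obstacle: extracting the $O(1)$ precision requires not merely compactness but the sharp ``single-bubble'' estimates of \cite{Chen_Lin_1} and \cite{Li_Harnack} — available precisely because $\mathcal{S}\cap Z=\emptyset$ keeps $R_{k}$ between positive constants near each $q_{l}$ and the local mass is exactly $8\pi$ — which give the profile $\xi_{k}(x)=M_{l,k}-2\log(1+\tfrac{R_{0}(q_{l})}{8}e^{M_{l,k}}|x-x_{l,k}|^{2})+O(1)$ near $q_{l}$, with $M_{l,k}=\max_{B_{r}(q_{l})}\xi_{k}=(d_{k}-s_{k})+O(1)=d_{k}+O(1)$, plus the accompanying exponentially small mass defect $\int_{X}R_{k}e^{\xi_{k}}=8\pi m+O(d_{k}^{-1})$. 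Integrating the profile yields $\int_{X}R_{k}e^{\xi_{k}}\xi_{k}\,dA=\sum_{l}8\pi M_{l,k}+O(1)=8\pi m\,d_{k}+O(1)$, so $\int_{X}|\nabla w_{k}|^{2}\,dA=16\pi m\,d_{k}+O(1)$ and \eqref{c_k_esitimate_} follows. This energy identity is exactly the ingredient that degenerates when $\mathcal{S}\cap Z\neq\emptyset$, which is why the collapsing case must be treated separately.
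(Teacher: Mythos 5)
Your proposal follows essentially the same route as the paper: the concentration alternative forced by $\mathcal{S}\cap Z=\emptyset$ together with the least blow-up mass $8\pi$ at each $q_{l}\notin Z$, the uniform bound $\int_{X}e^{\xi_{k}}\leq C$ and the weak limit of $e^{\xi_{k}}$, a two-sided bound on $s_{k}$ yielding $\alpha_{k}\to\alpha_{0}\neq 0$, the wedge-product identity $\int_{X}\beta_{0}\wedge\alpha=\int_{X}e^{-u_{k}}\langle *_{E}^{-1}\alpha_{k},*_{E}^{-1}\alpha\rangle\,dA$ for the orthogonality and for $\rho([\beta])=4\pi m$, and the sharp single-bubble estimates of \cite{Chen_Lin_1},\cite{Li_Harnack} for \eqref{c_k_esitimate_}. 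Two local differences are worth recording. For the two-sided bound on $s_{k}$, the paper bounds $s_{k}$ from above via $\int_{X}\Vert\beta_{0}\Vert^{2}\leq Ce^{-s_{k}/2}$ and from below by constructing $\alpha\in C_{2}(X)$ vanishing at all but one point of $\mathcal{S}$ and performing a local computation in holomorphic coordinates; your upper bound via part (v) of Lemma \ref{lem_6.4} and your lower bound via the global identity $\int_{X}\beta_{0}\wedge\alpha_{k}=\int_{X}e^{u_{k}}\Vert\beta_{0}+\bar{\partial}\eta_{k}\Vert^{2}\,dA\to\rho([\beta])/4=\pi m>0$ are cleaner and equally valid. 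For \eqref{c_k_esitimate_}, the paper quotes the gradient estimate $\int_{B_{r}(q_{l})}\vert\nabla\xi_{k}\vert^{2}=16\pi\lambda_{k}+O(1)$ directly from \cite{Chen_Lin_1}, whereas you derive it from the energy identity obtained by testing against $w_{k}$; in your version the term $(d_{k}-s_{k})\int_{X}R_{k}e^{\xi_{k}}$ produces the error $(d_{k}-s_{k})\bigl(8\pi m-\int_{X}R_{k}e^{\xi_{k}}\bigr)=2(d_{k}-s_{k})\bigl(\rho([\beta])-\rho_{t_{k}}([\beta])\bigr)$, which is $O(1)$ only if the mass defect decays like $O(1/d_{k})$ --- this is true, but it is precisely the sharp local-mass expansion of \cite{Chen_Lin_1} and does not follow from weak convergence or from the $O(1)$-accurate pointwise profile alone, so it must be invoked explicitly (as you do, though only by assertion). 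With that caveat the argument is complete and matches the paper's conclusions.
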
	 
\begin{remark}\label{rem_only_one_blow_up_point}
Since
$
\dim_{\mathbb{C}}Q_{2}[q_{1},\ldots,q_{m}]=3(\mathfrak{g}-1)-m
$
(see \eqref{dimension_of_Q_kappa}),
then the orthogonality condition \eqref{beta_0_wedge_alpha_integral_is_zero} 
together with the estimate \eqref{c_k_esitimate_} for the global minimum of $D_{t_{k}}$ 
seem to indicate that $\xi_{k}$ should admit only \underline{one} blow-up point ($m=1$),  
where the  holomorphic quadratic differential
$*_{E} \beta_{0}$
does not vanish.
\end{remark}

\begin{proof}
From \eqref{S_intersected_Z_is_empty_condition} we see that 
$\xi_{k}$ satisfies alternative (a) of (ii) 
in Theorem \ref{thm_blow_up_global_from_part_1}, i.e.
blow-up occurs with the "concentration" property, and furthermore 
$\Vert \hat{\alpha}_{0} \Vert(q)>0, \; \forall \; q \in \mathcal{S}$.  
More precisely, (along a subsequence) as $k\longrightarrow +\infty$, we have: 
\begin{align}
& 
8e^{\xi_{k}}\Vert \hat \alpha_{k} \Vert^{2}
\rightharpoonup
8\pi \sum_{l=1}^{m}\delta_{q_{l}}
,
\; \text{ weakly in the sense of measures, } \; 
\notag
\\
& 
w_{k}\longrightarrow w_{0}
\; \text{ in } \; 
C^{2,\alpha}(X\setminus S) 
,\; 
-\Delta w_{0}
=
8\pi \sum_{l=1}^{m}\delta_{q_{l}}
+
2\mu e^{w_{0}}-2
\; \text{ in } \; 
X. 
\notag
\end{align}
Furthermore (by \eqref{xi_k_versus_d_k_minus_s_k}) we know that, 
$d_{k}-s_{k}\longrightarrow +\infty$, 
and from \eqref{S_intersected_Z_is_empty_condition}, 
we derive also that, 
\begin{equation}\label{6.41}
\int_{X}e^{\xi_{k}}\leq C.
\end{equation}
Hence
\begin{equation}\label{6.42}
e^{\xi_{k}}
\rightharpoonup
\pi
\sum_{l=1}^{m}
\frac{1}{\Vert \hat \alpha_{0} \Vert^{2}(q_{l})}
\delta_{q_{l}},
\; \text{ as } \; 
k\longrightarrow +\infty,
\end{equation}
 weakly in the sense of measures. 
\begin{equation}\label{6.43}
\; \text{ \textbf{\underline{Claim:} } } \;
\quad 
s_{k}=O(1).  
\quad\quad\quad\quad\quad\quad\quad
\quad\quad\quad\quad\quad\quad\quad
\end{equation}
To establish \eqref{6.43}, we observe that,
\begin{equation}\label{beta_0_squared_integral}
\begin{split}
\int_{X}\Vert \beta_{0} \Vert^{2}dA
= \; &
\int_{X}\langle \beta_{0},\beta_{0} + \bar{\partial}\eta_{k} \rangle dA
=
\int_{X}(e^{-u_{k}}\beta_{0})\wedge \alpha_{k} \\
\leq \; &     
\Vert \beta_{0} \Vert_{L^{\infty}}
\Vert \alpha_{k} \Vert_{L^{\infty}}
\int_{X} e^{-u_{k}}dA \\
\leq \; & Ce^{\frac{s_{k}}{2}}\int_{X}e^{-u_{k}}dA  
\leq 
Ce^{-\frac{s_{k}}{2}}\int_{X}e^{\xi_{k}}dA
\leq 
Ce^{-\frac{s_{k}}{2}},
\end{split}
\end{equation}
where in the last inequality we have used \eqref{6.41}. 
Since $\beta_{0}\neq 0$, then \eqref{beta_0_squared_integral} implies that,
$ s_{k} \leq  C $ 
for suitable $C>0$. In order to obtain a lower bound for 
$s_{k}$,
we take $\alpha \in C_{2}(X)$ and compute:
\begin{equation}\label{beta_0_wege_alpha_integral}
\begin{split}
\int_{X} \beta_{0}\wedge \alpha
= &
\int_{X} (\beta_{0} + \bar{\partial} \eta_{k}) \wedge \alpha 
= 
\int_{X} \langle \beta_{0}+\bar{\partial}\eta_{k},*_{E}^{-1}\alpha \rangle dA 
\\
= & 
e^{-\frac{s_{k}}{2}}
\int_{X} e^{\xi_{k}} \langle *_{E}^{-1}\hat \alpha_{k},*_{E}^{-1}\alpha \rangle dA .
\end{split}
\end{equation}
For $r>0$ sufficiently small, clearly we have
\begin{equation*}
\vert 
\int_{X \setminus \cup_{l=1}^{m}B_{r}(q_{l})}
e^{\xi_{k}}\langle *_{E}^{-1}\hat \alpha_{k},*_{E}^{-1}\alpha \rangle dA
\vert 
\leq C \int_{X \setminus \cup_{l=1}^{m}B_{r}(q_{l})}
e^{\xi_{k}}dA
\longrightarrow 
0,
\end{equation*}
as $k\longrightarrow +\infty$.
Next, for $l\in \{ 1,\ldots,m \}$, around $q_{l}$ we introduce holomorphic coordinates 
$\{ z \}$, centered at the origin, so that:
\begin{equation*}
\hat \alpha_{k}
=
h_{k}^{(l)}(dz)^{2}
\; \text{ and } \; 
\alpha =\varphi^{(l)}(dz)^{2}
\; \text{ in } \; 
B_{r},
\end{equation*}
where $h_{k}^{(l)}$ and $\varphi^{(l)}$ are holomorphic in $B_{r}$. 
Furthermore,  
$h_{k}^{(l)}\longrightarrow h^{(l)}$ uniformly in $B_{r}(0)$
and  
$\hat \alpha_{0}=h^{(l)}(dz)^{2}$. 
Since 
$\Vert \hat{\alpha}_{0} \Vert(q_{l})>0$, then (for $r>0$ sufficiently small)
we have that $\hat{\alpha}_{0}$ never vanishes in a neighbourhood of $q_{l}$,
that is the holomorphic function:
$h^{(l)}(z)\neq 0$ for all $z\in B_{r}$.
Therefore, in view of \eqref{6.42}, for
$r>0$ sufficiently small, we find, as $k\longrightarrow +\infty$, 
\begin{equation}\label{H_l_varphi_k}
\int_{B_{r}(q_{l}) } e^{\xi_{k}}
\langle *_{E}^{-1} \hat \alpha_{k}, *_{E}^{-1} \alpha\rangle dA
\longrightarrow 
\pi 
\frac{h^{(l)}(0)}{\vert h^{(l)}(0) \vert^{2}}
\bar\varphi^{(l)}(0)
:=
H_{l}\bar{\varphi}_{l} \in \mathbb{C}
,
\end{equation}
with 
$H_{l} \neq 0 $ and $\vert \varphi_{l} \vert=\Vert \alpha \Vert(q_{l})$,
$l=1,\ldots, m$.

\

By Remark \ref{rem_vanishing_at_all_but_one}, we know that there exists a quadratic holomorphic differential 
$\alpha \in C_{\kappa}(X)$, which vanishes at all but one point in $\mathcal{S}$. 
For example, we may choose 
\begin{equation*}
\alpha \in C_{2}(X)
\; : \; 
\Vert \alpha \Vert(q_{1})=1
\; \text{ while } \; 
\Vert \alpha \Vert(q)=0
,
\; \forall \; q\in \mathcal{S}\setminus \{  q_{1} \}, 
\end{equation*}
and by this choice, according to the estimates above, we find:
\begin{equation}\label{6.44}
\int_{X}\beta_{0}\wedge \alpha
=
e^{-\frac{s_{k}}{2}}(H_{1}+o(1))
,
\; \text{ as } \; 
k\longrightarrow +\infty,
\end{equation}
with $H_{1}\neq 0$. 
Since we have already shown that $s_{k}$ is uniformly bounded from above, 
then \eqref{6.44} implies that,
\begin{equation*}
\vert \int_{X}\beta_{0}\wedge \alpha \vert 
\geq 
L
>
0,
\end{equation*}
and in turn we deduce that,  
$s_{k}\geq -c$
and \eqref{6.43} is established. 

\

As a consequence of Claim \eqref{6.43}, we have that,  
$C^{-1}\leq \Vert \alpha_{k} \Vert_{L^{2}}\leq C$
for suitable $C>0$, and then (along a subsequence):
$\alpha_{k}\longrightarrow \alpha_{0} \in C_{2}(X)$, 
as  $k\longrightarrow +\infty$, 
(in any relevant norm), 
and $\alpha_{0}\neq 0$. 
So, $\alpha_{0}$ vanishes \underline{exactly} at
$Z$ and moreover, by \eqref{S_intersected_Z_is_empty_condition}, 
$
\Vert \alpha_{0} \Vert(q_{l})
\neq 0
$, 
$\; \forall \; l=1,\ldots,m$.
So we can reformulate \eqref{6.42} as follows:
\begin{equation*}
e^{-u_{k}}
\rightharpoonup
\pi \sum_{l=1}^{m}
(\frac{1}{\Vert \alpha_{0} \Vert^{2}(q_{l})})\delta_{q_{l}}
,
\; \text{ as } \; 
k\longrightarrow +\infty
,
\; \text{ weakly in the sense of measures,} 	
\end{equation*} 
and \eqref{alpha_k_convergence} and \eqref{weak_convergence_of_exp_minus_u_k} are established. 
Moreover, for $r>0$ sufficiently small, we can use the first identity in
\eqref{beta_0_wege_alpha_integral} and argue as above to obtain
\begin{equation*}
\begin{split}
\int_{X} \beta_{0} \wedge \alpha_{0}
= \; &
\lim_{k\to \infty}
(
\sum_{l=1}^{m}
\int_{B_{r}(q_{l})} e^{-u_{k}}
\langle *_{E}^{-1}\alpha_{k},*_{E}^{-1}\alpha_{0}\rangle dA
) \\
= \; &
\lim_{k\to \infty} \int_{X}e^{-u_{k}}\Vert \alpha_{k} \Vert^{2}dA
=\pi m.
\end{split}
\end{equation*}
Since 
$\int_{X}e^{-u_{k}}\Vert \alpha_{k} \Vert^{2}dA
=
\int_{X} e^{u_{k}}\Vert \beta_{0}+\bar{\partial}\eta_{0} \Vert^{2}dA
$, 
we obtain in particular, that
$\rho([\beta])=4\pi m$. 

Similarly  if $\alpha\in C_{2}(X)$ vanishes 
at each point of $\mathcal{S}$, then from 
\eqref{beta_0_wege_alpha_integral} and \eqref{H_l_varphi_k} we have:
\begin{equation*}
\int_{X}
e^{-u_{k}}
\langle *_{E}^{-1}\alpha_{k},*_{E}^{-1}\alpha \rangle dA
\longrightarrow 0
,
\; \text{ as } \; 
k\longrightarrow +\infty,
\end{equation*}
and so, 
$\int_{X}\beta_{0} \wedge \alpha  =0$ as claimed in \eqref{beta_0_wedge_alpha_integral_is_zero}. 

Finally, under the assumption \eqref{S_intersected_Z_is_empty_condition}, we can
use well known \textit{sup+inf} estimates and gradient estimates for $\xi_{k}$ around 
$q_{l}\in \mathcal{S}$ (see \cite{Chen_Lin_1}) and obtain
(for $r>0$ sufficiently small)
\begin{equation*}
\lambda_{k}:=\max_{\bar B _{r}(q_{l})}\xi_{k}
=
-\min_{\partial \bar B _{r}(q_{l})}\xi_{k}
+
O(1)
,
\; \forall \; l=1,\ldots,m
\end{equation*}
 
\begin{equation*}
\int_{B_{r}(q_{l})} \vert \nabla w_{k} \vert^{2}
=
\int_{B_{r}(q_{l})}\vert \nabla \xi_{k} \vert^{2}
=
16 \pi \lambda_{k}+O(1).
\end{equation*}
But, from \eqref{xi_k_versus_d_k_minus_s_k} and \eqref{6.43},  we see that
$\lambda_{k}=d_{k}+O(1)$, and we deduce that
\begin{equation*}
\int_{X} \vert \nabla w_{k} \vert^{2}dA 
=
(16 \pi m)d_{k}+O(1). 
\end{equation*}
Consequently,
\begin{equation*}
\begin{split}
c_{k}
= &
D_{t_{k}}(u_{k},\eta_{k})
=
\int_{X}
(
\frac{\vert \nabla w_{k} \vert^{2}}{4}
-
u_{k}
+
t_{k}e^{u_{k}}
+
4e^{u_{k}}
\Vert \beta_{0}+\bar{\partial}\eta_{k} \Vert^{2}
)
dA \\
= &
\int_{X}
(
\frac{\vert \nabla w_{k} \vert^{2}}{4}
-
4\pi(\mathfrak{g}-1)d_{k}
+
O(1)
=
-4\pi(\mathfrak{g}-1-m)d_{k}+O(1)
\end{split}
\end{equation*}
and also \eqref{c_k_esitimate_} is established. 
\end{proof}	

\begin{remark}
Under the assumption \eqref{S_intersected_Z_is_empty_condition}, 
the estimate \eqref{c_k_esitimate_} allows us to give 
a positive answer to \eqref{Question_2_i} posed above. 
Indeed if 
$\rho([\beta])=4\pi(\mathfrak{g}-1)$, 
then  necessarily   
$m=\mathfrak{g}-1$ 
and according to \eqref{c_k_esitimate_} we deduce that 
$D_{0}$ is bounded from below in 
$\Lambda$. 

\end{remark}	
When \eqref{S_intersected_Z_is_empty_condition} holds
then Theorem \ref{main_thm_1} gives a reasonable description about 
the blow-up behaviour of minimizers for the Donaldson functional, 
as $t\longrightarrow 0^{+}$.
Although we cannot yet guarantee that \eqref{S_intersected_Z_is_empty_condition}
always holds, 
by the minimizing property of the sequence $(u_{k},\eta_{k})$ 
for $D_{t}$ we expect,  
that blow-up for $(u_{k},\eta_{k})$ should occur
with the least possible blow-up mass $8\pi$. 
So, next, we shall focus to this situation and 
assume that:
\begin{equation}\label{6.45}
\lim_{r \to 0^{+} }
\lim_{k \to +\infty }
8 \int_{B_{r}(q)} e^{u_{k}}
\Vert \beta_{0} + \bar{\partial}\eta_{k} \Vert^{2}dA
=
8\pi
,
\; \forall \; 
q\in S. 
\end{equation}
Indeed, \eqref{6.45} is always ensured 
under the assumption \eqref{S_intersected_Z_is_empty_condition}
or when $\mathfrak{g}=2$, see \eqref{least_possible_blow_up_energy_for_k_2_and_g_2}.
On the other hand, when $q\in \mathcal{S} \cap Z$ and we assume \eqref{6.45}, 
then (by Theorem \ref{thm_blow_up_global_from_part_1}), necessarily $q\in Z_{0}$, 
that is blow-up must occurs at a point of "collapsing" zeroes of 
$\hat{\alpha}_{k}$. In other words, if \eqref{6.45} holds then  
$\mathcal{S}\cap Z=\mathcal{S}\cap Z_{0}$. 
Therefore,

\begin{corollary}\label{cor_6.8}
Assume \eqref{6.45} and that
$\mathcal{S}\cap Z_{0}=\emptyset$. 
Then the conclusion of Theorem \ref{main_thm_1} holds.
\end{corollary}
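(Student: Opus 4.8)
The plan is to reduce Corollary \ref{cor_6.8} to Theorem \ref{main_thm_1} by showing that, under the standing assumptions \eqref{k_is_2_condition}--\eqref{f_k_convergence} together with \eqref{6.45} and $\mathcal{S}\cap Z_{0}=\emptyset$, the blow-up set $\mathcal{S}$ in fact avoids all of $Z$; that is, the stronger hypothesis \eqref{S_intersected_Z_is_empty_condition} holds, and then Theorem \ref{main_thm_1} applies verbatim.

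First I would note that, since $\mathcal{S}\neq\emptyset$ and $\mathcal{S}\cap Z_{0}=\emptyset$, we have $\mathcal{S}\setminus Z_{0}=\mathcal{S}\neq\emptyset$, so by the final remark of alternative (ii)(a) in Theorem \ref{thm_blow_up_global_from_part_1} the sequence $\xi_{k}$ must blow up \emph{with concentration}; in particular alternative (ii)(b) (blow-up without concentration, which would force $\mathcal{S}\subset Z_{0}$) is excluded. Hence $R_{k}e^{\xi_{k}}\rightharpoonup\sum_{q\in\mathcal{S}}\sigma(q)\delta_{q}$ with $\sigma(q)\in 8\pi\N$ and the explicit quantization $\sigma(q)=8\pi$ for $q\notin Z$ and $\sigma(q)=8\pi(1+n_{i})$ for $q=z_{i}\in Z\setminus Z_{0}$. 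Here I would also record the identity $R_{k}e^{\xi_{k}}=8e^{u_{k}}\|\beta_{0}+\bar{\partial}\eta_{k}\|^{2}$, which follows from \eqref{6.33}, from $\kappa=2$, and from the definitions of $\alpha_{k}$ and $\hat{\alpha}_{k}$ together with the fact that $*_{E}$ is a fiberwise isometry; this matches the blow-up mass appearing in alternative (ii)(a) with the one written in hypothesis \eqref{6.45}.

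Next, the assumption \eqref{6.45} says $\sigma(q)=8\pi$ for every $q\in\mathcal{S}$. Since any $z_{i}\in Z\setminus Z_{0}$ carries mass $8\pi(1+n_{i})\ge 16\pi$ (as $n_{i}\ge 1$), no blow-up point can lie in $Z\setminus Z_{0}$, so $\mathcal{S}\cap(Z\setminus Z_{0})=\emptyset$. Combined with $\mathcal{S}\cap Z_{0}=\emptyset$ this gives $\mathcal{S}\cap Z=\emptyset$, which is exactly \eqref{S_intersected_Z_is_empty_condition}. Applying Theorem \ref{main_thm_1} then yields, along a subsequence, the convergence $\alpha_{k}\to\alpha_{0}\in C_{2}(X)$ with $\alpha_{0}\neq 0$ vanishing exactly at $Z$, the weak convergence \eqref{weak_convergence_of_exp_minus_u_k} of $e^{-u_{k}}$, the energy estimate \eqref{c_k_esitimate_}, the orthogonality \eqref{beta_0_wedge_alpha_integral_is_zero}, and $\rho([\beta])=4\int_{X}\beta_{0}\wedge\alpha_{0}=4\pi m$ --- i.e. the full conclusion of Theorem \ref{main_thm_1}.

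This corollary involves no genuine analytic difficulty: it is a bookkeeping consequence of the mass quantization in Theorem \ref{thm_blow_up_global_from_part_1} and of the dichotomy there between concentration and no-concentration. The only points that need a little care are (i) matching the two notions of blow-up mass --- the one written in \eqref{6.45} in terms of $e^{u_{k}}\|\beta_{0}+\bar{\partial}\eta_{k}\|^{2}$ and the one written in Theorem \ref{thm_blow_up_global_from_part_1} in terms of $R_{k}e^{\xi_{k}}$ --- and (ii) observing that the $8\pi$-hypothesis excludes concentration at the non-collapsing zeroes $Z\setminus Z_{0}$, while $\mathcal{S}\cap Z_{0}=\emptyset$ excludes both concentration at the collapsing zeroes and, via the final remark of part (ii)(a), the blow-up-without-concentration scenario of part (ii)(b).
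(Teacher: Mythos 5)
Your proposal is correct and follows essentially the same route as the paper: the text preceding Corollary \ref{cor_6.8} observes that under \eqref{6.45} one has $\mathcal{S}\cap Z=\mathcal{S}\cap Z_{0}$ (since a point of $Z\setminus Z_{0}$ would carry mass $8\pi(1+n_{i})>8\pi$ by Theorem \ref{thm_blow_up_global_from_part_1}), so the hypothesis $\mathcal{S}\cap Z_{0}=\emptyset$ reduces the corollary to the case $\mathcal{S}\cap Z=\emptyset$ of Theorem \ref{main_thm_1}. Your additional checks (the identity $R_{k}e^{\xi_{k}}=8e^{u_{k}}\Vert\beta_{0}+\bar{\partial}\eta_{k}\Vert^{2}$ and the exclusion of alternative (ii)(b)) are accurate and consistent with the paper's implicit argument.
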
	
To deal with the case where 
$\mathcal{S}\cap Z_{0}\neq \emptyset$, 
we introduce the following notation.
For $q_{l}\in \mathcal{S}$ and $r>0$ sufficiently small, we let
\begin{equation}\label{6.46}
x_{k,l}\in B_{r}(q_{l})
\; : \; 
\xi_{k}(x_{k,l})=\max_{B_{r}(q_{l})}\xi_{k}
\longrightarrow 
+\infty
\; \text{ and } \; 
x_{k,l}\longrightarrow q_{l}, 
\end{equation} 
as $k\longrightarrow +\infty$, and set
\begin{equation}
\mu_{k,l}=\Vert \alpha_{k} \Vert^{2}(x_{k,l}),
\end{equation}
for $l=1,\ldots,m$. Since "locally" in holomorphic $z$-coordinates $\Vert \alpha_{k} \Vert^{2}(z)$ coincides (essentially) with the norm of a holomorphic function, 
we see that 
\begin{equation}
\; \text{ if  } \; 
\Vert \hat \alpha _{0} \Vert(q_{l})\neq 0
\; \text{ then  } \; 
\Vert \alpha_{k} \Vert^{2}(x_{k,l})
=
O(e^{s_{k}}). 
\end{equation}
We have:
\begin{thm}\label{main_thm_2}
Assume \eqref{6.45} and suppose that the blow-up set
$\mathcal{S}$ of $\xi_{k}$ in \eqref{the_blow_up_set_S} satisfies:
$
\mathcal{S} \cap  Z_{0}
\neq  
\emptyset
$. 
Then (along a subsequence):
$$s_{k}\longrightarrow +\infty
,
\; \text{ as } \;
k\longrightarrow +\infty.$$
Moreover,  there exists a set of indices
$J\subseteq \{ 1,\ldots,m \} $ such that,
as $k\longrightarrow +\infty$,
\begin{enumerate}[label=(\roman*)]
\item 	
$\; \forall \; l\in J
\; : \;  
q_{l}\in \mathcal{S} \cap Z_{0}
\; \text{ and } \; 
\mu_{k,l}\longrightarrow \mu_{l}>0
$:
\begin{equation*}
\quad\quad\;\, 
e^{-u_{k}}
\rightharpoonup
\pi \sum_{l\in J}\frac{1}{\mu_{l}}\delta_{q_{l}}
\; \text{ weakly in the sense of measures, } \; 
\end{equation*}
\item

\quad
\vspace{-26pt}
\begin{equation}\label{orthonality_on_S_0}
\begin{split}
&
\int_{X} \beta_{0} \wedge \alpha = 0, 
\; \forall \;   \alpha \in C_{2}(X) 
\; \text{ vanishing at any point of} \; 
\\
&  
\mathcal{S}_{0}=\{ q_{l}\in \mathcal{S} \; : \; l\in J \}\subset Z_{0}.
\; \text{ In particular, } \;   
\int_{X} \beta_{0} \wedge \hat \alpha _{0} = 0.
\end{split}
\end{equation}

\item
$ \mu_{k,l}\longrightarrow +\infty$, 
as 
$ k\longrightarrow +\infty$, 
$\; \forall \; l\in \{ 1,\ldots,m \}\setminus J$ (if not empty)
\begin{equation}\label{c_k_asymptotic}
c_{k}
=
D_{t_{k}}(u_{k},\eta_{k})
=
-4\pi 
\left(
(\mathfrak{g}-1-m)d_{k}
+
\sum_{l  \in \{ 1,\ldots, m\} \setminus  J}\log(\mu_{k,l})
\right)
+
O(1).
\end{equation}
with $d_{k}=\fint_{X}u_{k}\longrightarrow +\infty$. 
\end{enumerate}
\end{thm}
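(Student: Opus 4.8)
\textbf{Proof strategy for Theorem \ref{main_thm_2}.}

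The plan is to run a local blow-up analysis around each $q_l\in\mathcal{S}$, distinguishing points of $Z_0$ from points outside $Z$, and to feed the outcome back into the identity \eqref{6.17} and into the Donaldson energy. First I would observe that since $\mathcal{S}\cap Z_0\neq\emptyset$, alternative (b) of Theorem \ref{thm_blow_up_global_from_part_1} is ruled out for those points only if $\mathcal{S}\setminus Z_0\neq\emptyset$; under \eqref{6.45} the mass at every blow-up point is exactly $8\pi$, so by the dichotomy in Theorem \ref{thm_blow_up_global_from_part_1} every $q_l\in\mathcal{S}\cap Z$ must in fact lie in $Z_0$ (a non-collapsing zero of multiplicity $n_i$ would force mass $8\pi(1+n_i)>8\pi$), hence $\mathcal{S}\cap Z=\mathcal{S}\cap Z_0$. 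The first claim $s_k\to+\infty$ I would prove by contradiction: if $s_k=O(1)$ along a subsequence, then $\alpha_k\to\alpha_0\in C_2(X)\setminus\{0\}$ (using the compactness of $C_\kappa(X)$ under bounded $L^2$-norm recalled after \eqref{basis_for_C_k_X}), so $\|\alpha_k\|$ stays bounded and bounded away from zero on compact subsets of $X\setminus Z$; but a blow-up point $q_l\in Z_0$ is a limit of collapsing zeros, so $\Vert\hat\alpha_0\Vert(q_l)=0$, and the analysis showing "blow-up without concentration" (alternative (b)) would then apply near $q_l$, contradicting the mass quantization $\sigma(q_l)=8\pi$ together with $\mathcal{S}\setminus Z_0$ handled by (a). More directly: near $q_l\in Z_0$ the rescaling that exhibits mass $8\pi$ requires the local sup of $\xi_k$ to interact with the collapsing zeros at a scale dictated by $\mu_{k,l}=\|\alpha_k\|^2(x_{k,l})$, and boundedness of $s_k$ would make $\mu_{k,l}=O(e^{s_k})=O(1)$, forcing $\|\hat\alpha_k\|^2(x_{k,l})$ bounded below — impossible at a collapsing zero. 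So $s_k\to+\infty$.

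Next I would set up the index set $J$. For each $l$, extract (along a subsequence) the limit $\mu_{k,l}=\Vert\alpha_k\Vert^2(x_{k,l})\to\mu_l\in[0,+\infty]$, and put $l\in J$ iff $\mu_l\in(0,+\infty)$. For $l\in J$ the point $q_l$ must be in $Z_0$: if $q_l\in X\setminus Z$ then by Theorem \ref{main_thm_1}'s local argument $\Vert\hat\alpha_0\Vert(q_l)>0$, so $\mu_{k,l}=O(e^{s_k})\to+\infty$ since $s_k\to+\infty$, contradiction. Then, exactly as in the proof of Theorem \ref{main_thm_1}, I would analyze $e^{-u_k}=e^{-s_k}e^{\xi_k}$: on a small ball $B_r(q_l)$ with $l\in J$, the mass-$8\pi$ concentration of $8\Vert\hat\alpha_k\Vert^2 e^{\xi_k}$ at $q_l$ together with the rescaling normalization by $\mu_{k,l}$ yields $e^{-u_k}\rightharpoonup \frac{\pi}{\mu_l}\delta_{q_l}$ on $B_r(q_l)$; for $l\notin J$, $\mu_{k,l}\to+\infty$ so the contribution of $B_r(q_l)$ to $e^{-u_k}$ vanishes; and away from $\mathcal{S}$, $e^{-u_k}=e^{-(d_k-s_k)+O(1)}$, which $\to 0$ by \eqref{xi_k_versus_d_k_minus_s_k} since blow-up forces $d_k-s_k\to+\infty$ (this needs $d_k\to+\infty$ and $s_k\le d_k+C$ from \eqref{6.17_prime}, plus the concentration characterization in the Remark after \eqref{xi_k_versus_d_k_minus_s_k}). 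This gives (i). The orthogonality \eqref{orthonality_on_S_0} then follows verbatim from the Theorem \ref{main_thm_1} computation: for $\alpha\in C_2(X)$ write $\int_X\beta_0\wedge\alpha=e^{-s_k/2}\int_X e^{\xi_k}\langle *_E^{-1}\hat\alpha_k,*_E^{-1}\alpha\rangle\,dA$; splitting into balls $B_r(q_l)$, the $l\in J$ contributions converge (after multiplying by $e^{s_k/2}\to\infty$) to finite nonzero multiples of $\bar\varphi_l(0)$ where $|\varphi_l(0)|=\Vert\alpha\Vert(q_l)$ — provided I check $\Vert\hat\alpha_k\Vert^2(x_{k,l})e^{s_k}=\mu_{k,l}\to\mu_l$ is what makes the integral scale correctly — while all other contributions are $o(e^{-s_k/2})$; hence $\int_X\beta_0\wedge\alpha=0$ as soon as $\alpha$ vanishes on $\mathcal{S}_0=\{q_l:l\in J\}$. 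Taking $\alpha=\hat\alpha_0$, which vanishes at every $z_j\in Z\supseteq\mathcal{S}_0$, gives $\int_X\beta_0\wedge\hat\alpha_0=0$.

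For part (iii), the claim $\mu_{k,l}\to+\infty$ for $l\notin J$ is just the definition of $J$ (the value $\mu_l=0$ is excluded because $\|\alpha_k\|$ is bounded below on $X\setminus Z$ after normalizing $\hat\alpha_k$, so $\mu_{k,l}$ cannot go to $0$ — more precisely, at a blow-up point $\Vert\hat\alpha_k\Vert^2(x_{k,l})e^{\xi_k(x_{k,l})}\ge c>0$ eventually, and $e^{\xi_k(x_{k,l})}\le e^{\xi_k}_{\max}$... I would instead argue that if $\mu_{k,l}\to 0$ then $e^{-u_k}=e^{-s_k}e^{\xi_k}$ would carry mass $\ge \pi/\mu_l=+\infty$ near $q_l$, absurd since $e^{-u_k}$ is bounded in $L^1$ by Jensen applied to $\int e^{-u_k}$... this boundedness needs care and is one place I'd be careful). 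The energy asymptotics \eqref{c_k_asymptotic} I would get by the same gradient- and $\sup+\inf$-type estimates as in Theorem \ref{main_thm_1}: around each $q_l$, $\int_{B_r(q_l)}|\nabla w_k|^2 = 16\pi\lambda_{k,l}+O(1)$ where $\lambda_{k,l}=\max_{B_r(q_l)}\xi_k$; now for $l\in J$ one has $\lambda_{k,l}=d_k+O(1)$ as in Theorem \ref{main_thm_1} (since $-u_k+s_k$ peaks at height $d_k-s_k+\log(1/\mu_{k,l})+O(1)=d_k+O(1)$ using $s_k=\log\mu_{k,l}+O(1)$ on $\mathcal{S}_0$... actually $s_k$ and $\mu_{k,l}$ are linked by $\mu_{k,l}=\Vert\hat\alpha_k\Vert^2(x_{k,l})e^{s_k}$, so $\lambda_{k,l}=-u_k(x_{k,l})+s_k=d_k-w_k(x_{k,l})+s_k$, and I need to extract $s_k$ from the normalization), while for $l\notin J$ the peak involves the extra factor $\log\mu_{k,l}$. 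Summing and inserting into $c_k=\frac14\int_X|\nabla w_k|^2-4\pi(\mathfrak{g}-1)d_k+O(1)$ (the bookkeeping identity used throughout, with the $te^{u_k}$ and $4e^{u_k}\Vert\beta_k\Vert^2$ terms contributing $O(1)$ by \eqref{6.17}) yields \eqref{c_k_asymptotic}. The main obstacle I anticipate is the precise relation between $s_k$, $\mu_{k,l}$, and $\lambda_{k,l}$ at collapsing points $q_l\in Z_0$: unlike the non-vanishing case of Theorem \ref{main_thm_1}, here $\Vert\hat\alpha_k\Vert^2$ itself degenerates near $q_l$, so the local "bubbling" scale is no longer the naive one and one must invoke the sharp pointwise estimates of \cite{Tar_1} in the collapsing regime (Theorem \ref{theorem_from_part_1}) rather than the classical single-bubble estimates of \cite{Chen_Lin_1}; getting the $O(1)$-error in $\lambda_{k,l}$ and in the mass computation genuinely requires those refined estimates, and that is the technical heart of the argument.
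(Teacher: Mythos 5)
Your overall architecture is the same as the paper's: reduce everything to the identity $\int_X\beta_0\wedge\alpha=\int_Xe^{-u_k}\langle *_E^{-1}\alpha_k,*_E^{-1}\alpha\rangle\,dA$, expand the local contributions at each $x_{k,l}$ by means of the collapsing pointwise estimates of Theorem \ref{theorem_from_part_1}, and use the gradient estimate \eqref{gradient_estimate} for the energy expansion. However, there is a genuine gap in precisely the steps you treat as preliminary: the proofs of $s_k\to+\infty$, of the lower bound $\mu_{k,l}\geq c>0$ (hence $\mu_l>0$ for $l\in J$), and of $J\neq\emptyset$. Your first argument for $s_k\to+\infty$ fails because alternative (b) of Theorem \ref{thm_blow_up_global_from_part_1} is perfectly compatible with $\sigma(q)=8\pi$ at a point of $Z_0$, so no contradiction with \eqref{6.45} arises. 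Your ``more direct'' argument is logically reversed: since $\mu_{k,l}=e^{s_k}\Vert\hat\alpha_k\Vert^2(x_{k,l})$ and $\Vert\hat\alpha_k\Vert^2(x_{k,l})\to\Vert\hat\alpha_0\Vert^2(q_l)=0$, boundedness of $s_k$ would give $\mu_{k,l}\to0$, not ``$\Vert\hat\alpha_k\Vert^2(x_{k,l})$ bounded below''; there is no contradiction unless one already knows $\mu_{k,l}$ is bounded away from zero, which is exactly what must be proved. Your fallback for excluding $\mu_l=0$ rests on an a priori upper bound for $\int_Xe^{-u_k}$ that is not available (near a degenerating zero of $\alpha_k$ the bound $\int_Xe^{-u_k}\Vert\alpha_k\Vert^2\leq C$ gives no control on $\int_Xe^{-u_k}$), and you do not address $J\neq\emptyset$ at all, although conclusion (i) is false when $J=\emptyset$.

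The missing idea is that the $k$-independent quantity $\int_X\beta_0\wedge\alpha$ itself supplies all three bounds once the expansion \eqref{6.55a} is in hand. Choosing $q_l\in\mathcal{S}\cap Z_0$ and $\alpha\in C_2(X)$ vanishing at every point of $\mathcal{S}$ except $q_l$ (possible by Remark \ref{rem_vanishing_at_all_but_one}), the identity reads $\int_X\beta_0\wedge\alpha=\frac{1}{\Vert\alpha_k\Vert(x_{k,l})}(H_l\bar\varphi_l+o(1))+o(1)$ with $H_l\bar\varphi_l\neq0$; since the left-hand side is a fixed complex number, this forces $\Vert\alpha_k\Vert(x_{k,l})\geq c>0$, and then $e^{-s_k}=\Vert\hat\alpha_k\Vert^2(x_{k,l})/\mu_{k,l}\to0$, i.e.\ $s_k\to+\infty$. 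The same device, applied to the index realizing $\min_l\mu_{k,l}$, gives the uniform lower bound on all the $\mu_{k,l}$; and $J\neq\emptyset$ follows from Lemma \ref{lem_6.4}(v): if every $\mu_{k,l}\to+\infty$ then \eqref{integrals_in_balls} and \eqref{estimates_outside_balls} give $\int_Xe^{-u_k}\to0$, contradicting the lower bound $\int_Xe^{-u_k}\geq\gamma\fint_X\Vert\beta_0\Vert^2$. Without this circle of arguments the theorem's quantitative conclusions do not follow from the local estimates alone.
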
	
\begin{proof}
It is understood that the summation in \eqref{c_k_asymptotic} is dropped
when $J=\{ 1,\ldots,m \}$. 
As above, for given $\alpha \in C_{2}(X)$ and $r>0$ small, 
from \eqref{xi_k_versus_d_k_minus_s_k} we have:
\begin{equation*}
\begin{split}
\int_{X} \beta_{0} \wedge \alpha 
= \;&
\int_{X} e^{-u_{t}}
\langle *_{E}^{-1} \alpha_{t},*_{E}^{-1}\alpha \rangle dA 
= 
e^{-\frac{s_{k}}{2}}
(
\int_{X}e^{\xi_{k}}\langle *_{E}^{-1}\hat{\alpha}_{t},*_{E}^{-1}\alpha \rangle dA
)
\\
= \;&
e^{-\frac{s_{k}}{2}}
(
\sum_{l=1}^{m} \int_{B_{r}(x_{k,l})}e^{\xi_{k}}
\langle *_{E}^{-1}\hat \alpha _{t} , *_{E}^{-1}\alpha \rangle dA
+
O(e^{-d_{k}+s_{k}}) 
)
\end{split}
\end{equation*}
with $x_{k,l}$ as given in \eqref{6.46}.
Since $s_{k}\leq d_{k} + O(1)$, see \eqref{6.17_prime}, then

\noindent
$-d_{k}+\frac{s_{k}}{2}\longrightarrow -\infty$, and we may
still conclude (as before) that,
\begin{equation}\label{wedge_product_asymptotic}
\int_{X}\beta_{0} \wedge \alpha
=
e^{-\frac{s_{k}}{2}}
(
\sum_{l=1}^{m} \int_{B_{r}(x_{k,l})}
e^{\xi_{k}} \langle *_{E}^{-1}\hat{\alpha}_{t},*_{E}^{-1}\alpha\rangle dA
)
+
o(1)
\end{equation}

In case $x_{k,l} \longrightarrow  q_{l} \in \mathcal{S}\setminus Z_{0}$, 
as $k\longrightarrow +\infty$, 
then $q_{l} \not \in Z$ (as $\mathcal{S}\cap Z_{0}=\mathcal{S}\cap Z$) 

and so,
$$
0<a<\Vert \alpha_{k} \Vert(x_{k,l}) \leq b 
\; \text{ in  } \;
B_{r}(x_{k,l}) 
$$

So we can argue exactly as in \eqref{H_l_varphi_k} to obtain:
\begin{equation}\label{some_integral_..}
e^{-\frac{s_{k}}{2}}\int_{B_{r}(x_{k,l})}e^{\xi_{k}}
\langle *_{E}^{-1} \hat \alpha _{k} , *_{E}^{-1} \alpha \rangle dA
=
\frac{1}{\Vert \alpha_{k} \Vert(x_{k,l})}
(
H_{l} \bar \varphi _{l} + o(1))
, 
\end{equation}

as $k\longrightarrow +\infty$, 
for suitable $H_{l},\varphi_{l} \in \mathbb{C}$, with $H_{l}\neq 0$ and 
$ 
\vert  \varphi _{l} \vert 
= 
\Vert \alpha \Vert(q_{l}). 
$
\\
\noindent
Hence suppose that, $x_{k,l}\longrightarrow q_{l}\in \mathcal{S} \cap Z_{0}$, 
as $k\longrightarrow +\infty$. 
In this situation we are going to apply the blow-up analysis developed in 
\cite{Tar_1}. 
For this purpose it is convenient to introduce local holomorphic $z$-coordinates
around $x_{k,l}$ centered at the origin. 
So, for $r>0$ sufficiently small, we can always assume that,

\begin{align}
& 
\hat \alpha _{k}
= 
\left(\Pi_{j=1}^{s}
(z-p_{j,k})^{n_{j}}
\right)
\psi_{k}(z)(dz)^{2}
\; \text{ in } \; B_{r}
,\; s\geq 2,\; n_{j}\in \N; 
\notag
\\
&  
0
\leq 
\vert p_{1,k} \vert 
\leq
\vert p_{2,k} \vert 
\leq \ldots \leq
\vert p_{s,k} \vert \longrightarrow 0
,
\; \text{ as } \; 
k\longrightarrow \infty,
\;
p_{i,k}\neq p_{j,k},\;
i\neq j; 
\label{v_4} \\
& 
\hat{\alpha}_{0}(z)=z^{n}\psi(z)(dz)^{2} 
\; \text{ in } \;
B_{r},\;
n=\sum_{j=1}^{s}n_{j};
\notag
\\
&
\psi_{k} \; \text{ and } \; \psi
\; \text{ holomorphic and \underline{never vanishing} in  } \; B_{r},\;
\notag 
\\
&
\psi_{k}\longrightarrow \psi 
\; \text{ uniformly in } \; B_{r}, \; \text{ as } \; k\longrightarrow \infty;
\label{psi_k_towards_psi}
\\
& 
\alpha=\varphi (dz)^{2} \; \text{ in  } \; B_{r}
\; \text{ and } \; 
\varphi \; \text{ is holomorphic in } \; B_{r}. 
\notag
\end{align}

Thus, if (by abusing notation) we identify a function with its local expression 
in local $z$-coordinates, we see that 
$\xi_{k}=\xi_{k}(z)$ satisfies:
\begin{DispWithArrows}<>
& -\Delta \xi_{k}(z)
= 
(\Pi_{j=1}^{s}\vert z-p_{j,k} \vert^{2 n_{j}})
h_{k}(z)e^{\xi_{k}(z)}
+
g_{k}(z)
\; \text{ in } \; 
B_{r}
\label{v_1}
\\
& \max_{\partial B_{r}(0)} \xi_{k} -  \min_{\partial B_{r}} \xi_{k}\leq  C 
\label{v_2}
\\
& \xi_{k}(0)=  \max_{B_{r}(0)}\xi_{k} \longrightarrow \infty, \; \text{ as } \;
k\longrightarrow \infty,
\notag
\label{v_3} 
\end{DispWithArrows}
where 
\begin{equation*}
\begin{split}
&
h_{k}(z)=8\vert \psi_{k} \vert^{2}(z) \lambda^{-1}(z) 
\longrightarrow h(z):=8\vert \psi \vert^{2}(z) \lambda^{-1}(z)
\; \text{ uniformly in } \; B_{r};
\\
&
g_{k}(z)
= 
-f_{k}(z)\lambda(z)
\; \text{ is convergent in  } \; 
L^{p}(B_{r}), \; p>1,
\end{split}
\end{equation*}
with
$\lambda(z)=e^{2u_{0}(z)}>0$ the conformal factor of the metric $g_{X}$
in the $z$-coordinates, taken with the normalization $\lambda(0)=1$. 

We recall that \eqref{v_2} is by now a well known consequence of 
the Green representation formula for the function $\xi_{k}$ on $X$
(see \cite{Bartolucci_Chen_Lin_Tarantello}). 
Also it follows from \eqref{6.17} that,
\begin{equation*}
\int_{B_{r}} 
W_{k}
e^{\xi_{k}}
\leq C
,\quad
W_{k}(z)
: =
(\Pi_{j=1}^{s}\vert z-p_{j,k} \vert^{2 n_{j}})
h_{k}(z). 
\end{equation*}

By taking $r>0$ smaller if necessary, we can assume in addition that the origin 
it the only blow-up point for $\xi_{k}$ in $B_{r}$, namely
\begin{equation}\label{unique_blow_up_in_zero}
\; \forall \; \delta \in (0,r) \; \exists \; C_{\delta}>0
\; : \; 
\max_{\bar{B}_{r}\setminus B_{\delta}}\xi_{k} \leq C_{\delta}.
\end{equation}

Finally, around a point $q\in \mathcal{S} \cap Z_{0}$, 
the assumption \eqref{6.45} can be stated as follows:  
\begin{equation}\label{3.0_part_2}
\mathfrak{m}:=
\lim_{\delta \searrow 0  }
\lim_{k \to +\infty }
\frac{1}{2\pi}\int_{B_{\delta}}W_{k}e^{\xi_{k}} =4. 
\end{equation}
At this point the blow-up analysis developed in \cite{Tar_1} 
can be applied and
we deduce the following: 
\begin{thmx}[\cite{Tar_1}]\label{theorem_from_part_1}
Let $\xi_{k}$ be a solution of \eqref{v_1} and assume  
\eqref{v_4}-\eqref{psi_k_towards_psi} and 
\eqref{v_2}-\eqref{unique_blow_up_in_zero}. 
If \eqref{3.0_part_2} holds then
$p_{j,k}\neq 0,\; j=1,\ldots,s$ and 
there exists 
$s_{1}\in \{ 2,\ldots,s \} $ such that (along a subsequence)
\begin{equation*} 
z_{j,k}:=\frac{p_{j,k}}{\vert p_{s_{1},k} \vert }
\longrightarrow 
z_{j}\neq 0, 
\; \forall \;
j=1,\ldots,s_{1}, 
\end{equation*}
\begin{equation*}
\; \text{ if } \; 
s_{1}<s 
\; \text{ then } \; 
\frac{p_{j,k}}{\vert p_{s,k} \vert }\longrightarrow q_{j} \neq 0
\; \text{ and } \; 
\frac
{\vert p_{j,k} \vert }
{\vert p_{s_{1},k} \vert }
\longrightarrow 
+\infty,
\; \forall \; 
 j=s_{1}+1,\ldots,s. 
\end{equation*}
Moreover, 
\begin{align}
& \xi_{k}(0) \; 
+ 
2\ln \vert p_{s_{1},k}\vert + \ln (W_{k}(0)) \longrightarrow +\infty, 
\; \text{ as } \; k\longrightarrow +\infty,  \notag
\\
& \xi_{k}(0) 
=  
-
\left( \min_{\partial B_{r}} \xi_{k} + 2\ln (W_{k}(0))\right) + O(1)
\label{xi_k_0}
\\
&
\xi_{k}(x)
=  
\ln  
\left(
\frac
{e^{\xi_{k}(0)}}
{(1+ \frac{W_{k}(0)}{8}e^{\xi_{k}(0)}\vert x \vert^{2})^{2} }
\right)
+
O(1) 
\\
&
\int_{B_{r}}\vert \nabla  \xi_{k} \vert^{2}dx
=  
16\pi 
\left(
\xi_{k}(0)+\ln (W_{k}(0))
\right)
+ O(1).
\label{gradient_estimate}
\end{align}
\end{thmx}
\begin{proof}
See Theorem 5 in \cite{Tar_1}.  
\end{proof}

\begin{remark}
Incidentally, for the original sequence $u_{k}$ (see \eqref{6.33}), 
from Theorem \ref{theorem_from_part_1}  we obtain that,
\begin{equation}\label{formula_remark}
-u_{k}(z)
=
\ln
\frac
{e^{-u_{k}(0)}}
{(1+\Vert \alpha_{k} \Vert^{2}(x_{k,l})e^{-u_{k}(0)}\vert z \vert^{2} )^{2}}
+
O(1)
\; \text{ in } \; 
B_{r}.
\end{equation}
In particular, such an estimate allows us to compare the blow-up rates at different blow-up points as follows: 
\begin{equation*}
\frac{1}{C}
\leq 
\frac
{e^{-u_{k}(x_{k,h})}\Vert \alpha_{k} \Vert^{4}(x_{k,h})}
{e^{-u_{k}(x_{k,l})}\Vert \alpha_{k} \Vert^{4}(x_{k,l})}
\leq C
\; \text{ for all } \;
h,l=1,\ldots,m 
\end{equation*}
for suitable $C\geq 1$. 
\end{remark}

\

\noindent
At this point we may continue the proof of Theorem \ref{main_thm_2}
and we can  take advantage of the information about $\xi_{k}$ 
provided by Theorem \ref{theorem_from_part_1} to estimate the integral:
\begin{equation*}
\begin{split}
\int_{B_{r}(x_{k,l})}e^{\xi_{k}} 
&
\langle *_{E}^{-1} \hat \alpha _{k} , *_{E}^{-1} \alpha \rangle dA \\
= \; &
\int_{B_{r}}e^{\xi_{k}(z)}
(\Pi_{j=1}^{s}(z-p_{j,k} )^{n_{j}})
\psi_{k}(z)\bar{\varphi}(z) \lambda^{-1}(z)dz d\bar{z}.
\end{split}
\end{equation*}
To this purpose we see that, 
as $k\longrightarrow \infty$,
\begin{equation}\label{epsilon_s_k_definition}
\begin{split}
&
\varepsilon_{k}:=\vert p_{s_{1},k} \vert \longrightarrow 0
,\;
\lambda_{k}^{2}:=e^{\xi_{k}(0)+2\ln \varepsilon_{k}+\ln(W_{k}(0))}
\longrightarrow \infty \\
& 
\varepsilon_{j,k}:=
\frac{\varepsilon_{k}}{\vert p_{j,k} \vert }
\longrightarrow L_{j}\geq 0 
\; \text{ for all } \; 
j=1,\ldots,s_{1}.
\end{split}	
\end{equation}
Hence 
$
t_{k}^{2}=
\frac{W_{k}(0)}{8}e^{\xi_{k}(0)}
=
\frac{1}{8}(\frac{\lambda_{k}}{\varepsilon_{k}})^{2}
\longrightarrow \infty$,
as
$k\longrightarrow \infty$,
with 
\begin{equation}\label{W_k_0_property}
\begin{split}
&
\frac{W_{k}(0)}{8}
= 
(\Pi_{j=1}^{s}\vert p_{j,k} \vert^{2n_{j}})\vert \psi_{k}(0) \vert^{2} 
=
\Vert \hat{\alpha}_{k} \Vert^{2}(x_{k,l})
=
e^{-s_{k}}\Vert \alpha_{k} \Vert^{2}(x_{k,l})
>0. 
\end{split}
\end{equation}
Therefore, for $R>0$ large, we have 
\begin{equation*}
\begin{split}
\vert 
\int_{\{ \frac{R}{t_{k}} \leq \vert z \vert \leq r  \} } 
&
(\Pi_{j=1}^{s}( z-p_{j,k} )^{n_{j}})
\psi_{k}(z)\bar{\varphi}(z) \lambda^{-1}(z) \frac{i}{2}dz \wedge d\bar{z}
\vert
\\
\leq \; &
C
\left(
\int_{\{ \frac{R}{t_{k}} \leq \vert z \vert \leq r  \} } 
e^{\xi_{k}(z)}
\right)^{\frac{1}{2}}
\left(
\int_{\{ \frac{R}{t_{k}} \leq \vert z \vert \leq r  \} } 
e^{\xi_{k}(z)}W_{k}(z)
\right)^{\frac{1}{2}}
\\
\leq \; &
C
\left(
\int_{\{ \frac{R}{t_{k}} \leq \vert z \vert \leq r  \} } 
e^{\xi_{k}(z)}
\right)^{\frac{1}{2}}
\leq 
\frac{C}{(W_{k}(0))^{\frac{1}{2}} R}
\leq
\frac{1}{\Vert \hat{\alpha}_{k} \Vert(x_{k,j})}\frac{C}{R}.
\end{split}
\end{equation*}
While, for 
$\hat p _{j,k}
:=
\frac{p_{j,k}}{\vert p_{j,k} \vert }
\longrightarrow 
\hat p _{j}$ 
with
$\vert \hat{p_{j}} \vert=1 $, $j=1,\ldots,s$; 
in view of 
\eqref{epsilon_s_k_definition}-\eqref{W_k_0_property} 
we have:
\begin{equation*}
\begin{split}
&
\int_{\{ \vert z \vert \leq \frac{R}{t_{k}}  \} }  
e^{\xi_{k}(z)}
(\Pi_{j=1}^{s}( z-p_{j,k} )^{n_{j}})
\psi_{k}(z)\bar{\varphi}(z) \lambda^{-1}(z)\frac{i}{2}dz \wedge d\bar{z} \\
= \; &
\int_{\{ \vert z \vert \leq \frac{R}{t_{k}}  \}}
\frac
{e^{\xi_{k}(0)}(\Pi_{j=1}^{s}( z-p_{j,k} )^{n_{j}})}
{(1+\frac{W_{k}(0)}{8})e^{\xi_{k}(0)}\vert z \vert^{2})^{2} }
\psi_{k}(z)\bar{\varphi}(z) \lambda^{-1}(z)\frac{i}{2}dz \wedge d\bar{z}
+
o(1) \\
= \; &
\frac{8}{W_{k}(0)}
\int_{\{ \vert w \vert \leq R  \} }
\frac
{(\Pi_{j=1}^{s}( \frac{w}{t_{k}}-p_{j,k} )^{n_{j}})
\psi_{k}(\frac{w}{t_{k}})\bar{\varphi}(\frac{w}{t_{k}}) \lambda^{-1}(\frac{w}{t_{k}})}
{(1+\vert w \vert^{2})^{2}}
\frac{i}{2}dw \wedge d\bar{w}
+
o(1) 
\\
= \; &
\frac{1}{\Vert \hat{\alpha}_{k} \Vert(x_{k,l})}
\underset{\{ \vert w \vert \leq R  \} }{\int}
\hspace{-10pt}
\frac
{(\Pi_{j=1}^{s}( \varepsilon_{j,k}\frac{w}{\lambda_{k}}-\hat{p}_{j,k})^{n_{j}})
\psi_{k}(\frac{w}{t_{k}})\bar{\varphi}(\frac{w}{t_{k}}) \lambda^{-1}(\frac{w}{t_{k}})}
{\vert \psi_{k}(0) \vert (1+\vert w \vert^{2})^{2}}
\frac{i}{2}dw \wedge d\bar{w}
+
o(1)
\\
= \; &
\frac{1}{\Vert \hat{\alpha}_{k} \Vert(x_{k,l})}
\left[
\pi \, 
\Pi_{j=1}^{s}(-\hat{p}_{j})^{n_{j}}
\frac{\psi(0)}{\vert \psi(0) \vert }\bar{\varphi}(0)
+
O(\frac{1}{R^{2}})
+
o(1)
\right]
, 
\end{split}
\end{equation*}
where $o(1)\longrightarrow 0$ uniformly in $R$, as $k\longrightarrow \infty$. 
So, by letting $R\longrightarrow +\infty$, and combining the above estimates,
we conclude that the analogous of \eqref{some_integral_..} holds also when
$q_{l} \in \mathcal{S} \cap Z_{0}$. In other words, in terms of $u_{k}$, 
we have established that,
\begin{equation*}
\begin{split}
\int_{B_{r}(x_{k,l})}
e^{-u_{k}}
\langle *_{E}^{-1}\alpha_{k},*_{E}^{-1}\alpha \rangle dA
= \; &
e^{-\frac{s_{k}}{2}}\int_{B_{r}(x_{k,l})}e^{\xi_{k}} 
\langle *_{E}^{-1} \hat \alpha _{k} , *_{E}^{-1} \alpha \rangle dA  \\
= \; &
\frac{1}{\Vert \alpha_{k} \Vert(x_{k,l})}
(H_{l}\bar{\varphi}_{l}+o(1))
,
\; \forall \; l=1,\ldots,m
\end{split}
\end{equation*}
for suitable $H_{l},\varphi_{l} \in \mathbb{C}$, $H_{l}\neq 0$ and 
$ 
\vert  \varphi _{l} \vert 
= 
\Vert \alpha \Vert(q_{l})
$.
Consequently, from \eqref{wedge_product_asymptotic} we conclude that,  
\begin{equation}\label{6.55a}
\begin{split}
\int_{X} \beta_{0} \wedge \alpha 
= &
\sum_{l=1}^{m}
\frac{1}{\Vert \alpha_{k} \Vert(x_{k,l})}
(H_{l}\bar \varphi _{l}+o(1))
+
o(1),
\; \text{ as } \;  
k\longrightarrow +\infty.
\end{split}
\end{equation}
In particular, as above, by taking $q_{l}\in \mathcal{S} \cap Z_{0}$ 
(so $\hat{\alpha}_{0}(q_{l})=0$)
and a holomorphic quadratic differential $\alpha \in C_{2}(X)$ 
such that $\Vert \alpha \Vert(q_{l})\neq 0$, but
$\Vert \alpha \Vert(q)=0$ for all $q\in \mathcal{S}\setminus \{ q_{l} \} $,
from \eqref{6.55a} we find:
$$
\int_{X} \beta_{0} \wedge \alpha 
=
\frac{1}{\Vert \alpha_{k} \Vert(x_{k,l})}
(H_{l} \bar{\varphi}_{l} + o(1)) +o(1)
\; \text{ with } \; 
\vert H_{l} \bar{\varphi}_{l}\vert=\vert H_{l} \vert \Vert \alpha \Vert(q_{l}) >0.
$$
As a consequence, we deduce that
$\frac{1}{\Vert \alpha_{k} \Vert(x_{k,l})} \leq C$, 
and since  
$$
\Vert \hat{\alpha}_{k} \Vert(x_{k,l})\longrightarrow \Vert \hat{\alpha}_{0} \Vert(q_{l})=0, \; \text{ as } \; k\longrightarrow \infty,$$ 
we find that,
$$
e^{-s_{k}} \leq C \Vert \hat{\alpha}_{k} \Vert(x_{k,l}) \longrightarrow 0,
\; \text{ i.e. } \; 
s_{k}\longrightarrow \infty, 
\; \text{ as } \; 
k\longrightarrow \infty,
$$ 
as claimed. 

\begin{equation}\label{6.56}
\; \text{ \textbf{\underline{Claim:} } } \;
\quad
\min
\{ 
\Vert \alpha_{k} \Vert(x_{k,l})\; : \; l=1,\ldots,m
\} 
\leq C
\quad\quad \quad \quad \quad   
\end{equation}

\noindent
Argue by contradiction, and assume that, 
(along a subsequence)  
$$\Vert \alpha_{k} \Vert(x_{k,l})\longrightarrow +\infty,$$
as $k\longrightarrow +\infty$, for every 
$l=1,\ldots,m$. 
In case  $q_{l}\in \mathcal{S} \cap Z_{0}$ then, 
\begin{equation}\label{integrals_in_balls}
\begin{split}
\int_{B_{r}(x_{k,l})}e^{-u_{k}}
= &
e^{-s_{k}}\int_{B_{r}(x_{k,l})}e^{\xi_{k}} \\
= \; &
\frac{1}{\Vert \alpha_{k} \Vert^{2}(x_{k,l})}
\int_{\{\vert z \vert \leq \frac{r}{\varepsilon_{k}}\}} 
\frac{1}{(1+\vert z \vert^{2})^{2}}dz d\bar{z}
\\
\leq &
\frac{C}{\Vert \alpha_{k} \Vert^{2}(x_{k,l})}
\longrightarrow 
0
,
\; \text{ as } \; 
k\longrightarrow +\infty. 
\end{split}
\end{equation}
On the other hand, in case 
$q_{l} \in \mathcal{S} \setminus Z_{0}\neq \emptyset$ then we know that, 
$q_{l} \not \in Z$ and so $\Vert \hat{\alpha}_{k} \Vert$ is bounded below away from zero
in a small neighbourhood of $q_{l}$. As a consequence, 
for 
$r>0$ sufficiently small, 
we find 
$
\int_{B_{r}(x_{k,l})}e^{\xi_{k}}\leq C
$ 
and we obtain:
\begin{equation*}
\int_{B_{r}(x_{k,l})}e^{-u_{k}}
=
e^{-s_{k}}
\int_{B_{r}(x_{k,l})}e^{\xi_{k}}
\leq 
Ce^{-s_{k}}
\longrightarrow 
0,
\; \text{ as } \; 
k\longrightarrow \infty.  
\end{equation*}
Since we know also that, 
\begin{equation}\label{estimates_outside_balls}
e^{-u_{k}} =
e^{\xi_{k} -  s_{k}} 
\leq 
Ce^{- s_{k}}
\; \text{ in } \; 
X\setminus   \cup_{l=1}^{m}B_{r}(x_{k,l}),
\end{equation}
then we may conclude that,
$
\int_{X}e^{-u_{k}} 
\longrightarrow 0
$, as
$k\longrightarrow +\infty$
in contradiction to part (v) of Lemma \ref{lem_6.4},   
and \eqref{6.56}  is established. 

\

Since
for $q_{l}\in \mathcal{S} \setminus Z_{0} $
(i.e. $\Vert \hat{\alpha}_{0} \Vert(q_{l} )>0$)
we have:
$\Vert \alpha_{k} \Vert^{2}(x_{k,l})=O(e^{s_{k}}) \longrightarrow +\infty$,
then the condition
$\Vert \alpha_{k} \Vert(x_{k,l})\leq C$
implies that necessarily, $q_{l} \in \mathcal{S}\cap Z_{0}$.

Without loss of generality and after relabelling
(along a subsequence),
we can assume that
\begin{equation}\label{q_1_in_S_cap_Z_0}
q_{1} \in \mathcal{S}\cap Z_{0}
\; \text{ and } \; 
\mu_{k,1}
=
\Vert \alpha_{k} \Vert^{2}(x_{k,1})
=
\min_{l=1,\ldots, m}
\Vert \alpha_{k} \Vert^{2}(x_{k,l})
\leq C.
\end{equation}
As above, we can use such information in 
\eqref{6.55a} with $\alpha \in C_{2}(X)$ vanishing in  
$\mathcal{S}\setminus \{ q_{1} \} $, but not in $q_{1}$, to obtain:
\begin{equation*}
\int_{X} \beta_{0} \wedge \alpha 
=
\frac{1}{\sqrt{\mu_{k,1}}}
(H_{1}\bar \varphi_{1}+o(1))
,
\; \text{ as } \; 
k\longrightarrow +\infty,
\end{equation*} 
with $H_{1}\neq 0$ and $\vert \varphi_{1} \vert = \Vert \alpha \Vert(q_{1}) >0.$
So, by \eqref{q_1_in_S_cap_Z_0}, we derive first that,  
$\int_{X} \beta_{0} \wedge \alpha \neq 0$,
and then  we obtain also that,  
$\mu_{k,1}$ us uniformly bounded below away from zero. 
In fact, more generally, we conclude:
$$
\mu_{k,l} = \Vert \alpha_{k} \Vert^{2}(x_{k,l}) \geq  c >0
,
\; \forall \; k\in \N
\; \text{ and } \; 
\; \forall \; 
l\in \{ 1,\ldots,m \}. 
$$
Next, we define the set of indices:
\begin{equation*}
J
=
\{  
l\in \{ 1,\ldots, m \} 
\; : \; 
\limsup_{k \to +\infty}
\mu_{k,l} 
 < 
 \infty
\}
,
\end{equation*}
so that $1\in J$. Furthermore, if $l\in J$  then necessarily
$q_{l}\in \mathcal{S}\cap Z_{0}$ and (along a subsequence)
we can summarize the properties of the elements of $J$ as follows:
\begin{equation*}
\mu_{k,l}
\longrightarrow \mu_{l} >0
\; \text{ and } \;
q_{l} \in \mathcal{S}\cap Z_{0}
,
\; \forall \; l\in J\neq \emptyset.  
\end{equation*}
On the contrary,
$$
\; \text{ if } \;
\{ 1,\ldots, m \} \setminus J \neq \emptyset,
\; \text{ then } \; 
\mu_{k,l} \longrightarrow  +\infty
\; \text{ for } \; 
l\in \{ 1,\ldots, m \} \setminus J.
$$
Therefore, 
\eqref{6.55a}
can be expressed as follows:
\begin{equation*}
\int_{X}  \beta_{0} \wedge \alpha 
= 
(
\sum_{l \in J}
\frac{1}{\Vert \alpha_{k} \Vert(x_{k,l})}
H_{l}\bar \varphi _{l})
+
o(1),
\; \text{ as } \; 
k\longrightarrow +\infty,
\end{equation*}
and we easily derive that, 
\begin{equation*}
\; \text{ if } \; 
\alpha \in C_{2}(X)
\; \text{ and } \;
\Vert \alpha \Vert(q_{l})=0,  
\; \forall \; l\in J
\; \text{ then } \;  
\int_{X} \beta_{0} \wedge \alpha =0,
\end{equation*} 
and also (ii) is established. 

At this point we can combine the estimate \eqref{estimates_outside_balls} 
together with the integral estimate \eqref{integrals_in_balls} over 
the ball $B_{r}(x_{k,l})$ when $l\in \{ 1,\ldots,m \} \setminus J$,
and the estimate \eqref{formula_remark} around $x_{k,l}$ when $l\in J$, 
to deduce that,
$e^{-u_{k}}
\rightharpoonup
\pi \sum_{j\in J} \frac{1}{\mu_{l}}\delta_{q_{l}}
$,
weakly in the sense of measure, as claimed in (i).

Finally, for $r>0$ sufficiently small, we have:
\begin{equation*}
\int_{X}\vert \nabla w_{k} \vert^{2}
=
\int_{X} \vert \nabla \xi_{k} \vert^{2}
=
\sum_{l=1}^{m} \int_{B_{r}(q_{l})}
\vert \nabla \xi_{k} \vert^{2}
+
O(1),
\end{equation*}
and by means of the gradient estimate \eqref{gradient_estimate}  
in Theorem \ref{theorem_from_part_1},
together with  \eqref{xi_k_versus_d_k_minus_s_k},\eqref{xi_k_0} and 
\eqref{W_k_0_property}
we find:
\begin{equation*}
\int_{B_{r}(x_{k,l})}\vert \nabla \xi_{k} \vert^{2}
=
16 \pi 
(d_{k}-\log\Vert \alpha_{k} \Vert^{2}(x_{k,l}))
+
O(1).
\end{equation*}
On the other hand, for $l\in J$ we know that
$\Vert \alpha_{k} \Vert^{2}(x_{k,l})=O(1)$,
and we conclude:
\begin{equation*}
\begin{split}
c_{k}
= &
D_{t_{k}}(u_{k},\eta_{k})
=
\frac{1}{4}\int_{X}\vert \nabla w_{k} \vert^{2}
-
4\pi(\mathfrak{g}-1)d_{k}+O(1)
\\
= &
-
4\pi
\left(
(\mathfrak{g}-1-m)d_{k}
+
\sum_{l\in \{ 1,\ldots,m \} \setminus J}^{m}
\log(\Vert \alpha_{k} \Vert^{2}(x_{k,l}))
\right)
+
O(1)
,
\end{split}
\end{equation*}
and also \eqref{c_k_asymptotic} is established with $d_{k}\longrightarrow \infty$, as $k\longrightarrow \infty$.  
\end{proof}	 

It is reasonable to expect that the set of indices $J$ in Theorem \ref{main_thm_2}
either is a "singleton" or it covers the full set of indices.
In either cases we get a "cleaner" version of Theorem \ref{main_thm_2}, 
as already expressed by Theorem \ref{thm_last_intro} of the introduction, 
in case $\mathcal{S}$ is a "singleton". On the other hand we have:

\begin{corollary}\label{cor_6.10}
Let $\mathcal{S}$ be the (non-empty) blow-up set 
of $\xi_{k}$ in \eqref{the_blow_up_set_S}
and assume  $J=\{ 1,\ldots,m \}$ in Theorem \ref{main_thm_2}.  
Then   
$\mathcal{S}\subset Z_{0}$ and there holds:
\begin{enumerate}[label=(\roman*)]
\item $\mu_{k,l}\xrightarrow{k\to \infty} \mu_{l}>0$, 
$\; \forall \; l=1,\ldots,m$;
\item 
$
e^{-u_{k}}
\longrightarrow 
\pi \sum_{l=1}^{m}
\frac{1}{\mu_{l}}\delta_{q_{l}}$,	
as 
$k\longrightarrow +\infty$,
weakly in the sense of measures;
\item
$
\int_{X} \beta_{0}\wedge \alpha =0$,
$\alpha \in Q_{2}[q_{1},\ldots,q_{m}]$;
\item
$c_{k}=-4\pi(\mathfrak{g}-1-m)d_{k}+O(1)$,
with
$d_{k}=\fint_{X}u_{k}\longrightarrow +\infty$, 
as $k\longrightarrow +\infty$. 
\end{enumerate}
\end{corollary}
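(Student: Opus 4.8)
The plan is to obtain Corollary \ref{cor_6.10} as a direct specialization of Theorem \ref{main_thm_2} to the situation in which the index set $J$ produced there exhausts $\{1,\ldots,m\}$; no new analysis is needed, only a careful reading of the four conclusions of that theorem under this extra hypothesis. First I would note that, by part (i) of Theorem \ref{main_thm_2}, every $l\in J$ satisfies $q_l\in\mathcal{S}\cap Z_0$ and $\mu_{k,l}\to\mu_l>0$; since here $J=\{1,\ldots,m\}$, this at once yields $\mathcal{S}=\{q_1,\ldots,q_m\}\subset Z_0$ together with assertion (i) of the corollary. (Implicit in the statement is that we are in the regime $\mathcal{S}\cap Z_0\neq\emptyset$ in which Theorem \ref{main_thm_2} applies; this is automatic here since $1\in J$ always.)

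Next I would read off (ii) and (iii) directly. For (ii): with $J=\{1,\ldots,m\}$ the complementary set $\{1,\ldots,m\}\setminus J$ is empty, so the weak limit of $e^{-u_k}$ in the sense of measures exhibited in part (i) of Theorem \ref{main_thm_2} is precisely $\pi\sum_{l=1}^{m}\mu_l^{-1}\delta_{q_l}$. For (iii): since $\mathcal{S}_0=\{q_l:l\in J\}=\mathcal{S}$, the orthogonality relation \eqref{orthonality_on_S_0} asserts that $\int_X\beta_0\wedge\alpha=0$ for every $\alpha\in C_2(X)$ vanishing simultaneously at $q_1,\ldots,q_m$, i.e.\ for every $\alpha\in Q_2[q_1,\ldots,q_m]$ in the notation of \eqref{Q_kappa_definition}; this is exactly (iii).

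Finally, for (iv) I would invoke the energy identity \eqref{c_k_asymptotic}: when $J=\{1,\ldots,m\}$ the sum over $\{1,\ldots,m\}\setminus J$ drops out, so \eqref{c_k_asymptotic} collapses to $c_k=D_{t_k}(u_k,\eta_k)=-4\pi(\mathfrak{g}-1-m)d_k+O(1)$, while $d_k=\fint_X u_k\,dA\to+\infty$ is already part of the conclusion of Theorem \ref{main_thm_2}. I do not expect a genuine obstacle here: all the substantive work — the collapsing-bubble analysis of Theorem \ref{theorem_from_part_1}, the lower bounds $\mu_{k,l}\geq c>0$, and the conclusion $s_k\to+\infty$ — has already been carried out inside the proof of Theorem \ref{main_thm_2}, so the only points requiring attention are the index bookkeeping and the consistency check that the hypothesis $J=\{1,\ldots,m\}$ lies within the standing assumptions of that theorem.
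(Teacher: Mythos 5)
Your proposal is correct and matches the paper's (implicit) treatment: the corollary is stated there without a separate proof precisely because it is the specialization of Theorem \ref{main_thm_2} to $J=\{1,\ldots,m\}$, and your index bookkeeping — $\mathcal{S}_0=\mathcal{S}$ giving (iii), the empty complement killing the $\log\mu_{k,l}$ sum in \eqref{c_k_asymptotic} giving (iv), and part (i) of that theorem giving $\mathcal{S}\subset Z_0$ together with (i) and (ii) — is exactly what is needed.
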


\
	
\noindent	
Thus, under the assumptions of Corollary \ref{cor_6.10}  we can still conclude, 
as before, 

\noindent
that, when
$\rho([\beta])=4\pi(\mathfrak{g}-1)$ (i.e. $m=\mathfrak{g}-1$)
then $D_{0}$ is bounded from below. 

\

At this point, 
Theorem \ref{thm_last_intro}, Theorem \ref{main_thm_3}
and Theorem \ref{thm_infimum_in_lambda_outside_tau_X} 
can be easily derived.

\

\

\textbf{Acknowledgments}

\noindent
We thank Dr. Martin Mayer for his precious assistance and 
useful comments during the preparation of the manuscript. 

Also, we have profited enormously from the illuminating comments
and suggestions of a dear colleague, Prof. Stefano Trapani. 

This work was partially supported by:
MIUR excellence project:   
Department of Mathematics, University of Rome "Tor Vergata"
CUP E83C18000100006,
by "Beyond Borders" research project: 
"Variational approaches to PDE" 
and Fondi di Ricerca Scientifica d'Ateneo, 2021:
Research Project 
"Geometric Analysis in Nonlinear PDEs",
CUP E83C22001810005.

\end{document}